\documentclass[12pt,reqno,b5paper]{amsart}
\usepackage{comment}
\usepackage{multicol}
\usepackage{a4wide}
\usepackage{amsmath, amssymb, amsthm}
\usepackage[utf8]{inputenc}
\usepackage[T1]{fontenc}
\usepackage[all]{xypic}
\usepackage{graphicx}
\usepackage{fancyhdr}
\usepackage{tabularx}
\usepackage{wrapfig}
\usepackage{xcolor}
\usepackage{array}
\usepackage{textcomp}
\usepackage{siunitx}
\usepackage{epsfig}
\usepackage{hyperref}
\usepackage{url}
\usepackage{tabularx}
\usepackage{tikz-cd} 
\usepackage{quiver}
\usepackage{todonotes}

\usepackage[metapost, mplabels,truebbox,clip]{mfpic}
\definecolor{darkgreen}{RGB}{204,0,78}

\newcommand{\darkgreen}[1]{\textcolor{darkgreen}{#1}}

\makeatletter
\def\slashedarrowfill@#1#2#3#4{%
  $\m@th\thickmuskip0mu\medmuskip\thickmuskip\thinmuskip\thickmuskip
   \relax#4#1\mkern-7mu\cleaders\hbox{$#1\mkern-2mu$}\hfill\mkern-7mu
   \mathclap{#3}\mkern-7mu\cleaders\hbox{$#1\mkern-2mu$}\hfill\mkern-7mu#1$}

\def\rightslashedarrowfill@{%
  \slashedarrowfill@\relbar\relbar\mapstochar\rightarrow}

\newcommand\xslashedrightarrow[2][]{%
  \ext@arrow 0055{\rightslashedarrowfill@}{#1}{#2}}
\makeatother

\newtheorem{lemma}{Lemma}[section]
\newtheorem{corr}[lemma]{Corollary}
\newtheorem{proposition}[lemma]{Proposition}
\newtheorem{theorem}[lemma]{Theorem}

\theoremstyle{definition}

\newtheorem{definition}[lemma]{Definition}
\newtheorem{remark}[lemma]{Remark}
\newtheorem{example}[lemma]{Example}

\newtheorem{construction}[lemma]{Construction}
\newtheorem{notation}[lemma]{Notation}

\author{Tom\'{a}\v{s} Perutka}
\address{University of Münster}
\email{tom.perutka@uni-muenster.de, tom.perutka@gmail.com}
\date{\today}

\begin{document}
\def\Ccat{\operatorname{2\textbf{-Cat}}}
\def\ICat{\operatorname{\textbf{ICat}}}
\def\cat{\operatorname{\textbf{cat}}}
\def\Prof{\operatorname{\textbf{Prof}}}
\def\SMC{\operatorname{\textbf{SMC}}}
\def\Prop{\operatorname{\textbf{Prop}}}
\def\Law{\operatorname{\textbf{Law}}}
\def\ALG{\operatorname{-\textbf{Alg}}}
\def\Cat{\operatorname{\textbf{Cat}}}
\def\End{\operatorname{\mathsf{End}}}
\def\Mod{\operatorname{\mathsf{Mod}}}
\def\Mon{\operatorname{\mathsf{Mon}}}
\def\Bimod{\operatorname{\mathsf{Bimod}}}
\def\Sketch{\operatorname{\mathbf{Sk}}}
\def\Prod{\operatorname{\textbf{Prod}}}
\def\IntAlg{\operatorname{{\mathsf{IntAlg}}}}
\def\IntCoalg{\operatorname{{\mathsf{IntCoalg}}}}
\def\IntBialg{\operatorname{{\mathsf{IntBialg}}}}
\def\ev{\operatorname{ev}}
\def\ladj{\operatorname{ladj}}
\def\radj{\operatorname{radj}}
\def\inert{\operatorname{in}}
\def\act{\operatorname{act}}
\def\el{\operatorname{el}}
\def\str{\operatorname{str}}
\def\coev{\operatorname{coev}}
\def\CMon{\operatorname{\mathsf{CMon}}}
\def\LaxFun{\operatorname{\mathsf{LaxFun}}}
\def\PsFun{\operatorname{\mathsf{PsFun}}}
\def\Fun{\operatorname{\mathsf{Fun}}}
\def\Mul{\operatorname{\mathsf{Mul}}}
\def\Set{\operatorname{\textbf{Set}}}
\def\Alg{\operatorname{\mathsf{Alg}}}
\def\Coalg{\operatorname{\mathsf{Coalg}}}
\def\Bialg{\operatorname{-Bialg}}
\def\Cc{\operatorname{\mathbb{C}}}
\def\Dd{\operatorname{\mathbb{D}}}
\def\Bb{\operatorname{\mathbb{B}}}
\def\A{\operatorname{\mathcal{A}}}
\def\B{\operatorname{\mathcal{B}}}
\def\C{\operatorname{\mathcal{C}}}
\def\Ss{\operatorname{\mathbb{S}}}
\def\D{\operatorname{\mathcal{D}}}
\def\E{\operatorname{\mathcal{E}}}
\def\V{\operatorname{\mathcal{V}}}
\def\W{\operatorname{\mathcal{W}}}
\def\P{\operatorname{\mathcal{P}}}
\def\O{\operatorname{\mathcal{O}}}
\def\mF{\operatorname{\mathcal{F}}}
\def\mM{\operatorname{\mathcal{M}}}
\def\mN{\operatorname{\mathcal{N}}}
\def\mZ{\operatorname{\mathcal{Z}}}
\def\U{\operatorname{\mathcal{U}}}
\def\X{\operatorname{\mathcal{X}}}
\def\Y{\operatorname{\mathcal{Y}}}
\def\K{\operatorname{\mathit{\widetilde{K}}}}
\def\G{\operatorname{\mathit{\widetilde{G}}}}
\def\Q{\operatorname{\mathbb{Q}}}
\def\Z{\operatorname{\mathbb{Z}}}
\def\N{\operatorname{\mathbb{N}}}
\def\M{\operatorname{\mathbb{V}}}
\def\F{\operatorname{\mathbb{F}}}
\def\T{\operatorname{\mathbb{T}}}
\def\FP{\operatorname{\mathsf{FP}}}
\def\Pr{\operatorname{\mathbf{Pr}}}
\def\Hom{\operatorname{\mathsf{Hom}}}
\def\Map{\operatorname{\mathsf{Map}}}
\def\Id{\operatorname{Id}}
\def\ho{\operatorname{ho}}
\def\Nat{\operatorname{\mathsf{Nat}}}
\def\lan{\operatorname{lan}}
\def\t{\operatorname{\times}}
\def\tens{\operatorname{\otimes}}
\def\id{\operatorname{id}}
\def\im{\operatorname{im}}
\def\cst{\operatorname{cst}}
\def\coim{\operatorname{coim}}
\def\coker{\operatorname{coker}}
\def\fup{\operatorname{\overline{\mathit{f}}}}
\def\flow{\operatorname{\underline{\mathit{f}}}}
\def\lax{\operatorname{lax}}
\def\Lur{\operatorname{Lur}}
\def\Tw{\operatorname{Tw}}
\def\comp{\operatorname{comp}}

\title{2-dimensional Lawvere theories, commutativity, and higher Day convolution}

\maketitle

\begin{abstract}
The aim of this paper is to study categorified algebraic structures and their pseudo- and lax homomorphisms using the framework of Lawvere $2$-theories, and more generally, marked $2$-dimensional sketches. The key notion we focus on is that of $2$-dimensional commutativity. Equipping a $2$-theory $\T$ with such a structure, we study the implications on the $2$-categories $\mathsf{Mod}_{\text{pseudo}}
(\T,\mathbf{Cat})$, $\mathsf{Mod}_{\lax}(\T,\mathbf{Cat})$ of $\mathbb{T}$-models, pseudo- or lax homomorphisms, and modifications, both in the ordinary and $\infty$-categorical setting. We show that these $2$-categories admit certain natural structures of a closed $2$-operad and deduce a generalization of Fox's theorem. The main technical tool is of independent interest: we construct an $(\infty,n-1)$-operad structure on the hom $(\infty,n-1)$-category $\mathsf{Hom}_{\mathbb{V}}(\mathcal{M},\mathcal{N})$, where $\mathbb{V}$ is a monoidal $(\infty,n)$-category and $\mathcal{M},\mathcal{N}$ are monoids therein, generalizing the Day convolution to the higher setting.
\end{abstract}

\setcounter{tocdepth}{1}
\tableofcontents

\newpage
\section*{Introduction}

Categorical algebra studies algebraic structures that can be put on categories: (symmetric) monoidal categories, double categories, rig categories, categories with an action, involutive categories etc. One of the reasons making categorical algebra more complex than just studying algebraic structures on sets is that the added categorical dimension allows us to study different kinds of homomorphisms between objects with given algebraic structure: \darkgreen{{strict, pseudo,}} and \darkgreen{{(co)lax}}. The weakened notions of lax and colax homomorphisms then allow us to define further algebraic structures \textit{inside a model}: for example, if $\V$ is a monoidal category, the category $\Fun_{\lax}^\otimes(*,\V)$ of lax monoidal functors from a point to $\V$ is exactly the category $\Mon(\V)$ of monoids in $\V$.

Our primary source of algebraic structures are \darkgreen{Lawvere $(\infty,2)$-theories}. For any Lawvere $(\infty,2)$-theory $\T$ and any $(\infty,2)$-category $\C$ with finite products, we construct an $(\infty,2)$-category \darkgreen{$\Mod_{w}(\T,\C)$} for any choice $w\in\{$strict, lax, colax$\}$. Similarly, if $\C$ is an ordinary $2$-category with finite products and $\T$ an ordinary Lawvere $2$-theory, we construct a $2$-category $\Mod_w(\T,\C)$ for any choice $w\in\{$strict, pseudo, lax, colax$\}$.

\begin{remark}
    Note that in the ordinary $2$-categorical situation there is a distinction between commuting up to isomorphism and commuting strictly which does not make sense in the higher categorical world, so developing both theories simultaneously poses some notational challenges. For the introduction, let us put these aside and speak simply about $2$-categories, distinguishing between the ordinary and higher case only when discussing the results. We also use $\Cat$ as a placeholder for both the ordinary and $\infty$-categories, unless specified otherwise.
\end{remark}

Although Lawvere 2-theories are our main interest, we also look at other algebraic structures and perform similar constructions: \darkgreen{marked $2$-sketches} of Arkor-Bourke-Ko \cite{ABK24} and \darkgreen{algebraic patterns} of Chu-Haugseng \cite{CH21}. The only example that is not a Lawvere 2-theory, which we study in depth, is the algebraic pattern / sketch $\Delta^{op}$ for Segal objects.

Studying the $2$-categories $\Mod_{\text{(co)lax}}(\T,\Cat)$ is more difficult than the study of their stricter versions $\Mod_{\text{strict}}(\T,\Cat)$ or $\Mod_{\text{pseudo}}(\T,\Cat)$ as the former two are in general not cocomplete in any suitable 2-categorical sense. Nevertheless, the $2$-categories $\Mod_{\text{(co)lax}}(\T,\Cat)$ are of primary interest to us: they allow us to define, for a model $\X\colon \T\to \Cat$, a category of \darkgreen{{internal algebras} }$$\darkgreen{\IntAlg(\X)}:=\Hom_{\lax}(*,\X),$$ as well as the category of \darkgreen{internal coalgebras} $\Hom_{\text{colax}}(*,\X)$. For example, if $\T$ is the $2$-theory for monoidal categories, we can identify $\IntAlg(\V)$, $\IntCoalg(\V)$ with the categories $\Mon(\V)$, $\mathsf{Comon}(\V)$ of monoids and comonoids in a monoidal category $\V$. Considering $\Delta^{op}$ as an algebraic pattern (or a marked $2$-sketch) for Segal objects, for $\Cc$ a model in $\Cat$, i.e. a double category, we get that $\IntAlg(\mathbb{C})$ is the category of horizontal monads in $\mathbb{C}$.  

 After defining the basic notions, we study more deeply the phenomena associated with \darkgreen{$2$-dimensional commutativity}. Recall that a Lawvere $1$-theory $\T$ is commutative if all the ``operations'' commute with each other, i.e. for any morphisms $\alpha\colon n\to 1$, $\beta\colon k\to 1$ in $\T$, the square below commutes.
%https://q.uiver.app/#q=WzAsNCxbMCwwLCJtXFxjZG90IGsiXSxbMSwwLCJuXFxjZG90IGsiXSxbMCwxLCJtXFxjZG90IGwiXSxbMSwxLCJuXFxjZG90IGwiXSxbMCwxLCJrXFxjZG90IFxcYWxwaGEiXSxbMiwzLCJsXFxjZG90IFxcYWxwaGEiXSxbMCwyLCJtXFxjZG90IFxcYmV0YSIsMl0sWzEsMywiblxcY2RvdCBcXGJldGEiXV0=
\begin{equation} \label{intro: comm diagram}
    \begin{tikzcd} 
	{k\cdot n} & {k} \\
	{ n} & {1}
	\arrow["{k\cdot \alpha}", from=1-1, to=1-2]
	\arrow["{\alpha}", from=2-1, to=2-2]
	\arrow["{\beta\cdot n}"', from=1-1, to=2-1]
	\arrow["{\beta}", from=1-2, to=2-2]
\end{tikzcd}
\end{equation}
This is equivalent to saying that for any model $\X\colon \T\to \C$, the $n$-ary operation $\X(\alpha)\colon$ $ \X(1)^n\to \X(1)$ gives rise to a homomorphism of models $\X^n\to \X$. For example, the theory for monoids is not commutative, whereas the theory for commutative monoids is. 

If we investigate the differences one categorical dimension higher, i.e. monoidal vs. symmetric monoidal categories, we can see that their behaviour differs more drastically: for example, the category $\CMon(\V)$ of commutative monoids in a symmetric monoidal category $\V$ inherits the symmetric monoidal structure from that on $\V$; we denote that by $\underline{\CMon}(\V)$. If $\W$ is an arbitrary monoidal category, the analogue for $\Mon(\W)$ is not true. As another example, one can form a tensor product of symmetric monoidal categories $\V\boxtimes \W$ such that the internal hom is the category of symmetric monoidal functors, i.e. $$\Fun^\otimes(\V_1\boxtimes \V_2,\W)\simeq \Fun^\otimes(\V_1,\Fun^{\otimes}(\V_2,\W)).$$ Something similar does not even make sense for arbitrary monoidal categories as $\Fun^{\otimes}(\V_2,\W)$ simply does not inherit the monoidal structure. This should hint to us that if a $2$-theory $\T$ is ``2-dimensionally commutative,'' the $2$-categories of models should inherit some extra structure.

Another observation in this direction is due to Fox \cite{Fox76}. Denote by $\SMC$ the category of symmetric monoidal categories and strong symmetric monoidal functors. 

{\textbf{Fox's theorem.}} \textit{The endofunctor} $\underline{\CMon}(-)\colon \SMC\to \SMC$ \textit{is an idempotent comonad. Moreover, coalgebras for this comonad are precisely cocartesian monoidal categories.}

\begin{remark}
    A well-known special case of Fox's theorem is the useful fact that the coproduct of two commutative rings is their tensor product.
\end{remark}

The main motivation for this paper is to explain all this surprisingly nice behaviour of symmetric monoidal categories by the fact that the Lawvere $(2,1)$-theory $\T_{E_{\infty}}$ for symmetric monoidal categories is commutative in the suitable 2-dimensional sense, as well as to prove the analogue of the pleasant behaviour of $\CMon(-)$ for the more general functor $\IntAlg(-)$. 

\section*{Main results} 
\subsection{2-dimensional commutativity}
The first goal of this paper is to define the $2$-dimensional analogue of commutativity. In the $1$-dimensional situation, commutativity is a property of a Lawvere theory: we simply ask whether the diagrams (\ref{intro: comm diagram}) commute or not. In the $2$-dimensional setting, this becomes a structure: we want to fill the squares (\ref{intro: comm diagram}) with $2$-cells, possibly even non-invertible, and these cells ought to satisfy some coherence equations. An elegant solution to package all this structure is to consider $\T$ as a \darkgreen{marked $2$-category} (by marking the product projections $n\to 1$ which we call the \darkgreen{inert $1$-cells}) and then make use of a Gray tensor product on marked $2$-categories.

\textbf{Definition.} (Def. \ref{def: w-commutativity}) For a Lawvere $2$-theory\footnote{We also define $w$-commutativity for marked 2-sketches (Def. \ref{def: w-commutativity II}) and for algebraic patterns (Def. \ref{def: w-commutativity III}).} $\T$ and $w\in \{$strict, pseudo, lax, colax$\}$, a \darkgreen{$w$-commutativity} on $\T$ is an $E_1$-monoidal structure on $\T$ such that the multiplication map $\mu\colon \T\otimes_{s,w} \T\to \T$ preserves products in each variable and the monoidal unit is $1\in \T$. Here, $\otimes_{s,w}$ is a marked Gray tensor product of marked $2$-categories.

It turns out that the $w$-commutativity structure is quite rigid. Denoting again the multiplication map by $\mu$, we prove that necessarily $\mu(m,n)=m\cdot n$. Thus, filling the squares (\ref{intro: comm diagram}) with $2$-cells amounts to specifying $\mu$ on the Gray cells (\ref{dia: gray square}). By definition and the universal property of $\otimes_{s,w}$ we obtain that a $w$-commutativity on $\T$ implies the existence of a ``lifting functor'' \begin{align} \label{intro: lifting}
    \darkgreen{\widetilde{(-)}}\colon \Mod_w(\T,\C)\to \Mod_w(\T,\Mod_w(\T,\C)) 
\end{align} which is a section to both forgetful functors evaluating at 1 in the first or the second variable. Intuitively, this functor promotes each operation $\X(\alpha)\colon \X(1)^n\to \X(1)$ to a homomorphism of models $\widetilde{\X}(\alpha)\colon \X^n\to \X$.

Quite surprisingly, if $\T$ is a Lawvere $(2,2)$-theory, we can show that filling in the diagrams (\ref{intro: comm diagram}) in a coherent way already specifies completely the map $\mu\colon \T\otimes_{s,w}\T\to \T$ and it even implies that it is associative up to a coherent isomorphism! In other words, the coherences for the Gray tensor product imply the coherences for $\mu$.

\textbf{Theorem A}. (\darkgreen{miracle associativity}) (Lemma \ref{lemma: syntactic} + Thm. \ref{prop: miracle associativity}) \textit{Let $\T$ be a Lawvere $(2,2)$-theory. The following sets are in bijection:
\begin{enumerate}
    \item $w$-commutativity structures on $\T$,
    \item (syntactic criterion) coherent systems $\sigma_{\alpha\beta}$ of $w$-cells filling the diagrams below.
    %\item (semantic criterion) sections $\Mod_w(\T,\C)\to \Mod_w(\T,\Mod_w(\T,\C))$ of the forgetful functor\footnote{In fact, there are two forgetful functors (evaluation at $1$ in the first or the second variable), and this is a section to both of them.}, functorial in $\C$. 
\end{enumerate}}
% https://q.uiver.app/#q=WzAsNCxbMCwwLCJtXFxjZG90IGsiXSxbMCwxLCIgayJdLFsxLDAsIm0iXSxbMSwxLCIxIl0sWzAsMSwiXFxhbHBoYVxcY2RvdCBrIiwyXSxbMCwyLCJtXFxjZG90IFxcYmV0YSJdLFsyLDMsIlxcYWxwaGEiXSxbMSwzLCJcXGJldGEiLDJdLFs1LDcsIlxcc2lnbWFfe1xcYWxwaGFcXGJldGF9IiwwLHsib2Zmc2V0IjoxLCJzaG9ydGVuIjp7InNvdXJjZSI6NDAsInRhcmdldCI6NDB9fV1d
\[\begin{tikzcd}
	{m\cdot k} & m \\
	{ k} & 1
	\arrow[""{name=0, anchor=center, inner sep=0}, "{m\cdot \beta}", from=1-1, to=1-2]
	\arrow["{\alpha\cdot k}"', from=1-1, to=2-1]
	\arrow["\alpha", from=1-2, to=2-2]
	\arrow[""{name=1, anchor=center, inner sep=0}, "\beta"', from=2-1, to=2-2]
	\arrow["{\sigma_{\alpha\beta}}", shift right, between={0.4}{0.6}, Rightarrow, from=0, to=1]
\end{tikzcd}\]

We are going to formulate the result above more precisely in Section \ref{sec: commutativity}. There is an interesting special case: we can consider a braiding on a monoidal category $\V$ as a pseudocommutativity structure on the delooping $B\V$. The miracle associativity is then witnessed by the Yang-Baxter equation (see Example \ref{example: Yang-Baxter}). 

The previous result concerns syntax of $w$-commutativity, i.e. some implications for the Lawvere $2$-theories themselves. Let us now turn to the semantics of $w$-commutativity: what does such a structure on $\T$ tell us about the $2$-category $\Mod_w(\T,\Cat)$? First, we can observe that the lifting functor (\ref{intro: lifting}) equips the category $\IntAlg(\X)$ with a structure of a model $\darkgreen{\underline{\IntAlg}(\X)}\colon \T\to \Cat$. With a little more work, we can show that it promotes to an endofunctor $$\underline{\IntAlg}:=\underline{\Hom}_{\lax}(*,-)\colon \Mod_{w}(\T,\Cat)\to \Mod_{w}(\T,\Cat).$$ 

In fact, we can show that much more is true.

\textbf{Theorem B.} (\darkgreen{closed 2-operad structure}) (Thm. \ref{thm: closed structure}, Prop. \ref{prop: it exists, after all}, Cor. \ref{corr: closed operad}) \textit{A $w$-commutativity on $\T$ equips} $\Mod_w(\T,\Cat)$ \textit{with a closed $2$-operad / $2$-multicategory\footnote{This is one of many examples of clashing conventions between $\infty$-categorical and classical terminology. We usually distinguish between $(\infty,2)$-operads and $(2,2)$-multicategories throughout the paper.} structure. Moreover, if we denote the internal hom by $\underline{\Hom}_w(-,-)$, we get $\underline{\Hom}_{\lax}(*,\X)=\underline{\IntAlg}(\X)$. } 

\begin{remark}
    Let us note that the proof of the theorem above is much more intricate for $w\in \{$lax, colax$\}$ than for $w\in\{$strict, pseudo$\}$. In particular, only in the latter case, we have a proof working equally well for $(\infty,2)$-categories and $(2,2)$-categories. In case $w\in \{$lax, colax$\}$, we are currently only able to construct the 2-multicategory structure for $(2,2)$-categories, although we expect it to generalize as well. Even proving the closedness is more difficult in the lax setting and requires some theory of lax ends we develop in Appendix A.
\end{remark}

\begin{remark}
    Theorem B was proved in some special cases before. In the setting of commutative monads, the analogue was proved by Keigher \cite{Kei78}, and in the $2$-dimensional setting of pseudocommutative monads, this was developed by Hyland and Power \cite{HP02}. For Lawvere $(\infty,1)$-theories, the analogue was proved by Berman \cite{Ber20} and the special case of symmetric monoidal $\infty$-categories goes back to Gepner, Groth, and Nikolaus \cite{GGN16}, although these results only describe the $1$-dimensional structure. For $w\in\{$lax, colax$\}$ and the special case of symmetric monoidal categories, this was worked out in detail by Schmitt \cite{Sch07}, but there is no more general treatment of the case $w=$ lax that we are aware of. Let us also note that in the references above, it is usually proved that one has a monoidal structure, not only an operad. We expect this to generalize for $w\in\{$strict, pseudo$\}$ but not for the lax and colax case. Nevertheless, we almost never need this in our applications.
\end{remark}

\subsection{Higher Day convolution}
To prove Theorem B above, we make use of the fact that if $\T$ has a pseudocommutativity structure $\mu$, then $\Mod_p(\T,\Cat)$ is a sub-2-category of a hom 2-category $\Fun_{s,p}(\T,\Cat)$ in a monoidal 3-category $\Cat^{\mathfrak{m}}_2$ of marked categories, and as such, it inherits certain Day convolution structure. We prove much more general result. If $\O$ is an $\infty$-operad and $\V$ an $\O$-monoid in the cartesian $\infty$-category $\Cat_{(\infty,n)}$ of $(\infty,n)$-categories, the associated unstraightening $p\colon \V^\otimes\to \O$ is an example of what we call an \darkgreen{$(\infty,n)$-operad over $\O$} (Def. \ref{def: oo,n operad over O}). Our generalized Day convolution is a structure put on the hom $(\infty,n-1)$-category between two $\O$-algebras in $\V$.

\textbf{Theorem C.} (\darkgreen{higher Day convolution}) (Thm. \ref{lemma: higher day conv}) \textit{ Let $\O$ be an $\infty$-operad, $\V$ and $\O$-monoid in }$\Cat_{(\infty,n)}$,\textit{ and $M_0,M_1\colon \O \to \V$ two $\O$-algebras in $\V$. Then there exists an $(\infty,n-1)$-operad $p\colon\Hom_{\V}^{\lax}(M_0,M_1)^\otimes\to \O$. Moreover, if $\O=\Delta^{op}$ or $\mathsf{Fin}_*$ is an operad for $E_1$- or $E_\infty$- algebras, the fiber $p^{-1}(1)$ is the hom $(\infty,n-1)$-category $\Hom_{\V}^{\lax}(M_0(1),M_1(1))$, and if }$\V=\Cat$, \textit{we recover the usual Day convolution.}

For example, if $(\V,\otimes)$ is a monoidal $(\infty,n)$-category, $\mM_0$, $\mM_1$ two monoids in $\V$, and $X_1,\dots,X_n,Y\colon \mM_0\to\mM_1$ a collection of one-cells, the a $n$-ary morphism $f\colon X_1,\dots,X_n\to Y$ in $\Hom_{\V}^{\lax}(\mM_0,\mM_1)$ looks as follows (where $m^n_0$, $m^n_1$ stand for the iterated multiplication):
% https://q.uiver.app/#q=WzAsNCxbMCwwLCJcXG1NXzBee1xcb3RpbWVzIG59Il0sWzEsMCwiXFxtTV8xXntcXG90aW1lcyBufSJdLFswLDEsIlxcbU1fMCJdLFsxLDEsIlxcbU1fMSJdLFsyLDMsIlkiLDJdLFswLDEsIlxcb3RpbWVzX2kgWF9pIl0sWzAsMiwibV8wXm4iLDJdLFsxLDMsIm1fMV5uIl0sWzUsNCwiZiIsMCx7InNob3J0ZW4iOnsic291cmNlIjozMCwidGFyZ2V0IjozMH19XV0=
\[\begin{tikzcd}
	{\mM_0^{\otimes n}} & {\mM_1^{\otimes n}} \\
	{\mM_0} & {\mM_1}
	\arrow[""{name=0, anchor=center, inner sep=0}, "{\otimes_i X_i}", from=1-1, to=1-2]
	\arrow["{m_0^n}"', from=1-1, to=2-1]
	\arrow["{m_1^n}", from=1-2, to=2-2]
	\arrow[""{name=1, anchor=center, inner sep=0}, "Y"', from=2-1, to=2-2]
	\arrow["f", between={0.3}{0.7}, Rightarrow, from=0, to=1]
\end{tikzcd}\]

\subsection{Other results}
The foundational results mentioned so far have some further applications in categorical algebra. First and foremost, we are now able to formulate and prove a generalization of Fox's theorem.

\textbf{Theorem D} (\darkgreen{Fox's theorem}) (Thm. \ref{thm: fox}, Prop. \ref{prop: restricting fox})  \textit{Let $\T$ be a Lawvere $2$-theory with a $w$-commutativity. Then the endofunctor} $$\underline{\IntAlg}\colon\Mod_w(\T,\Cat)\to \Mod_w(\T,\Cat)$$ \textit{has a structure of a $2$-comonad. Moreover, if $\T$ is \darkgreen{$w$-idempotent}, (see Def. \ref{def: Eckmann-Hilton}), this $2$-comonad is idempotent.}

\begin{remark}
    In the form written above, we are able to prove the theorem only for $(2,2)$-categories. For $(\infty,2)$-categories, we need some extra assumptions (cf. \ref{prop: restricting fox}) which should probably always hold.
\end{remark}

An example of a pseudoidempotent $2$-theory is the theory for symmetric monoidal categories. There is a more general family of examples, coming from unital commutative Lawvere $1$-theories. Then follows from a certain version of the Eckmann-Hilton argument that is not particularly difficult to prove but we have not found the full result in the literature.

\textbf{Proposition E.} (\darkgreen{Eckmann-Hilton for theories}) (Prop. \ref{subsec: 1-dim Kronecker Eckmann-Hilton}) \textit{Let $\T$ be a commutative Lawvere $1$-theory generated (in a suitable sense) by units and unital operations. Then the Kronecker product $\T\otimes_{\F^{op}}\T$ is isomorphic to $\T$.}

For any Lawvere $2$-theory $\T$, without any commutativity structure, we also construct certain generalized convolution structure. It is well-known that a set of maps from a comonoid $C$ to a monoid $M$ forms again a monoid, with a multiplication called convolution product: for any $f,g\colon C\to M$, we set $f\star g:= \mu\circ (f\otimes g)\circ \nabla$ where $\mu,$ $\nabla$ denote the multiplication and comultiplication. We generalize that to internal algebras and coalgebras.

\textbf{Proposition D.} (\darkgreen{convolution algebras}) (Prop. \ref{prop: conv product}) \textit{Let $\T$ be a Lawvere $(2,2)$-theory equipped with a suitable map $\T\to \T_{E_\infty}$ to the $2$-theory for symmetric monoidal categories. Consider a model $\X\colon \T\to \Cat$. Then for any $A\in \IntAlg(\X)$, $C\in \IntCoalg(\X)$ with underlying objects $a,c\in\X(1)$, respectively, the hom-set $\X(1)(c,a)$ naturally inherits a structure of an internal algebra in} $\Set$.

In the formulation above, $\Cat$ really means the category of ordinary categories, not $\infty$-categories. The map $\T\to \T_{E_\infty}$ above ensures that $\Set$ with its cartesian monoidal structure is a $\T$-model. For more details, see \ref{subsec: convolution}.

The last phenomenon we study is the interchange of internal algebras and coalgebras. In \ref{subsec: closed structure I}, we explain how to use the closed structure $\underline{\Hom}_w(-,-)$ to reprove some classical results about interchanging algebra and coalgebra structures in a braided monoidal $ifnty$-category. Related to that is the notion of a bilax monoidal functor: this is a pair of a lax and a colax monoidal structure on a functor, compatible with each other. We explain how this arises from pseudocommutativity (or more generally, a pseudocommutation). We study this for models in ordinary $2$-categories, utilizing a result of Arkor, Bourke, and Ko \cite[Theorem 7.4]{ABK24}, which says that we have an isomorphism of marked $2$-sketches $\Mod_{\lax}(\T,\Mod_{\mathrm{colax}}(\T,\C))\cong \Mod_{\mathrm{colax}}(\T,\Mod_{\lax}(\T,\C))$.

\textbf{Definition / Proposition E.} (\darkgreen{bilax structures}) (Lemma \ref{lemma: bilax} + Def. \ref{def:bilax})  \textit{ Let $\T$ be a pseudocommutative Lawvere $2$-theory, $\X,\Y\colon \T\to \C$ models, and $f\colon \X(1)\to \Y(1)$ a $1$-cell. A bilax structure on $f$ consists of a pair $(\fup,\flow)$ of a lax and a colax homomorphisms $\X\to \Y$ with $\fup(1)=f=\flow(1)$ such that they both come from a $1$-cell $\underline{\fup}$ in $\Mod_{\lax}(\T,\Mod_{\mathrm{colax}}(\T,\C))$.}

\section*{Outlook}\noindent
In the follow-up work, we would like to construct an $(\infty,2)$-operad structure on the $(\infty,2)$-category $\Mod_{\lax}(\T,\Cat_\infty)$ which turned out to be beyond the scope of the current paper: instead of just monoidal $(\infty,n)$ categories, one needs to use Gray categories in the process. The procedure is hinted in Remark \ref{remark: Gray structure}. We also plan to construct the $(\infty,2)$-category of marked $(\infty,2)$-sketches as an umbrella for all the choices of syntactic categories we have mentioned throughout the paper. 

\section*{Conventions and notation}
The main notational challenge of this paper is to try to keep the treatment of ordinary and higher $2$-categories uniform, at least to some extent. This is difficult mostly for two reasons:
\begin{enumerate}
    \item In ordinary $2$-categories, there are certain extra levels of strictnes (i.e., strict vs. pseudo) which higher categories do not see.
    \item Some of the conventions and terminology became very different for ordinary and higher categories; the relevant example for us is the dichotomy between the terms ``$\infty$-operad'' and ``multicategory''.
\end{enumerate}

As a solution, we use the term ``2-category'' whenever we can treat the ordinary and higher $2$-categories uniformly, and otherwise we distinguish between $(2,2)$- and $(\infty,2)$-categories. This mostly works well, except one situation, which is the clashing terminology between ``strict'' $\infty$-structures and ``pseudo'' ordinary structures. For example, let $\T_{E_\infty}$ be the Lawvere $(2,1)$-theory for symmetric monoidal categories and $\Cat$ the $(2,2)$-category of ordinary categories, then $\Mod_s(\T,\Cat)$ has different meanings if view $\T$, $\Cat$ as $(2,2)$-categories or $(\infty,2)$-categories.

Further conventions:
\begin{itemize}
    \item By $\infty$-category, we mean $(\infty,1)$-category, and we use the notation $\Cat_{\infty}$ for the (very large) $(\infty,2)$-category of $\infty$-categories, functors, and natural transformations. We also use just $\Cat$ when it is apparent from the context that we talk about higher categories, or in situations where we talk simultaneously about both ordinary and higher categories.
    \item By functors between $2$-categories, we always mean $2$-functors.
    \item To adapt to the standard terminology, we use the terms $(2,2)$-multicategory and $(\infty,2)$-operad to refer to the same concept.
    \item We write $\F$ for a skeleton of the category of finite sets and functions. We identify objects of this category with natural numbers.
    \item We write $\mathcal{S}$ for the $\infty$-category of $\infty$-groupoids (also spaces or animae). If $\C$ is an $\infty$-category, we use the term ``mapping space'' instead of ''mapping $\infty$-groupoid''.
    \item If $\C$ is a $(2,2)$-category with finite products, by \textit{monoids} or \textit{symmetric monoids} in $\C$ we always mean $E_1$- or $E_{\infty}$-monoids in the associated $(\infty,2)$-category (with all higher cells identities). In particular, by monoids in $\Cat$ we mean monoidal categories, not strict monoidal categories. The usual term in ordinary categorical literature would be ``pseudomonoid'', but we refrain from this term to achieve unity. 
\end{itemize}

\section*{Acknowledgments}
I thank John Bourke and Nathanael Arkor for many valuable discussions about the contents of this paper, as well as for reading carefully the initial draft. I am also deeply grateful to John for suggesting Fox's theorem to me as a topic of study during my Master's in Brno and for our many discussion back then.  Further, I thank Maxime Ramzi, Thorger Geiß and Phil Pützstück for discussion regarding the $\infty$-categorical setting, and to Miloslav Štěpán and Vít Jelínek for useful discussions about $2$-dimensional algebra. During my stay in Brno, I was supported by John Bourke's MASH Junior Project -- \textit{The language of higher dimensional categories}. During my stay in Münster, I was supported by the Deutsche Forschungsgemeinschaft (DFG, German Research Foundation) under Germany's Excellence Strategy EXC 2044-390685587, Mathematics Münster: Dynamics–Geometry–Structure, by the CRC 1442 \textit{Geometry: Deformations and Rigidity of the DFG}, and by the Leibniz prize of Eva Viehmann.

\section{Prelude: 1-dimensional case}

In this section, we are going to recall the main results on commutative Lawvere $1$-theories, first introduced by Linton \cite{Lin66}. We are going to use this simple setting to develop an intuition for the 2-dimensional case.% give a detailed treatment and include all the proofs for two reasons: first, it is hard to find some of these results in the literature; second, we will approach the $2$-dimensional setting in a similar way so it is worth thinking first about the classical case properly.

\subsection{Lawvere 1-theories} Denote by $\F$ a skeleton of the category of finite sets. Then $\F^{op}$ is a free category with finite powers (or products) generated by one object. More precisely, if $\C$ is a category with finite powers and we denote by $\FP(\F^{op},\C)\subset \Fun(\F^{op},\C)$ the full subcategory spanned by functors preserving finite powers, the evaluation-at-1 functor $$\ev_1\colon \FP(\F^{op},\C)\to \C$$ is an equivalence of categories. Notice that the multiplication of natural numbers becomes a coproduct in $\F^{op}$.

\begin{definition} \cite{Law63}
    A \textit{Lawvere (1-)theory} is a pair $(\T,\theta)$ where $\T$ is a category and $\theta\colon\F^{op}\to\T$ is an identity-on-objects functor that preserves finite powers. We define the category $\Law$ as a full subcategory of $\Cat$ where objects are Lawvere theories and morphisms are functors under $\F^{op}$. For a Lawvere theory $\T$, we define the category of \textit{$\T$-models in $\C$}, denoted by $\Mod(\T,\C)$, to be the category $\FP(\T,\C)$.
\end{definition}

Note that $\Mod(\T,\C)$ is equipped with the forgetful functor  $$U\colon\FP(\T,\C)\xrightarrow{\theta^*}\FP(\F^{op},\C)\xrightarrow{\ev_1}\C.$$

\begin{remark}
    (Variants of $\Mod(\T,\C)$.) There is a couple of alternative ways of defining $\Mod(\T,\C)$, all yielding the same category up to equivalence. First, one can take $\C$ to be a category with finite \textit{products} instead of powers and define $\Mod(\T,\C):=\Fun^{\times}(\T,\C)$ to be the category of functors preserving finite products. As finite products and finite powers in $\T$ coincide, the resulting category is the same. Another useful variant introduces an extra strictness condition: we can equip a category $\C$ (admitting finite powers) with an explicit choice of an inverse equivalence $J\colon \C\to \FP(\F^{op},\C)$ of $\ev_1$. This in turn yields a \textit{choice of finite powers} $c^n:= J(c)(n).$ Then, we can define $\Mod^{s}(\T,\C)$ as a pullback % https://q.uiver.app/#q=WzAsNCxbMCwxLCJcXEMiXSxbMSwxLCJcXEZQKFxcRl57b3B9LFxcQykiXSxbMSwwLCJcXEZQKFxcVCxcXEMpIl0sWzAsMCwiXFxNb2QoXFxULFxcQykiXSxbMCwxLCJcXHNpbWVxIl0sWzIsMV0sWzMsMl0sWzMsMCwiVSJdLFszLDEsIiIsMSx7InN0eWxlIjp7Im5hbWUiOiJjb3JuZXIifX1dXQ==
\[\begin{tikzcd}
	{\Mod(\T,\C)} & {\FP(\T,\C)} \\
	\C & {\FP(\F^{op},\C)}
	\arrow["J", from=2-1, to=2-2]
	\arrow["\theta^*", from=1-2, to=2-2]
	\arrow[from=1-1, to=1-2]
	\arrow["U", from=1-1, to=2-1]
	\arrow["\lrcorner"{anchor=center, pos=0.125}, draw=none, from=1-1, to=2-2]
\end{tikzcd}\]
As we are pulling back along an equivalence, $\Mod^s(\T,\C)\simeq\Mod(\T,\C)$. The difference between the two is the following subtlety: suppose that $\C$ has finite products and $\X,\Y$ are models of $\T$ in $\C$. Denote $X:=\X(1)$, $Y:=\Y(1)$. Then the product $\X\times \Y$ is a model as well. In $\FP(\T,\C)$, the product of two functors is pointwise and so for a morphism $\alpha\colon n\to 1$ in $\T$, we get an operation $X^n\times Y^n\to X\times Y$, whereas in $\Mod^s(\T,\C)$, we get an operation $(X\times Y)^n\to X\times Y$, which is slightly more favorable. None of these distinctions will really matter; we take $\C$ to be locally presentable, so we indeed get $\Fun^{\times}(\T,\C)\simeq \FP(\T,\C)\simeq \Mod^s(\T,\C)$. 
\end{remark}

The category $\Mod(\T,\C)$ has finite powers again and the forgetful functor $U\colon \Mod(\T,\C)$ preserves them. If we equip $\C$ with a choice of finite powers, we obtain one on $\Mod(\T,\C)$as well. For $\X\colon \T \to \C$, $X:=\X(1)$, we easily get the following description:

\begin{itemize}
    \item $\X^k(m)=(X^k)^m,$
    \item for any $\alpha\colon m\to n$, we define $\X^k(\alpha)$ is a composition of $\X(\alpha)^k$ with the canonical isomorphisms % https://q.uiver.app/#q=WzAsNCxbMCwwLCIoWF5rKV5tIl0sWzAsMSwiKFhebSleayJdLFsxLDEsIihYXm4pXmsiXSxbMSwwLCIoWF5rKV5uIl0sWzAsMywiXFxYXmsoXFxhbHBoYSkiXSxbMSwyLCJcXFgoXFxhbHBoYSleayIsMl0sWzAsMSwiXFxjb25nIiwyXSxbMiwzLCJcXGNvbmciLDJdXQ==
\[\begin{tikzcd}
	{(X^k)^m} & {(X^k)^n} \\
	{(X^m)^k} & {(X^n)^k}
	\arrow["{\X^k(\alpha)}", from=1-1, to=1-2]
	\arrow["\cong"', from=1-1, to=2-1]
	\arrow["{\X(\alpha)^k}"', from=2-1, to=2-2]
	\arrow["\cong"', from=2-2, to=1-2]
\end{tikzcd}\]
\end{itemize}

Let us also describe morphisms in $\Mod(\T,\C)$ explicitly. For two models $\X,\Y\colon \T\to \C$ $X=\X(1)$, $Y=\Y(1)$, a homomorphism of models $f\colon \X\to \Y$ is by definition a system of maps $f_n\colon X^n\to Y^n$ such that for each $\beta\colon k\to l$ in $\T$, the square below commutes.

% https://q.uiver.app/#q=WzAsNCxbMCwwLCJYXmsiXSxbMCwxLCJYXmwiXSxbMSwxLCJZXmwiXSxbMSwwLCJZXmsiXSxbMCwxLCJcXFgoXFxiZXRhKSIsMl0sWzMsMiwiXFxZKFxcYmV0YSkiXSxbMSwyLCJmXmwiLDJdLFswLDMsImZeayJdXQ==
\[\begin{tikzcd}
	{X^k} & {Y^k} \\
	{X^l} & {Y^l}
	\arrow["{f_k}", from=1-1, to=1-2]
	\arrow["{\X(\beta)}"', from=1-1, to=2-1]
	\arrow["{\Y(\beta)}", from=1-2, to=2-2]
	\arrow["{f_l}"', from=2-1, to=2-2]
\end{tikzcd}\]

In particular, taking $\beta\colon n\to 1$ to be any map in the image of $\theta\colon \F^{op}\to \C$, we get that $\X(\beta)$, $\Y(\beta)$ are projection maps and we obtain that $f_n=f_1^n$. This leads to the observation below.

\begin{remark} \label{rmk: U faithful}
    The forgetful functor $U\colon\Mod(\T,\C)\to \C$ is faithful. As we have seen above, a homomorphism of models $f\colon \X\to \Y$ is a map $f_1\colon \X(1)\to \Y(1)$ together with equations $f_1^n \X(\alpha)=\Y(\alpha)f_1^m$ for any map $\alpha\colon m\to n$ in $\T$. In other words, having two homomorphisms $f,g\colon \X\to \Y$, $\ev_1(f)=\ev_1(g)$ implies $f=g.$
 \end{remark}

\subsection{Commutativity for Lawvere theories}\cite{Lin66} We say that a Lawvere theory $\T$ is \textit{commutative} if for any two maps $\alpha\colon m\to n$, $\beta\colon k\to l$ in $\T$, the following commutes:

% https://q.uiver.app/#q=WzAsOCxbMSwwLCJrXFxjZG90IG0iXSxbMCwxLCJtXFxjZG90IGsiXSxbMiwwLCJrXFxjZG90IG4iXSxbMywxLCJuXFxjZG90IGsiXSxbMywyLCJuXFxjZG90IGwiXSxbMiwzLCJsXFxjZG90IG4iXSxbMSwzLCJsXFxjZG90IG0iXSxbMCwyLCJtXFxjZG90IGwiXSxbMCwxLCJcXGNvbmciLDJdLFswLDIsImtcXGNkb3QgXFxhbHBoYSJdLFs2LDUsImxcXGNkb3QgXFxhbHBoYSJdLFsyLDMsIlxcY29uZyJdLFsxLDcsIm1cXGNkb3RcXGJldGEiLDJdLFs3LDYsIlxcY29uZyIsMl0sWzMsNCwiblxcY2RvdCBcXGJldGEiXSxbNCw1LCJcXGNvbmciXV0=
\[\begin{tikzcd}
	& {k\cdot m} & {k\cdot n} \\
	{m\cdot k} &&& {n\cdot k} \\
	{m\cdot l} &&& {n\cdot l} \\
	& {l\cdot m} & {l\cdot n}
	\arrow["{k\cdot \alpha}", from=1-2, to=1-3]
	\arrow["\cong"', from=1-2, to=2-1]
	\arrow["\cong", from=1-3, to=2-4]
	\arrow["{m\cdot\beta}"', from=2-1, to=3-1]
	\arrow["{n\cdot \beta}", from=2-4, to=3-4]
	\arrow["\cong"', from=3-1, to=4-2]
	\arrow["\cong", from=3-4, to=4-3]
	\arrow["{l\cdot \alpha}", from=4-2, to=4-3]
\end{tikzcd}\]

Here, the unnamed isomorphisms are obtained from coproduct symmetries in $\F^{op}$ by applying $\theta\colon \F^{op}\to \T$. The notation $k\cdot \alpha$ simply refers to a $k$-th power of the map $\alpha$. It will be convenient to define $\beta\cdot m$ to be the following composition:

% https://q.uiver.app/#q=WzAsNCxbMCwwLCIga1xcY2RvdCBtIl0sWzAsMSwibVxcY2RvdCBrIl0sWzEsMSwibVxcY2RvdCBsIl0sWzEsMCwibFxcY2RvdCBtIl0sWzAsMSwiXFxjb25nIiwyXSxbMSwyLCJtXFxjZG90IFxcYmV0YSJdLFsyLDMsIlxcY29uZyIsMl0sWzAsMywiXFxiZXRhXFxjZG90IG0iXV0=
\[\begin{tikzcd}
	{ k\cdot m} & {l\cdot m} \\
	{m\cdot k} & {m\cdot l}
	\arrow["{\beta\cdot m}", from=1-1, to=1-2]
	\arrow["\cong"', from=1-1, to=2-1]
	\arrow["{m\cdot \beta}", from=2-1, to=2-2]
	\arrow["\cong"', from=2-2, to=1-2]
\end{tikzcd}\]

Then, we can rewrite the hexagon above as a square which we require to commute.

\begin{center}
\begin{equation} \label{diagram: comm}
%https://q.uiver.app/#q=WzAsNCxbMCwwLCJtXFxjZG90IGsiXSxbMSwwLCJuXFxjZG90IGsiXSxbMCwxLCJtXFxjZG90IGwiXSxbMSwxLCJuXFxjZG90IGwiXSxbMCwxLCJrXFxjZG90IFxcYWxwaGEiXSxbMiwzLCJsXFxjZG90IFxcYWxwaGEiXSxbMCwyLCJtXFxjZG90IFxcYmV0YSIsMl0sWzEsMywiblxcY2RvdCBcXGJldGEiXV0=
\begin{tikzcd} 
	{k\cdot m} & {k\cdot n} \\
	{l\cdot m} & {l\cdot n}
	\arrow["{k\cdot \alpha}", from=1-1, to=1-2]
	\arrow["{l\cdot \alpha}", from=2-1, to=2-2]
	\arrow["{\beta\cdot m}"', from=1-1, to=2-1]
	\arrow["{\beta\cdot n}", from=1-2, to=2-2]
\end{tikzcd}
\end{equation}
\end{center}

%We will sometimes refer to the composite map $k\cdot m\to l\cdot n$ as $\alpha\otimes \beta$. 
Note that since $\T(m,n)\cong \T(m,1)^n$, it is enough to consider maps $\alpha,\beta$ with $n=l=1$. 

One can interpret diagram (\ref{diagram: comm}) using a composition rule usually used in the context of operads:

\begin{definition}
    Let $\T$ be a Lawvere theory and $\{\beta_i\colon m_i\to 1\}_{1\le i\le n}$ some collection of maps in $\T$. For another map $\alpha\colon n\to 1$, define the \textit{operadic composition} $$\alpha(\beta_1,\dots,\beta_n):=\alpha \circ (\beta_1\times\cdots\times \beta_n).$$
\end{definition}

It makes sense to consider this kind of composition in the context of Lawvere theories as we usually want to consider only maps with target $1$. Then, the diagram (\ref{diagram: comm}) says that for any maps $\alpha\colon m\to 1$, $\beta\colon k\to 1$, we have $$\alpha(\beta,\dots,\beta)=\beta(\alpha,\dots,\alpha).$$ 

To check whether all the squares of the form (\ref{diagram: comm}) commute seems like a very tedious task. In fact, the number of squares we actually need to examine is not that large comparing to the number of maps we need to specify when defining our theory. To this end, we define a basis of a Lawvere theory.

\begin{definition} \label{remark: active inert} \begin{enumerate} 
    \item A map $\beta$ in $\T$ is called \textit{inert}\footnote{This is inspired by Lurie's terminology for operads, which is in fact closely related.} if it is in the image of $\theta\colon \F^{op}\to \T$.
    %\item A map $\alpha\colon n\to 1$ in $\T$ is called active if it is not inert.
    \item For a collection of maps $M$ in $\T$, we define $\langle M\rangle$ to be the smallest class of maps in $\T$ with target $1$ such that it contains isomorphisms, $M$, and is closed under operadic compositions.
    \item A collection of non-inert maps $B=\{\alpha_i\colon n_i\to 1\}_{i\in I}$ is called a \textit{basis} if any map $\gamma\colon m\to 1$ can be factored as $\gamma=\alpha\beta$ where $\beta$ is inert and $\alpha\in \langle B\rangle$.
    %\item A collection of maps $B=\{\alpha_i\colon n_i\to 1\}_{i\in I}$ is called a \textit{basis} if it is closed under products and compositions and every map $\gamma$ in $\T$ can be written as a composition $\alpha\beta$ where $\alpha\in B$ and $\beta$ is inert.
    %\item We define a \textit{collection of active maps in} $\T$ to be a collection of non-inert maps $\alpha\colon n\to 1$ in $\T$ such that any map $\gamma\colon m\to 1$ can be factored as $\gamma=\alpha\beta$ where $\beta$ is inert and $\alpha$ is active.
\end{enumerate}    
Given a basis $B$, we call maps in $B$ by \textit{active maps}. 
\end{definition}

Note any basis together with all the inert maps (in fact, it is enough to take those with codomain $1$) generate $\T$ under products and compositions. In other words, denoting $I$ the class of inert maps, then if $B$ is a basis then any map with target $1$ lies in $\langle B\cup I\rangle$. 

\begin{example}
    Let $\T_{mon}$ be the theory for monoids. Then the collection of iterated multiplication maps $m_n\colon n\to 1$, $n\ge 2$, together with the unit map $u\colon 0\to 1$, form a basis which we will denote $B_{can}$ (can for canonical). There are also much smaller bases, for example we can take just $B=\{m_2,u\}$ and we get $\langle B\rangle = B_{can}\cup\{\id_1\}$. As another example we can take a basis $B'=\{m_3,u\}$ which also satisfies $\langle B\rangle = B_{can}\cup\{\id_1\}$ as we can write $m_2$ as an operadic composition $m_2=m_3(u,\id_1,\id_1)$. Note that we could have taken the theory $\T_{cmon}$ for commutative monoids instead, obtaining the same bases.
\end{example}

It is thus enough to check the commutativity of squares (\ref{diagram: comm}) for $\alpha,$ $\beta$ elements of some basis $B$. That is because $(\F^{op},\id)$ as a Lawvere theory is itself commutative and thus if either $\alpha$ or $\beta$ in $\T$ is inert, the square (\ref{diagram: comm}) commutes automatically, and a condition for a square to commute is closed under products and compositions, and thus also operadic compositions.

%\begin{remark}      More concretely, let us call a map in $\T$ \textit{inert}\footnote{This is inspired by Lurie's terminology for operads, which is in fact closely related.} if it is in the image of $\theta\colon \F^{op}\to \T$. Then if either $\alpha$ or $\beta$ in the square (\ref{diagram: comm}) is inert, the square has to commute. We say that $\alpha$ in $\T$ is \textit{active} if whenever we write $\alpha$ as a composite $\alpha_1\circ\cdots\circ \alpha_n$ of a finite number of maps and some of them, say $\alpha_i$, is inert, then $\alpha_i=1$. Quite easily, we get that every map in $\T$ can be factored as into an inert map followed by an active one. Therefore, we only need to check the commutativity of (\ref{diagram: comm}) for $\alpha,\beta$ both active. Also, since $\T(m,n)\cong \T(m,1)^n$, we can assume that $\alpha$ and $\beta$ have $1$ as a codomain.\end{remark}

What does this imply for models of $\T$? Suppose we have a $\Set$-valued model $\X\colon \T\to \Set$, $X:=\X(1)$. Consider an element of $(X^m)^k$; we will write such element as an $m\times k$ matrix of elements $x_{ij}\in X$: $$(x_{ij})=\begin{pmatrix}
    x_{11} & x_{12} & \cdots & x_{1k} \\
    x_{21} & x_{22} & \cdots & x_{2k} \\
    \vdots & \vdots & \ddots & \vdots \\
    x_{m1} & x_{m2} & \cdots & x_{mk}
\end{pmatrix}$$

For an map $\alpha\colon m\to 1$ in $\T$, the map $k\cdot \alpha\colon mk\to k$ induces an operation $\X(\alpha)^k\colon(X^m)^k\to X^k$. We can depict this as an action of $\alpha$ on matrices (written as a left action) $$\alpha\cdot \begin{pmatrix}
    x_{11} & x_{12} & \cdots & x_{1k} \\
    x_{21} & x_{22} & \cdots & x_{2k} \\
    \vdots & \vdots & \ddots & \vdots \\
    x_{m1} & x_{m2} & \cdots & x_{mk}
\end{pmatrix}:=\begin{pmatrix}
    \alpha\begin{pmatrix} x_{11} \\ x_{21} \\ \vdots \\ x_{m1} \end{pmatrix} &
    \alpha\begin{pmatrix} x_{12} \\ x_{22} \\ \vdots \\ x_{m2} \end{pmatrix} &
    \cdots &
    \alpha\begin{pmatrix} x_{1k} \\ x_{2k} \\ \vdots \\ x_{mk} \end{pmatrix}
\end{pmatrix}
$$

Similarly, a map $\beta\colon k\to 1$ gives an operation $\X(\beta)^m\colon (X^k)^m\to X^m$. Composing with the natural isomorphism $(X^m)^k\cong (X^k)^m$, which we can visualize as a matrix transpose, we can depict the effect of $\X(\beta\cdot m)$ as an action (written as a right action) on $m\times k$ matrices $$\begin{pmatrix}
    x_{11} & x_{12} & \cdots & x_{1k} \\
    x_{21} & x_{22} & \cdots & x_{2k} \\
    \vdots & \vdots & \ddots & \vdots \\
    x_{m1} & x_{m2} & \cdots & x_{mk}
\end{pmatrix}\cdot \beta:=\begin{pmatrix}
    \beta(x_{11},\dots,x_{1k})\\ \beta(x_{21},\dots,x_{2k}) \\ \vdots \\ \beta(x_{m1},\dots,x_{mk})
\end{pmatrix} $$
Now, the commutativity of $\T$ implies that for every such $\alpha,\beta$ as above and any matrix $(x_{ij})\in (X^m)^k$, we have $(\alpha\cdot (x_{ij}))\cdot \beta=\alpha\cdot((x_{ij}\cdot\beta)$. Exploring the meaning of this equation, we arrive at the ``semantic'' criterion for commutativity of $\T$.

\begin{proposition} \label{prop: comm criteria}
    The following are equivalent:
    \begin{enumerate}
        \item (Syntactic criterion) $\T$ is commutative, i.e. the squares as in (\ref{diagram: comm}) commute.
        \item (Semantic criterion) For each model $\X\colon \T\to \C$, $X=\X(1)$ and each $\alpha\colon m\to n$ in $\T$, the map $\X(\alpha)\colon \X(1)^m\to \X(1)^n$ lifts to a homomorphism of models $\X^m\to \X^n$.
        %\item $\T$ is a commutative monoid in $\Law$.
    \end{enumerate}
\end{proposition}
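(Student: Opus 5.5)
For (1)$\Rightarrow$(2) I would fix a model $\X\colon \T\to\C$ and use the explicit description of the powers $\X^k$ given above: $\X^k(\beta)$ is $\X(\beta)^k$ conjugated by the canonical isomorphisms $(X^k)^m\cong (X^m)^k$. By Remark \ref{rmk: U faithful}, a homomorphism $\X^m\to\X^n$ is a map $f\colon X^m\to X^n$ satisfying, for every $\beta\colon k\to l$ in $\T$, the equation
\[
f^l\circ \X^m(\beta)=\X^n(\beta)\circ f^k
\]
as maps $(X^m)^k\to (X^n)^l$. I take $f=\X(\alpha)$, which determines $f_k=\X(\alpha)^k=\X(k\cdot\alpha)$. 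Unwinding the canonical isomorphisms, $\X^m(\beta)$ is exactly $\X(\beta\cdot m)$ and $\X^n(\beta)$ is $\X(\beta\cdot n)$. The homomorphism condition is then $\X$ applied to the square (\ref{diagram: comm}), so it holds whenever that square commutes in $\T$.

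For (2)$\Rightarrow$(1) I would use the generic model, namely the representable $\X=\T(0,-)$? That is covariant in the wrong variable. The correct choice is the Yoneda embedding: take $\C=\Fun(\T,\Set)^{op}$, or equivalently $\C=[\T^{op},\Set]$ precomposed appropriately. Concretely, let $\C=\widehat{\T^{op}}^{\,op}$ and $\X\colon \T\to \C$ be the Yoneda-type embedding $n\mapsto \T(n,-)$. This functor preserves finite products because $n=1^{n}$ is a power in $\T$, so $\T(n,-)\cong\T(1,-)^{\sqcup n}$, which is a product in the opposite category. It is also faithful. Applying (2) to this $\X$ and $\alpha\colon m\to n$, the lift $\X^m\to\X^n$ has $f_1=\X(\alpha)$. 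The homomorphism equation for each $\beta$ is then $\X$ applied to the two composites of (\ref{diagram: comm}). Faithfulness gives equality in $\T$, so the square commutes.

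The main obstacle is bookkeeping rather than anything conceptual. I must check carefully that $\X^m(\beta)$ really matches $\X(\beta\cdot m)$ with the symmetry isomorphisms in the right places, which is exactly why $\beta\cdot m$ was defined via the conjugation by the coproduct symmetries of $\F^{op}$. A second point is that the lift in (2) is unique and equals $\X(\alpha)$ levelwise, with $f_k=f_1^k$, as shown before Remark \ref{rmk: U faithful}. Finally, it suffices to treat $n=l=1$, using $\T(m,n)\cong\T(m,1)^n$.
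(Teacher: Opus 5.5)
Your argument for (1)$\Rightarrow$(2) is the same as the paper's: the homomorphism condition for $\X(\alpha)$ against $\beta$ is $\X$ applied to the square (\ref{diagram: comm}). The direction (2)$\Rightarrow$(1), however, breaks at the choice of test model. The functor $\T\to\Fun(\T,\Set)^{op}$, $n\mapsto\T(n,-)$, is faithful, but it does not preserve finite products. Products in $\Fun(\T,\Set)^{op}$ are coproducts in $\Fun(\T,\Set)$, computed pointwise, and the hom \emph{out of} a product is not a coproduct of homs. This co-Yoneda embedding sends colimits of $\T$ to colimits, not limits. Already for $\T=\F^{op}$, the set $\T(n,k)=\F(k,n)$ has $n^k$ elements, whereas $\coprod_{i=1}^n\T(1,k)$ has $n$. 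So $\T(n,-)\not\cong\T(1,-)^{\sqcup n}$. Your $\X$ is therefore not a model, and hypothesis (2) says nothing about it.

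The overall strategy is fine: apply (2) to a faithful model and reflect the resulting equation back into $\T$. You only need a faithful functor that really preserves products. The paper uses the identity model $\Id\colon\T\to\T$. There the lifted homomorphism $\Id^m\to\Id^n$ with underlying map $\alpha$ states the commutativity of (\ref{diagram: comm}) directly, with no faithfulness step at all. The Yoneda embedding in the other variable also works: $n\mapsto\T(-,n)\in\Fun(\T^{op},\Set)$ preserves products because $\T(-,n)\cong\T(-,1)^n$. If you want to stay with $\Set$-valued models, the covariant representable $\T(k\cdot m,-)\colon\T\to\Set$ is a model, namely the free one on $k\cdot m$ generators. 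Evaluating the two induced composites at $\id_{k\cdot m}$ returns the two composites of (\ref{diagram: comm}).
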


\begin{proof}
    $(1)\Leftarrow(2)$. Consider the model $\Id\colon \T\to\T$. As each $\alpha\colon m \to n$ in $\T$ lifts to a homomorphism of models $\Id^m\to \Id^n$, we obtain the desired equalities.
    
    $(1)\Rightarrow (2).$ We need to show that for each $\X\colon \T \to \C$ and each $\alpha\colon m\to n$, $\beta\colon k\to l$ in $\T$, the following commutes:
    % https://q.uiver.app/#q=WzAsNCxbMCwxLCJYKDEpXnttbH0iXSxbMSwxLCJYKDEpXntubH0iXSxbMCwwLCJYKDEpXntta30iXSxbMSwwLCJYKDEpXntua30iXSxbMCwxLCJYKFxcYWxwaGEpXmwiXSxbMiwwLCJYXm0oXFxiZXRhKSIsMl0sWzIsMywiWChcXGFscGhhKV5rIl0sWzMsMSwiWF5uKFxcYmV0YSkiXV0=
    \[\begin{tikzcd}
	{X^{mk}} & {X^{nk}} \\
	{X^{ml}} & {X^{nl}}
	\arrow["{\X(\alpha)^l}", from=2-1, to=2-2]
	\arrow["{\X^m(\beta)}"', from=1-1, to=2-1]
	\arrow["{\X(\alpha)^k}", from=1-1, to=1-2]
	\arrow["{\X^n(\beta)}", from=1-2, to=2-2]
    \end{tikzcd}\]
    But that is just $\X$ applied to the square (\ref{diagram: comm}). \end{proof}

    If $\T$ is commutative, we have a functor $\widetilde{(-)}\colon \Mod(\T,\C)\to \Mod(\T,\Mod(\T,\C))$, $\X\mapsto \widetilde{\X}$ where $\widetilde{\X}(n)=\X^n$ and $\widetilde{\X}(\alpha)$ is the lift described above.

\begin{corr} \label{rmk: 1-dim lifting functor}
    The functor $\widetilde{(-)}$ is fully faithful. 
\end{corr}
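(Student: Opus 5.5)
The plan is to reduce full faithfulness of $\widetilde{(-)}$ to the faithfulness of forgetful functors (Remark \ref{rmk: U faithful}), using that $\widetilde{(-)}$ is a section of the evaluation functor. Write $\ev_1\colon \Mod(\T,\Mod(\T,\C))\to \Mod(\T,\C)$ for evaluation at $1$. By construction $\widetilde{\X}(1)=\X^1=\X$, and on morphisms $\widetilde{f}$ has component $f^n$ at $n$, so $\ev_1\circ\widetilde{(-)}=\Id$. The first step is to check that $\widetilde{(-)}$ is really a functor on morphisms. For a homomorphism $f\colon \X\to\Y$, the powers $f^n\colon \X^n\to\Y^n$ are homomorphisms, since $\Mod(\T,\C)$ has finite powers. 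Naturality of the system $(f^n)_n$ with respect to the lifted operations $\widetilde{\X}(\alpha)$ can be checked after applying the faithful functor $U$ of Remark \ref{rmk: U faithful}. There it reduces to $f_1^n\circ\X(\alpha)=\Y(\alpha)\circ f_1^m$, which holds because $f$ is a homomorphism.

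Faithfulness is then immediate. Since $\ev_1\circ\widetilde{(-)}=\Id$, the equality $\widetilde{f}=\widetilde{g}$ gives $f=\ev_1(\widetilde f)=\ev_1(\widetilde g)=g$.

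For fullness, let $F\colon\widetilde{\X}\to\widetilde{\Y}$ be a homomorphism in $\Mod(\T,\Mod(\T,\C))$ and set $f:=F_1\colon \X\to\Y$, a morphism of $\Mod(\T,\C)$. I claim $F=\widetilde f$. Applying Remark \ref{rmk: U faithful} to the outer copy of $\T$, which works because $\Mod(\T,\C)$ has finite powers preserved appropriately, the component $F_n\colon \X^n\to\Y^n$ is forced to be $F_1^n=f^n$. This is the componentwise argument given just before that remark: naturality of $F$ against the inert projections $n\to1$, whose images under $\widetilde{\X}$ and $\widetilde{\Y}$ are the product projections of $\X^n$ and $\Y^n$, shows $F_n=F_1^n$. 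Hence $F=\widetilde{f}$, and $\widetilde{(-)}$ is full.

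The only potential obstacle is bookkeeping. The identification $\widetilde{\X}(\pi)=$ projection for inert $\pi$ holds either strictly or up to the canonical isomorphisms, depending on whether one uses $\FP$ or $\Mod^s$. In the non-strict variant, one shows the statement up to these canonical isomorphisms, so $\widetilde{(-)}$ is fully faithful up to equivalence, which is harmless.
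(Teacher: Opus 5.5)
Your proof is correct and is essentially the paper's argument. Fullness comes from the fact that a homomorphism $F\colon\widetilde{\X}\to\widetilde{\Y}$ is determined by its component at $1$ (Remark~\ref{rmk: U faithful} applied with $\Mod(\T,\C)$ in place of $\C$), so $F=\widetilde{F_1}$; faithfulness comes from $\widetilde{(-)}$ being a section of evaluation at $1$, and your check that $\widetilde{f}$ is a homomorphism is a detail the paper leaves implicit.
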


\begin{proof}
    For any $f\colon \widetilde{\X}\to \widetilde{\Y}$ with $f_1:=f(1)$, we have $f(1)=\widetilde{f_1}(1)$ and thus $f=\widetilde{f_1}$ by Remark \ref{rmk: U faithful}.
\end{proof}

Let us discuss some examples of commutative Lawvere theories.

\begin{example}
    Let $M$ be a monoid; then the monoid multiplication $m\colon M^2\to M$ is a homomorphism of monoids if and only if $M$ is commutative. Thus, the Lawvere theory $\T_{mon}$ for monoids is not commutative, but the theory $\T_{cmon}$ for commutative monoids is. Indeed, matching the syntactic criterion for $\T_{cmon}$ boils down to the commutativity of the following square (where $m$ stands for multiplication): % https://q.uiver.app/#q=WzAsNCxbMCwwLCI0Il0sWzEsMCwiMiJdLFsxLDEsIjEiXSxbMCwxLCIyIl0sWzAsMSwiMlxcY2RvdCBtIl0sWzEsMiwibSJdLFswLDMsIm1cXGNkb3QyIiwyXSxbMywyLCJtIl1d
\[\begin{tikzcd}
	4 & 2 \\
	2 & 1
	\arrow["{2\cdot m}", from=1-1, to=1-2]
	\arrow["{m\cdot2}"', from=1-1, to=2-1]
	\arrow["m", from=1-2, to=2-2]
	\arrow["m", from=2-1, to=2-2]
\end{tikzcd}\]
But $m\cdot 2$ can be written as $4\xrightarrow{1s1}4\xrightarrow{2\cdot m}2$, where $s\colon 2\to 2$ is the symmetry (coming from $\F^{op}$. But as $ms=m$ in $\T_{cmon}$ (this is the commutativity condition), we can rewrite the square above in a way such that the commutativity is clear: 
\begin{equation}
    \label{dia: comm mon}
    \begin{tikzcd} 
	4 & 3 & 2 \\
	4 & 2 & 1
	\arrow["1m1", from=1-1, to=1-2]
	\arrow["1s1"', from=1-1, to=2-1]
	\arrow["m1", from=1-2, to=1-3]
	\arrow["m", from=1-3, to=2-3]
	\arrow["1m1"{description}, from=2-1, to=1-2]
	\arrow["{2\cdot m}"', from=2-1, to=2-2]
	\arrow["m"', from=2-2, to=2-3]
\end{tikzcd}
\end{equation}
% https://q.uiver.app/#q=WzAsNixbMCwwLCI0Il0sWzEsMCwiMyJdLFsxLDEsIjIiXSxbMCwxLCI0Il0sWzIsMSwiMSJdLFsyLDAsIjIiXSxbMCwxLCIxbTEiXSxbMCwzLCIxczEiLDJdLFszLDIsIjJcXGNkb3QgbSIsMl0sWzMsMSwiMW0xIiwxXSxbMiw0LCJtIiwyXSxbNSw0LCJtIl0sWzEsNSwibTEiXV0=
Indeed, the triangle on the left commutes as $ms=m$ and the pentagon on the right commutes because $m$ is associative.

\end{example}

\begin{example} \label{example: 1dim monoids}
    Denote by $\T_M$ the Lawvere theory whose models in $\Set$ are sets equipped with an action of a monoid $M$. Hence, $\T_M$ is generated by unary maps $\rho_m\colon 1\to 1$ for each $m\in M$ such that $\rho_{mn}=\rho_m\circ \rho_n$ and $\rho_1=1$. Commutativity of $\T_M$ amounts to commutativity of the squares % https://q.uiver.app/#q=WzAsNCxbMCwwLCIxIl0sWzAsMSwiMSJdLFsxLDEsIjEiXSxbMSwwLCIxIl0sWzAsMSwiXFxyaG9fbSIsMl0sWzEsMiwiXFxyaG9fbiJdLFszLDIsIlxccmhvX20iXSxbMCwzLCJcXHJob19uIl1d
\[\begin{tikzcd}
	1 & 1 \\
	1 & 1
	\arrow["{\rho_n}", from=1-1, to=1-2]
	\arrow["{\rho_m}"', from=1-1, to=2-1]
	\arrow["{\rho_m}", from=1-2, to=2-2]
	\arrow["{\rho_n}", from=2-1, to=2-2]
\end{tikzcd}\]
In other words, $\T_M$ is commutative if and only if $M$ is. We get the analogous result for a family of theories $\T_R$ where $R$ is a ring and $\Mod(\T_R,\Set)$ is the category of $R$-modules.
\end{example}

\subsection{Categorical criterion for commutativity} \label{subsec: categorical criterion}
We have exhibited two equivalent criteria for a Lawvere theory to be commutative: one syntactic and the other semantic. There is a third criterion — which we shall call the \emph{categorical} criterion — and it will serve as the starting point for our approach in the $2$-dimensional setting. %Although the statement below is well-known to experts, it is not easy to find it in a literature and several subtleties are hard to reconstruct. For that reason we provide a complete proof here, since these considerations will be needed later.

\begin{proposition} \label{prop: ord formal}
    $\T$ is commutative if and only if $\T$ has a structure of a symmetric monoidal category such that the tensor product $\odot\colon \T\times \T \to \T$ preserves powers in each variable.
\end{proposition}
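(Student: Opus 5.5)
For the forward direction we build the structure explicitly. On objects set $m\odot k:=m\cdot k$, and use the symmetric monoidal structure on $\F^{op}$ inherited from multiplication of finite sets. On morphisms define, for $\alpha\colon m\to n$ and $\beta\colon k\to l$,
$$\alpha\odot\beta:=(l\cdot\alpha)\circ(\beta\cdot m)=(\beta\cdot n)\circ(k\cdot\alpha)\colon k\cdot m\to l\cdot n.$$
The two expressions agree precisely by the commutativity square (\ref{diagram: comm}).

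We then check that $\odot$ is a bifunctor. Fixing one variable, the maps $\alpha\mapsto l\cdot\alpha$ and $\beta\mapsto\beta\cdot m$ are functorial, being powers conjugated by symmetry isomorphisms. Functoriality in both variables together then reduces to the interchange law $(\alpha'\odot\beta')(\alpha\odot\beta)=(\alpha'\alpha)\odot(\beta'\beta)$, which follows by sliding $l\cdot\alpha$ past $\beta'\cdot n$ using (\ref{diagram: comm}) once more.

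Next come the coherence data. We take the associator, unitors and symmetry to be the images under $\theta$ of those of $(\F^{op},\cdot,1)$. The pentagon, hexagon and triangle axioms then hold because they already hold in $\F^{op}$. What remains is naturality of these isomorphisms with respect to arbitrary maps of $\T$. For the symmetry this is essentially the definition of $\beta\cdot m$ as a conjugate of $m\cdot\beta$. For the associator and unitors it follows because the powers $k\cdot(-)$ are defined using the chosen products, which are natural in $\T$.

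Finally, $-\odot k=k\cdot(-)$ is the $k$-th power functor, so it preserves powers. By the symmetry, $k\odot-$ preserves powers as well.

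For the converse, suppose $\odot$ is a symmetric monoidal structure preserving powers in each variable. The first step is to pin down $\odot$ on objects and on inert maps. Since $1\odot-$ preserves powers and sends $1$ to some object, we get $m\odot k\cong (m\odot 1)\cdot k$ and similarly $m\odot 1\cong m\cdot(1\odot 1)$.

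The main obstacle is that we do not know a priori that the unit is $1$ or that $1\odot 1\cong 1$; the statement does not say so. To handle this I would argue as follows.
\begin{enumerate}
\item Preservation of powers in each variable gives $I\odot m\cong (I\odot 1)^m$, so $m\cong (I\odot 1)\cdot m$ for all $m$ (because $I\odot m\cong m$).
\item Since objects are natural numbers, this forces $I\odot 1\cong 1$.
\item Hence $1\odot 1\cong 1$ by an analogous count, and so $m\odot k\cong m\cdot k$.
\end{enumerate}

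Under these identifications, $\alpha\odot k$ is identified with the power $k\cdot\alpha$, and $m\odot\beta$ with $\beta\cdot m$. Both identifications come from preservation of powers, using that $\odot$ is determined on power cones. Then bifunctoriality of $\odot$ gives
$$(\alpha\odot l)(m\odot\beta)=(n\odot\beta)(\alpha\odot k),$$
which is exactly the square (\ref{diagram: comm}). Care is needed so that the identifications are compatible with the chosen isomorphisms, and this is where I expect the bookkeeping to be heaviest.
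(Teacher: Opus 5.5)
\textbf{Forward direction and overall route.} Your forward direction is the paper's construction: $m\odot k=m\cdot k$, with $\odot$ on morphisms built from $n\cdot\alpha$ and $\beta\cdot k$, well defined by (\ref{diagram: comm}), and coherence pulled back from $\F^{op}$ along $\theta$. For the converse you take a different and more elementary route. The paper precomposes models with $\odot$ to get a section of the forgetful functor and then invokes Proposition \ref{prop: comm criteria}; you instead read (\ref{diagram: comm}) directly off the interchange law. That route works. The bookkeeping you worry about is also light once the unit is $1$. The maps $p_i\odot q_j$, composed with the unitor $1\odot 1\cong 1$, exhibit $m\odot k$ as an $(m\times k)$-indexed power of $1$, since it is a power of a power. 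Naturality of the unitors then shows that $\alpha\odot k$ acts column-wise as $k\cdot\alpha$ and $m\odot\beta$ acts row-wise as $\beta\cdot m$. Associativity and symmetry are never used.

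\textbf{The gap: identifying the unit.} Steps (1)--(2) only re-derive $I\odot 1\cong 1$, which is just the unitor. The real content is step (3). Writing $I=v$, power preservation gives $1\cong 1\odot I\cong (1\odot 1)^{v}$, i.e.\ $(1\odot 1)\cdot v\cong 1$ in $\T$. You then cancel ``since objects are natural numbers,'' and that inference is invalid. The functor $\theta$ is bijective on objects, but distinct numbers can be isomorphic objects of a Lawvere theory. For example, in Higman's theory of algebras with $X\cong X^4$ one has $4\cong 1$ but $2\not\cong 1$, so $1\odot 1=v=2$ satisfies your count. The paper's Lemma \ref{lemma: unique on F} runs exactly this count, but inside the skeletal category $\F^{op}$. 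Your $\odot$ lives on $\T$ and need not restrict to $\F^{op}$.

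\textbf{A repair.} The same isomorphism shows that $1$ is $\odot$-invertible, since $1\odot 1^{v\cdot v}\cong((1\odot 1)^{v})^{v}\cong 1^{v}=I$. Now transport the structure along the autoequivalence $X\mapsto X\odot 1^{v\cdot v}$, i.e.\ set $X\otimes Y:=X\odot 1^{v\cdot v}\odot Y$. This is again symmetric monoidal, has unit $1$, and still preserves powers in each variable. Applying your interchange argument to $\otimes$ then finishes the proof.
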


The key part is the following lemma for $\T=\F^{op}$ which turns out to be quite subtle. We use a term \textit{magmal category} (\cite{Dav07}) for categories with a monoidal structure which is not necessarily associative: i.e., we have a tensor product $\odot\colon \C^2\to\C$, a unit $I$, and unitors $\lambda_X\colon X\to I\odot X$, $\rho_X\colon X\to X\odot I$ such that $\rho_I=\lambda_I$ and the usual unitality axioms hold, but no associators. 

\begin{lemma} \label{lemma: unique on F}
    There exists a unique magmal structure $\odot\colon \F^{op} \times\F^{op}\to \F^{op},$ $u\colon1\to \F^{op}$ which preserves products in each variable, and it is the cocartesian monoidal structure. In particular, it is symmetric, and the induced monoid structure on $\N$ is the classical multiplication, $m\odot n=m\cdot n$, $u(*)=1$.
\end{lemma}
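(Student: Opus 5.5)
The plan is to exploit that $\F^{op}$ is the free category with finite powers on one object: a functor out of $\F^{op}$ preserving finite powers is determined, up to unique isomorphism, by its value at $1$, and natural transformations between such functors are determined by their components at $1$ (the equivalence $\ev_1\colon \FP(\F^{op},\C)\to\C$). Since $\odot$ preserves powers in each variable, for fixed $m$ the functor $m\odot(-)\colon\F^{op}\to\F^{op}$ preserves powers, so $m\odot n\cong (m\odot 1)^{n}$, i.e.\ $m\odot n = n\cdot(m\odot 1)$ because objects are natural numbers and powers in $\F^{op}$ are multiplication. The unitor $\rho_m\colon m\to m\odot 1$ is an isomorphism, and $\F^{op}$ is skeletal, so $m\odot 1=m$ and hence $m\odot n = m\cdot n$. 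Symmetrically, the unit $u(*)$ must satisfy $u(*)\odot 1\cong 1$; as $u(*)\odot 1 = u(*)$ by the previous computation applied with $m=u(*)$, we get $u(*)=1$.

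Next I would pin down $\odot$ on morphisms. Restricted to the first variable with the second fixed at $1$, the functor $(-)\odot 1$ is naturally isomorphic to the identity via $\rho$. A power-preserving endofunctor of $\F^{op}$ isomorphic to $\Id$ at the object $1$ is isomorphic to $\Id$, and transporting along $\rho$ gives that $(-)\odot 1$ agrees with $\Id$ up to the automorphism $\rho_1$ of $1$, which is the identity since $\F^{op}(1,1)$ is trivial. Then, using power preservation in the second variable, $\odot(-,n)$ is the $n$-fold power of $\odot(-,1)$, so on morphisms $f\odot g$ is forced to be the map induced by the universal property of powers, namely the cocartesian product $f\sqcup\dots$ in $\F$ (block-wise product of functions). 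Doing this in both orders shows the bifunctor is determined by its restriction to objects and to the two "axes," and these coincide with the cocartesian tensor $m\times n$ in $\F$ (product of finite sets, which is the coproduct in $\F^{op}$ iterated, i.e.\ $n$-fold power of $m$). Hence $\odot$ is the (skeletal) cocartesian/product-of-sets structure.

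Existence is clear: the cartesian product of finite sets on the skeleton $\F$ (choosing a lexicographic bijection $[m]\times[n]\cong[mn]$) preserves coproducts in each variable, so $\F^{op}$ gets a power-preserving-in-each-variable symmetric monoidal structure with unit $1$. The unitors are then forced to be identities by the above, and the associator and symmetry exist canonically, giving symmetry and the induced monoid structure $(\N,\cdot,1)$ on objects.

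The main obstacle is the subtlety in uniqueness on morphisms: "preserving powers" only determines a functor up to isomorphism, whereas I claim equality of $\odot$. I would handle this by checking that a power-preserving functor $\F^{op}\to\F^{op}$ sending $1\mapsto 1$ is forced on the nose on projections (inert maps) because the chosen projections $n\to 1$ in the skeleton are compatible only with the identity-type choice, together with the rigidity that $\F^{op}(1,1)$ and the automorphisms fixing all projections are trivial; this must be checked carefully rather than merely up to isomorphism.
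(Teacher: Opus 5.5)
\textbf{Minor issue: the unit.} Your object-level computation is essentially the paper's, with one slip. You use the unitor as a map $\rho_m\colon m\to m\odot 1$ before you know the unit is $1$, and then derive $u(*)=1$ from $m\odot 1=m$; that is circular. The repair is the paper's argument. Put $v:=u(*)$; power preservation in the second variable gives $1\cong 1\odot v=v\cdot(1\odot 1)$. By skeletality $v=1=1\odot 1$, and then $m\odot n=m\cdot n$.

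\textbf{The genuine gap: morphisms and unitors.} You claim that $(-)\odot 1$ equals $\Id$, that $f\odot g$ is forced on the nose, and that the unitors are forced to be identities. All three are false, so the resolution you sketch for your ``main obstacle'' cannot succeed. In particular, a power-preserving endofunctor of $\F^{op}$ fixing $1$ is \emph{not} determined on the nose on projections.

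Here is a counterexample. Take the cocartesian structure $\odot$, normalized so that $(-)\odot 1=\Id$ and $\rho=\id$. Choose any automorphisms $\sigma_{m,n}$ of $mn$ in $\F^{op}$ with $\sigma_{1,1}=\id$. For $f\colon m\to n$ and $g\colon k\to l$, set $f\odot' g:=\sigma_{n,l}\circ(f\odot g)\circ\sigma_{m,k}^{-1}$, with unitors $\sigma_{1,n}\lambda_n$ and $\sigma_{m,1}\rho_m$. Since $\sigma\colon\odot\Rightarrow\odot'$ is a natural isomorphism, $\odot'$ is again a magmal structure with unit $1$ that preserves powers in each variable. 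Yet if $\sigma_{2,1}$ is the swap, then $p_1\odot'\id_1=p_2$ and $\rho'_2\neq\id$.

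Demanding strict preservation of chosen power cones does not rescue this either. Imposing it in both variables forces $p_i\circ r_j=p_j\circ r_i$ for the chosen projections $r_j$ of $2^2$, i.e.\ a symmetric bijection $2\times 2\cong 4$, which is impossible. So the uniqueness in the lemma only makes sense up to (unique) isomorphism, and the proof must be organized that way.

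\textbf{What is actually needed.} Two things.

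(i) $\odot$ is naturally isomorphic, in fact uniquely, to the coproduct bifunctor of $\F^{op}$. This does follow from the part of your argument that works: bifunctors preserving powers in each variable, and transformations between them, are determined by their value at $(1,1)$ via $\ev_1$ applied twice.

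(ii) Under this isomorphism the unitors are the canonical cocartesian ones. The paper obtains (ii)---that a magmal structure whose tensor is the coproduct is necessarily cocartesian---by citing \cite[Thm.~1.1]{Ark25}. Your proposal has no valid argument for this step.

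In this special case you could also prove (ii) directly. The transported unitors differ from the canonical coprojection isomorphisms (recall $1$ is initial in $\F^{op}$) by a natural automorphism $\alpha$ of $\Id_{\F^{op}}$. Any such $\alpha$ is trivial: $\F^{op}(1,1)$ is a singleton, so $p\circ\alpha_m=\alpha_1\circ p=p$ for all projections $p\colon m\to 1$, and these are jointly monic.

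A small correction as well: on morphisms the tensor is the product $f\times g$ of functions in $\F$, i.e.\ the coproduct in $\F^{op}$, not $f\sqcup g$.
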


\begin{proof}
    By definition, $\odot$ has to preserve powers in each variable, i.e. $(m\cdot 1)\odot n=m\cdot (1\odot n)$, $m\odot (n\cdot 1)=n\cdot (m\odot1)$. %Therefore, if $a\otimes n=b\otimes n$ then $a=b$. 
    Denote $v:=u(*)$; this means that $v\odot n=n=n\odot v$. Then $$1=v\odot1=v\cdot(1\odot1),$$ hence $v=1$. This yields $m\odot n=(m\cdot 1)\odot n=m\cdot (1\odot n)=m\cdot n.$ Therefore, the functor $\odot$ is just a coproduct in $\F^{op}$. Then, by \cite[Thm. 1.1]{Ark25}, the magmal structure is necessarily the cocartesian monoidal structure. %On morphisms, we then easily get $\alpha\odot \id_n=n\cdot \alpha=\id_n\odot \alpha$, again from the product preservation.
\end{proof}

\textit{Proof of Proposition \ref{prop: ord formal}.} If $\T$ is commutative, define $\odot\colon \T\times\T\to \T$ by sending $(m,n)$ to $m\cdot n$, $(\id_n,\alpha)$ to $n\cdot \alpha$, $(\beta, \id_k)$ to $\beta\cdot k$. This is well-defined thanks to the syntactic criterion for commutativity. The associators etc. are inherited from the cocartesian monoidal structure on $\F^{op}$. On the other hand, since $\odot$ preserves products in each variable, precomposition with $\odot$ gives us a functor $\Mod(\T,\C)\to \Mod(\T,\Mod(\T,\C))$ which is easily seen to be a section of the forgetful functor, and thus $\T$ is commutative via the semantic criterion. \qed

One useful implication of this viewpoint on commutativity is the following observation. Let $\T$ be a Lawvere theory $\T$ and denote by $F$ and $U$ the free and forgetful functor between $\Mod(\T,\Set)$ and $\Set$. Then the set $UF1$ has a natural monoid structure induced by the composition in $\T$ under the isomorphism $UF1\cong \Set(1,UF1)\cong \T(1,1)$.

\begin{corr}
     If $\T$ is commutative then $UF1$ is a commutative monoid.
\end{corr}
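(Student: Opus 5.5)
The monoid structure on $UF1\cong\T(1,1)$ is composition of unary operations. So the claim reduces to showing that for any two maps $\alpha,\beta\colon 1\to 1$ in $\T$ we have $\alpha\circ\beta=\beta\circ\alpha$.

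The plan is to specialise the syntactic criterion (\ref{diagram: comm}) to $m=n=k=l=1$. In that case $k\cdot\alpha=\alpha$ and $l\cdot\alpha=\alpha$, since the first power of a map is the map itself. Likewise $\beta\cdot m=\beta\cdot 1$, and this is $\beta$ conjugated by the symmetry isomorphisms $1\cdot 1\cong 1\cdot 1$ coming from $\F^{op}$. These symmetries are identities, because the only endomorphism of $1$ in $\F^{op}$ is the identity. Hence the square (\ref{diagram: comm}) becomes exactly $\beta\circ\alpha=\alpha\circ\beta$.

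An alternative route goes through Proposition \ref{prop: ord formal}. With $\odot(m,n)=m\cdot n$ and $1$ the unit, one applies the usual Eckmann--Hilton argument on $\T(1,1)=\mathrm{End}(1)$, the endomorphism monoid of the unit object. Interchange gives $\alpha\circ\beta=(\alpha\odot 1)\circ(1\odot\beta)=\alpha\odot\beta=(1\odot\beta)\circ(\alpha\odot 1)=\beta\circ\alpha$.

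The only point requiring care is checking that the monoid structure on $UF1$ induced via $\Set(1,UF1)\cong\T(1,1)$ agrees with composition in $\T$, or with its opposite. Either convention gives the same conclusion, since commutativity is symmetric. This identification is by Yoneda, since $F1$ is the representable model $\T(1,-)$. I expect no real obstacle.
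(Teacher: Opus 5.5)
Your proposal is correct. Your alternative route is exactly the paper's proof. The paper uses Proposition \ref{prop: ord formal} to make $\T$ monoidal with unit $1$, and then cites the general fact that the endomorphism monoid of a monoidal unit is commutative.

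Your primary route is genuinely different and more economical. You specialise the defining square (\ref{diagram: comm}) to $m=n=k=l=1$. This uses two facts: $1\cdot\alpha=\alpha$, and $\beta\cdot 1=\beta$ because the symmetry $1\cdot 1\cong 1\cdot 1$ is the unique endomorphism of $1$ in $\F^{op}$. This requires none of the monoidal structure: no well-definedness check for $\odot$ and no associators inherited from $\F^{op}$. It also shows that the corollary only uses commutativity of unary operations with each other, which is weaker than full commutativity of $\T$.

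The paper's route buys conceptual clarity: it presents the result as an instance of the Eckmann--Hilton principle for the unit object. That principle is the theme the paper builds on in its later discussion of units and unital operations.

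The two proofs are at heart the same computation. The interchange identity $(\alpha\odot 1)\circ(1\odot\beta)=(1\odot\beta)\circ(\alpha\odot 1)$ in your alternative route is, once you unwind how $\odot$ is defined on morphisms, precisely your specialised square.
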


\begin{proof}
    If $\T$ is commutative, then by Proposition \ref{prop: ord formal}, it is a monoidal category with $1$ being a unit. But for any unit in any monoidal category, its endomorphism monoid is commutative. Therefore, $\T(1,1)$ is commutative and under the isomorphism above, so is $UF1$.
\end{proof}

\subsection{Units and unital operations} \label{subsec: 1-cat EH} We say that a \textit{unit} in a Lawvere theory $\T$ is a map $u\colon 0\to 1$, i.e. a ``nullary operation symbol''. 

\begin{lemma} \label{lemma: unitality}
    Let $\T$ be a commutative Lawvere theory. Then there is at most one unit in $\T$.
\end{lemma}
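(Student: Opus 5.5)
Take two units $u,v\colon 0\to 1$ in $\T$ and apply the syntactic commutativity criterion, i.e. the commutativity of the square (\ref{diagram: comm}), with $\alpha=u$ and $\beta=v$. Here $m=k=0$ and $n=l=1$, so the square reads
\[\begin{tikzcd}
	{0\cdot 0} & {0\cdot 1} \\
	{1\cdot 0} & {1\cdot 1}
	\arrow["{0\cdot u}", from=1-1, to=1-2]
	\arrow["{1\cdot u}"', from=2-1, to=2-2]
	\arrow["{v\cdot 0}"', from=1-1, to=2-1]
	\arrow["{v\cdot 1}", from=1-2, to=2-2]
\end{tikzcd}\]
The plan is to identify each edge and read off $u=v$.

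First I identify the objects. We have $0\cdot 0=0$, $0\cdot 1=0$, $1\cdot 0=0$ and $1\cdot 1=1$, since the multiplication $m\cdot n$ is the $m$-fold coproduct of $n$ in $\F^{op}$ (i.e. the $m$-th power in $\T$), and $0$ is terminal in $\T$.

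Next I identify the arrows.
\begin{itemize}
\item $0\cdot u$ is the zeroth power of $u$, a map $0\to 0$. As $0$ is terminal, it is $\id_0$.
\item $1\cdot u=u$.
\item $v\cdot 0$ is, by definition, $0\cdot v$ conjugated by the symmetry isomorphisms of $\F^{op}$. It is again a map $0\to 0$, hence $\id_0$.
\item $v\cdot 1$ is $1\cdot v=v$ conjugated by the symmetries $1\cdot 0\cong 0\cdot 1$ and $1\cdot 1\cong 1\cdot 1$. The first is an endomorphism of the terminal object, and the second is the image under $\theta$ of the unique bijection of a one-element set. Both are identities, so $v\cdot 1=v$.
\end{itemize}

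Commutativity of the square then gives $v\circ\id_0=u\circ\id_0$, i.e. $u=v$. Hence there is at most one unit.

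The only step needing care is the bookkeeping of the symmetry isomorphisms in $\beta\cdot m$. Every object involved is either $0$ (terminal, so all its endomorphisms are trivial) or $1$ (where the symmetry comes from the trivial permutation). So I expect no real obstacle; the argument is essentially the Eckmann–Hilton-style observation that $u(\,)=v(\,)$ for nullary operations.
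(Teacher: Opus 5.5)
Your proof is correct and is the same argument as the paper's. You instantiate the commutativity square with $\alpha=u$ and $\beta=v$, observe that $0\cdot u$ and $v\cdot 0$ are endomorphisms of the terminal object $0$ and hence identities, and read off $u=v$; your check that the symmetry isomorphisms in $v\cdot 1$ are trivial makes explicit a step the paper leaves implicit.
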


\begin{proof}
    Suppose there are two of them, say $u,v\colon 0\to 1$. Then the commutativity square (\ref{diagram: comm}) for $u,v$ is of the following form: % https://q.uiver.app/#q=WzAsNCxbMCwxLCIwIl0sWzEsMSwiMSJdLFsxLDAsIjAiXSxbMCwwLCIwIl0sWzAsMSwidSJdLFsyLDEsInYiXSxbMywyLCIwXFxjZG90IHUiXSxbMywwLCJ2XFxjZG90MCIsMl1d
\[\begin{tikzcd}
	0 & 0 \\
	0 & 1
	\arrow["{0\cdot u}", from=1-1, to=1-2]
	\arrow["{v\cdot0}"', from=1-1, to=2-1]
	\arrow["v", from=1-2, to=2-2]
	\arrow["u", from=2-1, to=2-2]
\end{tikzcd}\]
 Since $0\cdot u=\id_0=v\cdot 0$, we get that $u=v$.
\end{proof}

For a unit $u$ and positive integers $k\le n$, $n>1$, denote by $u_{k,n}\colon 1\to n$ the map of the form $u+\cdots +\id_1+\cdots +u$ where $\id_1$ is at $k$-th position.

\begin{definition} \label{def: unitality}
    Let $\T$ be a Lawvere theory, $\alpha\colon n\to 1$ a map in $\T$, $n> 1$. We say that $\alpha$ is \textit{unital }with a unit $u\colon 0\to 1$ if for any $k\le n$, we get $\alpha\circ u_{k,n}=1$.
\end{definition}

%In other words, $\alpha$ is unital if and only if it exists a unit $u$ such that for any model $\X\colon \T\to \C$ any any
For example, for a model $\X\colon \T\to \Set$, $X:=\X(1)$, $\X(u)$ just picks an object in $u_X\in X$. Unitality of $\alpha$ then implies that for any $x\in X$, we get that $\X(\alpha)(u_X,\cdots,x,\cdots,u_X)=x.$

\begin{example}
    Consider the Lawvere theory for commutative rings. Then the binary maps $a,m\colon 2\to 1$ modeling addition and multiplication are unital, each with a different unit. Therefore, by Lemma \ref{lemma: unitality}, this theory cannot be commutative.
\end{example}

\begin{lemma} \label{lemma: unital maps}
    Let $\T$ be a commutative Lawvere theory and suppose that $\alpha,\beta\colon n\to 1$, $n>1$, are unital maps with the same unit $u$. Then $\alpha=\beta$.
\end{lemma}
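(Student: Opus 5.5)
The plan is a classical Eckmann--Hilton argument, run through the syntactic criterion: the commutativity square (\ref{diagram: comm}) for the pair $\alpha,\beta$ is the interchange law $\alpha(\beta,\dots,\beta)=\beta(\alpha,\dots,\alpha)$ of the two $n$-ary operations, as composites $n\cdot n\to 1$. I will precompose this interchange law with a well-chosen map $n\to n\cdot n$ built from the unit $u$ and the identity. On one side it collapses to $\alpha$, on the other to $\beta$, which gives $\alpha=\beta$.

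Concretely, picture an element of $n\cdot n$ as an $n\times n$ matrix, as in the discussion preceding Proposition \ref{prop: comm criteria}. Define $\delta\colon n\to n\cdot n$ as the map placing the $i$-th input at position $(i,i)$ and the unit $u$ everywhere else. Formally, $\delta$ is the product over $i$ of the maps $u_{i,n}\colon 1\to n$, up to the canonical identification $n\cdot n\cong n\cdot n$ coming from $\F^{op}$. That is, $\delta=u_{1,n}+\cdots+u_{n,n}$, read in the row-wise convention.

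\textbf{Row side.} Apply $k\cdot\alpha$ with $k=n$ first, i.e.\ $\alpha$ on each column or row according to the convention. The $i$-th row of $\delta$ is exactly $u_{i,n}$. By unitality of $\alpha$ (Definition \ref{def: unitality}), $\alpha\circ u_{i,n}=\id_1$, so $(n\cdot\alpha)\circ\delta=\id_n$ as a map $n\to n$. Composing with $\beta$ then gives $\beta\circ(n\cdot\alpha)\circ\delta=\beta$.

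\textbf{Column side.} The column version $(\beta\cdot n)\circ\delta$ is treated symmetrically. The $j$-th column of the transposed matrix is again $u_{j,n}$, because the diagonal is preserved by the symmetry isomorphism of $\F^{op}$. Unitality of $\beta$ therefore gives $(\beta\cdot n)\circ\delta=\id_n$, and composing with $\alpha$ yields $\alpha$.

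Commutativity of (\ref{diagram: comm}) says the two composites $n\cdot n\to 1$ agree. Precomposing both with $\delta$ therefore gives $\beta=\alpha$.

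The main obstacle is purely bookkeeping. One must check rigorously that the restriction of $\delta$ to each row, and also to each column after the coproduct symmetry, is precisely the map $u_{k,n}$. This amounts to verifying that $(n\cdot\alpha)\circ\delta=\alpha\circ u_{1,n}+\cdots+\alpha\circ u_{n,n}$, and likewise for $\beta\cdot n$ after conjugating by the symmetry.

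I would handle this with the observation that $\T(m,n)\cong\T(m,1)^n$. It then suffices to compare the $i$-th components, i.e.\ post-composition with the inert projections. This reduces everything to equalities in $\F^{op}$ together with two facts:
\begin{itemize}
\item inert maps commute past $u$ in the expected way, because $u+\cdots+u$ is simply the product of copies of $u$;
\item $0\cdot\alpha=\id_0$, so that the nullary inputs behave correctly.
\end{itemize}
The argument uses only the case $k=l=n$ of the commutativity square and needs no induction.
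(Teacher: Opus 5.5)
Your proposal is correct and is essentially the paper's proof. The paper precomposes the commutativity square $\beta\circ(n\cdot\alpha)=\alpha\circ(\beta\cdot n)$ with $\sum_i u_{i,n}\colon n\to n\cdot n$, and uses unitality of $\alpha$ and $\beta$ (same unit $u$) to get $(n\cdot\alpha)\circ\sum_i u_{i,n}=\id_n=(\beta\cdot n)\circ\sum_i u_{i,n}$. Your componentwise check via $\T(m,n)\cong\T(m,1)^n$, together with the observation that the diagonal map is fixed by the transpose symmetry, just spells out what the paper treats as immediate.
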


\begin{proof}
    Consider the map $\sum_{i=1}^n u_{i,n}\colon n\to  n\cdot n$. Then, by definition of unitality, the triangles in the diagram below commutes.

    % https://q.uiver.app/#q=WzAsNSxbMSwxLCJuXFxjZG90IG4iXSxbMiwxLCJuIl0sWzEsMiwibiJdLFsyLDIsIjEiXSxbMCwwLCJuIl0sWzAsMSwiblxcY2RvdCBcXGFscGhhIl0sWzIsMywiXFxhbHBoYSJdLFswLDIsIlxcYmV0YSBcXGNkb3QgbiIsMl0sWzEsMywiIFxcYmV0YSJdLFs0LDAsIlxcc3VtX2kgdV97aSxufSJdLFs0LDEsIlxcaWRfbiIsMCx7ImN1cnZlIjotM31dLFs0LDIsIlxcaWRfbiIsMix7ImN1cnZlIjozfV1d
\[\begin{tikzcd}
	n \\
	& {n\cdot n} & n \\
	& n & 1
	\arrow["{\sum_i u_{i,n}}", from=1-1, to=2-2]
	\arrow["{\id_n}", curve={height=-18pt}, from=1-1, to=2-3]
	\arrow["{\id_n}"', curve={height=18pt}, from=1-1, to=3-2]
	\arrow["{n\cdot \alpha}", from=2-2, to=2-3]
	\arrow["{\beta \cdot n}"', from=2-2, to=3-2]
	\arrow["{ \beta}", from=2-3, to=3-3]
	\arrow["\alpha", from=3-2, to=3-3]
\end{tikzcd}\]

The square also commutes as our theory is commutative. Hence, $\alpha=\beta.$
\end{proof}

Using matrices, we can explain an intuition behind this. Let $\X$ be a $\Set$-valued model, $X=\X(1)$. Seeing elements of $X^{n\cdot n}$ as $n\times n$ matrices, we get that $\sum_i u_{i,n}\colon X^n\to X^{n\cdot n}$ send an $n$-tuple $(x_1,\dots,x_n)$ to a matrix $(x_{ij})$ with $x_{ij}=x_i$ if $i=j$ and $x_{ij}=u_X$ otherwise. Then $\alpha \cdot (x_{ij})$ gives us an $n$-tuple of elements of the form $\alpha(u_X,\dots,x_i,\dots,u_X)=x_i$, and similary $(x_{ij})\cdot \beta$. This exhibits the commutativity of the triangles above.

\begin{remark}
    Note that the core of the proof above is a slight generalization of a proof of the classical Eckmann-Hilton argument, which says that is we have two group structures on a set $X$ which commute and share the same unit, then they necessarily have to be the same (and commutative). Based on that, we call the statement below (and its 2-dimensional generalization later) an Eckmann-Hilton argument.
\end{remark}

\subsection{Kronecker product and the Eckmann-Hilton argument} \label{subsec: 1-dim Kronecker Eckmann-Hilton} We have seen in Lemma \ref{lemma: unique on F} that $\F^{op}$ possesses a unique monoidal structure compatible with products. Therefore, we can view any Lawvere theory $\T$ as a module over $\F^{op}$ (viewing $\F^{op}$ as a symmetric pseudomonoid in $\Cat$) via the action $a\colon\F^{op}\times \T\to \T$ given by sending $(m,n)$ to $m\cdot n$ and $(\id_n,\alpha)$ to $n\cdot \alpha$. Then one can consider a tensor product of Lawvere theories $\T$, $\Ss$ as $\F^{op}$-modules: $$\T\otimes_{\F^{op}}\Ss:=\text{coeq}(\T\times\F^{op}\times \Ss\rightrightarrows\T\times \Ss).$$ It is well-known (e.g. \cite[3.11]{Bor94}) that $\T\otimes_{\F^{op}}\Ss$ is again a Lawvere theory, and if $B$ is a basis for $\T$ and $B'$ one for $\Ss$,  $\T\otimes_{\F^{op}}\Ss$ admits $B\coprod B'$ as a basis. This tensor product of Lawvere theories is often called the \textit{Kronecker product.}

From the discussion in \ref{subsec: categorical criterion}, it follows that if $\T$ is commutative, then the structure map $\theta\colon \F^{op}\to \T$ is symmetric monoidal. Therefore, we can view $\T$ as a symmetric pseudomonoid in $\F^{op}$-modules. In other words, writing the tensor product on $\T$ as $\mu\colon\T\times \T\to \T$, then $\mu$ factors through $\T\otimes_{\F^{op}}\T$ as a quotient map followed by some $\mu'$, and if we write $u'\colon \T\cong\F^{op}\otimes_{\F^{op}}\T\xrightarrow{\theta\otimes 1}\T\otimes_{\F^{op}} \T$, then $\mu', u'$ gives a pseudomonoid structure on $\T$ with respect to the Kronecker product. Note that $\mu'$ is identity on objects.
 
\begin{proposition} \label{prop: 1-dim EH}
    (Eckmann-Hilton argument) Let $\T$ be a commutative Lawvere theory. Suppose that there is a basis $B$ of $\T$ such that every map $\alpha\colon n\to 1$, $\alpha\in B$, $n>1$, is unital and there are no unary maps $1\to 1$ in $B$. Then the map $\mu'\colon \T\otimes_{\F^{op}}\T\to \T$ is an isomorphism.
\end{proposition}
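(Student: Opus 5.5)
We must show $\mu'\colon \T\otimes_{\F^{op}}\T\to\T$ is an isomorphism. Since $\mu'$ is identity on objects (both are Lawvere theories with objects $\N$), it suffices to show it is bijective on hom-sets, and since homs into $n$ are $n$-fold powers of homs into $1$ on both sides, it suffices to treat maps with target $1$. Write $\iota_1,\iota_2\colon \T\to\T\otimes_{\F^{op}}\T$ for the two inclusions (coming from $\T\cong \T\otimes_{\F^{op}}\F^{op}$ and the unit $\theta$). Then $\mu'\iota_1=\id=\mu'\iota_2$, because $\mu$ restricted to $\T\times\{1\}$ or $\{1\}\times\T$ is the identity up to the unitors. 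In particular, $\mu'$ is full: every $\gamma\colon m\to 1$ equals $\mu'(\iota_1\gamma)$.

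For faithfulness, the plan is to show that $\iota_1=\iota_2$. Then $\T\otimes_{\F^{op}}\T$ is generated by the image of a single copy of $\T$, and $\iota_1$ is a two-sided inverse of $\mu'$. Indeed, $\iota_1$ is surjective on morphisms, since the Kronecker product admits $B\amalg B'$ as a basis (\cite[3.11]{Bor94}), and $\iota_1\mu'\iota_1=\iota_1$ then gives $\iota_1\mu'=\id$. Because of the basis statement, it is enough to check $\iota_1(\alpha)=\iota_2(\alpha)$ for $\alpha\in B$. Both $\iota_1,\iota_2$ preserve products and inert maps, and the set of maps on which they agree is closed under operadic composition. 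By hypothesis, every $\alpha\in B$ is either a unit $0\to 1$ or unital of arity $n>1$.

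The key observation is that $\T\otimes_{\F^{op}}\T$ is itself commutative. The two copies commute with each other by construction of the coequalizer: the relation identifies $(\beta\cdot m)\ \circ$ and $(k\cdot\alpha)\ \circ$ appropriately, which is exactly diagram (\ref{diagram: comm}) for $\iota_1\alpha$ against $\iota_2\beta$. Each copy is internally commutative because $\T$ is. Hence the basis $B\amalg B'$ consists of pairwise commuting maps, and by the closure remark after the definition of a basis, the Kronecker product is commutative.

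The argument then proceeds in three steps. First, for units, $\iota_1u$ and $\iota_2u$ are two units in a commutative theory, so they coincide by Lemma~\ref{lemma: unitality}. Second, for $\alpha\in B$ of arity $n>1$ with unit $u$, both $\iota_1\alpha$ and $\iota_2\alpha$ are unital with the common unit $\iota_1u=\iota_2u$. Unitality is preserved because $\iota_j$ preserves inerts and composition, so $\iota_j(u_{k,n})=(\iota_ju)_{k,n}$. Lemma~\ref{lemma: unital maps} then gives $\iota_1\alpha=\iota_2\alpha$. Third, the hypothesis that $B$ has no unary maps is what makes this case analysis exhaustive, since unary basis maps are not covered by either lemma.

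The main obstacle is making rigorous that the coequalizer description really yields commutativity between the two copies, and that $B\amalg B$ is a basis there. I would verify this from the explicit description of the Kronecker product in \cite{Bor94}: it is the Lawvere theory generated by both theories subject to the commutation relations between them. I would also check that the isomorphism $\T\otimes_{\F^{op}}\F^{op}\cong\T$ makes $\mu'\iota_j=\id$ strictly rather than merely up to isomorphism, which holds because all structure is identity on objects.
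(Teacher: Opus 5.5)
Your argument is correct and is essentially the paper's. Showing $\iota_1=\iota_2$ on the basis amounts to the paper's check that $u'\mu'$ fixes each element of $B\amalg B$ (the paper's $u'$ is your $\iota_2$), and both rest on commutativity of $\T\otimes_{\F^{op}}\T$ together with Lemmas~\ref{lemma: unitality} and~\ref{lemma: unital maps}, applied to units and unital operations; the only difference is that you derive that commutativity directly from pairwise commutation of the generators $B\amalg B$, whereas the paper cites the fact that a tensor product of symmetric pseudomonoids is again one.
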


\begin{proof}
    We know that $\mu'u'= 1$ so it is enough to show that $u'\mu'\cong 1$. As a tensor product of two symmetric pseudomonoids is again one,\footnote{The extension of the $1$-dimensional result is \cite[Appendix A]{Sch14}.} $\T\otimes_{\F^{op}}\T$ is again commutative. The basis $B\coprod B$ satisfies the same conditions as $B$ does. Let us take any $\alpha\colon n\to1$ in $B\coprod B$. Then either $\alpha$ is a unit or a unital operation. As any morphism of Lawvere theories preserves units and unital operations, we get that $u'\mu'(\alpha)\colon n\to 1$ has to be $\alpha$ itself by the combined effort of Lemmas \ref{lemma: unitality} and \ref{lemma: unital maps}. 
\end{proof}

\begin{remark}
    The conditions on $\T$ in the proposition are sufficient but not necessary; consider the Lawvere $1$-theory $\T_{ab}$ is for abelian groups. Then although there is a unary map $1\to1$, the group inverse, the Eckmann-Hilton argument still works. This is connected to the fact that being an abelian group is a property of a commutative monoid, not a structure.
\end{remark}

\begin{example}
    To see that we cannot drop the condition about unary maps in Definition \ref{def: Eckmann-Hilton}, we use the theory $\T_{M}$ from Example \ref{example: 1dim monoids}. Then one can check that $\T_{M}\otimes_{\F^{op}} \T_M= \T_{M\times M}$.%Consider a model of $\T_{inv}$ in $\Cat$, i.e. a category $\C$ equipped an involution functor $\iota\colon \C\to \C$. Suppose that $\iota$ is not an identity. Then we can lift $(\C,\iota)$ to a model $\T_{inv}\to \Mod_p(\T_{inv},\Cat)$ in two different ways: either we equip $(\C,\iota)$ with $\iota$, which is easily shown to be an endomorphism $\Mod_p(\T_{inv},\Cat)$, or with the identity. Therefore, the Eckmann-Hilton argument does not hold for $\T_{inv}$. 
\end{example}

\section{2-dimensional algebraic structures}

We now proceed to define 2-dimensional analogue of Lawvere theories and their models. The semantic theory now becomes richer as we have various flavours of morphisms of models: strict, pseudo, lax, colax. We use the framework of \cite{ABK24} to consider Lawvere 2-theories as \textit{marked $2$-sketches}\footnote{Also enhanced $2$-sketches or $\mF$-sketches}. The goal of this section is to define the category of algebras internal to a $\T$-model $\X$, denoted by $\IntAlg(\X)$, both in $(2,2)$ and $(\infty,2)$-categorical contexts. To achieve that, we need to recall the theory of marked (or enhanced) $2$-categories and the Gray tensor products thereof.

We want to develop the theory simultaneously in the setting of $(2,2)$-categories and $(\infty,2)$, but due to different conventions in these two research fields, we need to do it separately to some extent. We follow the notational conventions explained at the beginning: in particular, whenever we can treat both cases uniformly, we talk about $2$-categories, otherwise we talk about $(2,2)$- or $(\infty,2)$-categories. 

\subsection{Gray tensor products} \label{sub: recollections}

Recall that if $F,G\colon \C\to \D$ are functors between $(2,2)$-categories, we have several flavours of natural transformations we can consider: strict (or $2$-natural), pseudo, lax, and colax.\footnote{Also called oplax; we try to fix the notation in such a way that we do not need to address this too much.} For example, a lax natural transformation $\phi\colon F\to G$ consists of a collection of $1$-cells $\phi_X\colon F(X)\to G(X)$ indexed by objects $X$ of $\C$, together with a collection of $2$-cells $\phi_f$ as below indexed by $1$-cells $f$ in $\C$, satisfying some coherence equations.

\[\begin{tikzcd}
	{F(X)} & {G(X)} \\
	{F(X')} & {G(X')}
	\arrow[""{name=0, anchor=center, inner sep=0}, "{\phi_X}", from=1-1, to=1-2]
	\arrow["Ff"', from=1-1, to=2-1]
	\arrow["Gf", from=1-2, to=2-2]
	\arrow[""{name=1, anchor=center, inner sep=0}, "{\phi_{X'}}"', from=2-1, to=2-2]
	\arrow["{\phi_f}", between={0.4}{0.6}, Rightarrow, from=0, to=1]
\end{tikzcd}\]

For a colax natural transformation, $2$-cells $\phi_f$ point in the opposite direction. In case of pseudonaturality, all of the $\phi_f$ are invertible, and finally, for a strict natural transformation, we want the square above to commute, i.e. all the $\phi_f$ to be identities. Sometimes, \emph{funny} or unnatural natural transformations can be considered: these are simply collections $\{\phi_X\}_{X\in ob \C}$ with no other data or compatibilites whatsoever. If $F, G\colon \C\to \D$ are functors between $(\infty,2)$-categories, the same above still applies with the exception that it does not make sense to distinguish between strict and pseudo; see the notation below

\begin{notation}(Weaknesses.) \label{notation: weaknesses}
    In order to handle all of the flavours above uniformly, we introduce a set of \textit{weaknesses}\footnote{This is inspired by \cite[Notation 4.6]{ABK24}.} $W_{ord}=\{s,p,l,c\}$, standing for strict, pseudo, lax and colax. Hence, we can talk about $w$-natural transformations for any $w\in W_{ord}$. We also fix a set of weaknesses for $(\infty,2)$-categories $W_{\infty}=\{s,l,c\}$ We also occasionally speak about the set $\widetilde W_{ord}:=W_{ord}\cup\{f\}$, where $f$ stands for funny. For any $W\in\{W_{ord}, W_{\infty}, \widetilde{W}_{ord}\}$ we define an involution $(-)^*\colon W\to W$ such that $l^*:=c$, $c:=l^*$ and all the other elements stay fixed. We are going to denote by $\Fun_w(\C,\D)$ the $2$-category (either $(2,2)$- or $(\infty,2)$-) of functors, $w$-natural transformations, and modifications. To avoid the ambiguity between the terms colax and oplax, we will usually use the notation $\Fun_{l^*}(\C,\D)$, using the involution above, instead of $\Fun_c(\X,\D)$. We also introduce a partial order on $\widetilde{W}_{ord}$ such that $s\le p$, $p\le l, c$, and $f$ is maximal, restricting to a partial order on $W_{ord}$ and $W_\infty$.
\end{notation}

For each weakness $w\in \widetilde W_{ord}$, there is a notion of a \textit{$w$-Gray tensor product} $\otimes_w$, introduced in \cite{Gra06}, making $(\Cat_2, \otimes_w, 1, \Fun_w(-,-), \Fun_{w^*}(-,-))$ into a biclosed monoidal category, symmetric for $w\le p$. In other words, we have natural isomorphisms \begin{align*}
    \Fun_w(\A\otimes_w \B, \C)&\cong \Fun_w(\A,\Fun_w(\B,\C)),\\
    \Fun_{w^*}(\A\otimes_w \B, \C)&\cong \Fun_{w^*}(\B,\Fun_{w^*}(\A,\C)),\\    \Fun_{w}(\A,\Fun_{w^*}(\B,\C))&\cong \Fun_{w^*}(\B,\Fun_{w}(\A,\C)).
\end{align*} In particular, $\otimes_s$ is just a cartesian product $\times$. In general, on objects we have $ob(\A\otimes_w\B)=ob\A\times ob\B$. The 1-cells are in all cases generated by those of the form $(f,1),$ $(1,g)$ composing in the obvious way. We have the 2-cells $(\alpha,1)$, $(1,\beta)$ in either case, composing the obvious way. The difference is in the following square:
% https://q.uiver.app/#q=WzAsNCxbMCwwLCIoYSxiKSJdLFswLDEsIihhJyxiKSJdLFsxLDAsIihhLGInKSJdLFsxLDEsIihhJyxiJykiXSxbMCwxLCIoZiwxKSIsMl0sWzAsMiwiKDEsZykiXSxbMSwzLCIoMSxnKSJdLFsyLDMsIihmLDEpIl1d
\[\begin{tikzcd}
	{(a,b)} & {(a,b')} \\
	{(a',b)} & {(a',b')}
	\arrow["{(f,1)}"', from=1-1, to=2-1]
	\arrow["{(1,g)}", from=1-1, to=1-2]
	\arrow["{(1,g)}", from=2-1, to=2-2]
	\arrow["{(f,1)}", from=1-2, to=2-2]
\end{tikzcd}\]

For $\otimes_s$, i.e. cartesian product, this square commutes. For lax Gray tensor product, it is filled by the $2$-cell $\Sigma_{fg}$:
% https://q.uiver.app/#q=WzAsNCxbMCwwLCIoYSxiKSJdLFswLDEsIihhJyxiKSJdLFsxLDAsIihhLGInKSJdLFsxLDEsIihhJyxiJykiXSxbMCwxLCIoZiwxKSIsMl0sWzAsMiwiKDEsZykiXSxbMSwzLCIoMSxnKSJdLFsyLDMsIihmLDEpIl0sWzUsNiwiXFxzaWdtYV97Zmd9IiwwLHsic2hvcnRlbiI6eyJzb3VyY2UiOjQwLCJ0YXJnZXQiOjQwfX1dXQ==
\begin{equation}\label{dia: gray square} \begin{tikzcd}
	{(a,b)} & {(a,b')} \\
	{(a',b)} & {(a',b')}
	\arrow[""{name=0, anchor=center, inner sep=0}, "{(1,g)}", from=1-1, to=1-2]
	\arrow["{(f,1)}"', from=1-1, to=2-1]
	\arrow["{(f,1)}", from=1-2, to=2-2]
	\arrow[""{name=1, anchor=center, inner sep=0}, "{(1,g)}", from=2-1, to=2-2]
	\arrow["{\Sigma_{fg}}", between={0.4}{0.6}, Rightarrow, from=0, to=1]
\end{tikzcd}\end{equation}
For colax Gray tensor product, there is a $2$-cell in the opposite way, and for $\otimes_p$, the cell is always invertible. For funny tensor product, there is no cell at all filling the square above. The $2$-cells $\Sigma_{fg}$ ought to satisfy a number of coherence equations, described e.g. in \cite{Gur13} (we see them explicitly in Lemma \ref{lemma: syntactic}). We are going to sometimes call the $2$-cells by \textit{Gray cells} or Gray $w$-cells. Note that we have canonical maps
% https://q.uiver.app/#q=WzAsNSxbMCwxLCJcXEFcXG90aW1lc19mXFxCIl0sWzEsMCwiXFxBXFxvdGltZXNfbFxcQiJdLFsxLDIsIlxcQVxcb3RpbWVzX2NcXEIiXSxbMiwxLCJcXEFcXG90aW1lc19wXFxCIl0sWzMsMSwiXFxBXFxvdGltZXNfc1xcQiJdLFswLDFdLFswLDJdLFsxLDNdLFsyLDNdLFszLDRdXQ==
\[\begin{tikzcd}
	& {\A\otimes_l\B} \\
	{\A\otimes_f\B} && {\A\otimes_p\B} & {\A\otimes_s\B} \\
	& {\A\otimes_c\B}
	\arrow[from=2-1, to=1-2]
	\arrow[from=2-1, to=3-2]
	\arrow[from=1-2, to=2-3]
	\arrow[from=3-2, to=2-3]
	\arrow[from=2-3, to=2-4]
\end{tikzcd}\]
In other words, for $w',w\in \widetilde W_{ord}$ with $w'\le w$, the identity functor $(\Cat_2, \otimes_w,1)\to (\Cat_2, \otimes_{w'},1)$ is lax monoidal.

For $w\in W_{\infty}$, we have a Gray tensor product making $(\Cat_{(\infty,2)},\otimes_w,1,\Fun_w(-,-),\Fun_{w^*}(-,-))$ into a biclosed monoidal $\infty$-category, a homotopy coherent enhancement of $\otimes_p,$ $\otimes_l,$ $\otimes_{l^*}$ described above. Again, we get $\otimes_s=\times.$ The formal construction of $\otimes_l$ is fairly subtle; see \cite{CM23}, \cite{LR25} and references therein. %Both $\otimes_{l}$, $\otimes_{l^{*}}$ is are compatible with the one on $(2,2)$-categories introduced above.

\subsection{Marked 2-categories.} To define the $2$-category $\Mod_{\lax}(\T,\C)$, we need to be able to talk about lax natural transformations which become strict on a specified collection of $1$-cells in $\T$, called a marking. To do so, we use the theory of \textit{marked $(\infty,2)$-categories} in the form that is developed in \cite{AGH24} and \cite{AHM26}, together with its $(2,2)$-categorical variant called \textit{enhanced (2,2)-categories} or $\mF$-categories in the form developed in \cite{ABK24}. We choose to use the term marked (instead of enhanced) throughout the paper.

Recall that a marked $2$-category is a pair $(\C,I)$ where $\C$ is a $2$-category and $I$ a collection of $1$-cells called \textit{marking} containing equivalences and closed under compositions. We have a $1$-category $\Cat^{\mathfrak{m}}_2$ with objects marked $2$-categories and morphisms $(\C,I)\to (\D,J) $ being functors $F\colon \C\to \D$ preserving marking, i.e. $F(I)\subset J$. We have two natural functors $\Cat_2^{\mathfrak{m}}\to \Cat_2$ -- a forgetful functor which forgets the marking and a functor sending $(\C,I)$ to the locally full 2-subcategory $\C_I\subset \C$ spanned by the marked $1$-morphisms.

Having two marked $(2,2)$-categories $(\C,I)$, $(\D,J)$, we can consider their \textit{marked Gray tensor product} $\C\otimes_{w',w}\D$ for any $w',w\in {W}_{ord}$, $w'\le w$. This is obtained from the unmarked Gray tensor product $\C\otimes_w\D$ by requiring that the Gray $w$-cells $\Sigma_{fg}$ are $w'$-cells whenever $f$ or $g$ are marked. We have a canonical marking $I\otimes J$ on $\C\otimes_{w',w}\D$ generated by $1$-cells $(f,1)$, $(1,g)$ where $f\in I$ and $g\in J$. We can also consider the functor $(2,2)$-categories $\Fun_{w',w}(\C,\D)$ defined as follows: \begin{itemize}
       \item objects are marking-preserving functors $\C\to \D$,
       \item $1$-cells are $w$-natural transformations of functors $\phi\colon F\Rightarrow G$ such that for a $1$-cell $f\colon X\to X'$ in $\C$, the component $\phi_f$ is a $w'$-cell whenever $f$ is marked,
       % https://q.uiver.app/#q=WzAsNCxbMCwwLCJGKFgpIl0sWzAsMSwiRihYJykiXSxbMSwwLCJHKFgpIl0sWzEsMSwiRyhYJykiXSxbMCwyLCJcXHBoaV9YIl0sWzAsMSwiRmYiLDJdLFsyLDMsIkdmIl0sWzEsMywiXFxwaGlfe1gnfSIsMl0sWzQsNywiXFxwaGlfZiIsMCx7InNob3J0ZW4iOnsic291cmNlIjo0MCwidGFyZ2V0Ijo0MH19XV0=
\[\begin{tikzcd}
	{F(X)} & {G(X)} \\
	{F(X')} & {G(X')}
	\arrow[""{name=0, anchor=center, inner sep=0}, "{\phi_X}", from=1-1, to=1-2]
	\arrow["Ff"', from=1-1, to=2-1]
	\arrow["Gf", from=1-2, to=2-2]
	\arrow[""{name=1, anchor=center, inner sep=0}, "{\phi_{X'}}"', from=2-1, to=2-2]
	\arrow["{\phi_f}", between={0.4}{0.6}, Rightarrow, from=0, to=1]
\end{tikzcd}\]
\item $2$-cells are modifications.
   \end{itemize}
This $(2,2)$-category also admits a natural marking $[I,J]$ where we mark $w'$-natural transformations. Similarly as in the unmarked case, we have the following result.

\begin{proposition} \label{prop: Gray universal} (\cite{ABK24}, Thm. 6.12) 
    For any marked $(2,2)$-categories $(\A,I),$ $(\B,J),$ $(\C,K)$, we have the family of isomorphisms of marked $(2,2)$-categories \begin{align}
        \Fun_{w',w}(\A\otimes_{w',w}\B,\C)&\cong \Fun_{w',w}(\A,\Fun_{w',w}(\B,\C)),\\
        \Fun_{w'^*,w^*}(\A\otimes_{w',w}\B,\C)&\cong \Fun_{w'^*,w^*}(\B,\Fun_{w'^*, w^*}(\A,\C)),\\
        \Fun_{w',w}(\A,\Fun_{w'^*, w^*}(\B,\C))&\cong \Fun_{w'^*,w^*}(\B,\Fun_{w',w}(\A,\C)),
    \end{align} natural in each variable.
\end{proposition}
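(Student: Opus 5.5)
The plan is to reduce the marked statement to the unmarked Gray closedness recalled in \ref{sub: recollections}, and then check that the markings and weakness conditions match on both sides. Concretely, I would present $\A\otimes_{w',w}\B$ as a quotient of the unmarked $\A\otimes_w\B$. It has the same objects and generating $1$-cells $(f,1)$, $(1,g)$. Its Gray cells $\Sigma_{fg}$ are forced to be $w'$-cells (identities if $w'=s$, invertible if $w'=p$) whenever $f\in I$ or $g\in J$. Because this is a presentation by generators and relations, $2$-functors out of $\A\otimes_{w',w}\B$ are exactly $2$-functors $F$ out of $\A\otimes_w\B$ whose values $F(\Sigma_{fg})$ are $w'$-cells whenever $f$ or $g$ is marked.

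For the first isomorphism, I would start from the unmarked isomorphism $\Fun_w(\A\otimes_w\B,\C)\cong\Fun_w(\A,\Fun_w(\B,\C))$ and check three correspondences.

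On objects, a functor $F\colon\A\otimes_w\B\to\C$ corresponds to $a\mapsto F(a,-)$, with $F(f,1)$ becoming the components of a $w$-natural transformation $F(a,-)\Rightarrow F(a',-)$ whose $2$-cell at $g$ is $F(\Sigma_{fg})$. Then:
\begin{enumerate}
\item $F$ preserves the marking $I\otimes J$ if and only if each $F(a,-)$ is marking-preserving and each $F(f,1)$ with $f\in I$ is componentwise marked.
\item $F(\Sigma_{fg})$ is a $w'$-cell for $g\in J$ if and only if each transformation $F(f,1)$ is $w'$ on marked $1$-cells, i.e.\ lies in $\Fun_{w',w}(\B,\C)$.
\item $F(\Sigma_{fg})$ is a $w'$-cell for $f\in I$ if and only if $F(f,1)$ is a $w'$-natural transformation, i.e.\ is marked in $[J,K]$.
\end{enumerate}
Together these say exactly that $a\mapsto F(a,-)$ is a marking-preserving functor $\A\to\Fun_{w',w}(\B,\C)$.

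The same bookkeeping must be done one level up, for $1$-cells. A $1$-cell on either side is a $w$-transformation $\phi$ that is $w'$ on marked $1$-cells. Under the unmarked correspondence, its components at $(f,1)$ and $(1,g)$ split into the two layers of the curried transformation. The conditions "$w'$ at marked $(f,1)$" and "$w'$ at marked $(1,g)$" translate respectively into "the outer transformation is $w'$ at marked $f$" and "each component transformation lies in $\Fun_{w',w}(\B,\C)$". Modifications carry no extra conditions, and the markings $[I\otimes J,K]$ and $[I,[J,K]]$ (the $w'$-transformations) also correspond.

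The second isomorphism follows by applying the same argument with the roles of $\A$ and $\B$ swapped and the weaknesses dualized. This uses the unmarked $\Fun_{w^*}(\A\otimes_w\B,\C)\cong\Fun_{w^*}(\B,\Fun_{w^*}(\A,\C))$, together with the observation that a $w'$-cell read in the opposite direction is a $w'^*$-cell. The third isomorphism is obtained by composing the first two via Yoneda: both sides represent $\Fun(-,\C)$-data on $\A\otimes_{w',w}\B$. Naturality in each variable is inherited from the unmarked isomorphisms, since the marked hom-categories are locally full sub-$2$-categories cut out by conditions preserved by pre- and postcomposition.

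I expect the main obstacle to be the first step: justifying that the marked Gray tensor product really has the stated presentation, so that the unmarked coherence axioms for the $\Sigma_{fg}$ suffice. One also has to handle the subtle case $w'=s$ with $w=l$, where setting $\Sigma_{fg}=\id$ must be compatible with the Gray coherence equations. I would first try to define $\A\otimes_{w',w}\B$ directly as the coinserter/coequifier-style quotient of $\A\otimes_w\B$ inverting or trivializing the relevant cells. Then the universal property, restricted to functors, is immediate, and the enriched ($2$-categorical) version follows by applying it to $\A\otimes_{w',w}\B\otimes \mathbf{2}$-type cylinders.
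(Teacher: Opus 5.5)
Your argument for the first two isomorphisms is correct, and it follows the route the paper has in mind. The paper gives no proof of its own; it cites \cite{ABK24}. It later defines $\A\otimes_{w',w}\B$ as the colimit gluing $\A\otimes_{w'}\B_J$ and $\A_I\otimes_{w'}\B$ onto $\A\otimes_w\B$, and calls the universal property an easy exercise. That colimit is exactly your presentation, and your object, $1$-cell and modification bookkeeping against unmarked closedness is that exercise.

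The step you flag as the main obstacle is therefore not one. The presentation holds by definition, and imposing $\Sigma_{fg}=\id$ (or adjoining inverses) needs no compatibility check with the Gray axioms for the universal property. Compatibility would only matter if you wanted an explicit normal form for the quotient.

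One point should be made explicit. Your condition (1) requires $F(f,1_b)\in K$ for $f\in I$. This has a counterpart on the curried side only because the marked $1$-cells of $\Fun_{w',w}(\B,\C)$ are the $w'$-transformations \emph{with marked components}. That is the $\mF$-enriched reading of $[J,K]$, slightly stronger than the paper's phrasing.

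The genuine gap is the third isomorphism when $w\in\{l,c\}$. For $w\in\{s,p\}$ you have $w^*=w$ and $w'^*=w'$, so (3) is literally (2) composed with the inverse of (1), and your sentence is fine there. In general, though, the two sides of (3) do not represent data on a single tensor product. Only their sets of objects agree: by (1) and (2) with $\A$ and $\B$ exchanged, both are marking-preserving functors $\B\otimes_{w',w}\A\to\C$. Their $1$-cells are mixed transformations, of type $w$ in the $\A$-direction and $w^*$ in the $\B$-direction. Such transformations are not the $1$-cells of any hom out of a Gray tensor product, so no composite of instances of (1) and (2) identifies the two sides. A correct Yoneda argument would test against an arbitrary marked $\X$ and would have to identify maps $\B\otimes(\X\otimes\A)\to\C$ with maps $(\B\otimes\X)\otimes\A\to\C$. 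That requires associativity of $\otimes_{w',w}$, which you neither prove nor cite. Your fallback via cylinders $\A\otimes\B\otimes\mathbf{2}$ also uses associativity silently.

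The simpler repair is to treat (3) exactly as you treated (1). Start from the unmarked isomorphism $\Fun_w(\A,\Fun_{w^*}(\B,\C))\cong\Fun_{w^*}(\B,\Fun_w(\A,\C))$ recalled in \ref{sub: recollections}. Under it, both sides consist of two-variable data $F(a,b)$, $F(f,b)$, $F(a,g)$ with interchange cells. Then check that the marked conditions coincide on both sides:
\begin{itemize}
\item the interchange cell at $(f,g)$ is a $w'$-cell in one reading (a $w'^*$-cell in the other) whenever $f$ or $g$ is marked;
\item $F(f,b)\in K$ and $F(a,g)\in K$ for marked $f$ and $g$;
\item on $1$-cells, the $w'$-condition at marked $f$, the $w'^*$-condition at marked $g$, and the two markings correspond just as in your treatment of (1).
\end{itemize}
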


Having two marked $(\infty,2)$-categories $(\C,I)$, $(\D,J)$, the constructions and results above are still available, see \cite{AHM26} and references therein. In particular, we can consider the marked Gray tensor product $\C\otimes_{w',w}\D$ for any $w',w\in W_{\infty}$, $w'\le w$. As $\otimes_{s,s}=\times$, we focus on the tensor product $\C\otimes_{s,l}\D$. One can obtain it from the non-marked Gray tensor product $\C\otimes_l\D$ by inverting the Gray cells $\Sigma_{fg}$ whenever $f$ or $g$ is marked. We have also the functor $(\infty,2)$-categories $\Fun_{w',w}(\C,\D)$ and canonical markings $I\otimes J$, $[I,J]$, together with the following result.

\begin{proposition} \label{prop: marked tensor-hom}
    \cite[Cor. 2.3.6]{AHM26} For any marked $(\infty,2)$-categories $(\A,I),$ $(\B,J),$ $(\C,K)$, and $w\in \{l,l^*\},$ we have the equivalences of marked $(\infty,2)$-categories \begin{align}
        \Fun_{s,l}(\A\otimes_{s,l}\B,\C)&\simeq \Fun_{s,l}(\A,\Fun_{s,l}(\B,\C)),\\
        \Fun_{s,l^*}(\A\otimes_{s,l}\B,\C)&\simeq \Fun_{s,l^*}(\B,\Fun_{s,l^*}(\A,\C)),\\
        \Fun_{s,l}(\A,\Fun_{s, l^*}(\B,\C))&\simeq \Fun_{s,l^*}(\B,\Fun_{s,l}(\A,\C)),
    \end{align} natural in each variable.
\end{proposition}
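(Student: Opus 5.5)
The plan is to reduce all three equivalences to the corresponding unmarked statements for $\otimes_l$ and then identify the marking conditions on both sides. For the unmarked lax Gray tensor product on $\Cat_{(\infty,2)}$ we have the biclosed structure $\Fun_l(\A\otimes_l\B,\C)\simeq\Fun_l(\A,\Fun_l(\B,\C))$ and its variants, as recalled in \ref{sub: recollections} (\cite{CM23}, \cite{LR25}). I will write $\A\otimes_{s,l}\B$ as a localization of $\A\otimes_l\B$ in the sense of a pushout in $\Cat_{(\infty,2)}$. Concretely, for every pair $(f,g)$ with $f\in I$ or $g\in J$, glue in an inverse to the Gray cell $\Sigma_{fg}$. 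It suffices to do this for $f\colon[1]\to\A$ or $g\colon[1]\to\B$, since $[1]\otimes_l[1]\to[1]\times[1]$ is the universal way to invert the single Gray cell.

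Dually, $\Fun_{s,l}(\B,\C)\subset\Fun_l(\B,\C)$ is the sub-$(\infty,2)$-category of marking-preserving functors and of lax transformations whose naturality cells at marked $1$-cells are invertible. I will check that this inclusion is locally full and that its essential image is characterised by conditions detected on the generators $[1]\to\B$.

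For the first equivalence I argue via Yoneda on mapping spaces. By the previous paragraph, a marked functor $\A\otimes_{s,l}\B\to\C$ corresponds to a functor $\A\otimes_l\B\to\C$ satisfying two conditions: it sends Gray cells $\Sigma_{fg}$ with a marked leg to invertible $2$-cells, and it sends the marking $I\otimes J$ into $K$. Under the unmarked adjunction this becomes a functor $\A\to\Fun_l(\B,\C)$. I then verify that the two conditions translate exactly into the following:
\begin{enumerate}
\item each $\A(a)$ is marking preserving;
\item each $\A(f)$ for $f\in I$ is a strict-on-marked transformation, i.e. lies in $[J,K]$ or is $s$-natural at the required cells;
\item the naturality cells of $\A(f)$ at $g\in J$ are invertible.
\end{enumerate}
These identifications come from naturality of the unmarked equivalence, evaluated on the generators $[1]$, $[1]\otimes_l[1]$ and their marked versions. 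Upgrading the resulting equivalence of mapping spaces to an equivalence of marked $(\infty,2)$-categories amounts to replacing $\A$ by $\A\otimes_{s,l}\mathsf{D}$ for test objects $\mathsf{D}$, using associativity of $\otimes_{s,l}$ and naturality. The second equivalence is obtained the same way from the other half of the unmarked biclosedness, with $l$ and $l^*$ exchanged. The third follows formally by combining the first two, since both sides then corepresent $\Fun_{s,l}(\A\otimes_{s,l}\B,\C)$ in the appropriate variance.

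The main obstacle is the compatibility of the localization with tensoring. We need $(\A\otimes_{s,l}\B)\otimes_{s,l}\mathsf{D}$ to be the localization of the iterated unmarked tensor at all relevant Gray cells, which also gives associativity. This is needed for the Yoneda upgrade. My first attempt will be to show that $\otimes_l$ preserves colimits in each variable, as a left adjoint by biclosedness. Then both sides can be reduced, by writing marked $(\infty,2)$-categories as colimits of the gaunt marked generators ($\Theta_2$-cells and the marked arrow $[1]^\sharp$), to a finite check on $[1]\otimes_l[1]\otimes_l[1]$ with various markings. There the statement is a concrete computation with strict (gaunt) $2$-categories, where the $(2,2)$-categorical result of Proposition \ref{prop: Gray universal} can be used.
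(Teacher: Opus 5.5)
Your first step is sound. It is also the natural route from the definition the paper relies on: the paper only cites \cite[Cor.~2.3.6]{AHM26}, where $\A\otimes_{s,l}\B$ is the colimit recalled shortly after the statement. That colimit glues $\A_I\times\B$ and $\A\times\B_J$ onto $\A\otimes_l\B$ along $\A_I\otimes_l\B$ and $\A\otimes_l\B_J$. It agrees with your localization at Gray squares with a marked leg, given that $\A_I\times\B$ is obtained from $\A_I\otimes_l\B$ by inverting its Gray cells. The mapping-space equivalence then follows by transporting the unmarked adjunction, as you describe. Writing $\hat F$ for the curried functor, the conditions should read as follows: each $\hat F(a)$ preserves markings; every $\hat F(f)$ has invertible naturality cells at marked $g$; and for $f\in I$, $\hat F(f)$ is invertible at \emph{every} $g$ and has components in $K$. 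Your item (2) garbles the last condition, since being strict at marked cells is the requirement on all $\hat F(f)$, i.e.\ your item (3).

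The upgrade to marked internal homs does not go through as written. First, the test object must sit on the left. Since $-\otimes_{s,l}\B$ is left adjoint to $\Fun_{s,l}(\B,-)$, you need $(\mathsf{D}\otimes_{s,l}\A)\otimes_{s,l}\B\simeq\mathsf{D}\otimes_{s,l}(\A\otimes_{s,l}\B)$, not a statement about $\A\otimes_{s,l}\mathsf{D}$. Likewise, both sides of the third equivalence have underlying space $\Map(\B\otimes_{s,l}\A,\C)$, not maps out of $\A\otimes_{s,l}\B$, so that Yoneda argument runs through $\B\otimes_{s,l}\mathsf{D}\otimes_{s,l}\A$.

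More seriously, the route you sketch for associativity is not viable. Proposition~\ref{prop: Gray universal} concerns strict $2$-functors between $(2,2)$-categories. It says nothing about pushouts in $\Cat_{(\infty,2)}$, which need not agree with strict pushouts even for gaunt inputs. Moreover, a check on generators only helps once you have a natural comparison map between colimit-preserving functors.

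None of this is needed. Inverting a $2$-cell is a pushout along the map from the walking $2$-cell to $[1]$, and $-\otimes_l\mathsf{D}$ preserves pushouts. Hence $(\A\otimes_{s,l}\B)\otimes_l\mathsf{D}$ is $\A\otimes_l\B\otimes_l\mathsf{D}$ with the cells $(\Sigma_{fg},1_d)$ inverted, because a modification is invertible once its components are. The $1$-cells of $\A\otimes_{s,l}\B$ are generated, up to invertible $2$-cells, by the $(f,1)$ and $(1,g)$. It follows that $(\A\otimes_{s,l}\B)\otimes_{s,l}\mathsf{D}$ is the localization of $\A\otimes_l\B\otimes_l\mathsf{D}$ at the Gray cells with a marked leg, for each of the three pairs of factors, with marking generated by the three markings. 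This description is symmetric in the bracketing, so associativity of $\otimes_{s,l}$ follows from that of $\otimes_l$.

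Alternatively, you can avoid marked associativity altogether. Via precomposition with the epimorphism $\A\otimes_l\B\to\A\otimes_{s,l}\B$, $\Fun_{s,l}(\A\otimes_{s,l}\B,\C)$ is a locally full sub-$(\infty,2)$-category of $\Fun_l(\A\otimes_l\B,\C)\simeq\Fun_l(\A,\Fun_l(\B,\C))$, and so is $\Fun_{s,l}(\A,\Fun_{s,l}(\B,\C))$. It then suffices to match objects, $1$-morphisms and markings, which uses only unmarked associativity for $[1]\otimes_l\A\otimes_l\B$.
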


\begin{remark} Although we will not need it extensively, it is worth it to discuss funny tensor product $\otimes_f$ and its variants $\otimes_{w,f}$ for other weaknesses $w\in W_{ord}$; this is omitted in \cite{ABK24}. Intuitively, for example $\C\otimes_{l,f} \D$ should have the lax Gray cells $\Sigma_{fg}$ only when $f$ or $g$ is marked and no relationship between $(1,f)\circ (g,1)$ and $(g,1)\circ(1,f)$ in general, so we can see it behaves differently than its aforementioned variants.  A way of defining $\otimes_{w',w}$ for any weaknesses $w',w$, including $f$, is the following definition.
\end{remark}
\begin{definition}
    Let $(\A,I)$, $(\B,J)$ be marked $(2,2)$-categories, $w', w\in \widetilde{W}_{ord}$ any weaknesses. Define $\A\otimes_{w',w}\B$ to be a colimit of the following diagram: 
    % https://q.uiver.app/#q=WzAsNSxbMiwxLCJcXEFeXFxsYW1iZGFcXG90aW1lc193XFxCXlxcbGFtYmRhIl0sWzEsMCwiXFxBXlxcbGFtYmRhXFxvdGltZXNfd1xcQl5cXHRhdSJdLFswLDEsIlxcQV5cXGxhbWJkYVxcb3RpbWVzX3t3J31cXEJeXFx0YXUiXSxbMywwLCJcXEFeXFx0YXVcXG90aW1lc193XFxCXlxcbGFtYmRhIl0sWzQsMSwiXFxBXlxcdGF1XFxvdGltZXNfe3cnfVxcQl5cXGxhbWJkYSJdLFsxLDBdLFszLDBdLFszLDRdLFsxLDJdXQ==
\[\begin{tikzcd}
	& {\A\otimes_w\B_J} && {\A_I\otimes_w\B} \\
	{\A\otimes_{w'}\B_J} && {\A\otimes_w\B} && {\A_I\otimes_{w'}\B}
	\arrow[from=1-2, to=2-1]
	\arrow[from=1-2, to=2-3]
	\arrow[from=1-4, to=2-3]
	\arrow[from=1-4, to=2-5]
\end{tikzcd}\]
\end{definition}
It is an easy exercise to see that this agrees with the usual definition for $w,w'\ne f$ and also that this colimit has a correct universal property in the sense of Proposition \ref{prop: Gray universal}. In the context of $(\infty,2)$-categories for $w'=s$, $w=l$, the approach above was utilized in \cite[Def. 2.3.1]{AHM26}.

\subsection{Lawvere 2-theories} \label{subsec: Lawvere theories}

 To define Lawvere $(\infty,2)$-theories, we follow the $(\infty,1)$-dimensional definition \cite[Def. 3.2]{Cra10}.

\begin{definition} 
    A \textit{Lawvere $(\infty,2)$-theory} is a pair $(\T,\theta)$ where $\T$ is an $(\infty,2)$-category with finite products and a distinguished object $1_{\T}$ and $\theta\colon \F^{op}\to \T$ is an essentially surjective, product preserving functor sending $1$ to $1_{\T}$.
\end{definition}

\begin{notation}
    We are often going to abuse notation and write $1$ for $1_{\T}$ and $n$ for the $n$-th power $(1_{\T})^n$.
\end{notation}

We view any Lawvere $(\infty,2)$-theory $\T$ as a marked $(\infty,2)$-category with marking generated by equivalences and the image of $\theta$; we call these maps \textit{inert}. Let us fix the following notation: for $(\infty,2)$-categories $\C$, $\D$ together with a marking $I$ on $\C$, we denote $$\Fun_{w',w}(\C,\D):=\Fun_{w',w}((\C,I),(\D,\text{all}))$$ to be the hom-category in $\Cat^\mathfrak{m}_{(\infty,2)}$ where $\D$ is viewed as marked $(\infty,2)$ with the maximal marking (consisting of all $1$-cells).% More generally, we view any $(\infty,2)$-category $\C$ with finite products as a marked $(\infty,2)$-category with marking generated by equivalences and all the projection maps in any product diagram in $\C$.

\begin{definition}
    For any $\T$ a Lawvere $(\infty,2)$-theory viewed as a marked $(\infty,2)$-category as above, any $(\infty,2)$-category $\C$ with finite products, and for any $w\in \{s,l,l^*\}$, define the $(\infty,2)$-category $\Mod_w(\T,\C)$ to be the full $(\infty,2)$-subcategory of $\Fun_{s,w}(\T,\C)$ spanned by functors preserving finite products. We denote the hom-categories in $\Mod_w(\T,\C)$ by $\Hom_w(-,-)$.
\end{definition}

Take $w=l,$ $\C=\Cat_{\infty}$ and let us look more concretely on how $\Mod_w(\T,\Cat_\infty)$ looks. Objects are functors $\X\colon\T\to\Cat_\infty$ satisfying the condition that the inert maps (i.e., the maps in the image of $\theta\colon \F^{op}\to \T)$ $p_i\colon n\to 1$ induce an equivalence \begin{equation}\label{eq: Segal condition}
    \X(n)\xrightarrow{\simeq}\X(1)^n, 
\end{equation} i.e. (a variant of) the Segal condition. Consider now a $1$-cell $f\colon \X\to \Y$; this is more generally a morphism in $\Fun_{s,l}(\T,\Cat)$, i.e. a marked lax natural transformation. The marked condition forces the following square to commute (up to homotopy) for any inert $p_i$:
% https://q.uiver.app/#q=WzAsNCxbMCwwLCJcXFgobikiXSxbMCwxLCJcXFgoMSkiXSxbMSwxLCJcXFkoMSkiXSxbMSwwLCJcXFkobikiXSxbMSwyLCJmKDEpIiwyXSxbMCwzLCJmKG4pIl0sWzAsMSwiXFxYKHBfaSkiLDJdLFszLDIsIlxcWShwX2kpIl0sWzMsMSwiXFxzaW1lcSIsMSx7InN0eWxlIjp7ImJvZHkiOnsibmFtZSI6Im5vbmUifSwiaGVhZCI6eyJuYW1lIjoibm9uZSJ9fX1dXQ==
\[\begin{tikzcd}
	{\X(n)} & {\Y(n)} \\
	{\X(1)} & {\Y(1)}
	\arrow["{f(n)}", from=1-1, to=1-2]
	\arrow["{\X(p_i)}"', from=1-1, to=2-1]
	\arrow["\simeq"{description}, draw=none, from=1-2, to=2-1]
	\arrow["{\Y(p_i)}", from=1-2, to=2-2]
	\arrow["{f(1)}"', from=2-1, to=2-2]
\end{tikzcd}\]
Hence, we get that $f(n)\simeq f(1)^n$. Therefore, denoting $X=\X(1)$, $Y=\Y(1)$, we can view $f$ as a data of a functor of $(\infty,1)$-categories $f_1\colon X\to Y$ together with $2$-cells $f_\alpha\colon \Y(\alpha)f_1^m\to f_1\X(\alpha)$ for $\alpha$ active\footnote{We are using the same notation as in the $1$-dimensional case \ref{remark: active inert}.} $1$-cells with respect to some basis $B$ of $1$-cells. Finally, for a modification $t\colon f\Rrightarrow g$, the equation below yields $t(n)\simeq t(1)^n$, so we can view $t$ as a natural transformation of functors $t_1\colon f_1\Rightarrow g_1$ together with some higher coherence data with respect to the cells $f_\alpha,$ $g_\alpha$.

% https://q.uiver.app/#q=WzAsOCxbMCwwLCJcXFgobikiXSxbMSwwLCJcXFkobikiXSxbMCwxLCJcXFgoMSkiXSxbMSwxLCJcXFkoMSkiXSxbMywwLCJcXFgobikiXSxbMywxLCJcXFgoMSkiXSxbNCwwLCJcXFkobikiXSxbNCwxLCJcXFkoMSkiXSxbMCwxLCJmKG4pIiwwLHsiY3VydmUiOi0yfV0sWzAsMSwiZyhuKSIsMix7ImN1cnZlIjoyfV0sWzAsMiwiXFxYKHBfaSkiLDJdLFsyLDMsImcoMSkiLDIseyJjdXJ2ZSI6Mn1dLFsxLDMsIlxcWShwX2kpIl0sWzQsNiwiZihuKSIsMCx7ImN1cnZlIjotMn1dLFs1LDcsImcoMSkiLDIseyJjdXJ2ZSI6Mn1dLFs1LDcsImYoMSkiLDAseyJjdXJ2ZSI6LTJ9XSxbNCw1LCJcXFgocF9pKSIsMl0sWzYsNywiXFxZKHBfaSkiXSxbOCw5LCJ0KG4pIiwwLHsic2hvcnRlbiI6eyJzb3VyY2UiOjIwLCJ0YXJnZXQiOjIwfX1dLFsxMiwxNiwiXFxzaW1lcSIsMyx7InNob3J0ZW4iOnsic291cmNlIjoyMCwidGFyZ2V0IjoyMH0sInN0eWxlIjp7ImJvZHkiOnsibmFtZSI6Im5vbmUifSwiaGVhZCI6eyJuYW1lIjoibm9uZSJ9fX1dLFsxNSwxNCwidCgxKSIsMCx7InNob3J0ZW4iOnsic291cmNlIjoyMCwidGFyZ2V0IjoyMH19XV0=
\[\begin{tikzcd}
	{\X(n)} & {\Y(n)} && {\X(n)} & {\Y(n)} \\
	{\X(1)} & {\Y(1)} && {\X(1)} & {\Y(1)}
	\arrow[""{name=0, anchor=center, inner sep=0}, "{f(n)}", curve={height=-12pt}, from=1-1, to=1-2]
	\arrow[""{name=1, anchor=center, inner sep=0}, "{g(n)}"', curve={height=12pt}, from=1-1, to=1-2]
	\arrow["{\X(p_i)}"', from=1-1, to=2-1]
	\arrow[""{name=2, anchor=center, inner sep=0}, "{\Y(p_i)}", from=1-2, to=2-2]
	\arrow["{f(n)}", curve={height=-12pt}, from=1-4, to=1-5]
	\arrow[""{name=3, anchor=center, inner sep=0}, "{\X(p_i)}"', from=1-4, to=2-4]
	\arrow["{\Y(p_i)}", from=1-5, to=2-5]
	\arrow["{g(1)}"', curve={height=12pt}, from=2-1, to=2-2]
	\arrow[""{name=4, anchor=center, inner sep=0}, "{g(1)}"', curve={height=12pt}, from=2-4, to=2-5]
	\arrow[""{name=5, anchor=center, inner sep=0}, "{f(1)}", curve={height=-12pt}, from=2-4, to=2-5]
	\arrow["{t(n)}", between={0.2}{0.8}, Rightarrow, from=0, to=1]
	\arrow["\simeq"{marking, allow upside down}, draw=none, from=2, to=3]
	\arrow["{t(1)}", between={0.2}{0.8}, Rightarrow, from=5, to=4]
\end{tikzcd}\]

\begin{remark} \label{remark: 2-dim faithful I}
    If both $\C$ and $\T$ are $(2,2)$-categories, the forgetful functor $\Mod_w(\T,\C)\to \C$ is locally faithful, i.e. the functors on hom-categories $\Hom_w(\X,\Y)\to \C(\X(1),\Y(1))$ are faithful. Indeed, having two modifications $t,s\colon f\Rrightarrow g$ in $\Hom_w(\X,\Y)$ with $s(1)=t(1)$, from the discussion above we necessarily obtain $s=t$. 
\end{remark}

\begin{remark}
    For $\C=\Cat_\infty$, we can use the marked $(\infty,2)$-categorical unstraightening equivalence \cite[Prop. 3.5.7]{AGH24} to view $\Mod_{l}(\T,\Cat)$, $\Mod_{l^*}(\T,\Cat)$ as full subcategories of certain $(\infty,2)$-categories $\mathsf{Fib}^l_{(0,1)}(\T)$, $\mathsf{Fib}^l_{1,1}(\T^{coop})$ of fibrations satisfying the Segal condition (\ref{eq: Segal condition}). More concretely, objects of $\mathsf{Fib}^l_{(0,1)}(\T)$ are functors of $(\infty,2)$-categories $\pi\colon \mathbb{X}\to\T$ such that the functor of underlying $\infty$-categories is a cocartesian fibration and the functors on hom-categories $\Hom_{\mathbb{X}}(x,y)\to \Hom_{\T}(\pi(x),\pi(y))$ are cartesian fibrations satisfying some coherences. A map $\mathbb{X}\to \mathbb{Y}$ is a functor of $(\infty,2)$-categories over $\T$ which preserves the cocartesian lifts of marked $1$-cells. Although this approach can be often useful when constructing $\T$-models, it will turn out more useful to view models as functors from $\T$, rather than fibrations over $\T$, to construct certain Day convolution structure in Section \ref{sec: day conv}.
\end{remark}

\begin{example}
    Consider $\T=\T_{mon}$. For the respective choices of $w\in \{s,l,l^*\}$,  the $(\infty,2)$-category $\Mod_w(\T,\Cat_\infty)$ is the $(\infty,2)$-category of monoidal $\infty$-categories, strict / lax / colax monoidal functors, and monoidal natural transformations. Although this presentation differs from the usual one using the formalism of $\infty$-operads where one would use $\Delta^{op}$ instead of $\T_{mon}$, one can check that it is equivalent, for example by the results of \cite{Ber18}. One way of phrasing it precisely is the following: precomposition with the inclusion $\Delta^{op}\to \T_{mon}$ induces an equivalence between finite product preserving functors out of $\T_{mon}$ and functors out of $\Delta^{op}$ satisfying the Segal condition.
\end{example}

\begin{example}
    As a more general family of examples we can consider Lawvere $(\infty,1)$-theories $\T_{E_n}$ such that $\Mod_w(\T,\Cat{\infty})$ is the $(\infty,2)$-category of $E_n$-monoidal categories, $E_n$-monoidal functors, and $E_n$-monoidal natural transformations. These theories exists by \cite[Appendix B]{GGN16}, and for $n=2$ or $n=\infty$, they are even $(2,1)$-theories.
\end{example}

\begin{remark} \label{rmk: pointwise products}
    All the categories $\Mod_w(\T,\Cat)$ have pointwise products. One can check it by  unstraightening and seeing that the pointwise product $\X\times \Y$ of two models correspond to the fiber product $\mathbb{X}\times_{\T} \mathbb{Y}$ of the respoective fibrations.
\end{remark}

Comparing to the $(\infty,1)$-categorical situation, the extra categorical dimension in the $(\infty,2)$-setting allows us to observe more phenomena. For any Lawvere $(\infty,2)$-theory, denote by $*$ the constant model $\T\to \Cat_{\infty}$ sending everything to $\Delta^0$ and identities therein. Then, for $\T=\T_{mon}$ and $\V\colon \T_{mon} \to \Cat_{\infty}$ a model, i.e. a monoidal $\infty$-category, we get that the $\infty$-category $\Hom_l(*,\V)$ of lax monoidal functors from $*$ to $\V$ and monoidal natural transformations is exactly the the $\infty$-category $\Alg(\V)$ of $\mathbb{E}_1$-monoids in $\V$ and homomorphisms between them. Motivated by that, consider the following definition.

\begin{definition}
    Let $\T$ be Lawvere $(\infty,2)$-theory, $\C$ an $(\infty,2)$-category with finite products, $\X\colon \T\to\C$ a model. We define the category $\IntAlg(\X)$ of \textit{internal algebras} in $\X$ to be the $(\infty,1)$-category of lax homomorphisms of $\T$-models $\Hom_l(*,\X)$. Similarly, define \textit{internal coalgebras} $\IntCoalg(\X)$ to be the $(\infty,1)$-category $\Hom_{l^*}(*,\X)$ of colax (or oplax) homomorphisms of $\T$-models.
\end{definition}

We are going to see more examples later in \ref{subsec: examples}, \ref{subsec: example segal}.

\begin{remark}
    No matter whether we want to study $(\infty,2)$-categories or $(2,2)$-categories on the semantic side, it is often enough to study $(2,2)$-categories on syntactic side: in fact, all our examples (e.g. $\T_{mon},$ $\T_{cmon}$) of Lawvere $(\infty,2)$-theories turn out to be $(2,2)$-categories. This principle has been often called  \textit{animation} in the recent literature \cite{CS24}: for a Lawvere $(1,1)$-theory $\T$, the animation of the category $\Mod(\T,\Set)$ is the $\infty$-category $\Mod(\T,\mathcal{S})$. Therefore, although our main syntactic result \ref{prop: miracle associativity} only applies to Lawvere $(2,2)$-theories, it is still interesting from the $\infty$-categorical standpoint.
\end{remark}

In the $(2,2)$-categorical setting, there are usually extra levels of strictness one wants to keep track of. We spend the rest of this subsection describing variants thereof.

\begin{definition}
    Define a \textit{Lawvere $(2,2)$-theory} to be a pair $(\T,\theta)$ where $\T$ is a $(2,2)$-category and $\theta\colon\F^{op}\to\T$ an identity-on-objects functor that preserves finite products. 
\end{definition}

Again, we view $\T$ as a marked $(2,2)$-category with marking generated by the image of $\theta$ (in the strict setting, we do not need to add isomorphisms).

\begin{definition}
    For any $w\in \{s,p,l,c,f\}$ and any $(2,2)$-category $\C$ with finite products, define $\Mod_w(\T,\C)$ to be the full $(2,2)$-subcategory of $\Fun_{s,w}(\T,\C)$ spanned by functors preserving finite products. We denote the hom-categories therein by $\Hom_w(-,-)$. For a model $\X\colon \T\to \C$, we define the category of internal algebras and coalgebras by $\IntAlg(\X):=\Hom_l(*,\X)$, $\IntCoalg(\X):=\Hom_{l^*}(*,\X)$.
\end{definition}

\begin{remark}
    There is an alternative notion which also deserves the name ``Lawvere $2$-theory'': an identity-on-objects $2$-functor $\Cat^{\omega}\to \T$ where $\Cat^\omega$ is the $2$-category of finitely presentable categories. This falls under the framework of enriched Lawvere theories developed by Power in \cite{Pow99}. To distinguish these two notions, one may call the one from definition above a \textit{discrete} Lawvere $2$-theory. However, as Power's notion will not be used or referred to throughout this paper, we will drop the adjective 'discrete'. Whenever we speak about finite powers, we always mean powers by finite discrete categories $n$.
\end{remark}

There are variants of the previous constructions -- similarly as in Section 1, we can speak about finite powers instead of finite products and we can also equip $\C$ with a choice of powers, resulting in a slightly different (but equivalent) $(2,2)$-category of models. 

The most important subtlety in the $(2,2)$-categorical case is whether to treat models as pseudofunctors or honest $2$-functors. For example, the functors of $(2,2)$-categories $\T_{mon}\to \Cat$ correspond to strict monoidal categories; a monoidal category would correspond to a pseudofunctor, strict on inert cells. One way of getting around this is to introduce a Lawvere $(2,1)$-theory $\T_{mon}^\flat$ for pseudomonoids together with a map $\T_{mon}^\flat\to \T_{mon}$ inducing an equivalence $\PsFun_{s,p}^\times(\T,-)\cong\Fun_{s,p}^\times(\T^\flat,-)$.  We will make use of this in \ref{subsec: examples}.

\subsection{2-dimensional commutativity}
In this subsection, we are going to introduce the notion of a $w$-commutativity for Lawvere $2$-theories. %This works equally well for both $(\infty,2)$- and $(2,2)$-theories. 

\begin{definition} \label{def: Lawvere tensor}
    For Lawvere $(\infty,2)$-theories $\T, \Ss$ (with their natural marking by inert $1$-cells) and any $(\infty,2)$-category $\C$ with finite products, define $\Mod_w(\T\otimes_{s,w}\Ss,\C)$ to be the full subcategory of $\Fun_{s,w}(\T\otimes_{s,w}\Ss,\C)$ spanned by functors preserving finite products in each variable. Namely, we require that for any objects $T\in \T$, $S\in \Ss$, both precompositions of $F\colon \T\otimes_{s,w}\Ss\to \C$ with $\T\simeq \T\otimes_{s,w}1\xrightarrow{1\otimes S} \T\otimes_{s,w}\Ss$, $\Ss\xrightarrow{T\otimes 1} \T\otimes_{s,w}\Ss$ preserve finite products.
\end{definition}

\begin{lemma} \label{lemma: sketchy equivalence}
    We have the following equivalences of categories:
    \begin{align}
        \Mod_{l}(\T\otimes_{s,l}\Ss,\C)&\simeq \Mod_{l}(\T,\Mod_{l}(\Ss,\C)), \\
        \Mod_{l^*}(\T\otimes_{s,l}\Ss,\C)&\simeq \Mod_{l^*}(\Ss,\Mod_{l^*}(\T,\C)),\\
        \Mod_{l}(\T,\Mod_{ l^*}(\Ss,\C))&\simeq \Mod_{l^*}(\Ss,\Mod_{l}(\T,\C)).
    \end{align}
\end{lemma}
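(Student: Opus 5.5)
The plan is to restrict the marked tensor–hom equivalences of Proposition \ref{prop: marked tensor-hom} to the full subcategories cut out by the finite-product conditions. I take $\A=\T$ and $\B=\Ss$ with their inert markings, and $\C$ with the maximal marking. One point needs checking first: the marked functor categories $\Fun_{s,l}(\Ss,\C)$ and $\Fun_{s,l^*}(\Ss,\C)$ carry the marking $[J,\text{all}]$ of strict (pseudo) transformations, not the maximal one. The iterated hom $\Fun_{s,l}(\T,\Fun_{s,l}(\Ss,\C))$ is thus a priori smaller than the category whose full subcategory is $\Mod_l(\T,\Mod_l(\Ss,\C))$. So the first step is to show that any functor $\T\to\Fun_{s,l}(\Ss,\C)$ landing in product-preserving functors automatically sends inert maps to marked (strict) transformations. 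An inert $p\colon n\to m$ goes to a lax transformation $F(n)\Rightarrow F(m)$ between models. Under the identifications $F(n)\simeq F(1)^n$ from the Segal condition for the $\T$-variable, its components are products of projections. Its lax cells $\phi_\alpha$ are then determined by the cells for identities and for projection maps in $\Ss$, all of which are equivalences. I expect this to need some care about which product structure is used, namely pointwise products in $\Mod_l(\Ss,\C)$ (Remark \ref{rmk: pointwise products}).

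Next I identify the objects. A functor $F\colon \T\otimes_{s,l}\Ss\to\C$ corresponds under the first equivalence to its curry $\hat F\colon \T\to\Fun_{s,l}(\Ss,\C)$, with $\hat F(T)=F(T\otimes -)$. The condition ``$F(T\otimes-)$ preserves finite products for every $T$'' says exactly that $\hat F$ lands in $\Mod_l(\Ss,\C)$. Since products in $\Mod_l(\Ss,\C)$ are pointwise, the condition ``$F(-\otimes S)$ preserves products for every $S$'' says exactly that $\hat F\colon\T\to\Mod_l(\Ss,\C)$ preserves finite products. Evaluation at each $S$ jointly detects product cones, which I justify by the same pointwise-product remark. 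Both sides are full subcategories, so the equivalence of Proposition \ref{prop: marked tensor-hom} restricts to the first claimed equivalence.

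The second equivalence follows by the same argument applied to the second line of Proposition \ref{prop: marked tensor-hom}, with the roles of $\T$ and $\Ss$ swapped. The third follows from the third line: a functor $\T\to\Fun_{s,l^*}(\Ss,\C)$ and its transpose $\Ss\to\Fun_{s,l}(\T,\C)$ both encode a two-variable functor, and product preservation in each variable is a symmetric condition. In each case I first apply the marking argument above, and then check that the product conditions transpose correctly.

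The main obstacle is the marking issue: showing that inert maps of $\T$ are automatically sent to strict transformations once the functor lands in models. If the direct argument via lax cells proves messy, I would instead argue on the fibrational side. There, a lax transformation between Segal functors whose underlying component is a product of projections is forced to be cocartesian-preserving, and hence strict.
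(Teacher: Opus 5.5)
Your plan is the paper's own: restrict the equivalences of Proposition~\ref{prop: marked tensor-hom} to the full subcategories of models and match the product conditions via pointwise products. The only difference is that the paper disposes of the marking issue you isolate, namely the full embedding of $\Mod_l(\T,\Mod_l(\Ss,\C))$ into $\Fun_{s,l}(\T,\Fun_{s,l}(\Ss,\C))$, by citing \cite[Cor.~2.8.13]{AGH24} rather than proving it.

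If you prove that step by hand, your stated reason is not the right one: lax cells are not determined by those at identities and projections of $\Ss$, since the cells at active maps are not. The correct argument is that $F(p_i)\simeq\pi_i\circ e$, where $\pi_i$ is the strict pointwise projection and $e\colon F(n)\xrightarrow{\sim}F(1)^n$ is an equivalence in $\Mod_l(\Ss,\C)$. Any equivalence $\alpha$ with inverse $\beta$ in a lax functor $2$-category has invertible lax cells, because invertibility of the cells of $\beta\alpha\cong 1$ and $\alpha\beta\cong 1$ makes each $\alpha_f$ both split epi and split mono.
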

\begin{proof}
    We prove only the first equivalence as the other are proved similarly. By Proposition \ref{prop: marked tensor-hom}, we have an equivalence $$\Fun_{s,l}(\T\otimes_{s,l}\Ss,\C)\simeq \Fun_{s,l}(\T,\Fun_{s,l}(\Ss,\C)).$$ By definition, $\Mod_{l}(\T\otimes_{s,l}\Ss,\C)$ embeds fully faithfully into the left hand side. By \cite[Cor. 2.8.13]{AGH24}, $\Mod_{l}(\T,\Mod_{l}(\Ss,\C))$ embeds fully faithfully into the right hand side as well. Therefore, to prove the equivalence, it is enough to compare the conditions defining the categories of models, which is straightforward.
\end{proof}

We need the previous lemma to make sense of the following definition. It applies to both $(\infty,2)$-categorical and $(2,2)$-categorical setting.
\begin{definition} \label{def: w-commutativity}
    Let $\T$ be any Lawvere $2$-theory, $w$ any weakness. A \textit{$w$-commutativity} on $\T$ is an $E_1$-monoid structure on $\T$ in $(\Cat_2^{\mathfrak{m}},\otimes_{s,w},1)$ such that the multiplication $\mu\colon \T\otimes_{s,w}\T\to \T$ preserves products in each variable and the unit is the distinguished object $1_{\T}$. Moreover, in case that $\otimes_{s,w}$ is symmetric, a symmetric $w$-commutativity on $\T$ is a symmetric (or $E_\infty$-) monoid structure.
\end{definition}

Denote by $U_1, U_2$ the forgetful functors $$\Mod_w(\T,\Mod_w(\T,\C))\simeq\Mod_{w}(\T\otimes_{s,w}\T,\C)\to \Mod_w(\T,\C)$$ obtained by composing the equivalence from Lemma \ref{lemma: sketchy equivalence} with evaluating at $1_{\T}$ in first or second variable, respectively.

\begin{proposition} \label{prop: lifting the models functor}
    For any $w$-commutativity $\mu\colon \T\otimes_{s,w}\T\to \T$ on $\T$, the functor $$\Mod_w(\T,\C)\to \Mod_w(\T,\Mod_w(\T,\C))$$ induced by precomposition with $\mu$ and the equivalence from Lemma \ref{lemma: sketchy equivalence} is a section to both $U_1$ and $U_2$.
\end{proposition}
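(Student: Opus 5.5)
First I check that precomposition with $\mu$ lands in the right place. Given a model $\X\colon \T\to\C$, the composite $\X\circ\mu\colon \T\otimes_{s,w}\T\to\C$ is a marking-preserving functor, because $\mu$ is a morphism in $\Cat_2^{\mathfrak{m}}$. It preserves products in each variable, since $\mu$ does by Definition \ref{def: w-commutativity} and $\X$ preserves products. Precomposition with $\mu$ is functorial on $\Fun_{s,w}(\T,\C)\to\Fun_{s,w}(\T\otimes_{s,w}\T,\C)$. So it restricts to a functor $\mu^*\colon\Mod_w(\T,\C)\to\Mod_w(\T\otimes_{s,w}\T,\C)$. Composing with the equivalence of Lemma \ref{lemma: sketchy equivalence} (or Proposition \ref{prop: Gray universal} in the $(2,2)$-setting) gives the functor in the statement.

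Next I compute $U_1\circ\mu^*$ and $U_2\circ\mu^*$. Under the tensor-hom equivalence, evaluating the model of models at $1_\T$ in the first variable corresponds to restricting a functor $F\colon\T\otimes_{s,w}\T\to\C$ along
\[\T\simeq 1\otimes_{s,w}\T\xrightarrow{1_\T\otimes 1}\T\otimes_{s,w}\T.\]
I will verify this from the naturality of the equivalence in Proposition \ref{prop: marked tensor-hom}/\ref{prop: Gray universal} in the first variable, applied to the map $1\to\T$ picking out $1_\T$. Evaluation in the second variable similarly corresponds to restriction along $\T\simeq\T\otimes_{s,w}1\xrightarrow{1\otimes 1_\T}\T\otimes_{s,w}\T$. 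Therefore $U_1\mu^*$ is precomposition with $\mu\circ(1_\T\otimes 1)\circ\lambda^{-1}$, and $U_2\mu^*$ is precomposition with $\mu\circ(1\otimes 1_\T)\circ\rho^{-1}$.

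By the unit part of the $E_1$-monoid structure, whose unit is $1_\T$ by Definition \ref{def: w-commutativity}, these composites are identified with $\id_\T$ via the unitors. In the $\infty$-setting this is part of the monoid data. In the $(2,2)$-setting it is an isomorphism, or an equality in the strict case, of marked functors. Precomposition with $\id_\T$ is the identity, so both composites are naturally equivalent to the identity of $\Mod_w(\T,\C)$.

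The main obstacle is the identification of $U_i$ with restriction along the unit inclusions. This requires the equivalence of Lemma \ref{lemma: sketchy equivalence} to be natural in the first argument with respect to marked functors out of the point, and compatible with the unitor $1\otimes_{s,w}\T\simeq\T$. I would deduce it from the naturality statements in \cite{AHM26} and \cite{ABK24}, checking that the equivalence restricted to $1\otimes\T$ is the evident identification $\Fun_{s,w}(1,\Fun_{s,w}(\T,\C))\simeq\Fun_{s,w}(\T,\C)$. The remaining checks are formal.
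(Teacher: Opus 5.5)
Your proposal is correct and takes the same route as the paper. Write $u\colon 1\to\T$ for the map picking out $1_\T$. Both you and the paper identify $U_1$ and $U_2$, via the tensor--hom equivalence, with restriction along $u\otimes_{s,w}1$ and $1\otimes_{s,w}u$, and then conclude from the unit axiom $\mu\circ(u\otimes_{s,w}1)\simeq\id\simeq\mu\circ(1\otimes_{s,w}u)$. The extra checks you spell out are left implicit in the paper: that $\mu^*$ lands in models, and the naturality needed to match $U_i$ with these restrictions. In fact, the paper's definition of $U_1,U_2$ as evaluation at $1_\T$ on $\Mod_w(\T\otimes_{s,w}\T,\C)$ makes that second identification essentially immediate.
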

\begin{proof}
    Denote by $u$ the map $1\to \T$ picking the distinguished object $1_{\T}$. By definition of the $w$-commutativity, we have $\mu\circ(1\otimes_{s,w} u)\simeq1\simeq\mu\circ(u\otimes_{s,w} 1)$. Precomposition with $1\otimes_{s,w} u$, $u\otimes_{s,w} 1$ induces functors $\Mod_w(\T\otimes_{s,w}\T,\C)\to \Mod_w(\T,\C)$ evaluating at $1_{\T}$ and the first or second variable and after composing with the equivalence from \ref{lemma: sketchy equivalence}, we get exactly $U_1$ and $U_2$, finishing the proof. 
\end{proof}
We denote the functor above by $\widetilde{(-)}$ and for a model $\X$, we think about $\widetilde{\X}$ as a lift of $\X$ along the forgetful functor. In particular, we are going to see in the next section that for any $w$-commutativity $\mu$, we necessarily have $\mu(m,n)=m\cdot n$. Hence, if $\alpha\colon m\to n$, $\beta\colon k\to l$ are $1$-cells in $\T$, we get that the image of the Gray $w$-cell $\X(\mu(\Sigma_{\alpha\beta}))$ yields a $2$-cell as follows (where $X=\X(1)$): 
% https://q.uiver.app/#q=WzAsNixbMCwwLCIoWF5rKV5tIl0sWzIsMCwiKFhebClebSJdLFswLDEsIihYXm0pXmsiXSxbMiwxLCIoWF5tKV5sIl0sWzAsMiwiWF5rIl0sWzIsMiwiWF5sIl0sWzAsMSwiXFxYKFxcYmV0YSlebSJdLFswLDIsIlxcY29uZyJdLFsxLDMsIlxcY29uZyIsMl0sWzIsNCwiXFxYKFxcYWxwaGEpXmsiXSxbMyw1LCJcXFgoXFxhbHBoYSlebCIsMl0sWzQsNSwiXFxYKFxcYmV0YSkiLDJdLFswLDQsIlxcWF5rKFxcYWxwaGEpIiwyLHsiY3VydmUiOjR9XSxbMSw1LCJcXFhebChcXGFscGhhKSIsMCx7ImN1cnZlIjotNH1dLFs2LDExLCJcXFgoXFxzaWdtYV9cXGFscGhhXFxiZXRhKSIsMCx7InNob3J0ZW4iOnsic291cmNlIjo0MCwidGFyZ2V0Ijo0MH19XV0=
\[\begin{tikzcd}
	{(X^k)^m} && {(X^l)^m} \\
	{(X^m)^k} && {(X^m)^l} \\
	{(X^k)^n} && {(X^l)^n}
	\arrow[""{name=0, anchor=center, inner sep=0}, "{\X(\beta)^m}", from=1-1, to=1-3]
	\arrow["\cong", from=1-1, to=2-1]
	\arrow["{\X^k(\alpha)}"', curve={height=24pt}, from=1-1, to=3-1]
	\arrow["\cong"', from=1-3, to=2-3]
	\arrow["{\X^l(\alpha)}", curve={height=-24pt}, from=1-3, to=3-3]
	\arrow["{\X(\alpha)^k}", from=2-1, to=3-1]
	\arrow["{\X(\alpha)^l}"', from=2-3, to=3-3]
	\arrow[""{name=1, anchor=center, inner sep=0}, "{\X(\beta)^n}"', from=3-1, to=3-3]
	\arrow["{\X(\sigma_{\alpha\beta})}", between={0.4}{0.6}, Rightarrow, from=0, to=1]
\end{tikzcd}\]
Letting $\alpha$ vary, this makes $\X(\beta)$ into a $w$-homomorphism of models $\widetilde{\X}(\beta)\colon\X^k\to \X^l$.

\begin{lemma} \label{rmk: 2-dim faithful}
    If $\T$ and $\C$ are $(2,2)$-categories and $\mu\colon\T\otimes_{s,w}\T\to\T$ a $w$-commutativity structure, the associated functor $\widetilde{(-)}\colon \Mod_w(\T,\C)\to \Mod_w(\T,\Mod_w(\T,\C))$ is locally fully faithful, i.e. for any models $\X,\Y$ in $\C$, the induced functor $\Hom_w(\X,\Y)\to \Hom_w(\widetilde{\X},\widetilde{\Y})$ is fully faithful. 
\end{lemma}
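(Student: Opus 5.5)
The plan is to mirror the 1-dimensional proof of Corollary \ref{rmk: 1-dim lifting functor}, using the local faithfulness of the forgetful functor (Remark \ref{remark: 2-dim faithful I}) applied twice. Fix models $\X,\Y\colon\T\to\C$ and $1$-cells $f,g\in\Hom_w(\X,\Y)$. Write $\widetilde{f},\widetilde{g}\colon\widetilde{\X}\to\widetilde{\Y}$ for their images. By Proposition \ref{prop: lifting the models functor}, $U_1\circ\widetilde{(-)}$ is the identity on $\Mod_w(\T,\C)$, so on hom-categories the composite
\[\Hom_w(\X,\Y)\to\Hom_w(\widetilde{\X},\widetilde{\Y})\xrightarrow{U_1}\Hom_w(\X,\Y)\]
is the identity functor. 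From this, faithfulness of $\Hom_w(\X,\Y)\to\Hom_w(\widetilde{\X},\widetilde{\Y})$ is immediate.

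For fullness, take a $2$-cell $\tau\colon\widetilde{f}\Rrightarrow\widetilde{g}$ in $\Mod_w(\T,\Mod_w(\T,\C))$. Its evaluation at $1$ is $t:=U_1(\tau)$, a modification $f\Rrightarrow g$ in $\Mod_w(\T,\C)$. The claim is that $\widetilde{t}=\tau$. Both are $2$-cells $\widetilde f\Rrightarrow\widetilde g$ whose components at $1\in\T$ agree, since $U_1(\widetilde t)=t=U_1(\tau)$.

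The key step is then to apply Remark \ref{remark: 2-dim faithful I}. That remark says that the forgetful functor $\Mod_w(\T,\mathcal{D})\to\mathcal{D}$ is locally faithful, for $\mathcal{D}$ a $(2,2)$-category with finite products. We apply it with $\mathcal{D}=\Mod_w(\T,\C)$. This requires knowing that $\Mod_w(\T,\C)$ is a $(2,2)$-category with finite products. This holds because products are computed pointwise: a product of product-preserving functors is product-preserving, and the projections are strict, hence marked, transformations (cf. Remark \ref{rmk: pointwise products}). Under the identification of Lemma \ref{lemma: sketchy equivalence}, evaluation at $1$ in the outer variable is exactly $U_1$. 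Hence $U_1$ is faithful on $\Hom_w(\widetilde\X,\widetilde\Y)$, and $\tau=\widetilde{t}$.

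The main subtlety I expect is making sure the faithfulness argument of Remark \ref{remark: 2-dim faithful I} really applies in the iterated setting. Its proof uses two facts: $t(n)$ is determined by $t(1)$ via the Segal condition, and modifications in a $(2,2)$-category are determined by their components. Both facts hold for $\mathcal{D}=\Mod_w(\T,\C)$. Modifications there are again determined componentwise, since $2$-cells in $\Mod_w(\T,\C)$ are modifications of transformations into $\C$. If needed, I would unwind this directly. A modification $\tau$ has components $\tau_n$ that are $2$-cells $\widetilde f(n)=f^n\Rrightarrow g^n$ in $\Mod_w(\T,\C)$. These components are compatible with the inert projections, which are strict, so $\tau_n=\tau_1^n$, and similarly for $\widetilde t$. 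Equality $\tau=\widetilde t$ therefore reduces to $\tau_1=t$, which holds by construction.
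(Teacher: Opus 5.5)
Your proposal is correct and is essentially the paper's own argument: faithfulness comes from $\widetilde{(-)}$ being a section of the forgetful functors, and fullness comes from applying the local faithfulness of Remark \ref{remark: 2-dim faithful I} to $\Mod_w(\T,\Mod_w(\T,\C))\to\Mod_w(\T,\C)$, since $\tau$ and $\widetilde{U_1(\tau)}$ have the same component at $1$. Your additional checks, that $\Mod_w(\T,\C)$ has pointwise finite products and the componentwise unwinding $\tau_n=\tau_1^n$, only make explicit what the paper's two-line proof leaves implicit.
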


\begin{proof}
    By Remark \ref{remark: 2-dim faithful I}, any modification $\hat s\colon \tilde f\to \tilde g$ is necessarily of the form $\widetilde{\hat s(1)}$ as both of these have the same component at $1$. Faithfulness follows from being a section to the forgetful functors.
\end{proof}

\subsection{Marked (2,2)-sketches as $\mF$-sketches} In the $(2,2)$-categorical context, we can generalize our constructions from Lawvere theories to marked $(2,2)$-sketches. By insight of \cite{ABK24}, we can study those by methods of enriched category theory.
\begin{definition} \cite[Def. 5.1]{ABK24}
    Let $\V$ be a symmetric monoidal category. A (limit) \emph{$\V$-sketch} $\Ss$ comprises a $\V$-enriched category $\Ss_0$ equipped with a class $\Gamma_{\Ss}$ of weighted cones in $\Ss_0$. 
    \begin{equation}
		%\label{cone}
		\Big\{ \big(W_i \colon  J_i \to \V , \quad D_i \colon J_i \to \Ss_0 , \quad X_i \in  \Ss_0 , \quad \gamma_i \colon W_i \Rightarrow  \Ss_0(X_i, D_i{-}) \big) \Big\}_{i \in \Gamma_{\Ss}}
	\end{equation}    
    A morphism $F \colon  \Ss \to \T$ of $\V$-sketches is a $\V$-functor $F_0 \colon \Ss_0 \to \T_0$ sending weighted cones in $\Gamma_{\Ss}$ to those in $\Gamma_{\T}$. Limit sketches, their morphisms, and natural transformations form a 2-category $\V\text{-}\Sketch$. We denote by $\Mod(\Ss,\T)$ the full subcategory of $\Fun^{\V}(\Ss_0,\T_0)$ spanned by morphisms of $\V$-sketches. If $\Phi=\{W_k\colon J_k\to \V\}$ is a class of weights in $\V$, denote by $\V\text{-}\Sketch^{\Phi}$ the full subcategory of those $\V$-sketches $\Ss$ with all the weights for cones in $\Gamma_{\Ss}$ belonging to $\Phi.$
\end{definition}

Consider the full subcategory $\mF$ of the arrow category $\Cat^{\to}$ spanned by functors $\C'\to \C$ that are fully faithful and injective on objects, together with its cartesian monoidal structure. One can then interpret marked $(2,2)$-categories as $\mF$-enriched categories. Using the definition above, one can define marked $(2,2)$-sketches to be $\mF$-sketches. 

The tensor product $\otimes_{w',w}$ of marked $(2,2)$-categories lifts to $\mF$-sketches (\cite{ABK24}, 6.15); for $\mF$-sketches $\Ss_1, \Ss_2$, we can equip $\Ss_1\otimes_{w',w}\Ss_2$ with a sketch structure in such a way that for any sketch $\T$, an $\mF$-functor $\Ss_1\otimes_{w',w}\Ss_2\to \T$ is a model if and only if the precomposition with $\Ss_1\otimes_{w',w}1\xrightarrow{1\otimes S_2}\Ss_1\otimes_{w',w}\Ss_2$, $1\otimes_{w',w}\Ss_2\xrightarrow{S_1\otimes1 }\Ss_1\otimes_{w',w}\Ss_2$ gives us models $\Ss_1\to \T$, $\Ss_2\to \T$ for any objects $S_i$ of of $\Ss_i$. Note that this generalizes \ref{def: Lawvere tensor}. Defining $\Mod_{w',w}(\Ss,\T)$ as a full subcategory of $\Fun_{w',w}(\Ss,\T)$ spanned by models, one can make $(\mF\text{-}\Sketch, \otimes_{w',w},1)$ into a biclosed monoidal category again, generalizing \ref{lemma: sketchy equivalence}:% So, for example if $\T$ is a Lawvere $2$-theory viewed as an $\Psi$-$\mF$-sketch and $\C$ is a $2$-category with finite powers, then a model $\T\otimes_{s,w}\T\to \C$ is a functor of $2$-variables which is a model when restricted to either of the variables. There is a useful version of the previous proposition using the tensor product of sketches that we are going to exploit later: 

\begin{proposition} \cite[Prop. 7.1]{ABK24} \label{prop: swapping lax and colax}
    For any sketches $\Ss_1,$ $\Ss_2,$ $\T$ and any weaknesses $w', w\in W$, we have the following isomorphisms:  \begin{align}
        \Mod_{w',w}(\Ss_1\otimes_{w',w}\Ss_2,\T)&\simeq \Mod_{w',w}(\Ss_1,\Mod_{w',w}(\Ss_2,\T)), \\
        \Mod_{w'^*,w^*}(\Ss_1\otimes_{w',w}\Ss_2,\T)&\simeq \Mod_{w'^*,w^*}(\Ss_2,\Mod_{w'^*,w^*}(\Ss_1,\T)),\\
        \Mod_{w',w}(\Ss_1,\Mod_{w'^*,w^*}(\Ss_2,\T))&\simeq \Mod_{w'^*,w^*}(\Ss_2,\Mod_{w',w}(\Ss_1,\T)).
    \end{align}
\end{proposition}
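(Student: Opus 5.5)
The plan is to deduce these isomorphisms from the unmarked-sketch-free statement, Proposition \ref{prop: Gray universal}, by restricting to full subcategories of models. This follows the pattern of Lemma \ref{lemma: sketchy equivalence}.

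\textbf{First isomorphism.} Proposition \ref{prop: Gray universal}, applied to the underlying marked $(2,2)$-categories (the $\mF$-categories $\Ss_{1,0},\Ss_{2,0},\T_0$), gives an isomorphism
\[
\Fun_{w',w}(\Ss_1\otimes_{w',w}\Ss_2,\T)\cong\Fun_{w',w}(\Ss_1,\Fun_{w',w}(\Ss_2,\T)).
\]
Both sides of the desired isomorphism are full subcategories of these, so two things need checking.

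First, $\Mod_{w',w}(\Ss_2,\T)$ must itself carry a sketch structure, with a fully faithful sketch morphism into $\Fun_{w',w}(\Ss_2,\T)$. This is what makes $\Mod_{w',w}(\Ss_1,\Mod_{w',w}(\Ss_2,\T))$ a full subcategory of the right-hand side. The cones should be the ones created pointwise: a cone in $\Mod_{w',w}(\Ss_2,\T)$ is declared distinguished iff its evaluation at every object $S_2$ is in $\Gamma_\T$. The inclusion is then a sketch morphism by definition.

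Second, under the transposition, an $\mF$-functor $F$ out of $\Ss_1\otimes_{w',w}\Ss_2$ must correspond to one whose transpose $\hat F$ lands in $\Mod_{w',w}(\Ss_2,\T)$ and sends cones of $\Ss_1$ to distinguished cones. By the description of the sketch structure on $\Ss_1\otimes_{w',w}\Ss_2$, $F$ is a model iff each restriction $F(S_1,-)$ and each $F(-,S_2)$ is a model. Now $F(S_1,-)=\hat F(S_1)$. Further, $F(-,S_2)$ preserves the cones iff $\hat F$ sends each cone of $\Ss_1$ to a pointwise-distinguished cone. These two conditions match exactly.

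\textbf{Second and third isomorphisms.} These follow in the same way from the other two isomorphisms of Proposition \ref{prop: Gray universal}. One uses the same pointwise sketch structure on $\Mod_{w'^*,w^*}(-,\T)$. The third isomorphism involves no tensor product: there one only needs that both sides are the full subcategory of $\Fun_{w',w}(\Ss_1,\Fun_{w'^*,w^*}(\Ss_2,\T))$ on the functors of two variables that are models in each variable separately.

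\textbf{Main obstacle.} The delicate step is the one involving weighted cones with nontrivial weights $W_i\colon J_i\to\mF$. One must check that a cone in the functor $\mF$-category $\Fun_{w',w}(\Ss_2,\T)$ is determined, as a weighted cone, by its components at each $S_2$. One must also check that the lax naturality $2$-cells do not obstruct this. In particular, the marking on $\Fun_{w',w}$, which consists of the $w'$-transformations, must be the one that makes the tight part of the cone condition work.

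The approach is to express the cone condition as a family of $\mF$-morphisms $W_i\Rightarrow\T_0(X,D-)$ and to use that limits in $\Fun_{w',w}(\Ss_2,\T)$ of the relevant weights are computed pointwise. This in turn should follow from the enriched Yoneda lemma in $\mF$-$\Cat$ applied to the closed structure of Proposition \ref{prop: Gray universal}. I would first attempt this pointwise-cones lemma in isolation, treating it as the technical core; the rest is bookkeeping.
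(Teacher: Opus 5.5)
Your proposal is correct and follows essentially the paper's route. The paper simply cites \cite[Prop.~7.1]{ABK24}, and its own analogue, Lemma~\ref{lemma: sketchy equivalence}, is proved exactly as you propose: restrict the currying isomorphisms of Proposition~\ref{prop: Gray universal} to the full sub-$\mF$-categories of models and compare the defining conditions.

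The step you single out as the main obstacle is not actually needed. A sketch morphism only has to send cones into $\Gamma_\T$, not to limit cones. With your pointwise cone structure you only use two facts: each $\ev_{S_2}$ is an $\mF$-functor (tight transformations have tight components), and $\ev_{S_2}\circ\hat F=F(-,S_2)$ by naturality of the currying. So you never need limits in $\Fun_{w',w}(\Ss_2,\T)$ to be computed pointwise, which in any case fails in general for lax $w$.
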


%To upgrade this to a statement about $\Psi$-$\mF$-sketches, we need to argue that $\Mod_{w,s}(-,-)$ is closed under the property of being an $\Psi$-$\mF$-sketch. This follows in bigger generality from \cite[A.2]{ABK24}. 

\begin{notation}
    For any pair of $\mF$-sketches $\Ss$, $\T$, put $\Mod_w(\Ss,\T):=\Mod_{s,w}(\Ss,\T)$. We denote the hom-categories in $\Mod_w(\Ss,\T)$ by $\Hom_w(-,-)$.
\end{notation}

The upshot of working with sketches is that we can make the semantic side into a sketch as well. Namely, for any class $\Phi$ of weights, we can make any $\Phi$-complete $(2,2)$-category $\C$ into an $\mF$-sketch by choosing the collection of weighted cones $\Gamma_{\C}$ to be all weighted $\Phi$-limit cones and marking all the maps appearing in these cones. Then, the objects in the internal hom $\Mod_w(\Ss, \C)$ of sketches are exactly the functors $\Ss\to \C$ which sends the weighted cones in $\Gamma_{\Ss}$ weighted limit cones in $\C$. In particular, if $\Ss$ is a Lawvere $(2,2)$-theory $(\T,\theta)$, viewed as an $\mF$-sketch with marked cells being those in the image of $\theta$, and cones being all cones for finite products (alternatively, weighted cones for finite powers), we get that $\Mod_w(\T,\C)$ as an internal hom in $\mF$-sketches agrees with $\Mod_w(\T,\C)$ defined in \ref{subsec: Lawvere theories}. Indeed, although we now consider more restrictive marking on $\C$ than in the subsection, the product preservation force any model $\T\to \C$ to respect this marking.

\begin{definition} \label{def: w-commutativity II}
    Let $\Ss$ be an $\Phi$-$\mF$-sketch (i.e., marked $\Phi$-$(2,2)$-sketch), $\X\colon \Ss\to \C$ a model. 
    \begin{enumerate}
        \item Define $\IntAlg(\X):=\Hom_l(*,\X)$, $\IntCoalg(\X):=\Hom_{l^*}(*,\X)$.
        \item Define a $w$-commutativity on $\Ss$ to be a pseudomonoid structure $\mu\colon \Ss\otimes_{s,w}\Ss\to \Ss$, $u\colon 1\to \Ss$ in $(\mF\text{-}\Sketch,\otimes_{s,w},1)$.
    \end{enumerate}
\end{definition}

We can easily see that this generalizes our definitions for Lawvere $(2,2)$-theories. 

\begin{remark}
    A pleasant consequence of working with $\mF$-sketches in that we get a converse to Proposition \ref{prop: lifting the models functor}. Observe that $\Mod_{w}(\T,-)$ is an $(\Cat^{\mathfrak{m}}_{(2,2)},\otimes_{s,w},1)$-enriched presheaf on $\mF\text{-}\Sketch$. Denote by $\U_1,$ $\U_2$ the forgetful natural transformations of presheaves $\Mod_w(\T,\Mod_w(\T,-))\to \Mod_w(\T,-)$ induced by evaluation at $1_{\T}$ in first or second variable, respectively. Then by the enriched Yoneda lemma, a natural transformation which is a section to both $\U_1,\U_2$ is equivalent to a morphism of $\mF$-sketches $\mu\colon \T\otimes_{s,w}\T\to \T$ strictly unital with the unit $1_{\T}$. We will see in \ref{prop: miracle associativity} that this data uniquely extends to a $w$-commutativity structure on $\T$. 
\end{remark}

\subsection{Algebraic patterns}
In the $\infty$-categorical literature, there is a choice of a syntactic category used more often that Lawvere theories: algebraic patterns. Comparing to the subsection above, algebraic patterns are essentially marked sketches with extra structure. 

\begin{definition} \cite[Def. 2.1, 2.7]{CH21} An algebraic pattern is an $\infty$-category $\O$ equipped with the data of a factorization system $(\O^{\inert},\O^{\act})$ and a full subcategory $\O^{\el}\subseteq \O^{\inert}$. For any $\infty$-category $\C$ having all limits of the shape $\O^{\el}_{X/}:=\O^{\el}\times_{\O^{\inert}}\O^{\inert}_{X/}$, $X\in \O$, a Segal $\O$-object in $\C$ is a functor $F\colon \O\to \C$ such that for every $X\in \O$ the canonical map $F(X)\to \lim_{E\in \O^{\el}_{X/}}F(E)$ is an equivalence. 
\end{definition}

Note that the definition of a Segal object makes sense also for $(\infty,2)$-categories having all limits of said shapes; we call them $\O$-complete. We can view any algebraic pattern as a marked $(\infty,2)$-category (with all $2$-cells invertible) by marking the equivalences and inert $1$-cells (i.e., morphisms in $\O^{\inert}$). 

\begin{definition}
    For an algebraic pattern $\O$ and an $\O$-complete $(\infty,2)$-category, define $\Mod_{w}(\O,\C)$ to be the full $(\infty,2)$-subcategory of $\Fun_{s,w}(\O,\C)$ spanned by Segal objects.
\end{definition}

In Section $5$ of \textit{loc. cit.}, the category of algebraic patterns is defined and shown to have products. In particular, for patterns $\O,$ $\P$, we have $\Mod_s(\O\times \P, \C)\simeq \Mod_s(\O,\Mod_s(\P,\C))$. 
\begin{definition} \label{def: w-commutativity III}
    A commutativity structure on an algebraic pattern $\O$ is a monoidal structure on $(\O,\text{inert})$ in $(\Cat^{\mathfrak{m}}_{(\infty,2)},\times)$ such that $\mu\colon \O\times \O\to \O$ is a Segal morphism (Def. 4.2. in \textit{loc. cit.}) and the unit is an elementary object.
\end{definition}

The requirement that $\mu$ be a Segal morphism yields a lifting functor $\Mod_s(\O,\C)\xrightarrow{\mu^*}\Mod_s(\O\times \O,\C)\simeq \Mod_s(\O,\Mod_s(\O,\C))$ similarly as in the previous sections.

%\begin{example}    One can check that the category $\F_*$ has a commutativity structure with $1$ being a unit and $\mu(m,n)=m\cdot n$, \end{example}

\section{Syntax of $w$-commutativity} \label{sec: commutativity}

Recall that $w$-commutativity on a Lawvere $2$-theory $\T$ is an $E_1$-monoid structure $\mu\colon \T\otimes_{s,w}\to \T$, $u\colon 1\to \T$ such that $\mu$ preserves products in each variable and $u(1)=1_{\T}$. In particular, a part of the structure is to give $w$-cells $\mu(\Sigma_{\alpha\beta})$ as below for any $\alpha\colon m\to n$, $\beta\colon k\to l$.
% https://q.uiver.app/#q=WzAsNCxbMCwwLCJcXG11KG0sIGspIl0sWzAsMSwiXFxtdShuLCBrKSJdLFsxLDAsIlxcbXUobSwgbCkiXSxbMSwxLCJcXG11KG4sbCkiXSxbMCwxLCJcXG11KFxcYWxwaGEsIGspIiwyXSxbMCwyLCJcXG11KG0sIFxcYmV0YSkiXSxbMiwzLCJcXG11KFxcYWxwaGEsIGwpIl0sWzEsMywiXFxtdShuLCBcXGJldGEpIiwyXSxbNSw3LCJcXG11KFxcU2lnbWFfe1xcYWxwaGFcXGJldGF9KSIsMCx7Im9mZnNldCI6MSwic2hvcnRlbiI6eyJzb3VyY2UiOjQwLCJ0YXJnZXQiOjQwfX1dXQ==
\[\begin{tikzcd}
	{\mu(m, k)} & {\mu(m, l)} \\
	{\mu(n, k)} & {\mu(n,l)}
	\arrow[""{name=0, anchor=center, inner sep=0}, "{\mu(m, \beta)}", from=1-1, to=1-2]
	\arrow["{\mu(\alpha, k)}"', from=1-1, to=2-1]
	\arrow["{\mu(\alpha, l)}", from=1-2, to=2-2]
	\arrow[""{name=1, anchor=center, inner sep=0}, "{\mu(n, \beta)}"', from=2-1, to=2-2]
	\arrow["{\mu(\Sigma_{\alpha\beta})}", shift right, between={0.4}{0.6}, Rightarrow, from=0, to=1]
\end{tikzcd}\]

We will see that $\mu(m,n)=m\cdot n$ and thus we are filling the squares of the form \ref{diagram: comm} with come coherent system of $w$-cells. In fact, Lemma \ref{lemma: syntactic} will tell us that except filling these cells, $\mu$ is uniquely determined. Theorem \ref{prop: miracle associativity} will tell us even more: surprisingly, $\mu$ turns out to be associative ``for free'', without even specifying the associativity in the definition. To this end, and also for the further discussion, we define some weaker variants of $w$-commutativity. In the following, we view a Lawvere $(2,2)$-theory as a marked $(2,2)$-sketch. Write $u\colon 1\to \T$ for a map picking the object $1_{\T}$.

\begin{definition} Let $\rho\colon \Ss\to\T$ be a product-preserving functor mapping $1_{\Ss}$ to $1_{\T}$. A \textit{$w$-commutation} of $\Ss$ over $\T$ is a map $\mu\colon \Ss\otimes_{s,w}\Ss\to \T$ together with isomorphisms $\lambda^L\colon \mu (1\otimes u)\cong \rho$, $\lambda^R\colon \mu(u\otimes1)\cong \rho$ such that $\lambda^L_{1_{\T}}=\lambda^R_{1_{\T}}$. A \textit{magmal $w$-commutativity} on $\T$ is a $w$-commutation with respect to $\rho=\id\colon \T\to\T$.
\end{definition}

We see that a magmal $w$-commutativity on $\T$ is a $w$-commutativity without any associativity requirements. We will soon see that any magmal commutativity is strictly unital, i.e. we can choose $\lambda^L$, $\lambda^R$ to be identities.

\subsection{Syntactic exploration: Lawvere $(2,2)$-theories} We are going to see that if $\T$ is a Lawvere $(2,2)$-theory, the notion of a $w$-commutativity is quite rigid. In particular, if $\mu\colon \T\otimes_{s,w}\T\to \T$ is a $w$-commutativity, we are going to see that on objects, necessarily $\mu(m,n)=m\cdot n$. In fact, the only variability we have is in how we fill in the squares (\ref{diagram: comm}).

\begin{lemma}
    Let $\mu$ be any magmal $w$-commutativity on $\T$. Then $\theta\colon \F^{op}\to \T$ is a homomorphism of magmas with respect to $\mu$ and the cocartesian monoidal structure on $\F^{op}$.
\end{lemma}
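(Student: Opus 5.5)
The plan is to reduce the statement to Lemma \ref{lemma: unique on F} by restricting $\mu$ along $\theta$ in both variables. Consider the composite
$$\mu_0\colon \F^{op}\times\F^{op}\xrightarrow{}\F^{op}\otimes_{s,w}\F^{op}\xrightarrow{\theta\otimes\theta}\T\otimes_{s,w}\T\xrightarrow{\mu}\T.$$
The first map exists because every $1$-cell of $\F^{op}$ is marked: the Gray cells $\Sigma_{fg}$ of $\F^{op}\otimes_{s,w}\F^{op}$ are all required to be strict, so this tensor product is simply the cartesian product. Likewise, every Gray cell $\Sigma_{\theta(f)\theta(g)}$ in $\T\otimes_{s,w}\T$ is an identity, since $\theta(f)$ and $\theta(g)$ are inert, hence marked. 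Therefore $\mu_0$ is an honest functor of two variables, and it preserves products in each variable because $\theta$ preserves products and $\mu$ does so in each variable.

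Next I determine $\mu_0$ on objects, following the argument of Lemma \ref{lemma: unique on F}. Product preservation in each variable gives $\mu(m,n)\cong m\cdot\mu(1,n)$ and $\mu(1,n)\cong n\cdot\mu(1,1)$. The unit condition gives $\mu(1,1)=\mu(u\otimes 1)(1_\T)\cong 1$, via $\lambda^R_{1_\T}=\lambda^L_{1_\T}$. Hence $\mu(m,n)\cong m\cdot n$. Since $\T$ is identity-on-objects over $\F^{op}$ and the products in $\T$ are the $\theta$-images of those in $\F^{op}$, I would then transport the structure along these canonical isomorphisms so that $\mu_0$ takes values in the image of $\theta$. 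On morphisms, preservation of products in each variable forces $\mu(f,n)=f\cdot n$ and $\mu(m,g)=m\cdot g$ up to the canonical isomorphisms. For $\theta(f)$ with $f$ inert these are again inert, so $\mu_0$ factors as $\theta\circ\odot$ for a functor $\odot\colon\F^{op}\times\F^{op}\to\F^{op}$ that preserves products in each variable.

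The unitors $\lambda^L,\lambda^R$ restrict along $\theta$ to make $\odot$ a magmal structure with unit $1$. Lemma \ref{lemma: unique on F} then identifies $\odot$ with the cocartesian monoidal structure, $m\odot n=m\cdot n$. This yields exactly the equalities $\mu\circ(\theta\otimes\theta)=\theta\circ\odot$ and $\mu(\theta u)=u$ that make $\theta$ a homomorphism of magmas.

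The main obstacle is the factorization of $\mu_0$ through $\theta$ on the nose, which makes $\theta$ a strict rather than pseudo magma map. The isomorphisms $\mu(m,n)\cong m\cdot n$ only hold up to canonical product comparison isomorphisms, and I must check that these are themselves images of maps in $\F^{op}$. They are, because they are built from product projections, which are inert. The unit compatibility must also be checked. If strictness cannot be achieved directly, I would accept a homomorphism up to these canonical inert isomorphisms, which is harmless because the marking contains them.
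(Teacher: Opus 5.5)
\textbf{The gap.} The problem is the step where you conclude that $\mu_0$ factors as $\theta\circ\odot$ for a functor $\odot\colon\F^{op}\times\F^{op}\to\F^{op}$. Knowing that every $\mu_0(f,g)$ is inert only says that it lies in the image of $\theta$; this already follows from $\mu$ preserving markings, so no transport is needed for it. To get a functor $\odot$ you must choose preimages along $\theta$ and then show three things:
\begin{enumerate}
\item they compose correctly, $\odot(f'f,n)=\odot(f',n)\odot(f,n)$;
\item they satisfy the interchange $\odot(f,1)\odot(1,g)=\odot(1,g)\odot(f,1)$;
\item the preimages of the comparison isomorphisms are invertible in $\F^{op}$, so that transporting along them keeps you inside the inert maps and Lemma~\ref{lemma: unique on F} applies.
\end{enumerate}
All of these identities hold in $\T$. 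They descend to $\F^{op}$ only if $\theta$ is injective on inert maps. Your proposal never addresses this. The obstacle you single out, whether the comparison isomorphisms lie in the image of $\theta$, is not the real one: lying in the image is not enough to lift functorially.

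\textbf{What the paper does.} Faithfulness is precisely what the paper's proof supplies. It restricts $\mu$ to $\T_I\times\T_I\cong\T_I\otimes_{s,w}\T_I$, which is essentially your $\mu_0$, and notes that $\theta\colon\F^{op}\to\T_I^{\le 1}$ is bijective on objects and full. If $\theta$ is not faithful, two distinct projections $n\to 1$ become equal in $\T$. This makes $1$ (sub)terminal, so parallel $1$-cells of $\T$ coincide and the claim is trivial. Otherwise $\theta$ is an isomorphism onto $\T_I^{\le 1}$, the restricted structure transports to a product-preserving magmal structure on $\F^{op}$, and Lemma~\ref{lemma: unique on F} finishes. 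The degenerate case really has to be split off: when $1$ is terminal, product preservation is automatic, $\mu(m,n)$ need not equal $m\cdot n$, and lifts of inert maps are far from unique, so nothing in your argument pins down $\odot$.

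\textbf{How to repair your route.} Adding this dichotomy repairs your argument. Alternatively, you can avoid lifting altogether. For each $n$, $\mu_0(-,n)$ and $\theta(-\cdot n)$ are product-preserving functors out of $\F^{op}$, identified at $1$ by $\lambda^R_n$. Since $\ev_1$ is an equivalence, there is a unique natural isomorphism $\phi$ between them with that component at $1$. Naturality in $n$ follows from naturality of $\lambda^R$, tested against the projections. The identity $\phi_{m,1}=\lambda^L_m$ follows from the same uniqueness together with $\lambda^L_{1_{\T}}=\lambda^R_{1_{\T}}$. This exhibits $\theta$ as a magma homomorphism up to coherent isomorphism, without needing Lemma~\ref{lemma: unique on F}.
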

\begin{proof}
Recall we have the marking $I$ on $\T$ by inert cells. We are going to factor $\theta\colon \F^{op}\to \T$ as a composition of four maps $\F^{op}\to \T_I^{\le1}\to \T_I\to \T$ and show that each of them is a homomorphism. Here, $\T_I$ is the locally full $(2,2)$-subcategory of $\T$ spanned my marked $1$-cells, $\T_{I}^{\le1}$ its underlying $1$-category.

First, note that if we restrict $\mu$ to $\T_I\times \T_I\cong \T_I\otimes_{s,w}\T_I\subset \T\otimes_{s,w}\T$, the image will land in $\T_I$ as $\mu$ is a morphism of marked $(2,2)$-categories. Therefore, the inclusion $\T_I\to\T$ is a homomorphism of magmas. The inclusion $\T_I^{\le1}\to \T_I$ is a again a homomorphism of magmas for obvious reasons. Clearly, the map $\theta$ factors through $\T_I^{\le1}$ and the latter still have finite products (as all the product projections in $\T$ comes from $\F^{op}$). We will conclude by showing that either $\T_I^{\le1}$ is contractible (and hence $\theta$ is a homomorphism for trivial reasons) or the map $\F^{op}\to \T_I^{\le1}$ is an equivalence. 

The restriction $\theta$ to a functor $\F^{op}\to \T_I^{\le1}$ is bijective on objects and full (as $\T_{I}^{\le1}$ is the essential image of $\F^{op}$ in $\T$). Suppose it is not faithful, i.e. we have an inert map in $\T$ which comes from two different maps in $\F^{op}$. WLOG we can assume that it is a map $n\to 1$, so we get that two projections from the $n$-th power of $1$ are the same. But that implies (in any category with products, really) that $1$ is the terminal object, and so all the objects of $\T$, being precisely the powers of $1$, are isomorphic, and $\T_I^{\le1}$ becomes contractible. 
\end{proof}

Leveraging the fact that $\theta$ is a homomorphism of magmas $(\F^{op},\text{cocartesian})\to (\T,\mu)$, we obtain the following:
\begin{itemize}
    \item $u(*)=1$, $\mu(m,n)=m\cdot n$, (previous lemma),
    \item for any $1$-cell $\alpha$ in $\T$, $\mu(\id_n,\alpha)=n\cdot \alpha,$ $\mu(\alpha,\id_n)=\alpha\cdot n$, (preservation of powers), 
    \item for any $2$-cell $s$ in $\T$,  $\mu(\id_n,s)=n\cdot s,$ $\mu(s,\id_n)=s\cdot n$ (preservation of powers).
\end{itemize} 

This observation can be improved using $f$-commutativity.

\begin{lemma} \label{lemma: f-commutativity}
    There exists a unique magmal $f$-commutativity structure $\mu\colon \T\otimes_{s,f}\T\to \T$, completely specified by the requirements above. Moreover, it is associative.
\end{lemma}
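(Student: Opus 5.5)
Since the funny tensor product $\T\otimes_{s,f}\T$ has no Gray cells, a functor out of it is determined by its two families of restrictions, with no interchange constraint between them. Concretely, I use the universal property $\Fun_{s,f}(\T\otimes_{s,f}\T,\T)\cong\Fun_{s,f}(\T,\Fun_{s,f}(\T,\T))$. Alternatively, I use the colimit description of $\otimes_{f}$ as the pushout of $\T\times\mathrm{ob}\,\T$ and $\mathrm{ob}\,\T\times\T$ over $\mathrm{ob}\,\T\times\mathrm{ob}\,\T$. With either description, a $2$-functor $\mu\colon \T\otimes_{s,f}\T\to\T$ amounts to the following data: an object function, a $2$-functor $\mu(m,-)\colon\T\to\T$ for each $m$, and a $2$-functor $\mu(-,n)$ for each $n$, agreeing on objects. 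Marked-ness imposes one extra condition: the generating cells $(\alpha,1)$ and $(1,\beta)$ with $\alpha,\beta$ inert must go to inert cells, and since $\otimes_{s,f}$ over marked cells is $\otimes_{s,s}$, the squares involving inert cells must commute strictly. The bulleted consequences of the previous lemma force $\mu(m,n)=m\cdot n$, $\mu(\id_m,\beta)=m\cdot\beta$, $\mu(\alpha,\id_n)=\alpha\cdot n$, and the analogous formulas on $2$-cells. Uniqueness is then immediate, because the $1$- and $2$-cells of the form $(\alpha,1)$, $(1,\beta)$, $(s,1)$, $(1,t)$ generate $\T\otimes_{s,f}\T$.

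For existence, I define $\mu$ by these formulas and check the following.
\begin{enumerate}
\item Each $m\cdot(-)$ and $(-)\cdot n$ is a genuine $2$-functor. The first is the $m$-fold power, which is strictly functorial because the powers in $\T$ are chosen via $\theta$. The second is conjugation of the power by the symmetry isomorphisms $\theta(\sigma)$, so it is functorial as well.
\item The marked squares commute strictly. When $\alpha$ or $\beta$ is inert, the square is the image under the product structure of a naturality square, because inert cells are components of the structure isomorphisms and projections of powers. This is the same argument as in the $1$-dimensional section, where $\F^{op}$ is commutative.
\item $\mu$ preserves products in each variable, since $m\cdot(-)$ and $(-)\cdot n$ do.
\item The unit laws hold on the nose: $1\cdot\alpha=\alpha=\alpha\cdot1$. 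This also gives strict unitality.
\end{enumerate}

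For associativity, I compare the two composites $\mu(\mu\otimes1)$ and $\mu(1\otimes\mu)$ out of $\T\otimes_{s,f}\T\otimes_{s,f}\T$. Again by the absence of Gray cells, it suffices to compare them on generators in each of the three variables separately. On objects they give $(m\cdot n)\cdot k$ and $m\cdot(n\cdot k)$, which are equal objects. In the third variable they give $(mn)\cdot\gamma$ and $m\cdot(n\cdot\gamma)$. In the middle variable they give $m\cdot(\beta\cdot k)$ and $(m\cdot\beta)\cdot k$. In the first variable they give $(\alpha\cdot n)\cdot k$ and $\alpha\cdot(nk)$. In each case the two sides are conjugates of a power of the relevant cell by symmetry and associativity isomorphisms coming from $\theta$. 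These isomorphisms agree because the cocartesian structure on $\F^{op}$ is coherent, so I can take the associator to be the identity, or the canonical coherence isomorphism if one prefers. The pentagon is then inherited from $\F^{op}$ through $\theta$.

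The main obstacle is the bookkeeping in step 2 and in the associativity comparison: showing that the cells of the form $\alpha\cdot n$, defined by conjugation with symmetries, behave strictly with respect to inert maps and compose as claimed. My plan is to reduce everything to naturality of the product-comparison isomorphisms in $\T$ together with coherence of symmetric monoidal structure on $\F^{op}$ (via \cite[Thm. 1.1]{Ark25} as in Lemma \ref{lemma: unique on F}).
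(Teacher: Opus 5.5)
Your proposal is correct and follows essentially the paper's route. Existence comes from the fact that the squares with an inert side commute, so the forced map factors through $\T\otimes_{s,f}\T$; uniqueness holds because the cells $(\alpha,1)$, $(1,\beta)$, $(s,1)$, $(1,t)$ generate; and associativity is checked on generators in each variable, reducing to coherence for the chosen powers in $\T$ and for the cocartesian structure on $\F^{op}$. The paper does the same thing by splitting each naturality square of the $\F^{op}$-associator into a power-coherence square flanked by two squares in the image of $\theta$.

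The one step to tighten is your passage from ``coherent'' to ``I can take the associator to be the identity.'' Coherence only shows that the conjugating isomorphisms differ by the canonical associator, and that associator is literally the identity only for a positional (e.g.\ lexicographic) choice of identifications $[m]\times[n]\cong[mn]$. You would have to fix that choice explicitly, and the paper's subsequent remark instead treats these associators as nontrivial permutations. So your fallback with the canonical coherence isomorphism is the version that matches the paper.
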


\begin{proof}
    The criteria above define a map $\T\otimes_{f}\T\to \T$, which in fact factors through $\T\otimes_{f}\T\to \T\otimes_{s,f}\T$ as all the squares (\ref{diagram: comm}) with $\alpha$ or $\beta$ inert commute. So, there exists a magmal $f$-commutativity, and there is at most one satisfying the criteria above, so it is unique.
    
    To see that $\mu$ is associative, we equip it with associators $(m\cdot n)\cdot k\cong m\cdot (n\cdot k)$ coming from $\F^{op}$. Again, these are clearly the only compatible choice. We just need to show that they are natural with respect to all $1$- and $2$-cells in $\T$. For a $1$-cell $\alpha\colon n\to n'$, we need to show that the following square (and its variants with $\alpha$ at the first or the third position) commutes:

    % https://q.uiver.app/#q=WzAsNCxbMCwwLCIobVxcY2RvdCBuKVxcY2RvdCBrIl0sWzIsMCwiKG1cXGNkb3QgbicpXFxjZG90IGsiXSxbMiwxLCJtXFxjZG90IChuJ1xcY2RvdCBrKSJdLFswLDEsIm1cXGNkb3QgKG5cXGNkb3QgaykiXSxbMCwzLCJcXGNvbmciLDJdLFsxLDIsIlxcY29uZyJdLFswLDEsIihtXFxjZG90IFxcYWxwaGEpXFxjZG90IGsiXSxbMywyLCJtXFxjZG90IChcXGFscGhhXFxjZG90IGspIiwyXV0=
\begin{equation} \label{dia: assoc  comp in T}\begin{tikzcd}
	{(m\cdot n)\cdot k} && {(m\cdot n')\cdot k} \\
	{m\cdot (n\cdot k)} && {m\cdot (n'\cdot k)}
	\arrow["{(m\cdot \alpha)\cdot k}", from=1-1, to=1-3]
	\arrow["\cong"', from=1-1, to=2-1]
	\arrow["\cong", from=1-3, to=2-3]
	\arrow["{m\cdot (\alpha\cdot k)}"', from=2-1, to=2-3]
\end{tikzcd}\end{equation}

To see that, first suppose $\alpha$ is in fact at the third position (as in the middle square below); then the commutativity follows from coherences for finite powers in $\T$. Then, we write the diagram above as a composition of three squares:

% https://q.uiver.app/#q=WzAsOCxbMCwwLCIobVxcY2RvdCBuKVxcY2RvdCBrIl0sWzAsMSwibVxcY2RvdCAoblxcY2RvdCBrKSJdLFsyLDEsIm1cXGNkb3QgKGtcXGNkb3QgbicpIl0sWzIsMCwia1xcY2RvdCAobVxcY2RvdCBuJykiXSxbMywwLCIobVxcY2RvdCBuJylcXGNkb3QgayJdLFszLDEsIm1cXGNkb3QgKG4nXFxjZG90IGspIl0sWzEsMCwia1xcY2RvdCAobVxcY2RvdCBuKSJdLFsxLDEsIm1cXGNkb3QgKGtcXGNkb3QgbikiXSxbMCwxLCJcXGNvbmciLDJdLFszLDIsIlxcY29uZyJdLFszLDQsIlxcY29uZyJdLFs0LDUsIlxcY29uZyJdLFsyLDUsIlxcY29uZyIsMl0sWzYsMywia1xcY2RvdCAobVxcY2RvdCBcXGFscGhhKSJdLFs3LDIsIm1cXGNkb3QgKGtcXGNkb3QgXFxhbHBoYSkiLDJdLFsxLDcsIlxcY29uZyIsMl0sWzAsNiwiXFxjb25nIl0sWzYsNywiXFxjb25nIiwyXV0=
\[\begin{tikzcd}
	{(m\cdot n)\cdot k} & {k\cdot (m\cdot n)} & {k\cdot (m\cdot n')} & {(m\cdot n')\cdot k} \\
	{m\cdot (n\cdot k)} & {m\cdot (k\cdot n)} & {m\cdot (k\cdot n')} & {m\cdot (n'\cdot k)}
	\arrow["\cong", from=1-1, to=1-2]
	\arrow["\cong"', from=1-1, to=2-1]
	\arrow["{k\cdot (m\cdot \alpha)}", from=1-2, to=1-3]
	\arrow["\cong"', from=1-2, to=2-2]
	\arrow["\cong", from=1-3, to=1-4]
	\arrow["\cong", from=1-3, to=2-3]
	\arrow["\cong", from=1-4, to=2-4]
	\arrow["\cong"', from=2-1, to=2-2]
	\arrow["{m\cdot (k\cdot \alpha)}"', from=2-2, to=2-3]
	\arrow["\cong"', from=2-3, to=2-4]
\end{tikzcd}\]

We already commented on the commutativity of the middle square. The right and left squares are in the image of $\theta$ and one can easily show that they commute in $\F^{op}$ (coherences for coproducts). The case where $\alpha$ would be at the first position in (\ref{dia: assoc  comp in T}) is similar.

Finally, let $t\colon \alpha\Rightarrow \beta$ be any $2$-cell in $\T$. We need to show the following equation holds, together with its variants with $\alpha,\beta, t$ at the second or third position.

% https://q.uiver.app/#q=WzAsOCxbMCwwLCIoblxcY2RvdCBrKVxcY2RvdCBsIl0sWzAsMSwiblxcY2RvdCAoa1xcY2RvdCBsKSJdLFsxLDAsIihuJ1xcY2RvdCBrKVxcY2RvdCBsIl0sWzEsMSwibidcXGNkb3QgKGtcXGNkb3QgbCkiXSxbMiwwLCIoblxcY2RvdCBrKVxcY2RvdCBsIl0sWzIsMSwiblxcY2RvdCAoa1xcY2RvdCBsKSJdLFszLDAsIihuJ1xcY2RvdCBrKVxcY2RvdCBsIl0sWzMsMSwibidcXGNkb3QgKGtcXGNkb3QgbCkiXSxbMCwxLCJcXGNvbmciLDJdLFswLDIsIihcXGFscGhhXFxjZG90IGspXFxjZG90IGwiLDAseyJjdXJ2ZSI6LTN9XSxbMCwyLCIoXFxiZXRhXFxjZG90IGspXFxjZG90IGwiLDIseyJjdXJ2ZSI6M31dLFsxLDMsIlxcYmV0YVxcY2RvdCAoa1xcY2RvdCBsKSIsMix7ImN1cnZlIjozfV0sWzIsMywiXFxjb25nIl0sWzUsNywiXFxiZXRhXFxjZG90IChrXFxjZG90IGwpIiwyLHsiY3VydmUiOjN9XSxbNCw2LCIoXFxhbHBoYVxcY2RvdCBrKVxcY2RvdCBsIiwwLHsiY3VydmUiOi0zfV0sWzQsNSwiXFxjb25nIiwyXSxbNiw3LCJcXGNvbmciXSxbNSw3LCJcXGFscGhhXFxjZG90IChrXFxjZG90IGwpIiwyLHsiY3VydmUiOi0zfV0sWzksMTAsIih0XFxjZG90IGspXFxjZG90IGwiLDFdLFsxNywxMywidFxcY2RvdCAoa1xcY2RvdCBsw7oiLDFdLFsxMiwxNSwiPSIsMyx7InNob3J0ZW4iOnsic291cmNlIjoyMCwidGFyZ2V0IjoyMH0sInN0eWxlIjp7ImJvZHkiOnsibmFtZSI6Im5vbmUifSwiaGVhZCI6eyJuYW1lIjoibm9uZSJ9fX1dXQ==
\[\begin{tikzcd}
	{(n\cdot k)\cdot l} & {(n'\cdot k)\cdot l} & {(n\cdot k)\cdot l} & {(n'\cdot k)\cdot l} \\
	{n\cdot (k\cdot l)} & {n'\cdot (k\cdot l)} & {n\cdot (k\cdot l)} & {n'\cdot (k\cdot l)}
	\arrow[""{name=0, anchor=center, inner sep=0}, "{(\alpha\cdot k)\cdot l}", curve={height=-18pt}, from=1-1, to=1-2]
	\arrow[""{name=1, anchor=center, inner sep=0}, "{(\beta\cdot k)\cdot l}"', curve={height=18pt}, from=1-1, to=1-2]
	\arrow["\cong"', from=1-1, to=2-1]
	\arrow[""{name=2, anchor=center, inner sep=0}, "\cong", from=1-2, to=2-2]
	\arrow["{(\alpha\cdot k)\cdot l}", curve={height=-18pt}, from=1-3, to=1-4]
	\arrow[""{name=3, anchor=center, inner sep=0}, "\cong"', from=1-3, to=2-3]
	\arrow["\cong", from=1-4, to=2-4]
	\arrow["{\beta\cdot (k\cdot l)}"', curve={height=18pt}, from=2-1, to=2-2]
	\arrow[""{name=4, anchor=center, inner sep=0}, "{\beta\cdot (k\cdot l)}"', curve={height=18pt}, from=2-3, to=2-4]
	\arrow[""{name=5, anchor=center, inner sep=0}, "{\alpha\cdot (k\cdot l)}", curve={height=-18pt}, from=2-3, to=2-4]
	\arrow["{(t\cdot k)\cdot l}"{description}, Rightarrow, from=0, to=1]
	\arrow["{=}"{marking, allow upside down}, draw=none, from=2, to=3]
	\arrow["{t\cdot (k\cdot l)}"{description}, Rightarrow, from=5, to=4]
\end{tikzcd}\]
The proof goes through analogously to the case of $1$-cells: if $\alpha,\beta, t$ are at the third position, then the equation holds via coherences for powers in $\T$, and for the other two situations we can reduce to this case using some squares in $\F^{op}$ that commute.
\end{proof}

\begin{remark}
    Notice that from the previous lemma it follows that any $w$-commutativity structure on a Lawvere $2$-theory $\T$ is in fact strictly unital, as we have $n\cdot 1=1$. It is also almost never strictly associative (only for Lawvere theories with $0\cong 1$) as the associator isomorphisms $m\cdot(n\cdot k)\cong (m\cdot n)\cdot k$ in $\F^{op}$ are in general nontrivial permutations of $mnk$.
\end{remark}

We can formulate the observations made above as follows: in order to define magmal $w$-commutativity on $\T$ for $w\in\{s,p,l,c\}$, it in fact suffices to fill in the squares featuring the Gray $w$-cells $\Sigma_{\alpha\beta}$ as below in a coherent way.

% https://q.uiver.app/#q=WzAsNCxbMCwwLCJtXFxjZG90IGsiXSxbMCwxLCJuXFxjZG90IGsiXSxbMSwwLCJtXFxjZG90IGwiXSxbMSwxLCJuXFxjZG90IGwiXSxbMCwxLCJcXGFscGhhXFxjZG90IGsiLDJdLFswLDIsIm1cXGNkb3QgXFxiZXRhIl0sWzIsMywiXFxhbHBoYVxcY2RvdCBsIl0sWzEsMywiblxcY2RvdCBcXGJldGEiLDJdLFs1LDcsIlxcbXUoXFxTaWdtYV97XFxhbHBoYVxcYmV0YX0pIiwwLHsib2Zmc2V0IjoxLCJzaG9ydGVuIjp7InNvdXJjZSI6NDAsInRhcmdldCI6NDB9fV1d
\[\begin{tikzcd}
	{m\cdot k} & {m\cdot l} \\
	{n\cdot k} & {n\cdot l}
	\arrow[""{name=0, anchor=center, inner sep=0}, "{m\cdot \beta}", from=1-1, to=1-2]
	\arrow["{\alpha\cdot k}"', from=1-1, to=2-1]
	\arrow["{\alpha\cdot l}", from=1-2, to=2-2]
	\arrow[""{name=1, anchor=center, inner sep=0}, "{n\cdot \beta}"', from=2-1, to=2-2]
	\arrow["{\mu(\Sigma_{\alpha\beta})}", shift right, between={0.4}{0.6}, Rightarrow, from=0, to=1]
\end{tikzcd}\]

We are going to make that precise in the lemma below.

\begin{notation}
   For $\alpha\colon m\to n$, $\beta\colon k\to l$ in $\T$, we denote by $\beta\otimes \alpha$ the composition $m\cdot k\xrightarrow{\alpha\cdot k}n\cdot k\xrightarrow{n\cdot \beta}n\cdot l$. 
\end{notation} 

\begin{lemma} \label{lemma: syntactic}
    (Syntactic criterion for $w$-commutativity.) To give a magmal $w$-commutativity structure $\mu\colon \T\otimes_{s,w}\T\to \T$ on $\T$ amounts to specifying a family of $w$-cells $\sigma_{\alpha\beta}\colon \alpha\otimes \beta\to \beta\otimes \alpha$ for all $1$-cells $\alpha,\beta$ in $\T$, satisfying the following coherence equations for any $1$-cells $\alpha\colon \beta,\gamma,\gamma'$ and any $2$-cell $s\colon \gamma'\to \gamma$ in $\T$:
    \begin{enumerate}
        \item $\sigma_{\alpha\beta}=1$ whenever $\alpha$ or $\beta$ is inert,
        \item for any $\alpha\colon m\to n$, denoting by $p_i$ the $i$-th projection map $n\to 1$, we have $\sigma_{\alpha\beta}=(\sigma_{(p_i\alpha)\beta})_i$ and similarly in the second variable,
        \item for any composable $\alpha\colon m\to n$, $\beta\colon n \to k$ and any $\gamma\colon p\to q$, the following equation (and its analog in the second variable):
        % https://q.uiver.app/#q=WzAsMTAsWzAsMCwicFxcY2RvdCBtIl0sWzEsMCwicFxcY2RvdCBuIl0sWzIsMCwicFxcY2RvdCBrIl0sWzAsMSwicVxcY2RvdCBtIl0sWzEsMSwicVxcY2RvdCBuIl0sWzIsMSwicVxcY2RvdCBrIl0sWzQsMCwicFxcY2RvdCBtIl0sWzUsMCwicFxcY2RvdCBrIl0sWzQsMSwicVxcY2RvdCBtIl0sWzUsMSwicVxcY2RvdCBrIl0sWzAsMywiXFxnYW1tYVxcY2RvdCBtIiwyXSxbMSw0LCJcXGdhbW1hXFxjZG90IG4iLDFdLFsyLDUsIlxcZ2FtbWFcXGNkb3QgayJdLFswLDEsInBcXGNkb3QgXFxhbHBoYSJdLFsxLDIsInBcXGNkb3QgXFxiZXRhIl0sWzQsNSwicVxcY2RvdCBcXGJldGEiLDJdLFszLDQsInFcXGNkb3QgXFxhbHBoYSIsMl0sWzYsOCwiXFxnYW1tYVxcY2RvdCBtIiwyXSxbNyw5LCJcXGdhbW1hXFxjZG90IGsiXSxbNiw3LCJwXFxjZG90KFxcYmV0YVxcYWxwaGEpIl0sWzgsOSwicVxcY2RvdCAoXFxiZXRhXFxhbHBoYSkiLDJdLFsxMywxNiwiXFxzaWdtYV97XFxhbHBoYVxcZ2FtbWF9IiwyLHsic2hvcnRlbiI6eyJzb3VyY2UiOjQwLCJ0YXJnZXQiOjQwfX1dLFsxNCwxNSwiXFxzaWdtYV97XFxiZXRhXFxnYW1tYX0iLDAseyJzaG9ydGVuIjp7InNvdXJjZSI6NDAsInRhcmdldCI6NDB9fV0sWzE5LDIwLCJcXHNpZ21hX3soXFxiZXRhXFxhbHBoYSlcXGdhbW1hfSIsMCx7InNob3J0ZW4iOnsic291cmNlIjo0MCwidGFyZ2V0Ijo0MH19XSxbMTIsMTcsIj0iLDMseyJzaG9ydGVuIjp7InNvdXJjZSI6MjAsInRhcmdldCI6MjB9LCJzdHlsZSI6eyJib2R5Ijp7Im5hbWUiOiJub25lIn0sImhlYWQiOnsibmFtZSI6Im5vbmUifX19XV0=
\begin{equation}
    \label{dia: gray1}
\begin{tikzcd}
	{p\cdot m} & {p\cdot n} & {p\cdot k} && {p\cdot m} & {p\cdot k} \\
	{q\cdot m} & {q\cdot n} & {q\cdot k} && {q\cdot m} & {q\cdot k}
	\arrow[""{name=0, anchor=center, inner sep=0}, "{p\cdot \alpha}", from=1-1, to=1-2]
	\arrow["{\gamma\cdot m}"', from=1-1, to=2-1]
	\arrow[""{name=1, anchor=center, inner sep=0}, "{p\cdot \beta}", from=1-2, to=1-3]
	\arrow["{\gamma\cdot n}"{description}, from=1-2, to=2-2]
	\arrow[""{name=2, anchor=center, inner sep=0}, "{\gamma\cdot k}", from=1-3, to=2-3]
	\arrow[""{name=3, anchor=center, inner sep=0}, "{p\cdot(\beta\alpha)}", from=1-5, to=1-6]
	\arrow[""{name=4, anchor=center, inner sep=0}, "{\gamma\cdot m}"', from=1-5, to=2-5]
	\arrow["{\gamma\cdot k}", from=1-6, to=2-6]
	\arrow[""{name=5, anchor=center, inner sep=0}, "{q\cdot \alpha}"', from=2-1, to=2-2]
	\arrow[""{name=6, anchor=center, inner sep=0}, "{q\cdot \beta}"', from=2-2, to=2-3]
	\arrow[""{name=7, anchor=center, inner sep=0}, "{q\cdot (\beta\alpha)}"', from=2-5, to=2-6]
	\arrow["{\sigma_{\gamma\alpha}}"', between={0.4}{0.6}, Rightarrow, from=0, to=5]
	\arrow["{\sigma_{\gamma\beta}}", between={0.4}{0.6}, Rightarrow, from=1, to=6]
	\arrow["{=}"{marking, allow upside down}, draw=none, from=2, to=4]
	\arrow["{\sigma_{\gamma(\beta\alpha)}}", between={0.4}{0.6}, Rightarrow, from=3, to=7]
\end{tikzcd}\end{equation}

    \item for any $\alpha\colon m\to n$, $\gamma,\gamma'\colon p\to q$ and $s\colon \gamma\to \gamma'$, the following equations:
    % https://q.uiver.app/#q=WzAsOCxbMCwwLCJwXFxjZG90IG0iXSxbMCwxLCJxXFxjZG90IG0iXSxbNCwwLCJwXFxjZG90IG0iXSxbNCwxLCJxXFxjZG90IG0iXSxbMiwwLCJwXFxjZG90IG4iXSxbMiwxLCJxXFxjZG90IG4iXSxbNiwwLCJwXFxjZG90IG4iXSxbNiwxLCJxXFxjZG90IG4iXSxbMCwxLCJcXGdhbW1hJ1xcY2RvdCBtIiwwLHsiY3VydmUiOi0yfV0sWzIsMywiXFxnYW1tYVxcY2RvdCBtIiwwLHsiY3VydmUiOjJ9XSxbMSw1LCJxXFxjZG90IFxcYWxwaGEiLDJdLFswLDQsInBcXGNkb3QgXFxhbHBoYSJdLFs0LDUsIlxcZ2FtbWEnXFxjZG90IG4iLDAseyJjdXJ2ZSI6LTJ9XSxbMCwxLCJcXGdhbW1hXFxjZG90IG0iLDIseyJjdXJ2ZSI6Mn1dLFsyLDYsInBcXGNkb3QoXFxiZXRhXFxhbHBoYSkiXSxbMyw3LCJxXFxjZG90IChcXGJldGFcXGFscGhhKSIsMl0sWzYsNywiXFxnYW1tYSdcXGNkb3QgbiIsMCx7ImN1cnZlIjotMn1dLFs2LDcsIlxcZ2FtbWFcXGNkb3QgbiIsMix7ImN1cnZlIjoyfV0sWzExLDEwLCJcXHNpZ21hX3tcXGFscGhhXFxnYW1tYSd9IiwwLHsic2hvcnRlbiI6eyJzb3VyY2UiOjQwLCJ0YXJnZXQiOjQwfX1dLFs4LDEzLCJzXFxjZG90IG0iLDIseyJzaG9ydGVuIjp7InNvdXJjZSI6MjAsInRhcmdldCI6MjB9fV0sWzE0LDE1LCJcXHNpZ21hX3tcXGFscGhhXFxnYW1tYX0iLDIseyJzaG9ydGVuIjp7InNvdXJjZSI6NDAsInRhcmdldCI6NDB9fV0sWzE2LDE3LCJzXFxjZG90IG4iLDIseyJzaG9ydGVuIjp7InNvdXJjZSI6MjAsInRhcmdldCI6MjB9fV0sWzEyLDksIj0iLDMseyJsYWJlbF9wb3NpdGlvbiI6NzAsInNob3J0ZW4iOnsic291cmNlIjoyMCwidGFyZ2V0IjoyMH0sInN0eWxlIjp7ImJvZHkiOnsibmFtZSI6Im5vbmUifSwiaGVhZCI6eyJuYW1lIjoibm9uZSJ9fX1dXQ==
\begin{equation} \label{dia: gray2}
   \begin{tikzcd}
	{p\cdot m} && {p\cdot n} && {p\cdot m} && {p\cdot n} \\
	{q\cdot m} && {q\cdot n} && {q\cdot m} && {q\cdot n}
	\arrow[""{name=0, anchor=center, inner sep=0}, "{p\cdot \alpha}", from=1-1, to=1-3]
	\arrow[""{name=1, anchor=center, inner sep=0}, "{\gamma'\cdot m}", curve={height=-12pt}, from=1-1, to=2-1]
	\arrow[""{name=2, anchor=center, inner sep=0}, "{\gamma\cdot m}"', curve={height=12pt}, from=1-1, to=2-1]
	\arrow[""{name=3, anchor=center, inner sep=0}, "{\gamma'\cdot n}", curve={height=-12pt}, from=1-3, to=2-3]
	\arrow[""{name=4, anchor=center, inner sep=0}, "{p\cdot(\beta\alpha)}", from=1-5, to=1-7]
	\arrow[""{name=5, anchor=center, inner sep=0}, "{\gamma\cdot m}"', curve={height=12pt}, from=1-5, to=2-5]
	\arrow[""{name=6, anchor=center, inner sep=0}, "{\gamma'\cdot n}", curve={height=-12pt}, from=1-7, to=2-7]
	\arrow[""{name=7, anchor=center, inner sep=0}, "{\gamma\cdot n}"', curve={height=12pt}, from=1-7, to=2-7]
	\arrow[""{name=8, anchor=center, inner sep=0}, "{q\cdot \alpha}"', from=2-1, to=2-3]
	\arrow[""{name=9, anchor=center, inner sep=0}, "{q\cdot (\beta\alpha)}"', from=2-5, to=2-7]
	\arrow["{s\cdot m}", between={0.2}{0.8}, Rightarrow, from=2, to=1]
	\arrow["{\sigma_{\gamma'\alpha}}", between={0.4}{0.6}, Rightarrow, from=0, to=8]
	\arrow["{=}"{marking, allow upside down, pos=0.5}, draw=none, from=3, to=5]
	\arrow["{\sigma_{\gamma\alpha}}"', between={0.4}{0.6}, Rightarrow, from=4, to=9]
	\arrow["{s\cdot n}", between={0.2}{0.8}, Rightarrow, from=7, to=6]
\end{tikzcd} 
\end{equation}

% https://q.uiver.app/#q=WzAsOCxbMCwwLCJtXFxjZG90IHAiXSxbMSwwLCJtXFxjZG90IHEiXSxbMCwxLCJuXFxjZG90IHAiXSxbMSwxLCJuXFxjZG90IHEiXSxbMywwLCJtXFxjZG90IHEiXSxbMywxLCJuXFxjZG90IHAiXSxbNCwwLCJtXFxjZG90IHEiXSxbNCwxLCJuXFxjZG90IHEiXSxbMCwyLCJcXGFscGhhXFxjZG90IHAiLDJdLFsxLDMsIlxcYWxwaGFcXGNkb3QgcSJdLFswLDEsIiBtXFxjZG90XFxnYW1tYSIsMCx7ImN1cnZlIjotMn1dLFswLDEsIiBtXFxjZG90XFxnYW1tYSciLDIseyJjdXJ2ZSI6Mn1dLFsyLDMsIiBuXFxjZG90XFxnYW1tYSciLDIseyJjdXJ2ZSI6Mn1dLFs0LDUsIlxcYWxwaGFcXGNkb3QgcCIsMl0sWzYsNywiXFxhbHBoYVxcY2RvdCBxIl0sWzUsNywiIG5cXGNkb3RcXGdhbW1hJyIsMix7ImN1cnZlIjoyfV0sWzUsNywiIG5cXGNkb3RcXGdhbW1hIiwwLHsiY3VydmUiOi0yfV0sWzQsNiwiIG1cXGNkb3RcXGdhbW1hIiwwLHsiY3VydmUiOi0yfV0sWzEwLDExLCJtXFxjZG90IHMiLDAseyJzaG9ydGVuIjp7InNvdXJjZSI6MjAsInRhcmdldCI6MjB9fV0sWzE3LDE2LCJcXHNpZ21hX3tcXGFscGhhXFxnYW1tYX0iLDAseyJzaG9ydGVuIjp7InNvdXJjZSI6NDAsInRhcmdldCI6NDB9fV0sWzE2LDE1LCJuXFxjZG90IHMiLDAseyJzaG9ydGVuIjp7InNvdXJjZSI6MjAsInRhcmdldCI6MjB9fV0sWzksMTMsIj0iLDMseyJzaG9ydGVuIjp7InNvdXJjZSI6MjAsInRhcmdldCI6MjB9LCJzdHlsZSI6eyJib2R5Ijp7Im5hbWUiOiJub25lIn0sImhlYWQiOnsibmFtZSI6Im5vbmUifX19XSxbMTEsMTIsIlxcc2lnbWFfe1xcYWxwaGFcXGdhbW1hJ30iLDAseyJzaG9ydGVuIjp7InNvdXJjZSI6NDAsInRhcmdldCI6NDB9fV1d
\begin{equation} \label{dia: gray3}\begin{tikzcd}
	{m\cdot p} & {m\cdot q} && {m\cdot q} & {m\cdot q} \\
	{n\cdot p} & {n\cdot q} && {n\cdot p} & {n\cdot q}
	\arrow[""{name=0, anchor=center, inner sep=0}, "{ m\cdot\gamma}", curve={height=-12pt}, from=1-1, to=1-2]
	\arrow[""{name=1, anchor=center, inner sep=0}, "{ m\cdot\gamma'}"', curve={height=12pt}, from=1-1, to=1-2]
	\arrow["{\alpha\cdot p}"', from=1-1, to=2-1]
	\arrow[""{name=2, anchor=center, inner sep=0}, "{\alpha\cdot q}", from=1-2, to=2-2]
	\arrow[""{name=3, anchor=center, inner sep=0}, "{ m\cdot\gamma}", curve={height=-12pt}, from=1-4, to=1-5]
	\arrow[""{name=4, anchor=center, inner sep=0}, "{\alpha\cdot p}"', from=1-4, to=2-4]
	\arrow["{\alpha\cdot q}", from=1-5, to=2-5]
	\arrow[""{name=5, anchor=center, inner sep=0}, "{ n\cdot\gamma'}"', curve={height=12pt}, from=2-1, to=2-2]
	\arrow[""{name=6, anchor=center, inner sep=0}, "{ n\cdot\gamma'}"', curve={height=12pt}, from=2-4, to=2-5]
	\arrow[""{name=7, anchor=center, inner sep=0}, "{ n\cdot\gamma}", curve={height=-12pt}, from=2-4, to=2-5]
	\arrow["{m\cdot s}", between={0.2}{0.8}, Rightarrow, from=0, to=1]
	\arrow["{\sigma_{\alpha\gamma'}}", between={0.4}{0.6}, Rightarrow, from=1, to=5]
	\arrow["{=}"{marking, allow upside down}, draw=none, from=2, to=4]
	\arrow["{\sigma_{\alpha\gamma}}", between={0.4}{0.6}, Rightarrow, from=3, to=7]
	\arrow["{n\cdot s}", between={0.2}{0.8}, Rightarrow, from=7, to=6]
\end{tikzcd}\end{equation}
    
    \end{enumerate}
For symmetric pseudocommutativity, we moreover require $\sigma_{\beta\alpha}\sigma_{\alpha\beta}=1$.
\end{lemma}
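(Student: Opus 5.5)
The plan is to use the explicit presentation of the marked Gray tensor product $\T\otimes_{s,w}\T$ by generators and relations, combined with Lemma \ref{lemma: f-commutativity}, which already pins down $\mu$ on everything except the Gray cells. In the $(2,2)$-setting, $\T\otimes_{w}\T$ is the $2$-category generated by the funny tensor product $\T\otimes_f\T$ together with $w$-cells $\Sigma_{\alpha\beta}\colon (\alpha,1)(1,\beta)\Rightarrow(1,\beta)(\alpha,1)$, subject to the standard coherence relations (\cite{Gur13}). These relations say the following:
\begin{itemize}
\item $\Sigma_{\alpha\beta}$ is an identity when $\alpha$ or $\beta$ is an identity.
\item $\Sigma$ is compatible with composition in each variable, i.e. $\Sigma_{(\beta\alpha)\gamma}$ is the pasting of $\Sigma_{\alpha\gamma}$ and $\Sigma_{\beta\gamma}$, and symmetrically in the other variable.
\item $\Sigma$ is natural with respect to $2$-cells in each variable.
\end{itemize}
Passing to the marked version $\otimes_{s,w}$ adds exactly the relation that $\Sigma_{\alpha\beta}$ is an identity (strict cell) whenever $\alpha$ or $\beta$ is marked, i.e. inert. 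So a $2$-functor $\T\otimes_{s,w}\T\to\T$ is the same as a $2$-functor on $\T\otimes_f\T$ together with images of the generating cells satisfying the images of these relations.

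First, I restrict $\mu$ along $\T\otimes_f\T\to\T\otimes_{s,w}\T$. By the bullet list after the lemma on $\theta$ being a magma homomorphism, this restriction is forced to send $(m,n)\mapsto m\cdot n$, $(\alpha,1)\mapsto\alpha\cdot n$ and $(1,\beta)\mapsto m\cdot\beta$. By Lemma \ref{lemma: f-commutativity}, this is a well-defined, strictly unital functor.

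Second, extending to $\T\otimes_{s,w}\T$ is exactly the choice of $\sigma_{\alpha\beta}:=\mu(\Sigma_{\alpha\beta})$, a $w$-cell with the indicated boundary. I then translate each relation:
\begin{itemize}
\item The marking relation gives condition (1).
\item Compatibility with composition gives (3), diagram (\ref{dia: gray1}), and its analogue in the second variable.
\item Naturality in $2$-cells gives (4), diagrams (\ref{dia: gray2}) and (\ref{dia: gray3}).
\end{itemize}
Conversely, any family satisfying (1), (3) and (4) defines the $2$-functor by the universal property of the presentation.

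Third, I must account for product preservation in each variable, which is what produces condition (2). Fix $k$ and consider $\mu(-,k)$ on the whole $2$-category. On $1$-cells it already preserves powers, since $\alpha\mapsto\alpha\cdot k$. The content of (2) is that the cells $\sigma$ are also determined componentwise: postcomposing $\sigma_{\alpha\beta}$ with $p_i\cdot l$ must agree with $\sigma_{(p_i\alpha)\beta}$. This follows from (3) together with (1) applied to the inert $p_i$. Conversely, given (2), the functor $\mu(\beta\text{-direction})$ sends the product cone to a product cone compatibly on cells. Since $2$-cells into a power are determined by their projections, (2) is precisely product preservation at the level of the Gray cells.

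Finally, in the symmetric pseudo case, the symmetry of $\otimes_{s,p}$ swaps $\Sigma_{\alpha\beta}$ and $\Sigma_{\beta\alpha}^{-1}$. A symmetric structure therefore forces $\sigma_{\beta\alpha}\sigma_{\alpha\beta}=1$.

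I expect the main obstacle to be making the presentation of $\otimes_{s,w}$ precise enough, especially the marked relations and the completeness of Gurski's list of coherence axioms for the lax/colax cases. I would handle this by citing the explicit description in \cite{ABK24} and checking that no further relations (such as interchange among Gray cells) impose conditions beyond (1)–(4).
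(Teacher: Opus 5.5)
Your proposal is correct and follows the paper's own argument: (1) comes from the marking on $\otimes_{s,w}$, (3) and (4) are the images under $\mu$ of the Gray-cell coherences, and the symmetric condition comes from the symmetry of $\otimes_{s,p}$ sending $\Sigma_{fg}$ to $\Sigma_{gf}^{-1}$; your restriction to the funny part via Lemma~\ref{lemma: f-commutativity} just makes explicit what the paper leaves implicit. The only difference is in (2): the paper attributes it to product preservation in each variable, whereas you observe that it is already forced by (1) and (3), since $\sigma_{(p_i\alpha)\beta}$ is $\sigma_{\alpha\beta}$ whiskered by $p_i\cdot l$ and the $1$-cells $p_i\cdot l$ form a product cone, which is correct and slightly sharper.
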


\begin{proof}
    (1) follows from the description of $\otimes_{s,w}$ and (2) follows from the fact that the magmal $w$-commutativity $\mu$ is required to preserve products in each variable. The equations in (3) and (4) are just coherence equations for the Gray cells $\Sigma_{\alpha\beta}$.
    Finally, for any marked $(2,2)$-categories $\A,\B$, the symmetry $s\colon \A\otimes_{s,p} \B\xrightarrow{\sim} \B\otimes_{s,p} \A$ swaps sends $(a,b)$ to $(b,a)$, $(f,1)$ to $(1,f)$ etc. and the cell $\Sigma_{fg}$ gets mapped to $\Sigma_{gf}^{-1}$.
    % https://q.uiver.app/#q=WzAsOCxbMCwwLCIoYSxiKSJdLFsxLDAsIihhLGInKSJdLFswLDEsIihhJyxiKSJdLFsxLDEsIihhJyxiJykiXSxbMywwLCIoYixhKSJdLFszLDEsIihiJyxhKSJdLFs0LDAsIihiLGEnKSJdLFs0LDEsIihiJyxhJykiXSxbMCwxLCIoMSxnKSJdLFswLDIsIihmLDEpIiwyXSxbMiwzLCIoMSxnKSIsMl0sWzEsMywiKGYsMSkiXSxbNCw1LCIoZywxKSIsMl0sWzQsNiwiKDEsZikiXSxbNSw3LCIoMSxmKSIsMl0sWzYsNywiKGcsMSkiXSxbOCwxMCwiXFxTaWdtYV97Zmd9IiwwLHsic2hvcnRlbiI6eyJzb3VyY2UiOjMwLCJ0YXJnZXQiOjMwfX1dLFsxNCwxMywiXFxTaWdtYV97Z2Z9XnstMX0iLDIseyJzaG9ydGVuIjp7InNvdXJjZSI6MzAsInRhcmdldCI6MzB9fV0sWzExLDEyLCJcXG1hcHN0byIsMSx7InNob3J0ZW4iOnsic291cmNlIjoyMCwidGFyZ2V0IjoyMH0sInN0eWxlIjp7ImJvZHkiOnsibmFtZSI6Im5vbmUifSwiaGVhZCI6eyJuYW1lIjoibm9uZSJ9fX1dXQ==
\[\begin{tikzcd}
	{(a,b)} & {(a,b')} && {(b,a)} & {(b,a')} \\
	{(a',b)} & {(a',b')} && {(b',a)} & {(b',a')}
	\arrow[""{name=0, anchor=center, inner sep=0}, "{(1,g)}", from=1-1, to=1-2]
	\arrow["{(f,1)}"', from=1-1, to=2-1]
	\arrow[""{name=1, anchor=center, inner sep=0}, "{(f,1)}", from=1-2, to=2-2]
	\arrow[""{name=2, anchor=center, inner sep=0}, "{(1,f)}", from=1-4, to=1-5]
	\arrow[""{name=3, anchor=center, inner sep=0}, "{(g,1)}"', from=1-4, to=2-4]
	\arrow["{(g,1)}", from=1-5, to=2-5]
	\arrow[""{name=4, anchor=center, inner sep=0}, "{(1,g)}"', from=2-1, to=2-2]
	\arrow[""{name=5, anchor=center, inner sep=0}, "{(1,f)}"', from=2-4, to=2-5]
	\arrow["{\Sigma_{fg}}", between={0.3}{0.7}, Rightarrow, from=0, to=4]
	\arrow["\mapsto"{description}, draw=none, from=1, to=3]
	\arrow["{\Sigma_{gf}^{-1}}"', between={0.3}{0.7}, Rightarrow, from=5, to=2]
\end{tikzcd}\]
If $\mu s=\mu$, we get for any $\alpha,\beta$ in $\T$ that $\sigma_{\beta\alpha}=\sigma_{\alpha\beta}^{-1}$, finishing the proof.
\end{proof}

\begin{remark}
    Analogously to the $1$-dimensional setting, it is enough to define $\sigma_{\alpha\beta}$ for $\alpha$ and $\beta$ active with respect to some basis of $1$-cells in $\T$. That is because from the lemma above, formation of the cells $\sigma_{\alpha\beta}$ respects (operadic) compositions and products.
\end{remark}

\begin{remark}
    We can view $w$-commutativity as specifying a relationship between $\alpha\otimes \beta$ and $\beta\otimes \alpha,$ as well as between $(\alpha\circ \alpha')\otimes (\beta\circ\beta')$ and $(\alpha\otimes \beta)\circ(\alpha'\otimes\beta')$. In the case $w=p$, they are coherently isomorphic.
\end{remark}

Finally, we prove certain ``miracle associativity'' phenomenon: quite surprisingly, any magmal $w$-commutativity is automatically associative in a unique way. It follows from a simple observation: the $2$-cells $\sigma_{\alpha\beta}$ in $\T$ from Lemma \ref{lemma: syntactic} ought to satisfy some equations with respect to any other $2$-cell $s$ (see (\ref{dia: gray2}), (\ref{dia: gray3})); in particular, we can take $s$ to be some other cell of the form $\sigma_{\alpha'\beta'}$.

\begin{theorem} \label{prop: miracle associativity}
    Any magmal $w$-commutativity structure $\mu\colon\T\otimes_{s,w}\T\to \T$ on $\T$ is automatically associative with the associators coming from $\F^{op}$.
\end{theorem}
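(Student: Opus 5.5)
The plan is to reduce associativity to a single compatibility between the Gray cells and the associators of $\F^{op}$, and to obtain that compatibility from the naturality equations of Lemma \ref{lemma: syntactic}. We take the associators to be the isomorphisms $a_{m,n,k}\colon (m\cdot n)\cdot k\cong m\cdot(n\cdot k)$ coming from $\F^{op}$. These already satisfy the pentagon and unit axioms, because they come from the cocartesian structure (Lemma \ref{lemma: unique on F}).

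What remains is to show that $a$ is a (strict) natural transformation between the two composites $\mu(\mu\otimes 1)$ and $\mu(1\otimes\mu)$, viewed as functors out of the triple Gray product $\T\otimes_{s,w}\T\otimes_{s,w}\T$. By Lemma \ref{lemma: f-commutativity}, $a$ is already natural with respect to $1$-cells and $2$-cells of the form $(\alpha,1,1)$, $(1,\alpha,1)$, $(1,1,\alpha)$, and with respect to the $2$-cells built from these. The triple Gray product is generated by such cells together with the Gray cells $\Sigma$ between any two of the three factors. So the only remaining task is to show that $a$ is compatible with the images of the three kinds of Gray cells, $\Sigma_{\alpha\beta}\otimes 1_k$, $1_m\otimes\Sigma_{\beta\gamma}$, and the ``outer'' cell between the first and third factors.

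We compute both composites on each kind of cell. Take first the Gray cell between factors one and two at level $k$. Under $\mu(\mu\otimes1)$ it becomes $\sigma_{\alpha\beta}\cdot k$, that is, $\mu(\mu(\Sigma_{\alpha\beta}),\id_k)$. Under $\mu(1\otimes\mu)$ it becomes $\sigma_{\alpha,\,\beta\cdot k}$, the cell for $\alpha$ and the $1$-cell $\mu(\beta,\id_k)=\beta\cdot k$. The claim to prove is that conjugation by $a$ takes one to the other. To get this, we write $\beta\cdot k$ as $\beta$ whiskered by inert reindexings. Then rule (2) of Lemma \ref{lemma: syntactic} (compatibility with products), together with rule (1) and rule (3) (cells are trivial on inert maps and compose along composites), expresses $\sigma_{\alpha,\beta\cdot k}$ as $\sigma_{\alpha\beta}$ powered by $k$ and conjugated by $\F^{op}$-permutations. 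Those permutations are precisely the associator and symmetry isomorphisms.

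The other cases are handled the same way. The cell $1_m\otimes\Sigma_{\beta\gamma}$ is treated symmetrically. For the outer cell between the first and third factors, the image in $\T$ is $\sigma_{\alpha\cdot n,\gamma}$ on one side and $\sigma_{\alpha,n\cdot\gamma}$ on the other, and both reduce by rules (1)–(3) to $\sigma_{\alpha\gamma}$ with inserted powers. This is where the naturality equations (\ref{dia: gray2}), (\ref{dia: gray3}) enter. The outer Gray cell in $\T^{\otimes 3}$ is the composite of Gray cells through the middle factor, and the identification uses the interchange of $\sigma$ with $2$-cells $s$, taken with $s$ itself of the form $\sigma_{\alpha'\beta'}$. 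That is the ``miracle'' observation mentioned before the statement.

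The main obstacle is the bookkeeping in the reduction of $\sigma_{\alpha,\beta\cdot k}$ to $\sigma_{\alpha\beta}\cdot k$. Rule (2) only splits along the codomain, so the domain-side reindexing must be handled by rule (3) applied to the factorization of $\beta\cdot k$ into inert maps around $k$ copies of $\beta$. One then has to check that the resulting permutation agrees with $a$ exactly and not merely up to some other isomorphism. My first attempt would be to reduce to $\alpha,\beta,\gamma$ with codomain $1$ (allowed by rule (2)). In that case $\beta\cdot k=\beta\times\cdots\times\beta$ is a product of maps, rule (2) gives componentwise splitting directly, and the permutations become the standard coherence of iterated products in $\F^{op}$.

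Finally, uniqueness follows because any associator must restrict on objects to an isomorphism in the image of $\theta$, and it is forced by the argument of Lemma \ref{lemma: f-commutativity}.
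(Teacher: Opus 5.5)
Your argument is correct in substance, but it is organized differently from the paper's.

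\textbf{The paper's route.} The paper does not check naturality of the associator generator by generator. Citing Gray, it asserts that the condition to verify is the hexagon (\ref{dia: associativity}) for $\alpha\colon m\to 1$, $\beta\colon k\to 1$, $\gamma\colon p\to 1$. It rewrites both sides via (\ref{dia: gray1}) and the identity $\sigma_{\gamma,\alpha\cdot k}=\sigma_{\gamma\alpha}\cdot k$, turning the hexagon into (\ref{dia: associativity2}). It then recognizes (\ref{dia: associativity2}) as an instance of (\ref{dia: gray3}) with $s=\sigma_{\alpha\beta}$; that is the ``miracle''.

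\textbf{Your route.} You reduce to 2-naturality of the $\F^{op}$-associator $a$ on the generators of $\T^{\otimes 3}$. You then handle the three kinds of Gray generators using only rules (1)--(3) of Lemma \ref{lemma: syntactic}, together with naturality of $a$ against inert maps in $\F^{op}$. Your first case is, up to renaming, exactly the identity $\sigma_{\gamma,\alpha\cdot k}=\sigma_{\gamma\alpha}\cdot k$ that the paper uses without proof.

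\textbf{What each buys.} Your route makes explicit what associativity actually demands, and shows it follows from product-compatibility alone. The paper's route produces the hexagon (\ref{dia: associativity}) itself, which is what yields the Yang--Baxter equation in Example \ref{example: Yang-Baxter}. In your framework that hexagon is not an extra requirement. It is the image under $\mu(1\otimes\mu)$ of Gray's cube relation in $\T^{\otimes 3}$, translated via your first case. That image holds automatically, because $\mu(1\otimes\mu)$ is already a functor.

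\textbf{Correction to your third case.} The outer Gray cell is not ``the composite of Gray cells through the middle factor''. In $(\T\otimes\T)\otimes\T$ the cell $\Sigma_{(\alpha,1_n),\gamma}$ is itself a generator. Neither (\ref{dia: gray2}), (\ref{dia: gray3}), nor the choice $s=\sigma_{\alpha'\beta'}$ is needed there. The reduction you state first already suffices:
\begin{itemize}
\item by (1)--(3), the $j$-th components of $\sigma_{\alpha\cdot n,\gamma}$ and of $\sigma_{\alpha,n\cdot\gamma}$ are both $\sigma_{\alpha\gamma}$, whiskered by the inert maps selecting the $j$-th middle slot;
\item $a_{m,n,p}$ intertwines these inert maps by naturality in $\F^{op}$.
\end{itemize}
Delete the sentences invoking the interchange of $\sigma$ with $\sigma$. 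That ingredient is what produces the cube relation, so it belongs to the paper's argument, not yours.

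\textbf{Uniqueness remark.} Your closing uniqueness claim is not part of the statement. Its justification (``must restrict to an isomorphism in the image of $\theta$'') is asserted rather than shown, so it is best dropped.
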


\begin{proof}
    We already know from Lemma \ref{lemma: f-commutativity} that the underlying unique $f$-commutativity is associative with the associators coming from $\F^{op}$, so to extend this to $\mu$, the only task is to verify that $2$-cells $\sigma_{\alpha\beta}$ behave with respect to the associators same way the Gray cells do. That means, following \cite{Gra76}, we have to check the following equation for any $\alpha\colon m\to 1$, $\beta\colon k\to 1$, $\gamma\colon p\to 1$: 

     % % https://q.uiver.app/#q=WzAsMTQsWzAsMSwibWsiXSxbMSwyLCJrIl0sWzIsMiwiMSJdLFsxLDEsIm0iXSxbMSwwLCJwbSJdLFswLDAsInBtayJdLFsyLDEsInAiXSxbMywxLCJtayJdLFs0LDIsImsiXSxbMywwLCJwbWsiXSxbNCwxLCJwayJdLFs0LDAsInBtIl0sWzUsMSwicCJdLFs1LDIsIjEiXSxbMCwxLCJcXGFscGhhIGsiLDJdLFsxLDIsIlxcYmV0YSAiLDJdLFswLDMsIm1cXGJldGEgIl0sWzMsMiwiXFxhbHBoYSJdLFs0LDMsIlxcZ2FtbWEgbSJdLFs1LDQsInBtXFxiZXRhICJdLFs1LDAsIlxcZ2FtbWEgbWsiLDJdLFs0LDYsInBcXGFscGhhICJdLFs2LDIsIlxcZ2FtbWEiXSxbNyw4LCJcXGFscGhhIGsiLDJdLFs5LDcsIlxcZ2FtbWEgbWsiLDJdLFs5LDEwLCJwXFxhbHBoYSBrIiwyXSxbMTAsOCwiXFxnYW1tYSBrIiwyXSxbOSwxMSwicG1cXGJldGEgIl0sWzExLDEyLCJwXFxhbHBoYSAiXSxbMTAsMTIsInAgXFxiZXRhICIsMl0sWzgsMTMsIlxcYmV0YSAiLDJdLFsxMiwxMywiXFxnYW1tYSJdLFs2LDcsIj0iLDEseyJzdHlsZSI6eyJib2R5Ijp7Im5hbWUiOiJub25lIn0sImhlYWQiOnsibmFtZSI6Im5vbmUifX19XSxbMTYsMTUsIlxcc2lnbWFfe1xcYmV0YVxcYWxwaGEgfSIsMCx7Im9mZnNldCI6Miwic2hvcnRlbiI6eyJzb3VyY2UiOjQwLCJ0YXJnZXQiOjQwfX1dLFsxOSwxNiwiXFxzaWdtYV97bVxcYmV0YSxcXGdhbW1hfSIsMix7InNob3J0ZW4iOnsic291cmNlIjo0MCwidGFyZ2V0Ijo0MH19XSxbMjEsMTcsIlxcc2lnbWFfe1xcYWxwaGEgXFxnYW1tYX0iLDAseyJvZmZzZXQiOjIsInNob3J0ZW4iOnsic291cmNlIjo0MCwidGFyZ2V0Ijo0MH19XSxbMjksMzAsIlxcc2lnbWFfe1xcYmV0YVxcZ2FtbWF9IiwwLHsib2Zmc2V0IjoyLCJzaG9ydGVuIjp7InNvdXJjZSI6NDAsInRhcmdldCI6NDB9fV0sWzI1LDIzLCJcXHNpZ21hX3tcXGFscGhhXFxnYW1tYX1rIiwwLHsib2Zmc2V0IjoyLCJzaG9ydGVuIjp7InNvdXJjZSI6NDAsInRhcmdldCI6NDB9fV0sWzI3LDI5LCJwXFxzaWdtYV97XFxiZXRhIFxcYWxwaGF9IiwwLHsib2Zmc2V0IjoyLCJzaG9ydGVuIjp7InNvdXJjZSI6NDAsInRhcmdldCI6NDB9fV1d
\begin{equation} \label{dia: associativity}\begin{tikzcd}
	pmk & pm && pmk & pm \\
	mk & m & p & mk & pk & p \\
	& k & 1 && k & 1
	\arrow[""{name=0, anchor=center, inner sep=0}, "{pm\beta }", from=1-1, to=1-2]
	\arrow["{\gamma mk}"', from=1-1, to=2-1]
	\arrow["{\gamma m}", from=1-2, to=2-2]
	\arrow[""{name=1, anchor=center, inner sep=0}, "{p\alpha }", from=1-2, to=2-3]
	\arrow[""{name=2, anchor=center, inner sep=0}, "{pm\beta }", from=1-4, to=1-5]
	\arrow["{\gamma mk}"', from=1-4, to=2-4]
	\arrow[""{name=3, anchor=center, inner sep=0}, "{p\alpha k}"', from=1-4, to=2-5]
	\arrow["{p\alpha }", from=1-5, to=2-6]
	\arrow[""{name=4, anchor=center, inner sep=0}, "{m\beta }", from=2-1, to=2-2]
	\arrow["{\alpha k}"', from=2-1, to=3-2]
	\arrow[""{name=5, anchor=center, inner sep=0}, "\alpha", from=2-2, to=3-3]
	\arrow["{=}"{description}, draw=none, from=2-3, to=2-4]
	\arrow["\gamma", from=2-3, to=3-3]
	\arrow[""{name=6, anchor=center, inner sep=0}, "{\alpha k}"', from=2-4, to=3-5]
	\arrow[""{name=7, anchor=center, inner sep=0}, "{p \beta }"', from=2-5, to=2-6]
	\arrow["{\gamma k}"', from=2-5, to=3-5]
	\arrow["\gamma", from=2-6, to=3-6]
	\arrow[""{name=8, anchor=center, inner sep=0}, "{\beta }"', from=3-2, to=3-3]
	\arrow[""{name=9, anchor=center, inner sep=0}, "{\beta }"', from=3-5, to=3-6]
	\arrow["{\sigma_{\gamma,m\beta}}"', between={0.4}{0.6}, Rightarrow, from=0, to=4]
	\arrow["{\sigma_{ \gamma\alpha}}", shift right=2, between={0.4}{0.6}, Rightarrow, from=1, to=5]
	\arrow["{p\sigma_{\alpha\beta }}", shift right=2, between={0.4}{0.6}, Rightarrow, from=2, to=7]
	\arrow["{\sigma_{\gamma\alpha}k}", shift right=2, between={0.4}{0.6}, Rightarrow, from=3, to=6]
	\arrow["{\sigma_{\alpha\beta }}", shift right=2, between={0.4}{0.6}, Rightarrow, from=4, to=8]
	\arrow["{\sigma_{\gamma\beta}}", shift right=2, between={0.4}{0.6}, Rightarrow, from=7, to=9]
\end{tikzcd}\end{equation}

We can regard $\sigma_{\alpha\beta}$ as a $2$-cell $\alpha\otimes \beta \to \beta\otimes\alpha$ in $\T$. Thus, by the equation (\ref{dia: gray3}), the following has to hold:
% https://q.uiver.app/#q=WzAsOCxbMCwxLCJtayJdLFsyLDEsIjEiXSxbMCwwLCJwbWsiXSxbMiwwLCJwIl0sWzMsMCwicG1rIl0sWzMsMSwibWsiXSxbNSwwLCJwIl0sWzUsMSwiMSJdLFswLDEsIlxcYWxwaGFcXG90aW1lc1xcYmV0YSIsMSx7ImN1cnZlIjotMn1dLFswLDEsIlxcYmV0YVxcb3RpbWVzXFxhbHBoYSIsMix7ImN1cnZlIjoyfV0sWzIsMCwiXFxnYW1tYVxcY2RvdCBtayIsMl0sWzIsMywicFxcY2RvdCAoXFxhbHBoYVxcb3RpbWVzIFxcYmV0YSkiLDAseyJjdXJ2ZSI6LTJ9XSxbMywxLCJcXGdhbW1hIl0sWzQsNSwiXFxnYW1tYVxcY2RvdCBtayIsMV0sWzQsNiwicFxcY2RvdCAoXFxhbHBoYVxcb3RpbWVzIFxcYmV0YSkiLDAseyJjdXJ2ZSI6LTJ9XSxbNCw2LCJwXFxjZG90IChcXGJldGFcXG90aW1lcyBcXGFscGhhKSIsMSx7ImN1cnZlIjoyfV0sWzYsNywiXFxnYW1tYSJdLFs1LDcsIlxcYmV0YVxcb3RpbWVzIFxcYWxwaGEiLDIseyJjdXJ2ZSI6Mn1dLFs4LDksIlxcc2lnbWFfe1xcYWxwaGFcXGJldGF9IiwwLHsic2hvcnRlbiI6eyJzb3VyY2UiOjMwLCJ0YXJnZXQiOjMwfX1dLFsxMSw4LCJcXHNpZ21hX3tcXGFscGhhIFxcb3RpbWVzIFxcYmV0YSxcXGdhbW1hfSIsMCx7InNob3J0ZW4iOnsic291cmNlIjozMCwidGFyZ2V0IjozMH19XSxbMTQsMTUsInBcXGNkb3QgXFxzaWdtYV97XFxhbHBoYVxcYmV0YX0iLDAseyJzaG9ydGVuIjp7InNvdXJjZSI6MzAsInRhcmdldCI6MzB9fV0sWzE1LDE3LCJcXHNpZ21hX3tcXGJldGFcXG90aW1lc1xcYWxwaGEsXFxnYW1tYX0iLDAseyJzaG9ydGVuIjp7InNvdXJjZSI6MzAsInRhcmdldCI6MzB9fV0sWzEyLDEzLCI9IiwzLHsic2hvcnRlbiI6eyJzb3VyY2UiOjIwLCJ0YXJnZXQiOjIwfSwic3R5bGUiOnsiYm9keSI6eyJuYW1lIjoibm9uZSJ9LCJoZWFkIjp7Im5hbWUiOiJub25lIn19fV1d
\begin{equation}
    \label{dia: associativity2}
\begin{tikzcd}
	pmk && p & pmk && p \\
	mk && 1 & mk && 1
	\arrow[""{name=0, anchor=center, inner sep=0}, "{p\cdot (\alpha\otimes \beta)}", curve={height=-12pt}, from=1-1, to=1-3]
	\arrow["{\gamma\cdot mk}"', from=1-1, to=2-1]
	\arrow[""{name=1, anchor=center, inner sep=0}, "\gamma", from=1-3, to=2-3]
	\arrow[""{name=2, anchor=center, inner sep=0}, "{p\cdot (\alpha\otimes \beta)}", curve={height=-12pt}, from=1-4, to=1-6]
	\arrow[""{name=3, anchor=center, inner sep=0}, "{p\cdot (\beta\otimes \alpha)}"{description}, curve={height=12pt}, from=1-4, to=1-6]
	\arrow[""{name=4, anchor=center, inner sep=0}, "{\gamma\cdot mk}"{description}, from=1-4, to=2-4]
	\arrow["\gamma", from=1-6, to=2-6]
	\arrow[""{name=5, anchor=center, inner sep=0}, "{\alpha\otimes\beta}"{description}, curve={height=-12pt}, from=2-1, to=2-3]
	\arrow[""{name=6, anchor=center, inner sep=0}, "{\beta\otimes\alpha}"', curve={height=12pt}, from=2-1, to=2-3]
	\arrow[""{name=7, anchor=center, inner sep=0}, "{\beta\otimes \alpha}"', curve={height=12pt}, from=2-4, to=2-6]
	\arrow["{\sigma_{\gamma,\alpha \otimes \beta}}", between={0.3}{0.7}, Rightarrow, from=0, to=5]
	\arrow["{=}"{marking, allow upside down}, draw=none, from=1, to=4]
	\arrow["{p\cdot \sigma_{\alpha\beta}}", between={0.3}{0.7}, Rightarrow, from=2, to=3]
	\arrow["{\sigma_{\gamma,\beta\otimes\alpha}}", between={0.3}{0.7}, Rightarrow, from=3, to=7]
	\arrow["{\sigma_{\alpha\beta}}", between={0.3}{0.7}, Rightarrow, from=5, to=6]
\end{tikzcd}\end{equation}

Substituting $\beta\otimes \alpha=\beta \circ (\alpha\cdot k)$, we can rewrite $\sigma_{\gamma,\beta\otimes\alpha}$ as a pasting of $\sigma_{\gamma,\alpha \cdot k}=\sigma_{\gamma\alpha}\cdot k$ and $\sigma_{\gamma\beta}$, using (\ref{dia: gray1}).

% https://q.uiver.app/#q=WzAsNixbMCwwLCJwbWsiXSxbMCwxLCJtayJdLFsxLDEsImsiXSxbMiwxLCIxIl0sWzIsMCwicCJdLFsxLDAsInBrIl0sWzAsMSwiXFxnYW1tYVxcY2RvdCBtayIsMl0sWzUsMiwiXFxnYW1tYVxcY2RvdCBrIiwxXSxbNCwzLCJcXGdhbW1hIl0sWzEsMiwiXFxhbHBoYVxcY2RvdCBrIiwyXSxbMiwzLCJcXGJldGEiLDJdLFswLDUsInBcXGNkb3QgXFxhbHBoYVxcY2RvdCBrIl0sWzUsNCwicFxcY2RvdCBcXGJldGEiXSxbMTEsOSwiXFxzaWdtYV97XFxhbHBoYVxcY2RvdCBrLCBcXGdhbW1hfSIsMix7InNob3J0ZW4iOnsic291cmNlIjozMCwidGFyZ2V0IjozMH19XSxbMTIsMTAsIlxcc2lnbWFfe1xcYmV0YVxcZ2FtbWF9IiwwLHsic2hvcnRlbiI6eyJzb3VyY2UiOjMwLCJ0YXJnZXQiOjMwfX1dXQ==
\[\begin{tikzcd}
	pmk & pk & p \\
	mk & k & 1
	\arrow[""{name=0, anchor=center, inner sep=0}, "{p\cdot \alpha\cdot k}", from=1-1, to=1-2]
	\arrow["{\gamma\cdot mk}"', from=1-1, to=2-1]
	\arrow[""{name=1, anchor=center, inner sep=0}, "{p\cdot \beta}", from=1-2, to=1-3]
	\arrow["{\gamma\cdot k}"{description}, from=1-2, to=2-2]
	\arrow["\gamma", from=1-3, to=2-3]
	\arrow[""{name=2, anchor=center, inner sep=0}, "{\alpha\cdot k}"', from=2-1, to=2-2]
	\arrow[""{name=3, anchor=center, inner sep=0}, "\beta"', from=2-2, to=2-3]
	\arrow["{\sigma_{\gamma,\alpha} k}"', between={0.3}{0.7}, Rightarrow, from=0, to=2]
	\arrow["{\sigma_{\gamma\beta}}", between={0.3}{0.7}, Rightarrow, from=1, to=3]
\end{tikzcd}\]
Doing the same for $\sigma_{\alpha\otimes\beta,\gamma}$, we get that equations (\ref{dia: associativity}) and (\ref{dia: associativity2}) coincide.
\end{proof}

\begin{remark}
    \label{rmk: syntax of oo,2 theories}

  If $\T$ is more generally a Lawvere $(\infty,2)$-theory and $\mu$ a $w$-commutativity structure, the discussion above still applies to some extent. Denote by $\ho\colon \Cat^{\mathfrak{m}}_{(\infty,2)}\to \Cat^{\mathfrak{m}}_{(2,2)}$ the homotopy marked $(2,2)$-category functor (left adjoint to the inclusion). One can see that this is monoidal with respect to either of the Gray tensor products $\otimes_{s,w}$ and if we view $\T$ as a marked $(\infty,2)$-category, $\ho(\T)$ is equivalent to a (marked $(2,2)$-category associated to a) Lawvere $(2,2)$-theory. In particular, we get that $\mu(m,n)\simeq m\cdot n$. We can also see that a part of specifying $\mu$ is to find the fillers $\sigma_{\alpha\beta}$ to the squares (\ref{diagram: comm}). %Nevertheless, we probably cannot expect an analogue of \ref{prop: miracle associativity} to hold as there are of course much more coherences to handle.
\end{remark}

\subsection{Units and unital operations}
As a next example of how rigid the $w$-commutativity structures are, we are going to generalize some observations from the first section. To this end, let $\T$ be any Lawvere $2$-theory (applies to both $(\infty,2)$- and $(2,2)$-) and $\mu\colon \T\otimes_{s,w}\T\to \T$ a (magmal) lax commutativity structure.

We start the discussion with units. Same as in \ref{subsec: 1-cat EH}, we say that a unit in $\T$ is a map $u\colon 0\to 1$.

\begin{lemma} \label{lemma: comm units} 
    For any two units $u, v$ in $\T$, there exists a $2$-cell $ u\Rightarrow v$. 
\end{lemma}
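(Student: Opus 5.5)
Mimic Lemma \ref{lemma: unitality}, now using the $2$-cell filling the commutativity square for the pair $(u,v)$ in place of the commuting square. By Lemma \ref{lemma: syntactic} (or Remark \ref{rmk: syntax of oo,2 theories} in the $(\infty,2)$-case, after passing to $\ho(\T)$), the magmal lax commutativity $\mu$ provides, for any two $1$-cells $\alpha\colon m\to n$, $\beta\colon k\to l$, a lax cell $\sigma_{\alpha\beta}$ filling
\[\begin{tikzcd}
	{m\cdot k} & {m\cdot l} \\
	{n\cdot k} & {n\cdot l.}
	\arrow["{m\cdot \beta}", from=1-1, to=1-2]
	\arrow["{\alpha\cdot k}"', from=1-1, to=2-1]
	\arrow["{\alpha\cdot l}", from=1-2, to=2-2]
	\arrow["{n\cdot \beta}"', from=2-1, to=2-2]
\end{tikzcd}\]
Its direction is $(\alpha\cdot l)\circ(m\cdot\beta)\Rightarrow(n\cdot\beta)\circ(\alpha\cdot k)$, i.e.\ $\sigma_{\alpha\beta}\colon \alpha\otimes\beta\Rightarrow\beta\otimes\alpha$.

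Take $\alpha=u$ and $\beta=v$, so $m=k=0$ and $n=l=1$. Then $m\cdot k=0$, $m\cdot l=0$, $n\cdot k=0$ and $n\cdot l=1$. The $1$-cells $0\cdot v$ and $u\cdot 0$ are $1$-cells $0\to 0$. Since $\mu$ preserves powers in each variable and $0$ is terminal in $\T$, these are the identity; one can also argue that $\mu(\id_0,v)=0\cdot v$ is the empty power of $v$. The two composites around the square are therefore $u\cdot 1\circ \id_0=u$ and $1\cdot v\circ\id_0=v$, using $\mu(u,\id_1)=u\cdot 1=u$ and $\mu(\id_1,v)=v$ from strict unitality (the bullet list before Lemma \ref{lemma: f-commutativity}). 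The cell $\sigma_{uv}$ is thus a $2$-cell $u\Rightarrow v$, which is the claim. Swapping the roles gives $\sigma_{vu}\colon v\Rightarrow u$.

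The main point requiring care is the identification $0\cdot v=\id_0$ and the orientation convention. In the $(2,2)$-case, $0\cdot v$ is the unique map $0\to 0$, since $0$ is terminal (the empty product of $1$'s), so it is strictly the identity. In the $(\infty,2)$-case, the identification holds only up to the canonical equivalence, and the resulting $2$-cell is between $u$ and $v$ after whiskering with these equivalences. I would check that the labelling $\alpha\otimes\beta$ versus $\beta\otimes\alpha$ from the Notation preceding Lemma \ref{lemma: syntactic} produces the direction $u\Rightarrow v$ rather than $v\Rightarrow u$. If it produces the opposite direction, I would simply use $\sigma_{vu}$ instead, since both orderings are available.
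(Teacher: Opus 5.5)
Your proposal is correct and matches the paper's proof: take the lax commutativity filler $\sigma_{uv}$ for the square with $m=k=0$ and $n=l=1$, and note that $0\cdot v=\id_0=u\cdot 0$ because $0$ is terminal. Then $\sigma_{uv}$ is literally a $2$-cell $u\Rightarrow v$. Your orientation reading ($\alpha\otimes\beta\Rightarrow\beta\otimes\alpha$, i.e.\ from the top-right composite $u$ to the left-bottom composite $v$) agrees with the paper's, so the fallback to $\sigma_{vu}$ is unnecessary.
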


\begin{proof}
    The lax commutativity implies an existence of a cell $\sigma_{uv}$ as below.
    % https://q.uiver.app/#q=WzAsNCxbMCwwLCIwXFxjZG90MCJdLFswLDEsIjFcXGNkb3QwIl0sWzEsMCwiMFxcY2RvdDEiXSxbMSwxLCIxXFxjZG90IDEiXSxbMSwzLCJ2IiwyXSxbMiwzLCJ1Il0sWzAsMiwiMFxcY2RvdCB2Il0sWzAsMSwidVxcY2RvdCAwIiwyXSxbNiw0LCJzX3t1dn0iLDAseyJzaG9ydGVuIjp7InNvdXJjZSI6NDAsInRhcmdldCI6NDB9fV1d
\[\begin{tikzcd}
	{0\cdot0} & {0\cdot1} \\
	{1\cdot0} & {1\cdot 1}
	\arrow[""{name=0, anchor=center, inner sep=0}, "{0\cdot v}", from=1-1, to=1-2]
	\arrow["{u\cdot 0}"', from=1-1, to=2-1]
	\arrow["u", from=1-2, to=2-2]
	\arrow[""{name=1, anchor=center, inner sep=0}, "v"', from=2-1, to=2-2]
	\arrow["{\sigma_{uv}}", between={0.4}{0.6}, Rightarrow, from=0, to=1]
\end{tikzcd}\]
    As $0\cdot v=\id_0=u\cdot 0$, $\sigma_{uv}$ is in fact a cell $u\to v$. 
\end{proof}

Recall from \ref{subsec: 1-cat EH} that for a unit $u$ and positive integers $k\le n$, $n>1$, we denote by $u_{k,n}\colon 1\to n$ the map of the form $u+\cdots +\id_1+\cdots +u$ where $\id_1$ is at $k$-th position. We can define unitality of a map $\alpha\colon n\to 1$ similarly as with $1$-theories: we say that $\alpha$ is unital with a unit $u$ if for any positive integer $k\le n$, we have an isomorphism $\id_1\Rightarrow \alpha\circ u_{k,n}$.

% https://q.uiver.app/#q=WzAsMyxbMCwwLCIxIl0sWzIsMCwiMSJdLFsxLDEsIm4iXSxbMCwxLCIiLDAseyJsZXZlbCI6Miwic3R5bGUiOnsiaGVhZCI6eyJuYW1lIjoibm9uZSJ9fX1dLFswLDIsInVfe2ssbn0iLDJdLFsyLDEsIlxcYWxwaGEiLDJdLFszLDIsIlxcY29uZyIsMSx7InNob3J0ZW4iOnsic291cmNlIjozMCwidGFyZ2V0IjozMH0sInN0eWxlIjp7ImJvZHkiOnsibmFtZSI6Im5vbmUifSwiaGVhZCI6eyJuYW1lIjoibm9uZSJ9fX1dXQ==
\[\begin{tikzcd}
	1 && 1 \\
	& n
	\arrow[""{name=0, anchor=center, inner sep=0}, equals, from=1-1, to=1-3]
	\arrow["{u_{k,n}}"', from=1-1, to=2-2]
	\arrow["\alpha"', from=2-2, to=1-3]
	\arrow["\cong"{description}, draw=none, from=0, to=2-2]
\end{tikzcd}\]

\begin{lemma} \label{lemma: comm unital}
    Suppose that $\alpha,\beta\colon n\to 1$ are unital $1$-cells in $\T$. Then there exists a map $\beta \Rightarrow\alpha$.
\end{lemma}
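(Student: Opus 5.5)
We will mimic the proof of Lemma \ref{lemma: unital maps}, replacing the commutative square by the lax Gray cell provided by the lax commutativity. Presumably $\alpha,\beta$ share the same unit $u$, as in the $1$-dimensional lemma; if they have different units $u,v$, we first use Lemma \ref{lemma: comm units} to compare them. The central object is the $1$-cell $\delta:=\sum_i u_{i,n}\colon n\to n\cdot n$, the ``diagonal matrix with units off the diagonal''. We then consider the lax square for $\alpha,\beta$ with filler
\[
\sigma_{\beta\alpha}\colon \alpha\circ(\beta\cdot n)\Rightarrow \beta\circ(n\cdot\alpha)
\]
(or its opposite orientation, according to the convention in Lemma \ref{lemma: syntactic}; the direction of the resulting cell follows from this choice). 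Whiskering with $\delta$ gives a $2$-cell
\[
\alpha\circ(\beta\cdot n)\circ\delta\Rightarrow \beta\circ(n\cdot\alpha)\circ\delta .
\]

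Next we identify the two composites. Using product preservation (Lemma \ref{lemma: syntactic}(2) and powers in $\T$), the $i$-th component of $(n\cdot\alpha)\circ\delta\colon n\to n$ is $\alpha\circ u_{i,n}\circ p_i$. The unitality isomorphisms $\id_1\cong\alpha\circ u_{i,n}$, assembled componentwise, therefore give an isomorphism $\id_n\cong(n\cdot\alpha)\circ\delta$. Similarly $(\beta\cdot n)\circ\delta$ is a transposed version of the same matrix, since $\beta\cdot n$ is $n\cdot\beta$ conjugated by the symmetry coming from $\F^{op}$. Its components are $\beta\circ u_{i,n}\circ p_i$, so $(\beta\cdot n)\circ\delta\cong\id_n$ via the unitality isomorphisms for $\beta$.

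Composing, we obtain
\[
\alpha\cong\alpha\circ(\beta\cdot n)\circ\delta\Rightarrow\beta\circ(n\cdot\alpha)\circ\delta\cong\beta,
\]
which is a $2$-cell between $\alpha$ and $\beta$. If the convention for $\sigma$ makes it point $\alpha\Rightarrow\beta$, we instead use the square with the roles of $\alpha$ and $\beta$ swapped, $\sigma_{\alpha\beta}$. This gives $\beta\Rightarrow\alpha$ as stated, since the lemma is symmetric in the two unital maps.

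The main obstacle is the identification $(\beta\cdot n)\circ\delta\cong\id_n$. It requires carefully tracking the symmetry isomorphism of $\F^{op}$ hidden in $\beta\cdot n$ and checking that the transpose of the diagonal matrix $\delta$ is again diagonal, using only inert coherences (which hold strictly by Lemma \ref{lemma: f-commutativity}). A second point is the case of different units $u,v$. Here I would first produce the cell $\sigma_{uv}\colon u\Rightarrow v$ from Lemma \ref{lemma: comm units}, whisker it into $\delta$, and use it to mediate between $u_{i,n}$ and $v_{i,n}$ before applying the unitality isomorphisms. The direction of that cell then has to match the direction of the desired cell $\beta\Rightarrow\alpha$, which I expect to be compatible with choosing the appropriate orientation of the Gray cell.
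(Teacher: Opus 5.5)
Your proposal is correct and is essentially the paper's argument: the paper pastes $\sigma_{\beta\alpha}\colon \beta\circ(n\cdot\alpha)\Rightarrow\alpha\circ(\beta\cdot n)$ with the cell $U\Rightarrow V$ induced by $u\Rightarrow v$ (Lemma \ref{lemma: comm units}) and with the unitality isomorphisms $\id_n\cong(n\cdot\alpha)\circ U$ and $\id_n\cong(\beta\cdot n)\circ V$, which yields $\beta\Rightarrow\alpha$ directly. Your worries about orientation are harmless, since Lemma \ref{lemma: comm units} applies to either ordering of the two units and the hypotheses are symmetric in $\alpha$ and $\beta$.
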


\begin{proof}
    Denote by $u,v$ the units for $\alpha,\beta,$ respectively; we know that there exists a $2$-cell $u\Rightarrow v$. Denote $U:= \sum_i u_{in}$, $V:= \sum_i v_{in}$. We know that we have a $2$-cell $U\Rightarrow V$ and also an isomorphism $\id_n\Rightarrow (n\cdot \alpha)\circ U.$ It is straightforward to check that we have an isomorphism $\id_n\Rightarrow (\beta\cdot n)\circ V$ as well. Then we obtain a cell $\alpha\Rightarrow\beta$ as a pasting of the $2$-cells in the diagram below.
    % https://q.uiver.app/#q=WzAsNSxbMSwxLCJuXjIiXSxbMSwyLCJuIl0sWzIsMSwibiJdLFsyLDIsIjEiXSxbMCwwLCJuIl0sWzAsMSwiXFxiZXRhXFxjZG90IG4iLDJdLFswLDIsIm5cXGNkb3QgXFxhbHBoYSJdLFsyLDMsIlxcYmV0YSJdLFsxLDMsIlxcYWxwaGEiLDJdLFs0LDEsIiIsMCx7ImN1cnZlIjoyLCJsZXZlbCI6Miwic3R5bGUiOnsiaGVhZCI6eyJuYW1lIjoibm9uZSJ9fX1dLFs0LDIsIiIsMCx7ImN1cnZlIjotMiwibGV2ZWwiOjIsInN0eWxlIjp7ImhlYWQiOnsibmFtZSI6Im5vbmUifX19XSxbNCwwLCJWIiwxLHsiY3VydmUiOjJ9XSxbNCwwLCJVIiwxLHsiY3VydmUiOi0yfV0sWzYsOCwiXFxzaWdtYV97XFxiZXRhXFxhbHBoYX0iLDAseyJvZmZzZXQiOjEsInNob3J0ZW4iOnsic291cmNlIjo0MCwidGFyZ2V0Ijo0MH19XSxbMTIsMTEsIiIsMSx7InNob3J0ZW4iOnsic291cmNlIjozMCwidGFyZ2V0IjozMH19XSxbMTAsMCwiXFxzaW1lcSIsMCx7InNob3J0ZW4iOnsic291cmNlIjoyMH0sInN0eWxlIjp7ImJvZHkiOnsibmFtZSI6Im5vbmUifSwiaGVhZCI6eyJuYW1lIjoibm9uZSJ9fX1dLFswLDksIlxcc2ltZXEiLDAseyJzaG9ydGVuIjp7InRhcmdldCI6MjB9LCJzdHlsZSI6eyJib2R5Ijp7Im5hbWUiOiJub25lIn0sImhlYWQiOnsibmFtZSI6Im5vbmUifX19XV0=
\[\begin{tikzcd}
	n && \\
	& {n^2} & n \\
	& n & 1
	\arrow[""{name=0, anchor=center, inner sep=0}, "V"{description}, curve={height=12pt}, from=1-1, to=2-2]
	\arrow[""{name=1, anchor=center, inner sep=0}, "U"{description}, curve={height=-12pt}, from=1-1, to=2-2]
	\arrow[""{name=2, anchor=center, inner sep=0}, curve={height=-12pt}, equals, from=1-1, to=2-3]
	\arrow[""{name=3, anchor=center, inner sep=0}, curve={height=12pt}, equals, from=1-1, to=3-2]
	\arrow[""{name=4, anchor=center, inner sep=0}, "{n\cdot \alpha}", from=2-2, to=2-3]
	\arrow["{\beta\cdot n}"', from=2-2, to=3-2]
	\arrow["\beta", from=2-3, to=3-3]
	\arrow[""{name=5, anchor=center, inner sep=0}, "\alpha"', from=3-2, to=3-3]
	\arrow[between={0.3}{0.7}, Rightarrow, from=1, to=0]
	\arrow["\simeq", draw=none, from=2, to=2-2]
	\arrow["\simeq", draw=none, from=2-2, to=3]
	\arrow["{\sigma_{\beta\alpha}}", shift right, between={0.4}{0.6}, Rightarrow, from=4, to=5]
\end{tikzcd}\]
\end{proof}

Let us call the composite $2$-cell $\beta\Rightarrow\alpha$ from the diagram above by $\iota_{\beta\alpha}$.

\begin{proposition}
    Let $\T$ be a Lawvere $(2,2)$-theory with a pesudocommutativity $\mu$. Then the following holds:
    \begin{enumerate}
        \item There exists at most one unit $u$ in $\T$ which moreover admits no nontrivial automorphisms.
        \item Any two unital maps $\alpha,\beta\colon n \to 1$ are isomorphic. 
        %\item If $\mu$ is symmetric, the $2$-cell $\iota_{\beta\alpha}$ induces an isomorphism $\End(\beta)\xrightarrow{\sim}\End(\alpha)$. 
    \end{enumerate}
\end{proposition}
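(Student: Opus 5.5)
The plan is to exploit the fact that for a pseudocommutativity all the cells $\sigma_{\alpha\beta}$ of Lemma \ref{lemma: syntactic} are invertible, and then to feed suitable $2$-cells into the coherence equations (\ref{dia: gray2}) and (\ref{dia: gray3}). The degenerate arity $0$ makes most whiskerings trivial, since $0\cdot s=\id_{\id_0}$ and $s\cdot 0=\id_{\id_0}$. Since $\T$ is a $(2,2)$-theory, I read ``at most one unit'' as ``unique up to a unique isomorphism''. Strictly distinct but isomorphic $1$-cells $0\to 1$ cannot be excluded in general.

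For (1), let $u,v\colon 0\to 1$ be units. Lemma \ref{lemma: comm units} already gives a $2$-cell $\sigma_{uv}\colon u\Rightarrow v$. In the pseudo case $\sigma_{uv}$ is invertible, so any two units are isomorphic. For uniqueness of this isomorphism I take two units $\gamma,\gamma'\colon 0\to 1$, any $2$-cell $s\colon\gamma\Rightarrow\gamma'$, and a third unit $\alpha\colon 0\to 1$, and apply (\ref{dia: gray2}) with $p=m=0$ and $q=n=1$. The whiskering $s\cdot m=s\cdot 0$ is the identity on $\id_0$, while $s\cdot n=s$, and $p\cdot\alpha=\id_0=q\cdot\alpha\circ(\gamma\cdot 0)$ degenerate accordingly. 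The equation should then reduce to
\[
\sigma_{\gamma'\alpha}=s\circ\sigma_{\gamma\alpha}.
\]
Since $\sigma_{\gamma\alpha}$ is invertible, this gives $s=\sigma_{\gamma'\alpha}\sigma_{\gamma\alpha}^{-1}$. Hence there is at most one $2$-cell between any two units, and it is invertible. Specializing to $\gamma=\gamma'=u$ shows that $u$ has no nontrivial automorphisms; equivalently, one can put $s$ into (\ref{dia: gray3}) with $\alpha=u$ and obtain $\sigma_{u\gamma}=s\circ\sigma_{u\gamma}$ directly. The step that needs the most care is checking that the whiskerings in (\ref{dia: gray2}) and (\ref{dia: gray3}) really collapse as claimed, i.e. that $\mu(s,\id_0)$ is the identity, using $\mu(s,\id_n)=s\cdot n$ and that $0$ is terminal in $\T$ (as the empty power). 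I expect this to be routine bookkeeping rather than a genuine obstacle.

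For (2), I reuse the construction of $\iota_{\beta\alpha}$ from Lemma \ref{lemma: comm unital} and check that each constituent is invertible. The unit isomorphisms $\id_n\cong(n\cdot\alpha)\circ U$ and $\id_n\cong(\beta\cdot n)\circ V$ are invertible by the definition of unitality, together with the straightforward transfer to $\beta\cdot n$ noted in that lemma. The cell $U\Rightarrow V$ is assembled from $\sigma_{uv}$, which is invertible by (1). The cell $\sigma_{\beta\alpha}$ is invertible because $\mu$ is a pseudocommutativity. A pasting of invertible $2$-cells is invertible, so $\iota_{\beta\alpha}\colon\beta\Rightarrow\alpha$ is an isomorphism. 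This argument does not need the units of $\alpha$ and $\beta$ to coincide: part (1) has already identified them up to a unique isomorphism, and that is exactly what the pasting uses.
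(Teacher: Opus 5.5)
Your proposal follows the paper's proof: invertibility of $\sigma_{uv}$ from pseudocommutativity, the naturality equation (\ref{dia: gray2}) with $p=m=0$ to show that endomorphisms of a unit are trivial (you in fact get uniqueness of arbitrary $2$-cells between units, slightly more than the statement asks), and invertibility of every constituent of $\iota_{\beta\alpha}$ for (2). One bookkeeping slip: since $\sigma_{\gamma\alpha}\colon\gamma\Rightarrow\alpha$ and $s\colon\gamma\Rightarrow\gamma'$, equation (\ref{dia: gray2}) actually collapses to $\sigma_{\gamma'\alpha}\circ s=\sigma_{\gamma\alpha}$, so $s=\sigma_{\gamma'\alpha}^{-1}\sigma_{\gamma\alpha}$; your displayed identity is instead the one that (\ref{dia: gray3}) gives, namely $\sigma_{\alpha\gamma'}=s\circ\sigma_{\alpha\gamma}$, and the conclusion is unaffected.
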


\begin{proof}
    We know from Lemma \ref{lemma: comm units} that for any two units $u,v$ we have a $2$-cell $\sigma_{uv}\colon u\Rightarrow v$. As $\mu$ is a pseudocommutativity, it is invertible. Suppose that $t\colon u\Rightarrow u$ is an automorphism of the unit. Then the equation \ref{dia: gray2} tells us that $\sigma_{uv}\circ t=\sigma_{uv}$. This proves (1). Proof of (2) follows immediately from the lemma above as the $2$-cell $\iota_{\beta\alpha}$ is now invertible. 
\end{proof}

\subsection{Examples} \label{subsec: examples}

First class of examples comes from ordinary commutative Lawvere 1-theories. Commutative $1$-theory is obviously $s$-commutative regarded as a $2$-theory. We also want to show that if a $1$-theory describing some sort of structure is commutative -- e.g., theory for commutative monoids --, then the associated $2$-theory describing weakening of that structure in the $2$-dimensional setting -- e.g., $2$-theory for symmetric pseudomonoids -- is pseudocommutative.

\begin{lemma} \label{lemma: sharpening commutativity}
    Let $\T$ be a Lawvere $(1,1)$-theory and let $\T^\flat$ be a $(2,1)$-theory together with a map $\T^{\flat}\to \T$ inducing an isomorphism $\PsFun_{s,p}^\times(\T,-)\cong\Fun_{s,p}^\times(\T^\flat,-)$. Then if $\T$ is commutative, we have a symmetric pseudocommutativity on $\T^{\flat}$.
\end{lemma}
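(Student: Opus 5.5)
We build the pseudocommutativity semantically and then transport it to $\T^\flat$ via the Yoneda-type argument from the remark following Definition \ref{def: w-commutativity II}. The aim is to produce, naturally in a $(2,2)$-category $\C$ with finite products, a lifting functor
\[\Fun_{s,p}^\times(\T^\flat,\C)\to \Fun_{s,p}^\times(\T^\flat,\Fun_{s,p}^\times(\T^\flat,\C))\]
that is a section of both forgetful functors $U_1,U_2$. Using the hypothesis $\PsFun_{s,p}^\times(\T,-)\cong\Fun_{s,p}^\times(\T^\flat,-)$, this is the same as a lifting functor for pseudofunctors out of $\T$. Once such a natural section exists, the enriched Yoneda lemma in $(\mF\text{-}\Sketch,\otimes_{s,p},1)$ gives a strictly unital map $\mu\colon \T^\flat\otimes_{s,p}\T^\flat\to \T^\flat$ preserving products in each variable. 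Theorem \ref{prop: miracle associativity} then upgrades $\mu$ to a pseudocommutativity, so the only remaining issue is symmetry.

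For the lift itself, let $\X\colon\T\to\C$ be a pseudofunctor, strict on inert cells and preserving products. For each $\alpha\colon m\to 1$ in $\T$, we want a pseudomorphism $\widetilde\X(\alpha)\colon\X^m\to\X$. The candidate constraint $2$-cells for $\beta\colon k\to1$ come from $\X$ applied to the commuting square (\ref{diagram: comm}) in $\T$. Since $\X$ is only a pseudofunctor, the two composites $\X(k\cdot\alpha)\X(\beta\cdot m)$ and $\X(\beta)\X(m\cdot\alpha)$ are not equal, but each is canonically isomorphic to $\X$ of the same composite, because the square commutes strictly in $\T$. So the constraint cell is $\phi^{-1}\circ\phi'$, built from the pseudofunctoriality constraints $\phi,\phi'$. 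The coherence axioms for pseudonatural transformations, and for $\widetilde\X$ being a pseudofunctor in $\alpha$, all reduce to the coherence theorem for pseudofunctors. Every diagram in question is a pasting of constraint cells of $\X$ over a diagram that commutes strictly in $\T$, and any two such pastings agree. Naturality in $\C$ and the section property hold by construction.

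Alternatively, one can argue syntactically via Lemma \ref{lemma: syntactic}. Present $\T^\flat$ concretely as generated by the cells of $\T$ with composites related by invertible $2$-cells. Define $\sigma_{\alpha\beta}$ as the unique invertible $2$-cell $\alpha\otimes\beta\cong\beta\otimes\alpha$ lying over the identity of $\T$; this needs $\T^\flat\to\T$ to be locally an equivalence onto a discrete category, i.e. locally contractible. Conditions (1)--(4) then hold automatically because all parallel $2$-cells in $\T^\flat$ coincide, and $\sigma_{\beta\alpha}\sigma_{\alpha\beta}=1$ holds for the same reason, which gives symmetry.

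The main obstacle is justifying this local contractibility, or equivalently the coherence step in the semantic route. The hypothesis is stated only as an isomorphism of model $2$-categories, so I would first deduce from it, by testing against $\C=\Cat$ and the representable models, that $\T^\flat(m,1)\to\T(m,1)$ is an equivalence of groupoids onto a discrete set. If that fails, I would fall back on the semantic route, where coherence for pseudofunctors supplies the needed uniqueness of $2$-cells. In that route, symmetry is checked by showing $\mu s=\mu$ under the symmetry of $\otimes_{s,p}$, again using uniqueness of the constraint cells.
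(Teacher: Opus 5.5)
Your proposal takes a genuinely different route from the paper. The paper never looks at the $2$-cells of $\T^\flat$. It uses the hypothesis only through representability: a Yoneda chain gives $(\T\otimes_{s,p}\T)^\flat\cong\T^\flat\otimes_{s,p}\T^\flat$, i.e.\ $(-)^\flat$ is strong monoidal for $\otimes_{s,p}$. It then transports the commutative monoid structure of $\T$ to $\T^\flat$; $\T$ is commutative for $\times$ by Proposition~\ref{prop: ord formal}, hence for $\otimes_{s,p}$ via $\T\otimes_{s,p}\T\to\T\times\T$. Associativity, unitality and symmetry are thus inherited wholesale, with no use of Lemma~\ref{lemma: syntactic} or Theorem~\ref{prop: miracle associativity}.

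You instead imitate Proposition~\ref{prop: comm criteria}. You build $\widetilde{\X}$ by hand from constraint cells, extract a strictly unital $\mu$ by Yoneda (only the lift of the universal pseudofunctor $\T\to\T^\flat$ is really needed), and get associativity from the miracle theorem. This gives an explicit description of $\sigma_{\alpha\beta}$ as a canonical constraint isomorphism. It also sidesteps two things the paper leaves implicit: $(-)^\flat$ applied to the sketch $\T\otimes_{s,p}\T$, which is not a Lawvere theory, and a tensor--hom isomorphism for pseudofunctors.

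The price is that everything rests on your uniqueness claim, which you should state more carefully.
\begin{itemize}
\item It is not literally the coherence theorem for pseudofunctors. The cells of $\widetilde{\X}(\alpha)$ also involve the comparison isomorphisms $\X(k\cdot\alpha)\cong\X(\alpha)^k$ coming from the universal property of powers, and these are not constraint cells of $\X$. Before coherence applies, you must test equalities of $2$-cells into powers through the projections, which turns each comparison cell into a pasting of constraints.
\item Your two routes are not independent. Applied to the universal pseudofunctor $\T\to\T^\flat$, the semantic uniqueness statement is exactly the local contractibility your syntactic route needs. That contractibility cannot be read off from the hypothesis by testing against representables, since Yoneda only identifies $\T^\flat$ as a representing object. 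So the semantic route is the proof of the syntactic route's premise, not a fallback.
\item This makes your argument sensitive to the precise meaning of $\PsFun_{s,p}$, in a way the paper's argument is not. Take honest pseudofunctors with $\X\theta$ strict and $\T=\T_{cmon}$, with $m$ the multiplication, $s$ the swap and $\delta$ the diagonal. Comparing the two pastings from $\X(m)\X(s)\X(\delta)$ to $\X(m)\X(\delta)$ forces the symmetry of $x\otimes x$ to be the identity.
\item Correspondingly, the $(2,1)$-theory $\mathsf{Span}(\F)$ of Example~\ref{example: Eoo monoids} is out of reach of your argument. There the span $1\leftarrow k\to 1$ has automorphism group $\Sigma_k$, so $\mathsf{Span}(\F)$ is not locally contractible over $\T_{cmon}$. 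The paper's transport argument never uses any such property of the $2$-cells of $\T^\flat$.
\end{itemize}
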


\begin{proof}
    We can show that for $(\T\otimes_{s,p}\T)^\flat\cong \T^\flat\otimes_{s,p}\T^\flat$ by means of the Yoneda lemma:\begin{align*}
        \Fun_{s,p}(\T^\flat \otimes_{s,p} \T^\flat,-)&\cong \Fun_{s,p}(\T^\flat,\Fun_{s,p}(\T^{\flat},-)\\
        &\cong \PsFun_{s,p}(\T,\PsFun_{s,p}(\T,-))\\
        &\cong \PsFun_{s,p}(\T\otimes_{s,p}\T,-)\\
        &\cong \Fun_{s,p}((\T\otimes_{s,p} \T)^\flat,-) 
    \end{align*}
Since $\T$ is commutative, it is a commutative monoid with respect to $\times,$ hence also $\otimes_{s,p}$. We have just shown that $(-)^{\flat}$ is strong monoidal, and hence $\T^{\flat}$ is a commutative monoid with respect to $\otimes_{s,p}$. \end{proof}

%\begin{remark}    The lemma above is not surprising: as mentioned before, using $2$-functors instead of pseudofunctors throughout this paper is mostly an aesthetical choice. Also, in the $\infty$-categorical setting, the distinction between strict and pseudofunctors becomes meaningles, and therefore so does the distinction between $\T$ and $\T^\flat$.\end{remark}

\begin{example} \label{example: Eoo monoids}
    Consider the $(2,1)$-theory $\T$ for $E_\infty$-monoids; that is, a functor $\theta\colon \F^{op}\to \mathsf{Span}(\F)$ which is an identity on objects and sends a map $n\to m$ in $\F^{op}$ to a span $n\leftarrow m\xrightarrow{\id} m$. By \cite{Cra10}, $\Mod_{s}(\T,{\Cat_\infty})$ is the category of symmetric monoidal categories, symmetric monoidal functors, and symmetric monoidal natural transformations. In the context of the previous lemma, we can also write $\T=\T_{cmon}^{\flat}$. This theory is pseudocommutative.
\end{example}

\begin{example} \label{example: braided}
    An important example of a Lawvere $(2,1)$-theory which is not of the form $\T^{\flat}$ for any $1$-theory is $\T_{E_2}$, the $2$-theory for braided pseudomonoids. Explicitly, it is generated by $\T_{mon}^\flat$, the theory for pseudomonoids, together with an invertible $2$-cell $b\colon m\to ms$, satisfying the usual coherence equations for braiding (see e.g. \cite[XI.1]{Mac98}). It may appear it admits a pseudocommutativity structure, because we can actually find a system of cells $\sigma_{\alpha\beta}$; for example, we use $b$ to define $\sigma_{mm}$ the same way as for $\T_{cmon}^\flat$, in virtue of diagram (\ref{dia: comm mon}). 
    % https://q.uiver.app/#q=WzAsNixbMCwwLCI0Il0sWzEsMCwiMyJdLFsxLDEsIjIiXSxbMCwxLCI0Il0sWzIsMSwiMSJdLFsyLDAsIjIiXSxbMCwxLCIxbTEiXSxbMCwzLCIxczEiLDJdLFszLDIsIjJcXGNkb3QgbSIsMl0sWzMsMSwiMW0xIiwyXSxbMiw0LCJtIiwyXSxbNSw0LCJtIl0sWzEsNSwibTEiXSxbNiwzLCIxYjEiLDEseyJzaG9ydGVuIjp7InNvdXJjZSI6MjAsInRhcmdldCI6MTB9fV0sWzEsMTAsIlxcY29uZyIsMSx7InNob3J0ZW4iOnsic291cmNlIjozMCwidGFyZ2V0IjozMH0sInN0eWxlIjp7ImJvZHkiOnsibmFtZSI6Im5vbmUifSwiaGVhZCI6eyJuYW1lIjoibm9uZSJ9fX1dXQ==
\[\begin{tikzcd}
	4 & 3 & 2 \\
	4 & 2 & 1
	\arrow[""{name=0, anchor=center, inner sep=0}, "1m1", from=1-1, to=1-2]
	\arrow["1s1"', from=1-1, to=2-1]
	\arrow["m1", from=1-2, to=1-3]
	\arrow["m", from=1-3, to=2-3]
	\arrow["1m1"', from=2-1, to=1-2]
	\arrow["{2\cdot m}"', from=2-1, to=2-2]
	\arrow[""{name=1, anchor=center, inner sep=0}, "m"', from=2-2, to=2-3]
	\arrow["1b1"{description}, between={0.2}{0.9}, Rightarrow, from=0, to=2-1]
	\arrow["\cong"{description}, draw=none, from=1-2, to=1]
\end{tikzcd}\]
    
    However, the compatibilities of Lemma \ref{lemma: syntactic} are not satisfied, as we can see in the diagram below (or its vertical analogue):

    % https://q.uiver.app/#q=WzAsMTEsWzAsMCwiNCJdLFswLDEsIjQiXSxbMCwyLCIyIl0sWzIsMiwiMSJdLFsyLDAsIjIiXSxbMywxLCJcXG5lIl0sWzQsMCwiNCJdLFs0LDEsIjQiXSxbNCwyLCIyIl0sWzYsMCwiMiJdLFs2LDIsIjEiXSxbMCwxLCIxczEiLDJdLFsxLDIsIm1tIiwyXSxbNCwzLCJtIl0sWzAsNCwiMlxcY2RvdCBtIiwwLHsiY3VydmUiOi0yfV0sWzIsMywibSIsMCx7ImN1cnZlIjotMn1dLFsyLDMsIm1zIiwyLHsiY3VydmUiOjJ9XSxbNiw5LCIyXFxjZG90IG0iLDAseyJjdXJ2ZSI6LTJ9XSxbNiw5LCIyXFxjZG90IG1zIiwyLHsiY3VydmUiOjJ9XSxbNiw3LCIxczEiLDJdLFs3LDgsIjJcXGNkb3QgbSIsMl0sWzksMTAsIm0iXSxbOCwxMCwibXMiLDIseyJjdXJ2ZSI6Mn1dLFsxNCwxNSwiIiwwLHsic2hvcnRlbiI6eyJzb3VyY2UiOjQwLCJ0YXJnZXQiOjQwfX1dLFsxNSwxNiwiYiIsMCx7InNob3J0ZW4iOnsic291cmNlIjo0MCwidGFyZ2V0IjozMH19XSxbMTgsMjIsIiIsMix7InNob3J0ZW4iOnsic291cmNlIjo0MCwidGFyZ2V0Ijo0MH19XSxbMTcsMTgsImIiLDAseyJzaG9ydGVuIjp7InNvdXJjZSI6MzAsInRhcmdldCI6MzB9fV1d
\[\begin{tikzcd}
	4 && 2 && 4 && 2 \\
	4 &&& \ne & 4 \\
	2 && 1 && 2 && 1
	\arrow[""{name=0, anchor=center, inner sep=0}, "{2\cdot m}", curve={height=-12pt}, from=1-1, to=1-3]
	\arrow["1s1"', from=1-1, to=2-1]
	\arrow["m", from=1-3, to=3-3]
	\arrow[""{name=1, anchor=center, inner sep=0}, "{2\cdot m}", curve={height=-12pt}, from=1-5, to=1-7]
	\arrow[""{name=2, anchor=center, inner sep=0}, "{2\cdot ms}"', curve={height=12pt}, from=1-5, to=1-7]
	\arrow["1s1"', from=1-5, to=2-5]
	\arrow["m", from=1-7, to=3-7]
	\arrow["2\cdot m"', from=2-1, to=3-1]
	\arrow["{2\cdot m}"', from=2-5, to=3-5]
	\arrow[""{name=3, anchor=center, inner sep=0}, "m", curve={height=-12pt}, from=3-1, to=3-3]
	\arrow[""{name=4, anchor=center, inner sep=0}, "ms"', curve={height=12pt}, from=3-1, to=3-3]
	\arrow[""{name=5, anchor=center, inner sep=0}, "ms"', curve={height=12pt}, from=3-5, to=3-7]
	\arrow[between={0.4}{0.6}, Rightarrow, from=0, to=3]
	\arrow["b", between={0.3}{0.7}, Rightarrow, from=1, to=2]
	\arrow[between={0.4}{0.6}, Rightarrow, from=2, to=5]
	\arrow["b", between={0.4}{0.7}, Rightarrow, from=3, to=4]
\end{tikzcd}\]

The semantic way of seeing this failure is the observation that for monoids $M, N$ in a braided monoidal category $\V$, the braiding $b_{MN}\colon M\otimes N\to N\otimes M$ is not a monoid homomorphism. This would violate the eixistence of the lifting functor $\Mod_p(\T,\Cat)\to \Mod_p(\Mod_p(\T,\Cat))$. 

Nevertheless, the obstruction above is the only one, and hence we can only consider a map $\mu\colon\T_{E_1}\otimes_p \T_{E_1}\to \T_{E_2}$, giving us a $p$-commutation of $\T_{E_1}$ over $\T_{E_2}$ (using analogous reasoning as in Lemma \ref{lemma: syntactic}). This is of course closely related to the relationship $\mathbb{E}_1\otimes_{BV} \mathbb{E}_1\cong \mathbb{E}_2$ for operads (where $\otimes_{BV}$ is the Boardman-Vogt tensor product of operads). 
\end{example}

\begin{example}
    For a group $G$, we can consider the Lawvere $2$-theory $\T_G^\flat$, where $\T_G$ is as in \ref{example: 1dim monoids}. In fact, it is not necessary to take the whole Lawvere theory; we can just consider $BG^\flat$ where $BG$ is delooping of $G$. In other words, $BG^\flat$ is a bicategory with one object $\bullet$, $1$-cells $\rho_g$ for $g\in G$ and coherent invertible $2$-cells $\rho_{g}\rho_h\xrightarrow{\sim}\rho_{gh}$. As any bicategory\footnote{Strictly speaking, we should be talking here only about $2$-categories, so $\V$ should be a strict monoidal category. Nevertheless, in the last section, we will  generalize many results to $(\infty,2)$-categories, so the reader will hopefully believe me now that generalizing to bicategories causes no issues whatsoever.}, we can view it as an $\mF$-sketch with the empty set of specified cones and only tight map being the identity. In particular, $B\V\otimes_{s,w} B\V=B\V\otimes_w B\V$, the usual $w$-Gray tensor product. Following \ref{example: 1dim monoids} and \ref{lemma: sharpening commutativity}, we get that $BG^\flat$ is pseudocommutative if and only if $G$ is commutative. Models of $BG^\flat$ are categories with $G$-action as in \cite{BGM19}. An interesting object is the category of $G$-fixpoints, denoted by $\C^{G}$ in \textit{loc. cit.}, which in our formalism we can define as $\Hom_p(*,\C)$. For example, if $\C=\Set$ is equipped with the trivial $G$-action, then $\Hom_p(*,\Set)\cong G\text{-}\Set$, the category of sets with a $G$-action.
\end{example}

\begin{example} \label{example: Yang-Baxter} Let $\V$ be a monoidal category and $B\V$ its delooping, i.e. a bicategory with one object $\bullet$ and the hom-category $B\V(\bullet,\bullet)=\V$. By (simplified version of the proof of) Lemma \ref{lemma: syntactic}, a pseudocommutativity for $B\V$ is given by a system of invertible $2$-cells $b_{XY}\colon X\otimes Y\to Y\otimes X$; it is easy to compare the coherence equations to see that this is exactly braiding. 
    % https://q.uiver.app/#q=WzAsNCxbMCwwLCJcXGJ1bGxldCJdLFsxLDAsIlxcYnVsbGV0Il0sWzAsMSwiXFxidWxsZXQiXSxbMSwxLCJcXGJ1bGxldCJdLFswLDEsIlgiXSxbMSwzLCJZIl0sWzAsMiwiWSIsMl0sWzIsMywiWCIsMl0sWzQsNywiYl97WFl9IiwwLHsic2hvcnRlbiI6eyJzb3VyY2UiOjQwLCJ0YXJnZXQiOjQwfX1dXQ==
\[\begin{tikzcd}
	\bullet & \bullet \\
	\bullet & \bullet
	\arrow[""{name=0, anchor=center, inner sep=0}, "Y", from=1-1, to=1-2]
	\arrow["X"', from=1-1, to=2-1]
	\arrow["X", from=1-2, to=2-2]
	\arrow[""{name=1, anchor=center, inner sep=0}, "Y"', from=2-1, to=2-2]
	\arrow["{b_{XY}}", between={0.4}{0.6}, Rightarrow, from=0, to=1]
\end{tikzcd}\]

    Proposition \ref{prop: miracle associativity} tells us that this is associative. Let $X,Y,Z$ be any objects of $\V$; substituting $X=\gamma$, $Y=\alpha,$ $Z=\beta$ into the associativity equation \ref{dia: associativity}, we get that the following diagram has to commute (we omit the associators and brackets):

    % https://q.uiver.app/#q=WzAsNixbMSwwLCJYXFxvdGltZXMgWVxcb3RpbWVzIFoiXSxbMCwxLCJZXFxvdGltZXMgWFxcb3RpbWVzIFoiXSxbMCwyLCJZXFxvdGltZXMgWlxcb3RpbWVzIFgiXSxbMSwzLCJaXFxvdGltZXMgWVxcb3RpbWVzIFgiXSxbMiwxLCJYXFxvdGltZXMgWlxcb3RpbWVzIFkiXSxbMiwyLCJaXFxvdGltZXMgWFxcb3RpbWVzIFkiXSxbMCwxLCJiX3tYWX1cXG90aW1lcyAxIiwyXSxbMSwyLCIxXFxvdGltZXMgYl97WFp9IiwyXSxbMiwzLCJiX3tZWn1cXG90aW1lczEiLDJdLFswLDQsIjFcXG90aW1lcyBiX3tZWn0iXSxbNCw1LCJiX3tYWn1cXG90aW1lcyAxIl0sWzUsMywiMVxcb3RpbWVzIGJfe1hZfSJdXQ==
\[\begin{tikzcd}
	& {X\otimes Y\otimes Z} & \\
	{Y\otimes X\otimes Z} && {X\otimes Z\otimes Y} \\
	{Y\otimes Z\otimes X} && {Z\otimes X\otimes Y} \\
	& {Z\otimes Y\otimes X}
	\arrow["{b_{XY}\otimes 1}"', from=1-2, to=2-1]
	\arrow["{1\otimes b_{YZ}}", from=1-2, to=2-3]
	\arrow["{1\otimes b_{XZ}}"', from=2-1, to=3-1]
	\arrow["{b_{XZ}\otimes 1}", from=2-3, to=3-3]
	\arrow["{b_{YZ}\otimes1}"', from=3-1, to=4-2]
	\arrow["{1\otimes b_{XY}}", from=3-3, to=4-2]
\end{tikzcd}\]
    
    Amazingly, these are precisely the Yang-Baxter equations! Therefore, our framework of $w$-commutativity offers an alternative proof of the fact that for any braiding, these equations hold, going back to \cite{JS93}. By similar reasoning, we get that lax commutativity on $B\V$ is a non-invertible generalization of braiding, i.e. a \textit{lax braiding} as introduced in \cite{DPS07}. Proposition \ref{prop: miracle associativity} then tells us that the Yang-Baxter equations still hold for this weaker notion.
    
    By Lemma \ref{lemma: syntactic}, the pseudocommutativity structure is symmetric if and only if $b_{XY}b_{YX}=1$, which is the case if and only if the braiding is in fact a symmetry.

    A model of $B\V$ in $\Cat$ is a category with an action of $\V$, generalizing the previous example.
\end{example}

\begin{example} \label{example: involutive}
    One easy example of strictly commutative $2$-theory is the theory for involutive categories developed in \cite{Jac12}. Consider a theory $\T_{inv}$ generated by a map $\alpha\colon 1\to 1$, called involution, and an invertible $2$-cell $\iota\colon 1\to \alpha\circ\alpha$, satisfying the following equation: 
% https://q.uiver.app/#q=WzAsOCxbMCwwLCIxIl0sWzEsMCwiMSJdLFswLDEsIjEiXSxbMSwxLCIxIl0sWzIsMCwiMSJdLFszLDAsIjEiXSxbMiwxLCIxIl0sWzMsMSwiMSJdLFswLDEsIlxcYWxwaGEiXSxbMSwyLCJcXGFscGhhIiwyXSxbMiwzLCJcXGFscGhhIiwyXSxbNCw1LCJcXGFscGhhIl0sWzUsNiwiXFxhbHBoYSJdLFs2LDcsIlxcYWxwaGEiLDJdLFsxLDMsIiIsMCx7ImxldmVsIjoyLCJzdHlsZSI6eyJoZWFkIjp7Im5hbWUiOiJub25lIn19fV0sWzQsNiwiIiwyLHsibGV2ZWwiOjIsInN0eWxlIjp7ImhlYWQiOnsibmFtZSI6Im5vbmUifX19XSxbMTQsOSwiXFxpb3RhIiwyLHsib2Zmc2V0IjotMiwic2hvcnRlbiI6eyJzb3VyY2UiOjIwLCJ0YXJnZXQiOjIwfX1dLFsxNSwxMiwiXFxpb3RhIiwwLHsib2Zmc2V0IjotMSwic2hvcnRlbiI6eyJzb3VyY2UiOjIwLCJ0YXJnZXQiOjIwfX1dLFsxNCwxNSwiPSIsMSx7InNob3J0ZW4iOnsic291cmNlIjoyMCwidGFyZ2V0IjoyMH0sInN0eWxlIjp7ImJvZHkiOnsibmFtZSI6Im5vbmUifSwiaGVhZCI6eyJuYW1lIjoibm9uZSJ9fX1dXQ==
\[\begin{tikzcd}
	1 & 1 & 1 & 1 \\
	1 & 1 & 1 & 1
	\arrow["\alpha", from=1-1, to=1-2]
	\arrow[""{name=0, anchor=center, inner sep=0}, "\alpha"', from=1-2, to=2-1]
	\arrow[""{name=1, anchor=center, inner sep=0}, equals, from=1-2, to=2-2]
	\arrow["\alpha", from=1-3, to=1-4]
	\arrow[""{name=2, anchor=center, inner sep=0}, equals, from=1-3, to=2-3]
	\arrow[""{name=3, anchor=center, inner sep=0}, "\alpha", from=1-4, to=2-3]
	\arrow["\alpha"', from=2-1, to=2-2]
	\arrow["\alpha"', from=2-3, to=2-4]
	\arrow["\iota"', shift left=2, between={0.2}{0.8}, Rightarrow, from=1, to=0]
	\arrow["{=}"{description}, draw=none, from=1, to=2]
	\arrow["\iota", shift left, between={0.2}{0.8}, Rightarrow, from=2, to=3]
\end{tikzcd}\]

Then $\Mod_l(\T_{inv},\Cat)$ is the $2$-category of involutive categories from loc.cit. An internal algebra in an involutive category $\C$ is an object $X$ equipped with a map $\overline{X}\to X$ (where $\overline{X}$ is the involution); these are called self-conjugate objects in \textit{loc. cit.} This theory is strictly commutative as we can choose $s_{\alpha\alpha}=1$, and that choice certainly satisfies the equations from Lemma \ref{lemma: syntactic}. In fact, the results of sections 3-5 in \textit{loc. cit.} follows nicely from our general results (e.g. see \cite[Lemma 3.2]{Jac12} as an easy special case of Theorem \ref{thm: closed structure}).
\end{example}

\begin{example} \label{example: double delooping}
    Consider a Lawvere $(2,2)$-theory $\T$ where all $1$-cells are inert, i.e. only interesting data comes from endomorphisms of $\id_1\colon 1\to 1$. Models of such theory are equivalently functors out of $B^2M$, the double delooping of the monoid $M=\End(\id_1)$. Then any element in the center of $M$ induces a lax commutativity structure on $B^2M$, as one can see from Lemma \ref{lemma: syntactic} by considering the equation below.
    % https://q.uiver.app/#q=WzAsOCxbMCwwLCIxIl0sWzEsMCwiMSJdLFswLDEsIjEiXSxbMSwxLCIxIl0sWzIsMCwiMSJdLFszLDAsIjEiXSxbMiwxLCIxIl0sWzMsMSwiMSJdLFswLDEsIiIsMCx7ImxldmVsIjoyLCJzdHlsZSI6eyJoZWFkIjp7Im5hbWUiOiJub25lIn19fV0sWzEsMywiIiwwLHsibGV2ZWwiOjIsInN0eWxlIjp7ImhlYWQiOnsibmFtZSI6Im5vbmUifX19XSxbMiwzLCIiLDIseyJsZXZlbCI6Miwic3R5bGUiOnsiaGVhZCI6eyJuYW1lIjoibm9uZSJ9fX1dLFswLDIsIiIsMix7ImN1cnZlIjoyLCJsZXZlbCI6Miwic3R5bGUiOnsiaGVhZCI6eyJuYW1lIjoibm9uZSJ9fX1dLFswLDIsIiIsMSx7ImN1cnZlIjotMiwibGV2ZWwiOjIsInN0eWxlIjp7ImhlYWQiOnsibmFtZSI6Im5vbmUifX19XSxbNCw1LCIiLDIseyJsZXZlbCI6Miwic3R5bGUiOnsiaGVhZCI6eyJuYW1lIjoibm9uZSJ9fX1dLFs1LDcsIiIsMix7ImN1cnZlIjoyLCJsZXZlbCI6Miwic3R5bGUiOnsiaGVhZCI6eyJuYW1lIjoibm9uZSJ9fX1dLFs0LDYsIiIsMCx7ImxldmVsIjoyLCJzdHlsZSI6eyJoZWFkIjp7Im5hbWUiOiJub25lIn19fV0sWzYsNywiIiwwLHsibGV2ZWwiOjIsInN0eWxlIjp7ImhlYWQiOnsibmFtZSI6Im5vbmUifX19XSxbNSw3LCIiLDEseyJjdXJ2ZSI6LTIsImxldmVsIjoyLCJzdHlsZSI6eyJoZWFkIjp7Im5hbWUiOiJub25lIn19fV0sWzEyLDExLCJhIiwyLHsic2hvcnRlbiI6eyJzb3VyY2UiOjIwLCJ0YXJnZXQiOjIwfX1dLFs5LDEyLCJiIiwyLHsic2hvcnRlbiI6eyJzb3VyY2UiOjIwLCJ0YXJnZXQiOjIwfX1dLFsxNywxNCwiYSIsMix7InNob3J0ZW4iOnsic291cmNlIjoyMCwidGFyZ2V0IjoyMH19XSxbMTQsMTUsImIiLDIseyJzaG9ydGVuIjp7InNvdXJjZSI6MjAsInRhcmdldCI6MjB9fV0sWzksMTUsIj0iLDEseyJzaG9ydGVuIjp7InNvdXJjZSI6MjAsInRhcmdldCI6MjB9LCJzdHlsZSI6eyJib2R5Ijp7Im5hbWUiOiJub25lIn0sImhlYWQiOnsibmFtZSI6Im5vbmUifX19XV0=
\[\begin{tikzcd}
	1 & 1 & 1 & 1 \\
	1 & 1 & 1 & 1
	\arrow[equals, from=1-1, to=1-2]
	\arrow[""{name=0, anchor=center, inner sep=0}, curve={height=12pt}, equals, from=1-1, to=2-1]
	\arrow[""{name=1, anchor=center, inner sep=0}, curve={height=-12pt}, equals, from=1-1, to=2-1]
	\arrow[""{name=2, anchor=center, inner sep=0}, equals, from=1-2, to=2-2]
	\arrow[equals, from=1-3, to=1-4]
	\arrow[""{name=3, anchor=center, inner sep=0}, equals, from=1-3, to=2-3]
	\arrow[""{name=4, anchor=center, inner sep=0}, curve={height=12pt}, equals, from=1-4, to=2-4]
	\arrow[""{name=5, anchor=center, inner sep=0}, curve={height=-12pt}, equals, from=1-4, to=2-4]
	\arrow[equals, from=2-1, to=2-2]
	\arrow[equals, from=2-3, to=2-4]
	\arrow["a"', between={0.2}{0.8}, Rightarrow, from=1, to=0]
	\arrow["b"', between={0.2}{0.8}, Rightarrow, from=2, to=1]
	\arrow["{=}"{description}, draw=none, from=2, to=3]
	\arrow["b"', between={0.2}{0.8}, Rightarrow, from=4, to=3]
	\arrow["a"', between={0.2}{0.8}, Rightarrow, from=5, to=4]
\end{tikzcd}\]
\end{example}

\subsection{Extended example: Segal objects} \label{subsec: example segal} Although we mostly focus on Lawvere $2$-theories in this paper, our constructions makes sense for general marked sketches or algebraic patterns as well. We will demonstrate it now on the algebraic pattern for Segal objects. In fact, we will only consider it as a marked $(2,2)$-sketch.

     Recall that by $\Delta$ we mean the category of nonempty finite ordinals and order-preserving maps, using the notation $[n]:=\{0<1<\cdots<n\}$. We construct the $\Phi$-$\mF$-sketch for Segal objects / internal categories where $\Phi$ is the class of shapes for wide pullbacks. The underlying $2$-category of this sketch is in fact a $1$-category, namely $\Delta^{op}$. The marking consists of the inert maps in the sense of Lurie, generated by identities and the face maps $d_0,d_n\colon [n]\to [n-1]$. It is a simple combinatorial exercise to see that the diagrams iteratively built from tight maps as below commute:
     % https://q.uiver.app/#q=WzAsMTMsWzAsMiwiWzFdIl0sWzIsMiwiWzFdLCJdLFsxLDMsIlswXSJdLFsxLDEsIlsyXSJdLFszLDIsIlsxXSJdLFs1LDIsIlsxXSJdLFs0LDMsIlswXSJdLFs2LDMsIlswXSJdLFs3LDIsIlsxXSwiXSxbNCwxLCJbMl0iXSxbNiwxLCJbMl0iXSxbNSwwLCJbM10iXSxbOCwyLCJcXGRvdHMiXSxbMCwyLCJkXzAiLDJdLFsxLDIsImRfMSJdLFszLDAsImRfMiIsMl0sWzMsMSwiZF8wIl0sWzQsNiwiZF8wIiwyXSxbNSw2LCJkXzEiXSxbNSw3LCJkXzAiLDJdLFs4LDcsImRfMSJdLFsxMSw5LCJkXzMiLDJdLFs5LDQsImRfMiIsMl0sWzksNSwiZF8wIl0sWzEwLDUsImRfMiIsMl0sWzEwLDgsImRfMCJdLFsxMSwxMCwiZF8wIl1d
\[\begin{tikzcd}
	&&&&& {[3]} &&& \\
	& {[2]} &&& {[2]} && {[2]} \\
	{[1]} && {[1],} & {[1]} && {[1]} && {[1],} & \dots \\
	& {[0]} &&& {[0]} && {[0]}
	\arrow["{d_3}"', from=1-6, to=2-5]
	\arrow["{d_0}", from=1-6, to=2-7]
	\arrow["{d_2}"', from=2-2, to=3-1]
	\arrow["{d_0}", from=2-2, to=3-3]
	\arrow["{d_2}"', from=2-5, to=3-4]
	\arrow["{d_0}", from=2-5, to=3-6]
	\arrow["{d_2}"', from=2-7, to=3-6]
	\arrow["{d_0}", from=2-7, to=3-8]
	\arrow["{d_0}"', from=3-1, to=4-2]
	\arrow["{d_1}", from=3-3, to=4-2]
	\arrow["{d_0}"', from=3-4, to=4-5]
	\arrow["{d_1}", from=3-6, to=4-5]
	\arrow["{d_0}"', from=3-6, to=4-7]
	\arrow["{d_1}", from=3-8, to=4-7]
\end{tikzcd}\]

Therefore, we make the marked $(2,2)$-category ($\Delta^{op}$, inert maps) into a $\Phi$-$\mF$-sketch $\Ss_{Seg}$ by choosing cones of the following form for any $n\ge2$ (where the maps $[n]\to [1]$ are built as above): 

% https://q.uiver.app/#q=WzAsMTAsWzQsMiwiWzFdIl0sWzYsMiwiWzFdIl0sWzUsMywiWzBdIl0sWzcsMywiWzBdIl0sWzgsMiwiWzFdIl0sWzMsMiwiXFxjZG90cyJdLFswLDIsIlsxXSJdLFsxLDMsIlswXSJdLFsyLDIsIlsxXSJdLFs0LDAsIltuXSJdLFswLDIsImRfMCIsMl0sWzEsMiwiZF8xIl0sWzEsMywiZF8wIiwyXSxbNCwzLCJkXzEiXSxbNiw3LCJkXzAiLDJdLFs4LDcsImRfMSJdLFs5LDYsIiIsMix7ImN1cnZlIjozfV0sWzksOCwiIiwwLHsiY3VydmUiOjF9XSxbOSwwXSxbOSwxLCIiLDAseyJjdXJ2ZSI6LTF9XSxbOSw0LCIiLDAseyJjdXJ2ZSI6LTN9XSxbMTcsMTgsIlxcZG90cyIsMSx7InNob3J0ZW4iOnsic291cmNlIjoyMCwidGFyZ2V0IjoyMH0sInN0eWxlIjp7ImJvZHkiOnsibmFtZSI6Im5vbmUifSwiaGVhZCI6eyJuYW1lIjoibm9uZSJ9fX1dXQ==
\[\begin{tikzcd}
	&&&& {[n]} &&&& \\
	\\
	{[1]} && {[1]} & \cdots & {[1]} && {[1]} && {[1]} \\
	& {[0]} &&&& {[0]} && {[0]}
	\arrow[curve={height=18pt}, from=1-5, to=3-1]
	\arrow[""{name=0, anchor=center, inner sep=0}, curve={height=6pt}, from=1-5, to=3-3]
	\arrow[""{name=1, anchor=center, inner sep=0}, from=1-5, to=3-5]
	\arrow[curve={height=-6pt}, from=1-5, to=3-7]
	\arrow[curve={height=-18pt}, from=1-5, to=3-9]
	\arrow["{d_0}"', from=3-1, to=4-2]
	\arrow["{d_1}", from=3-3, to=4-2]
	\arrow["{d_0}"', from=3-5, to=4-6]
	\arrow["{d_1}", from=3-7, to=4-6]
	\arrow["{d_0}"', from=3-7, to=4-8]
	\arrow["{d_1}", from=3-9, to=4-8]
	\arrow["\dots"{description}, draw=none, from=0, to=1]
\end{tikzcd}\]

For any $2$-category $\C$ with pullbacks, models of $\Ss_{Seg}$ in $\C$ (depending only on the underlying $1$-category) are Segal objects in $\C$, which we an also interpret as categories internal to $\C$: i.e., for a model $\X$, we view $X_0:=\X([0])$ as an ``object of objects'', $X_1:=\X([1])$ as an ``object of morphisms'', the tight maps $X_1 \to X_0$ induced by $d_0$ and $d_1$ as source and target maps, the map $X_1\times_{X_0}X_1\cong X_2\to X_1$ induced by $d_1$ as a composition map etc. In particular, the objects of $\Mod_{l}(\Ss_{Seg},\Cat)$ are double categories. What are the $1$-cells? A lax homomorphism $f\colon \X\to \Y$ is determined by the functors $f_0\colon X_0\to Y_0$, $f_1\colon X_1\to Y_1$ together with some conditions: they have to commute with the source and target maps (as these are tight) and for the unit and composition maps, we get the following squares: 

% https://q.uiver.app/#q=WzAsOCxbMCwwLCJYXzFcXHRpbWVzX3tYXzB9IFhfMSJdLFsxLDAsIllfMVxcdGltZXNfe1lfMH0gWV8xIl0sWzAsMSwiWF8xIl0sWzEsMSwiWV8xLCJdLFsyLDAsIlhfMCJdLFsyLDEsIlhfMSJdLFszLDAsIllfMCJdLFszLDEsIllfMSJdLFswLDIsIlxcWChkXzEpIiwyXSxbMiwzLCJmXzEiLDJdLFsxLDMsIlxcWShkXzEpIl0sWzAsMSwiKGZfMSxmXzEpIl0sWzQsNSwiXFxYKHNfMCkiLDJdLFs2LDcsIlxcWShzXzApIl0sWzQsNiwiZl8wIl0sWzUsNywiZl8xIiwyXSxbMTEsOSwiIiwwLHsic2hvcnRlbiI6eyJzb3VyY2UiOjQwLCJ0YXJnZXQiOjQwfX1dLFsxNCwxNSwiIiwwLHsic2hvcnRlbiI6eyJzb3VyY2UiOjQwLCJ0YXJnZXQiOjQwfX1dXQ==
\[\begin{tikzcd}
	{X_1\times_{X_0} X_1} & {Y_1\times_{Y_0} Y_1} & {X_0} & {Y_0} \\
	{X_1} & {Y_1,} & {X_1} & {Y_1}
	\arrow[""{name=0, anchor=center, inner sep=0}, "{(f_1,f_1)}", from=1-1, to=1-2]
	\arrow["{\X(d_1)}"', from=1-1, to=2-1]
	\arrow["{\Y(d_1)}", from=1-2, to=2-2]
	\arrow[""{name=1, anchor=center, inner sep=0}, "{f_0}", from=1-3, to=1-4]
	\arrow["{\X(s_0)}"', from=1-3, to=2-3]
	\arrow["{\Y(s_0)}", from=1-4, to=2-4]
	\arrow[""{name=2, anchor=center, inner sep=0}, "{f_1}"', from=2-1, to=2-2]
	\arrow[""{name=3, anchor=center, inner sep=0}, "{f_1}"', from=2-3, to=2-4]
	\arrow[between={0.4}{0.6}, Rightarrow, from=0, to=2]
	\arrow[between={0.4}{0.6}, Rightarrow, from=1, to=3]
\end{tikzcd}\]

That gives us exactly a lax functor of double categories in the sense of \cite[7.2]{GP99}. From that, we get that for a double category $\X$, the category $\IntAlg(\X):=\Hom_l(*,\X)$ is precisely the category of horizontal monads and their homomorphisms.

What about $w$-commutativity structures on $\Ss_{Seg}$? Our sketch being a $1$-category, we have $\Ss_{Seg}\otimes_{s,w} \Ss_{Seg}=\Ss_{Seg}\times \Ss_{Seg}$, so we are looking for a monoidal structures on $\Delta^{op}$ satisfying some extra conditions. As we are going to see in the next subsection, if a sketch $\Ss$ admits a $w$-commutativity structure $\mu\colon \Ss\otimes_{s,w}\Ss\to \Ss$, $u\colon 1\to \Ss$, the induced functor $u^*\colon \Mod_w(\Ss,\Cat)\to \Cat$ should be thought of as a forgetful functor, and hence it our case, it makes sense to look for monoidal structures with the unit being $[0]$. In fact, this is the only possible choice: for any unit $I$ of any monoidal category, it has to hold that the endomorphism monoid of $I$ is commutative, as pointed out by Tim Campion on mathoverflow. Following a similar reasoning as in his answer\footnote{See Tim Campion's answer at \href{https://mathoverflow.net/questions/402257/is-oplus-the-only-monoidal-structure-on-the-simplex-category}{mo.402257}.}, we can then in fact show that there is no such monoidal structure.

\begin{lemma}
    There is no monoidal structure on $\Delta$.
\end{lemma}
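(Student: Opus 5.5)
The approach follows Tim Campion's argument. The statement is meant as: there is no monoidal structure $(\otimes,I)$ on $\Delta$; the remark above about units only motivated the expected choice $I=[0]$. I would first show that the unit is forced to be $[0]$, and then derive a contradiction from the unit being terminal.

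\textbf{Step 1: the unit is $[0]$.} For any monoidal category the endomorphism monoid $\End(I)$ of the unit is commutative; this is the Eckmann--Hilton argument with $\End(I)\cong \End(I\otimes I)$ and the two actions via $-\otimes I$ and $I\otimes -$. For $n\ge 1$, $\End([n])$ in $\Delta$ contains the two constant maps $c_0,c_n$, and $c_0c_n=c_0\ne c_n=c_nc_0$. So $\End([n])$ is noncommutative, and hence $I\cong[0]$, which is terminal in $\Delta$.

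\textbf{Step 2: consequences of a terminal unit.} Since $I=[0]$ is terminal, every $X\otimes Y$ has natural projections $X\otimes Y\to X\otimes[0]\cong X$ and $X\otimes Y\to [0]\otimes Y\cong Y$. Points $[0]\to X$ and $[0]\to Y$ tensor to points $[0]\cong[0]\otimes[0]\to X\otimes Y$. Composing with the projections shows that the points of $X\otimes Y$ contain the pairs of points of $X$ and of $Y$, so there is an injection
\[
\Delta([0],X)\times\Delta([0],Y)\hookrightarrow\Delta([0],X\otimes Y).
\]
This injection is monotone in each variable for the total order on vertices, by functoriality of $X\otimes-$ and $-\otimes Y$ applied to maps $[1]\to X$. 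In particular $[1]\otimes[1]\cong[N]$ for some $N\ge 3$, and it contains the four vertices $(i,j)$.

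\textbf{Step 3: the contradiction.} The step I expect to be the main obstacle is extracting an actual contradiction. The plan is to test the interchange law $(f\otimes 1)(1\otimes g)=(1\otimes g)(f\otimes 1)$ on the noncommuting pair $c_0,c_1\in\End([1])$, together with the faces $d_0,d_1\colon[0]\to[1]$ and the unitors. First, $c_0\otimes 1$ and $1\otimes c_1$ are idempotents on $[N]$, compatible with the two projections. Their images are $\{(0,j)\}$-type and $\{(i,1)\}$-type intervals of $[N]$, each isomorphic to $[1]$ via the unitors. Because order-preserving idempotents of $[N]$ have interval images and the four vertices $(i,j)$ are distinct, these two interval images must be "transverse" copies of $[1]$ inside a total order. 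Such a configuration forces $(0,1)$ and $(1,0)$ to be comparable in a way incompatible with the monotonicity from Step 2 combined with the symmetric roles of the two factors. If this counting does not close, the fallback is Campion's original route: first show that $[1]\otimes-$ would have to send the cosimplicial object $[\bullet]$ to a product-preserving structure, then use that $[1]\times[1]$ does not exist in $\Delta$.
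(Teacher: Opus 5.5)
Steps 1 and 2 are correct, and Step 1 is how the paper pins down the unit. Step 3, which you identify as the crux, does not yet produce a contradiction.

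\begin{itemize}
\item Its key claim is false. An order-preserving idempotent of $[N]$ need not have an interval as image: on $[2]$, the map $0,1\mapsto 0$, $2\mapsto 2$ is an idempotent with image $\{0,2\}$. So the ``transverse intervals'' configuration is not forced.
\item Monotonicity of $(x,y)\mapsto x\otimes y$ in each variable cannot give a contradiction by itself. The lexicographic order $(0,0)<(0,1)<(1,0)<(1,1)$ on four points of $[3]$ is monotone in each variable.
\item The appeal to the ``symmetric roles of the two factors'' is unavailable, since a monoidal structure carries no symmetry.
\item The fallback via Campion's cosimplicial argument is only named, not carried out.
\end{itemize}

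The missing idea is already in your Step 2. Consider the two projections
\[
\pi_1\colon[1]\otimes[1]\to[1]\otimes[0]\cong[1],\qquad \pi_2\colon[1]\otimes[1]\to[0]\otimes[1]\cong[1],
\]
that is, $1\otimes\sigma_0$ and $\sigma_0\otimes 1$ followed by the unitors. These are morphisms of $\Delta$, hence order-preserving. Your naturality computation gives $\pi_1(0,1)=0$, $\pi_1(1,0)=1$, $\pi_2(0,1)=1$ and $\pi_2(1,0)=0$. Since $[1]\otimes[1]\cong[N]$ is totally ordered and $(0,1)\neq(1,0)$, there are two cases. If $(0,1)<(1,0)$, then $\pi_2$ reverses the order. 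If $(1,0)<(0,1)$, then $\pi_1$ does. Either way this contradicts order-preservation. This is exactly the paper's proof, and with it Step 3 and the monotonicity claim become unnecessary.
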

\begin{proof}
    Suppose there is such structure $\otimes\colon \Delta\times \Delta\to \Delta$. We saw above that the unit has to be $[0]$. Maps $[0]\to [n]$ can be identified with elements of the linearly ordered set $[n]$: therefore, the maps $\partial_i\otimes \partial_j \colon [0]=[0]\otimes [0]\to [1]\otimes [1]$, $i=0,1$, correspond to four (not necessarily distinct) points $\partial_{i,j}\in [1]\otimes [1]$. As these maps preserve ordering, we necessarily get that $\partial_{0,0}\le \partial_{0,1},\partial_{1,0}\le\partial_{1,1}$.    

    Now, the map $\sigma_0\colon [1]\to[0]$ induces two order-preserving maps $\sigma_0\otimes 1,1\otimes \sigma_0\colon [1]\otimes[1]\to [1]$ and we can directly compute how they behave on $\partial_{i,j}$. In particular, we have the following commutative diagrams:% For $i=0,1$ $\sigma_0\otimes1$ sends $\partial_{i,0}$ to $\perp$ and $\partial_{i,1}$ to $\top$ and $1\otimes \sigma_0$ sends $\partial_{0,i}$ to $\perp$ and $\partial_{1,i}$ to $\top$. The enforces the relations $\partial_{0,1}\le\partial_{1,0}$, $\partial_{1,0}\le\partial_{0,1}$ in $[1]\otimes [1]$ and hence $\partial_{1,0}=\partial_{0,1}$

    % https://q.uiver.app/#q=WzAsMTIsWzAsMCwiWzBdXFxvdGltZXNbMF0iXSxbMSwwLCJbMF1cXG90aW1lc1sxXSJdLFswLDEsIlsxXVxcb3RpbWVzWzBdIl0sWzEsMSwiWzFdXFxvdGltZXNbMV0iXSxbMCwyLCJbMF1cXG90aW1lc1swXSJdLFsxLDIsIlswXVxcb3RpbWVzWzFdIl0sWzMsMCwiWzBdXFxvdGltZXNbMF0iXSxbNCwwLCJbMV1cXG90aW1lc1swXSJdLFszLDEsIlswXVxcb3RpbWVzWzFdIl0sWzQsMSwiWzFdXFxvdGltZXNbMV0iXSxbMywyLCJbMF1cXG90aW1lc1swXSJdLFs0LDIsIlsxXVxcb3RpbWVzWzBdIl0sWzAsMiwiXFxwYXJ0aWFsX2pcXG90aW1lcyAxIiwyXSxbMiw0LCJcXHNpZ21hXzBcXG90aW1lcyAxIiwyXSxbNCw1LCIxXFxvdGltZXNcXHBhcnRpYWxfaSIsMl0sWzEsMywiXFxwYXJ0aWFsX2pcXG90aW1lcyAxIl0sWzIsMywiMVxcb3RpbWVzXFxwYXJ0aWFsX2kiLDJdLFswLDEsIjFcXG90aW1lc1xccGFydGlhbF9pIl0sWzMsNSwiXFxzaWdtYV8wXFxvdGltZXMgMSJdLFs2LDgsIjFcXG90aW1lc1xccGFydGlhbF9pIiwyXSxbOCwxMCwiMVxcb3RpbWVzIFxcc2lnbWFfMCIsMl0sWzEwLDExLCJcXHBhcnRpYWxfalxcb3RpbWVzIDEiLDJdLFs5LDExLCIxXFxvdGltZXMgXFxzaWdtYV8wIl0sWzgsOSwiXFxwYXJ0aWFsX2pcXG90aW1lcyAxIiwyXSxbNiw3LCJcXHBhcnRpYWxfalxcb3RpbWVzIDEiXSxbNyw5LCIxXFxvdGltZXNcXHBhcnRpYWxfaSJdLFswLDMsIlxccGFydGlhbF97aml9Il0sWzYsOSwiXFxwYXJ0aWFsX3tqaX0iXV0=
\[\begin{tikzcd}
	{[0]\otimes[0]} & {[0]\otimes[1]} && {[0]\otimes[0]} & {[1]\otimes[0]} \\
	{[1]\otimes[0]} & {[1]\otimes[1]} && {[0]\otimes[1]} & {[1]\otimes[1]} \\
	{[0]\otimes[0]} & {[0]\otimes[1]} && {[0]\otimes[0]} & {[1]\otimes[0]}
	\arrow["{1\otimes\partial_i}", from=1-1, to=1-2]
	\arrow["{\partial_j\otimes 1}"', from=1-1, to=2-1]
	\arrow["{\partial_{ji}}", from=1-1, to=2-2]
	\arrow["{\partial_j\otimes 1}", from=1-2, to=2-2]
	\arrow["{\partial_j\otimes 1}", from=1-4, to=1-5]
	\arrow["{1\otimes\partial_i}"', from=1-4, to=2-4]
	\arrow["{\partial_{ji}}", from=1-4, to=2-5]
	\arrow["{1\otimes\partial_i}", from=1-5, to=2-5]
	\arrow["{1\otimes\partial_i}"', from=2-1, to=2-2]
	\arrow["{\sigma_0\otimes 1}"', from=2-1, to=3-1]
	\arrow["{\sigma_0\otimes 1}", from=2-2, to=3-2]
	\arrow["{\partial_j\otimes 1}"', from=2-4, to=2-5]
	\arrow["{1\otimes \sigma_0}"', from=2-4, to=3-4]
	\arrow["{1\otimes \sigma_0}", from=2-5, to=3-5]
	\arrow["{1\otimes\partial_i}"', from=3-1, to=3-2]
	\arrow["{\partial_j\otimes 1}"', from=3-4, to=3-5]
\end{tikzcd}\]

Notice that all the vertical compositions are equal to identity. Denote the elements of $[1]$ by $\perp<\top.$ From the diagram on the left, we get that $\sigma_0\otimes 1$ sends $\partial_{0,0}$, $\partial_{1,0}$ to $\perp$ and $\partial_{0,1}$, $\partial_{1,1}$ to $\top$. From the diagram on the right, we get that $1\otimes \sigma_0$ sends $\partial_{0,0}$, $\partial_{0,1}$ to $\perp$ and $\partial_{1,0}$, $\partial_{1,1}$ to $\top$. This implies that $\partial_{0,1}$ and $\partial_{1,0}$ are distinct and one of the maps $\sigma_0\otimes 1$, $1\otimes \sigma_0$ reverses their ordering, which is a contradiction.
\end{proof}

\subsection{Kronecker product and idempotent theories} We are going to generalize the observations from \ref{subsec: 1-dim Kronecker Eckmann-Hilton}. Let $\T$ be a Lawvere $(2,2)$-theory and $\mu\colon \T\otimes_{s,w}\T\to\T$ a $w$-commutativity. So far we have viewed $(\T,\mu)$ as a pseudomonoid in marked $(2,2)$-sketches, but utilizing all our results on the syntax of $w$-commutativity, we will show that $\T$ is a pseudomonoid in the $(2,2)$-category of Lawvere $(2,2)$-theories with a $2$-dimensional version of a Kronecker product. 

The marked $(2,2)$-sketch $\F^{op}$ with its cocartesian monoidal structure is a symmetric pseudomonoid in $(\Cat_{(2,2)}^{\mathfrak{m}},\otimes_{s,w})$ (even in $(\mF\text{-}\Sketch,\otimes_{s,w})$). Therefore, one can consider the $(2,2)$-category $\Mod_{\F^{op}}$ of $\F^{op}$-modules. This category does not depend on $w$ since $\F^{op}\otimes_{s,w} \mM=\F^{op}\times \mM$. As $\Cat_{(2,2)}^{\mathfrak{m}}$ is cocomplete, $\Mod_{\F^{op}}$ has a tensor product $\otimes^{s,w}_{\F^{op}}$ for each $w\in W_{ord}$ defined for $\F^{op}$-modules $\mM,$ $\mN$ as a coeqalizer $$\mM\otimes_{s,w}\F^{op}\otimes_{s,w} \mN\rightrightarrows \mM\otimes_{s,w} \mN.$$ 

Any Lawvere theory is an $\F^{op}$-module (in $\mF$-sketches, so also in $\Cat_{(2,2)}^{\mathfrak{m}}$) with the action $a\colon\F^{op}\times \T\to \T$ with $a(m,n)=m\cdot n$; this follows from Lemma \ref{lemma: f-commutativity} by precomposing the unique $f$-commutativity structure $\T\otimes_{s,f}\T\to\T$ with $\theta\times 1\colon \F^{op}\times \T\to\T\otimes_{s,f}\T$.

\begin{lemma}
    For any Lawvere $(2,2)$-theories $\T,$ $\Ss$, their tensor product $\T\otimes_{\F^{op}}^{s,w}\Ss$ is again a Lawvere $(2,2)$-theory. The quotient map $\T\otimes_{s,w}\Ss\to\T\otimes_{\F^{op}}^{s,w}\Ss$ induces an equivalence $\Mod_w(\T\otimes_{\F^{op}}^{s,w}\Ss,\C)\simeq \Mod_w(\T,\Mod_w(\Ss,\C))$.
\end{lemma}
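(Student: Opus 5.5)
We prove the two claims separately. First we describe $\T\otimes^{s,w}_{\F^{op}}\Ss$ explicitly. Then we obtain the equivalence of model categories from the universal property of the coequalizer combined with Lemma \ref{lemma: sketchy equivalence} (in the sketch form of Proposition \ref{prop: swapping lax and colax}).

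\textbf{Step 1: the tensor product is a Lawvere $(2,2)$-theory.} The objects of $\T\otimes_{s,w}\Ss$ are pairs $(m,n)$. The coequalizer identifies $(a\cdot m,n)$ with $(m,a\cdot n)$ for every $a$ in $\F^{op}$. Since every object of $\T$ is $m\cdot 1$, every pair is identified with $(1,mn)$ and with $(mn,1)$. The coequalizer also identifies the inert $1$-cells $(\theta(f)\cdot 1,1)$ and $(1,1\cdot\theta(f))$. Hence there is an identity-on-objects functor
\[\theta_\otimes\colon \F^{op}\to \T\otimes_{s,\F^{op}}\Ss\]
(I will write $\T\otimes^{s,w}_{\F^{op}}\Ss$), sending $n$ to the class of $(1,n)$. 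On $1$-cells, the quotient is generated by the cells $(\alpha,1)$ and $(1,\beta)$ subject to the Gray relations and the $\F^{op}$-balancing relations. On $2$-cells, it is generated by the images of $2$-cells of $\T$ and $\Ss$ together with the Gray $w$-cells.

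It remains to show that $\theta_\otimes$ preserves finite products, i.e.\ that $n$ is the $n$-th power of $1$. For this I would mimic the $1$-dimensional argument of \cite[3.11]{Bor94}. A $1$-cell $n\to 1$ in the quotient can be written, up to the relations, as an operadic composite of images of $1$-cells of $\T$ and $\Ss$. This gives a description of hom-categories $\mathrm{Hom}(m,n)\cong\mathrm{Hom}(m,1)^n$, with tupling obtained via the $\F^{op}$-action. Rather than making this syntactic description fully explicit, I prefer to verify the product property representably, using Step 2.

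\textbf{Step 2: the universal property.} Maps of marked $(2,2)$-categories
\[\T\otimes^{s,w}_{\F^{op}}\Ss\to\C\]
(and $w$-transformations and modifications between them) correspond to maps $F\colon \T\otimes_{s,w}\Ss\to\C$ equalizing the two actions, i.e.\ with $F(a\cdot m,n)=F(m,a\cdot n)$ on all cells. Take such an $F$ that moreover preserves products in each variable, in the sense of Definition \ref{def: Lawvere tensor}, and take $\C$ with a choice of powers (so that we may use the strict variant $\Mod^s$). Then $F(m,n)\cong F(1,1)^{mn}$, and the balancing condition is automatic up to the canonical isomorphisms. I expect this is where strictness must be handled: I would work with the strictified model categories (chosen powers), so that the balancing condition holds on the nose for product-preserving functors.

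The argument then runs as follows:
\begin{itemize}
\item $\Mod_w(\T\otimes^{s,w}_{\F^{op}}\Ss,\C)$ is identified with the full subcategory of $\Mod_w(\T\otimes_{s,w}\Ss,\C)$ given by balanced functors, and this subcategory is equivalent to the whole category.
\item By Lemma \ref{lemma: sketchy equivalence}, the whole category is $\Mod_w(\T,\Mod_w(\Ss,\C))$.
\item Applying this with $\C$ the representables (or $\C=\Cat$ via Yoneda) shows that $n$ is the $n$-fold power of $1$ in the quotient. This finishes Step 1.
\end{itemize}

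\textbf{Main obstacle.} The main obstacle is the comparison between product-preservation in each variable (only up to coherent isomorphism) and the strict balancing equations forced by the coequalizer, including on Gray $w$-cells. I would first try to reduce, via the $\mF$-sketch formalism of \cite{ABK24}, to checking generators: inert cells, cells $(\alpha,1)$ and $(1,\beta)$, and the cells $\Sigma_{\alpha\beta}$. Balancedness for the cells $\Sigma_{\alpha\beta}$ follows from condition (1) of Lemma \ref{lemma: syntactic} applied to the Gray structure, since Gray cells with an inert leg are identities.
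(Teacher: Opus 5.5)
\textbf{Gap in Step 1.} You never prove that $Q:=\T\otimes^{s,w}_{\F^{op}}\Ss$ has finite products with $n$ the $n$-th power of $1$. The proposed deduction from Step 2 cannot supply this.

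A natural equivalence $\Mod_w(Q,\C)\simeq\Mod_w(\T,\Mod_w(\Ss,\C))$ only describes \emph{sketch}-models of $Q$: functors sending the images of the product cones of $\F^{op}$ to product cones. Model categories of a sketch do not detect whether the distinguished cones are limit cones in the sketch itself. For example, take the sketch with objects $1,2$, the cone $2\rightrightarrows 1$ of two projections, and one extra $1$-cell $2\to 1$. It has exactly the models of the Lawvere theory of magmas in every $\C$ with finite products, yet $2$ is not $1\times 1$ in it.

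``Applying this to representables'' has the same problem. It would require knowing that $Q(m,-)$ is a model, which is precisely the claim $Q(m,n)\cong Q(m,1)^n$. For the same reason, Step 2 can only be stated with $\Mod_w(Q,\C)$ read in the sketch sense. The order ``Step 2 proves Step 1'' is therefore circular unless you make that explicit.

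\textbf{What is missing.} You need the explicit description of the quotient, which is what the paper uses. Build the coequalizer as the free $(2,2)$-category on a coequalizer of $2$-globular sets. On objects the only relation is $(n,1)\sim(1,n)$, so the objects are the natural numbers. Then prove $Q(m,n)\cong Q(m,1)^n$ by induction on $n$, using that the $1$- and $2$-cells of $Q$ are generated by those of $\T$, $\Ss$ and the Gray cells. This is the two-dimensional version of \cite[3.11]{Bor94}, which you began and then set aside.

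\textbf{Step 2.} Once Step 1 is in place, the equivalence of model categories is the easy half (the paper calls it straightforward), and your Step 2 is a reasonable way to get it. Two details need fixing.
\begin{enumerate}
\item Choosing powers in $\C$ does not make balancing hold on the nose: $(X^n)^{am}$ and $(X^{an})^m$ are in general distinct objects. You need a transport-of-structure step, replacing a model $F$ by an isomorphic strict $2$-functor $F'$ with $F'(m,n):=X^{mn}$. You also need to check that $w$-transformations and modifications between such functors are automatically balanced. They are, because strictness on inert cells determines their components from those at $(1,1)$.
\item Balancing the Gray cells $\Sigma_{\alpha\cdot k,\beta}$ against $\Sigma_{\alpha,k\cdot\beta}$ has nothing to do with Lemma \ref{lemma: syntactic}, which concerns commutativity structures on a single theory. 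In these cells neither leg is inert. The balancing follows from the Gray coherences, the fact that Gray cells with an inert leg are identities, and product preservation in each variable.
\end{enumerate}
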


\begin{proof}
    A conceptual proof would use the fact that the $(2,2)$-category of $\mF$-sketches is presentable, but we have not found it in current literature. What we can do instead is to prove the statement manually: one knows how to construct coequalizers of $(2,2)$-categories (by taking a free $(2,2)$-category on a coequalizer in 2-globular sets), and then we can check that objects of $\T\otimes_{\F^{op}}^{s,w}\Ss$ are natural numbers as on objects, we are quotienting out by the relation $(n,1)\sim (1,n)$. Then, we get that $\T\otimes_{\F^{op}}^{s,w}\Ss(m,n)\cong\T\otimes_{\F^{op}}^{s,w}\Ss(m,1)^n$ by induction on $n$ using that the $1$- and $2$-cells in $\T\otimes_{\F^{op}}^{s,w}\Ss$ are generated by those of $\T$, $\Ss$, and the Gray cells. The second statement is straightforward.
\end{proof}

Let $\mu$ be a $w$-commutativity on a Lawvere $(2,2)$-theory $\T$. We know that $\theta\colon \F^{op}\to \T$ is a homomorphism of pseudomonoids in $\mF\text{-}\Sketch$, which implies that $\T$ is a $\F^{op}$-pseudoalgebra, i.e. a pseudomonoid object in $(\Mod_{\F^{op}},\otimes_{s,w})$. In other words, the map $\mu$ factors as $\T\otimes_{s,w}\T\to \T\otimes_{\F^{op}}^{s,w}\T\xrightarrow{\mu'}\T$.

\begin{definition} \label{def: Eckmann-Hilton}
    Let $\T$ be a Lawvere $(2,2)$-theory with a pseudocommutativity $\mu$. We say that $(\T,\mu)$ is \textit{$w$-idempotent} if $\mu'\colon \T\otimes_{\F^{op}}^{s,w}\T\to\T$ is an equivalence of $(2,2)$-categories. 
\end{definition}

\begin{example}
    If $\T$ is a commutative Lawvere 1-theory with the basis of $1$-cells where every map is either unit or unital, then following Proposition, one can check that $\T^{\flat}$ is pseudo-idempotent.
\end{example}

\begin{remark}
    If $\T$ is a Lawvere $(2,2)$-theory, pseudoidempotence of its underlying $(2,1)$-theory does not imply the pseudoidempotence of $\T$. Consider the Lawvere theory $\T^2_M$ associated to the double delooping $B^2M$ of some monoid $M$, as in Example \ref{example: double delooping}. If $M$ is nontrivial, this is not going to be an idempotent theory.
\end{remark}

\begin{lemma} \label{lemma: idempotence}    If $(\T,\mu)$ is $w$-idempotent then the lifting functor $$\widetilde{(-)}\colon \Mod_w(\T,\C)\to \Mod_w(\T,\Mod_w(\T,\C))$$ is an equivalence of categories.
\end{lemma}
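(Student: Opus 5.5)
The plan is to identify the lifting functor with precomposition along $\mu'$, up to the equivalence in the preceding lemma, and then use that $\mu'$ is an equivalence. By construction, $\widetilde{(-)}$ is the composite of precomposition $\mu^*\colon \Mod_w(\T,\C)\to\Mod_w(\T\otimes_{s,w}\T,\C)$ with the equivalence of Lemma \ref{lemma: sketchy equivalence} (or of Proposition \ref{prop: swapping lax and colax} in the $(2,2)$-setting). Since $\mu$ factors as $\T\otimes_{s,w}\T\xrightarrow{q}\T\otimes^{s,w}_{\F^{op}}\T\xrightarrow{\mu'}\T$, we get $\mu^*=q^*\circ\mu'^*$.

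First I will check that $q^*\colon \Mod_w(\T\otimes^{s,w}_{\F^{op}}\T,\C)\to\Mod_w(\T\otimes_{s,w}\T,\C)$ is an equivalence onto its essential image. This essential image is everything. Indeed, a functor $\T\otimes_{s,w}\T\to\C$ preserving products in each variable automatically coequalizes the two $\F^{op}$-actions $m\cdot(-)$: both send $(m\cdot n,k)$ and $(n,m\cdot k)$ to the $m$-th power of the value at $(n,k)$. This is compatible on $1$- and $2$-cells because the powers are chosen coherently. This is precisely the content of the previous lemma, which gives $\Mod_w(\T\otimes^{s,w}_{\F^{op}}\T,\C)\simeq\Mod_w(\T,\Mod_w(\T,\C))$ via $q$. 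So the composite of $q^*$ with the equivalence of Lemma \ref{lemma: sketchy equivalence} is that equivalence, and it remains to show that
\[\mu'^*\colon \Mod_w(\T,\C)\to\Mod_w(\T\otimes^{s,w}_{\F^{op}}\T,\C)\]
is an equivalence.

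By $w$-idempotence, $\mu'$ is an equivalence of $(2,2)$-categories. I will check that it is an equivalence of marked sketches, i.e.\ that its inverse preserves the marking and the product cones. The marking on both sides consists of the inert maps generated by the image of $\F^{op}$, and $\mu'$ is identity on objects. Moreover, $\mu'$ commutes with the structure maps from $\F^{op}$, since $\theta$ is a homomorphism of pseudomonoids and the unit $u'$ is the image of $\F^{op}$. Hence inert maps correspond to inert maps and product cones to product cones, so any inverse equivalence is again a morphism of sketches. Precomposition along an equivalence of sketches induces an equivalence of the $w$-hom $2$-categories $\Fun_{s,w}(-,\C)$, and this restricts to the full subcategories of models.

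The main obstacle is this last point. One has to make sure that a pseudo-inverse of $\mu'$, together with its unit and counit, lives in the marked/sketch world, so that $\Fun_{s,w}(-,\C)$ is $2$-functorial on it. Otherwise $\mu'^*$ is merely fully faithful and essentially surjective in a weaker sense. I would handle this by noting that the unit and counit isomorphisms are invertible $2$-cells between identity-on-objects functors. Their components at objects can be taken to be inert isomorphisms coming from $\F^{op}$, which are marked. This keeps everything inside $\Cat^{\mathfrak m}_{(2,2)}$, and the $2$-functoriality of $\Fun_{s,w}(-,\C)$ then finishes the argument.
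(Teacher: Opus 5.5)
Your proposal is correct and follows the paper's route; the paper's proof is only the line ``this follows from the previous lemma''. You spell that out by writing $\mu^*=q^*\circ\mu'^*$, using the previous lemma to make $q^*$ (followed by the currying equivalence) an equivalence, and using $w$-idempotence to make $\mu'^*$ one; your extra check that an inverse of $\mu'$ respects the marking and product cones is a point the paper leaves implicit.
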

\begin{proof}
    This follows from the previous lemma.
\end{proof}

\section{Semantics of $w$-commutativity}

After exploring the syntax of $w$-commutativity, we now turn to the semantics. We have already observed that a $w$-commutativity structure $\mu$ on a Lawvere $2$-theory $\T$ equips the category $\Mod_w(\T,\C)$ with a functor $$\widetilde{(-)}\colon\Mod_w(\T,\C)\to \Mod_w(\T,\Mod_w(\T,\C))$$ which is a section to the forgetful functors evaluating at $1_{\T}$ at the first or second variable. In this section, we are going to explore some useful implications of having such a functor. Notably, this enables us to prove some algebra-coalgebra interchange type results. We also observe the existence of certain internal $\underline{\Hom}_w(-,-)$ which will be crucial for discussing Fox's theorem.

\subsection{Closed structure} \label{subsec: closed structure I} Let $\T$ be a Lawvere $2$-theory equipped with a $w$-commutativity structure $\mu$. For any two models $\X,\Y\colon \T\to \C$, we can lift the category of $w$-homomorphisms of models $\Hom_w(\X,\Y)$ to a model $\underline{\Hom}_w(\X,\Y)\colon\T\to \Cat$ with $\underline{\Hom}_w(\X,\Y)(1)=\Hom_w(\X,\Y)$.   

\begin{definition}
    Let $\T$ be a Lawvere $2$-theory with a $w$-commutativity $\mu$ and $\X,\Y\colon \T\to \C$ two models of $\T$ in a $2$-category $\C$ with finite products. Define $\underline{\Hom}_w(\X,\Y)$ to be the following composition:
    % https://q.uiver.app/#q=WzAsNCxbMCwwLCJcXFQiXSxbMSwwLCJcXE1vZF93KFxcVCxcXEMpIl0sWzIsMCwiXFxtYXRoc2Z7UFNofShcXE1vZF93KFxcVCxcXEMpKSJdLFszLDAsIlxcQ2F0X3tcXGluZnR5fSJdLFswLDEsIlxcd2lkZXRpbGRle1xcWX0iXSxbMSwyLCJ5Il0sWzIsMywiXFxldl97XFxYfSJdXQ==
\[\begin{tikzcd}
	\T & {\Mod_w(\T,\C)} & {\mathsf{PSh}(\Mod_w(\T,\C))} & {\Cat_{\infty}}
	\arrow["{\widetilde{\Y}}", from=1-1, to=1-2]
	\arrow["y", from=1-2, to=1-3]
	\arrow["{\ev_{\X}}", from=1-3, to=1-4]
\end{tikzcd}\]
\end{definition} 

Here, $\widetilde{\Y}$ is the lift of $\Y$ corresponding to $\mu$ and $\ev_{\X}$ is the evaluation functor $F\mapsto F(\X)$. Since all the functors $\widetilde{\Y}$, $y$, and $\ev_{\X}$ preserves finite products, the composition above is a model. We can easily describe the low-dimensional data: \begin{itemize}
    \item for any $m\in \N$, $\underline{\Hom}_w(\X,\Y)(m)=\Hom_w(\X,\Y)^m$,
    \item for any $1$-cell $\alpha\colon m\to n$ in $\T$,  $\underline{\Hom}_w(\X,\Y)(\alpha)$ is the composition % https://q.uiver.app/#q=WzAsNCxbMCwxLCJcXE1vZChcXFQpX3coXFxYLFxcWV5tKSJdLFsyLDEsIlxcTW9kKFxcVClfdyhcXFgsXFxZXm4pIl0sWzAsMCwiXFxNb2QoXFxUKV93KFxcWCxcXFkpXm0iXSxbMiwwLCJcXE1vZChcXFQpX3coXFxYLFxcWSlebiJdLFswLDEsIlxcd2lkZXRpbGRle1xcWX0oXFxhbHBoYSlfKiJdLFsyLDAsIlxcY29uZyJdLFsxLDMsIlxcY29uZyJdLFsyLDMsIltcXFgsXFxZXV93KFxcYWxwaGEpIl1d
\[\begin{tikzcd}
	{\Hom_w(\X,\Y)^m} && {\Hom_w(\X,\Y)^n} \\
	{\Hom_w(\X,\Y^m)} && {\Hom_w(\X,\Y^n)}
	\arrow["{\underline{\Hom}_w(\X,\Y)(\alpha)}", from=1-1, to=1-3]
	\arrow["\cong", from=1-1, to=2-1]
	\arrow["{\widetilde{\Y}(\alpha)_*}", from=2-1, to=2-3]
	\arrow["\cong", from=2-3, to=1-3]
\end{tikzcd}\]
    \item for a $2$-cell $s\colon \alpha\to \alpha'$, $\underline{\Hom}_w(\X,\Y)(s)$ is the post-composition with $\widetilde{\Y}(s)$, similarly as with $1$-cells.
\end{itemize}

The above can be made functorial by defining $\underline{\Hom}_w(\X,-):\Mod_w(\T,\C)\to \Mod_w(\T,\Cat_{\infty})$ as a composition $$\Mod_w(\T,\C)\xrightarrow{\widetilde{(-)}}\Mod_w(\T,\Mod_w(\T,\C))\xrightarrow{(\ev_{\X}\circ y)^*}\Mod_w(\T,\Cat_{\infty}).$$ Indeed, as $\ev_{\X}\circ y$ preserves products, the precomposition $\Fun_{s,w}(\T,\Mod_w(\T,\C))\to \Fun_{s,w}(\T,\Cat_{\infty})$ sends models to models. %Note that of $f\colon \X\to \Y$ is in fact a strict homomorphism, i.e. the $2$-cells $f_\alpha$ are always invertible, then the same is true for

The previous construction enables us to promote the functor $\IntAlg\colon \Mod_w(\T,\C)\to \Cat$ to a 2-functor $\underline{\IntAlg}\colon\Mod_w(\T,\C)\to \Mod_w(\T,\Cat)$, $\X\mapsto \underline{\Hom}_l(*,\X)$, whenever our theory is $w$-commutative for $w\le l$, and similarly with $\IntCoalg$ for $w\le l^*.$ There is another variation as follows: 
\begin{definition}\label{def: variations of intalg}
    Let $\T$ be a Lawvere $(\infty,2)$-theory with a strict commutativity $\mu$. Denote by $\lambda^+, \lambda^-$ the inclusions of $\Mod_s(\T,\C)$ to $\Mod_l(\T,\C)$, $\Mod_{l^*}(\T,\Cat_\infty)$, respectively. Define the functors $\underline{\IntAlg}, \underline{\IntCoalg}\colon \Mod_s(\T,\C)\to \Mod_s(\T,\Cat_\infty)$ as the composition $$\Mod_s(\T,\C)\xrightarrow{\widetilde{(-)}}\Mod_s(\T,\Mod_s(\T,\C))\xrightarrow{(\ev_{*}\circ y\circ\lambda^\bullet)^*}\Mod_s(\T,\Cat_{\infty}).$$ for respective choices $\bullet=+,-$. For a Lawvere $(2,2)$-theory and a $(2,2)$-category $\C$ with finite products, we define $\underline{\IntAlg}, \underline{\IntCoalg}\colon\Mod_p(\T,\C)\to \Mod_p(\T,\Cat)$ analogously.
\end{definition}
Indeed, both $\lambda^+,$ $\lambda^-$ preserve products (which are pointwise, see Remark \ref{rmk: pointwise products}) so the definition above makes sense.

\begin{example}
    Consider the example $\T=\T_{E_\infty}$. Recall that for any $w\in W$, the $(\infty,2)$-category $\Mod_w(\T,\Cat_\infty)$ is the $2$-category of symmetric monoidal $\infty$-categories, symmetric $w$-monoidal functors, and symmetric monoidal natural transformations. Let $X, Y$ be symmetric monoidal categories and denote the corresponding functors $\T_{E_\infty}\to \Cat$ by $\X,\Y$, respectively. As $\T_{E_\infty}$ has a commutativity structure by Example \ref{example: Eoo monoids}, we can consider a symmetric monoidal category $\underline{\Hom}_s(\X,\Y)$. 
    
    By definition, the underlying category is the category $\Fun^\otimes(X,Y)$ of symmetric monoidal functors and monoidal natural transformations, and it is easy to see that the monoidal unit is the constant functor sending everything in $X$ to the monoidal unit $u_Y$ of $Y$. How does the tensor product look like? Take some functors $F, G\in \Fun^\otimes(X,Y)$ with the structure maps $F_{x,x'}\colon F(x)\otimes F(x')\xrightarrow{\sim} F(x\otimes x')$, $F_0\colon u_Y\xrightarrow{\sim} F(u_X)$, and similarly for $G$. Then the tensor product $F\circledast G$ has the underlying functor $X\to Y$ pointwise sending $x$ to $F(x)\otimes G(x)$. The structure map  $(F\circledast G)_0$ is given by the tensor product $F_0\otimes G_0$ and $(F\circledast G)_{x,x'}$ is given by composing $F_{x,x'}\otimes G_{x,x'}$ with the symmetry $F(x)\otimes G(x)\otimes F(x') \otimes G(x') \xrightarrow{1s1}F(x)\otimes F(x') \otimes G(x) \otimes G(x')$.

    As commutativity implies (op)lax commutativity, we can also consider the symmetric monoidal category $\underline{\Hom}_l(\X,\Y)$ which works exactly the same as the above, just with symmetric \textit{lax} monoidal functors (i.e., the structure maps $F_{x,x'}$, $F_0$ being not necessarily invertible). In particular, we see that $\underline{\IntAlg}(\X)=\underline{\Hom}_l(*,\X)\simeq \mathsf{CAlg}(X)$ is the category of commutative algebras in $X$ with the same tensor product and unit as in $X$, and similarly for $\underline{\IntCoalg}(\X)$ and coalgebras.
\end{example}

More generally, consider a map of Lawvere $2$-theories $\rho\colon \Ss\to \T$ together with a $w$-commutation structure $\mu\colon \Ss\otimes_{s,w}\Ss\to \T$. Then for any $\T$-model $\Y$ and an $\Ss$-model $\X$, we get an $\Ss$-model $\underline{\Hom}_w(\X,\Y)\colon \Ss\to \Cat_{\infty}$ defined in the analogous fashion, functorial in $\Y$.

\begin{definition}    Let $\mu$ be a $w$-commutation as above. For an $\T$-model $\X$ and a $\T$-model $\Y$ in $\C$, define $\underline{\Hom}_w(\X,-)$ to be the following composition:$$\Mod_w(\T,\C)\xrightarrow{\widetilde{(-)}}\Mod_w(\Ss,\Mod_w(\Ss,\C))\xrightarrow{\ev_{\X}\circ\, y}\Mod_w(\Ss,\Cat).$$
\end{definition}

For $\Ss$-models $\X,\Y$ and a $\T$-model $\mZ$, define $\Mul_w(\X,\Y;\mZ):= \Nat_{s,w}(\Y\circ \mu, \times\circ \X\otimes_{s,w}\Y)$.  Objects of this category are $2$-cells $f$  filling the square below and morphisms are modifications between those.

% https://q.uiver.app/#q=WzAsNCxbMCwwLCJcXFNzXFxvdGltZXNfe3Msd31cXFNzIl0sWzAsMSwiXFxUIl0sWzEsMSwiXFxDYXQiXSxbMSwwLCJcXENhdFxcb3RpbWVzX3tzLHd9XFxDYXQiXSxbMCwxLCJcXG11IiwyXSxbMSwyLCJcXG1aIiwyXSxbMywyLCJcXHRpbWVzIl0sWzAsMywiXFxYXFxvdGltZXNfe3Msd31cXFkiXSxbNyw1LCJmIiwwLHsic2hvcnRlbiI6eyJzb3VyY2UiOjIwLCJ0YXJnZXQiOjIwfX1dXQ==
\[\begin{tikzcd}
	{\Ss\otimes_{s,w}\Ss} & {\Cat\otimes_{s,w}\Cat} \\
	\T & \Cat
	\arrow[""{name=0, anchor=center, inner sep=0}, "{\X\otimes_{s,w}\Y}", from=1-1, to=1-2]
	\arrow["\mu"', from=1-1, to=2-1]
	\arrow["\times", from=1-2, to=2-2]
	\arrow[""{name=1, anchor=center, inner sep=0}, "\mZ"', from=2-1, to=2-2]
	\arrow["f", between={0.2}{0.8}, Rightarrow, from=0, to=1]
\end{tikzcd}\]

\begin{proposition} \label{prop: closed structure without operad}
 Let $\mu$ be a strict commutation. Then we have the following equivalence: $\Mul_s(\X,\Y;\mZ)\cong \Hom_s(\X,\underline{\Hom}_s(\Y,\mZ))$. If it is symmetric, we get $\Hom_s(\X,\underline{\Hom}_s(\Y,\mZ))\simeq \Hom_s(\Y,\underline{\Hom}_s(\X,\mZ))$.
\end{proposition}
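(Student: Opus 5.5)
We will prove the first equivalence by currying and then using the defining property of the lifting $\widetilde{\mZ}$, and obtain the symmetric statement from the symmetry of $\mu$ together with the symmetry of $\times$.

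\emph{Step 1: currying.} For $w=s$ the Gray tensor product is the cartesian product. Hence $\X\otimes_{s,s}\Y$ followed by $\times$ is the external product $(S,S')\mapsto \X(S)\times\Y(S')$, and $\mZ\circ\mu$ is a functor $\Ss\times\Ss\to\Cat$. By Proposition \ref{prop: marked tensor-hom} (resp.\ Proposition \ref{prop: Gray universal} in the $(2,2)$-case), natural transformations between functors out of $\Ss\times\Ss$ curry to natural transformations out of $\Ss$ valued in functor categories. An $f\in\Mul_s(\X,\Y;\mZ)$ is therefore a transformation between functors $\Ss\to\Fun_{s,s}(\Ss,\Cat)$. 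Applying the cartesian closedness of $\Cat$ in the second variable, we can rewrite $f$ as a strict transformation
\[
\X\Rightarrow \Nat_{s,s}\bigl(\Y(-),\,\mZ\mu(S,-)\bigr).
\]
Here $S\mapsto \Nat_{s,s}(\Y(-),\mZ\mu(S,-))$ is a functor $\Ss\to\Cat$. (If the direction of $f$ in the square is the other one, the same argument applies with the roles of source and target swapped.)

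\emph{Step 2: identification with the internal hom.} By definition, $\widetilde{\mZ}$ is the curried $\mZ\circ\mu$, transported along Lemma \ref{lemma: sketchy equivalence}. So $\widetilde{\mZ}(S)$ is the $\Ss$-model $\mZ\mu(S,-)$, and $\Hom_s(\Y,\widetilde{\mZ}(S))=\Nat_{s,s}(\Y,\mZ\mu(S,-))$; this uses that $\Hom_s$ is a full subcategory of $\Fun_{s,s}$, since both functors are models. This is exactly $\underline{\Hom}_s(\Y,\mZ)(S)=\ev_{\Y}\,y\,\widetilde{\mZ}(S)$. One then checks on $1$- and $2$-cells of $\Ss$ that the functoriality in $S$ agrees, namely post-composition with $\widetilde{\mZ}(\alpha)$, as described after the definition. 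Finally, since both $\X$ and $\underline{\Hom}_s(\Y,\mZ)$ are models, the strict transformations between them form $\Hom_s(\X,\underline{\Hom}_s(\Y,\mZ))$. The condition that $f$ is marked-strict is automatic for $w=s$.

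\emph{Step 3: symmetry.} If $\mu$ is symmetric, then $\mu\circ\tau\simeq\mu$, where $\tau\colon\Ss\times\Ss\to\Ss\times\Ss$ is the swap. Also $\times\circ(\Y\otimes\X)\circ\tau\simeq \times\circ(\X\otimes\Y)$ via the symmetry of $\times$ in $\Cat$. Precomposition with $\tau$ therefore gives $\Mul_s(\X,\Y;\mZ)\simeq\Mul_s(\Y,\X;\mZ)$, and applying Steps 1--2 twice yields the second equivalence.

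The main obstacle is Step 2 in the $(\infty,2)$-setting. There one must make sure that the currying equivalences are compatible with the lift $\widetilde{\mZ}$ and with the Yoneda/evaluation construction at the level of full functoriality in $S$, not just pointwise. The plan is to avoid hands-on checks by noting that both sides are obtained from the single adjunction equivalence $\Fun_{s,s}(\Ss\times\Ss,\Cat)\simeq\Fun_{s,s}(\Ss,\Fun_{s,s}(\Ss,\Cat))$, applied to the same pair of functors, so that naturality is inherited from that equivalence.
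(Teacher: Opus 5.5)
Your argument is correct, but it is packaged differently from the paper's. The paper writes $\Mul_s(\X,\Y;\mZ)$ as an end $\int_{a,b}\Fun(X^a\times Y^b,Z^{ab})$ and splits it with Loregian's $\infty$-categorical Fubini rule. It then applies cartesian closedness of $\Cat$ and moves $\int_b$ inside $\Fun(X^a,-)$ to reach $\Nat(\X,\Nat(\Y,\mZ^{\bullet}))$.

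You replace Fubini by the currying equivalence $\Fun(\Ss\times\Ss,\Cat)\simeq\Fun(\Ss,\Fun(\Ss,\Cat))$. In the $(\infty,2)$-setting with $w=s$ this is plain cartesian closedness, since Proposition \ref{prop: marked tensor-hom} is only stated for $l,l^*$. You replace the end-exchange by the adjunction $(-)\times\Y\dashv\Nat(\Y,-)$ between $\Cat$ and $\Fun(\Ss,\Cat)$.

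Your route avoids ends entirely and makes Step 2 essentially tautological. $\widetilde{\mZ}$ is literally the curried $\mZ\mu$, and $\ev_{\Y}\circ y$ is the corepresentable $\Hom_s(\Y,-)$, i.e. the restriction of $\Nat(\Y,-)$ to models. So $\underline{\Hom}_s(\Y,\mZ)=\Nat(\Y,-)\circ\widetilde{\mZ}$ as functors, and no cell-by-cell comparison is needed. The paper's end formalism instead gives naturality in all variables at once. It is also the template the paper reuses, with lax ends and a modified Fubini rule, in Theorem \ref{thm: closed structure}.

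Two points to tighten.

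First, consider the passage from transformations $\X(-)\times\Y\Rightarrow\mZ\mu$ of functors $\Ss\to\Fun(\Ss,\Cat)$ to transformations $\X\Rightarrow\Nat(\Y,\widetilde{\mZ}(-))$. Its naturality in $S$ is not inherited from the currying equivalence, as your last paragraph suggests, because this is a second adjunction. What you need is that the $2$-functor $\Fun(\Ss,-)$ turns $(-)\times\Y\dashv\Nat(\Y,-)$ into an adjunction $\Fun(\Ss,(-)\times\Y)\dashv\Fun(\Ss,\Nat(\Y,-))$, which holds because any $2$-functor preserves adjunctions. This is exactly the role of commuting $\int_b$ with $\Fun(X^a,-)$ in the paper.

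Second, your parenthetical claim that the argument works with the direction of $f$ reversed is wrong: transformations into $\X(S)\times\Y(S')$ do not curry this way. The intended direction, as in the defining square and the paper's computation, is $\times\circ(\X\otimes_{s,s}\Y)\Rightarrow\mZ\mu$, which is the one you actually use.
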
 

\begin{proof}
We are going to proceed by the calculus of ends, developed in the $\infty$-categorical setting by \cite{Lor18}. Recall that for a functor $T\colon \C^{op}\times \C\to \Cat$, we define its end $\int_cT(c,c):=\lim_{\Tw(\C)} T\circ \Sigma$ where $\Sigma\colon \Tw(\C)\to \C^{op}\times \C$ is the unstraightening of a (two-sided) hom-functor. Hence, if $F,G\colon \C\to \Cat$ are any functors, we can compute a category of natural transformations $F\rightarrow G$ as an end $\int_c\Fun(F(c),G(c)).$ Using this, we compute as follows:

 \begin{align}\Nat(\times \circ (\X\times \Y),\mZ\mu)&\cong
    \int_{a,b} \Fun(X^a\times Y^b,Z^{ab})\nonumber\\
    &\cong \int_{a} \int_b\Fun(X^a\times Y^b,Z^{ab})\nonumber\\
    &\cong \int_{a} \int_b\Fun(X^a,\Fun(Y^b,Z^{ab}))\nonumber\\
    &\cong \int_a\Fun(X^a,\int_b\Fun(Y^b,Z^{ab})) \nonumber\\
    &\cong \int_a \Fun(X^a,\Nat(\Y,\mZ^a))\nonumber\\
    &\cong \Nat(\X,\Nat(\Y,\mZ^\bullet))\nonumber
\end{align}
To get from the first line to the second, we used the Fubini rule \cite[Thm. 2.2]{Lor18}, and the rest is standard. Since $\Mod_s(\T,\Cat)$ is a full sub-2-category of $\Fun(\T,\Cat)$, we can identify the right hand side at the end with $\Hom_s(\X,\underline{\Hom}_s(\Y,\mZ)$. The observation about the symmetry is clear.
\end{proof}

We obtain some immediate application by reproving some algebra-coalgebra interchange results below.

\begin{example}
    As usual, consider first the commutativity $\mu$ on $\T_{E_\infty}$. We use the result above to reprove the interchange of $E_\infty$-algebras and coalgebras. For a symmetric monoidal $\infty$-category $\V$ we can write $\Alg_{E_\infty}(\V)\simeq \Fun^{\otimes}(\P,\V)$ where $\P:=\mathrm{Env}(E_\infty^{\otimes})$ is the symmetric monoidal $\infty$-category arising as an envelope of the $E_\infty$-operad. We have a symmetric monoidal structure $\underline{\Alg}_{E_\infty}(\V):=\underline{\Hom}_s(\P,\V)$ on $\Alg_{E_\infty}(\V)$ induced by $\mu$, and similarly we have $\underline{\Coalg}_{E_\infty}(\V):=\underline{\Hom}_s(\P^{op},\V)$. Then, we compute: 
\begin{align*}
        \Alg_{E_{\infty}}(\underline{\Coalg_{E_\infty}}(\V))&\simeq \Hom_s(\P,\underline{\Hom}_s(\P^{op},\V))\\
        &\simeq \Mul_s(\P,\P^{op};\V)\\
        &\simeq \Mul_s(\P^{op},\P;\V)\\
        &\simeq \Hom_s(\P^{op},\underline{\Hom}_s(\P,\V))\\
        &\simeq \Coalg_{E_{\infty}}(\underline{\Alg}_{E_\infty}(\V)).
    \end{align*}
\end{example}

\begin{example}
    Let $\V$ be a braided monoidal $\infty$-category, i.e. a model of $\T_{E_2}$. There exists a commutation $\T_{E_1}\times \T_{E_1}\to \T_{E_2}$ (see Example \ref{example: braided}). For a monoidal $\infty$-category $\W$, $\underline{\Hom}_s(\W,\V)$ is precisely the category of braided monoidal functors $\Fun^\otimes_{E_2}(\Y,\mZ)$ which is monoidal, although no longer braided monoidal. We can use Proposition \ref{prop: closed structure without operad} to prove the interchange of $E_1$-algebras and coalgebras in $\V$, using the similar computation as above. This result was proved in \cite[Appendix A]{Erg22} by related methods, but in a language of operads.
\end{example}

In the next section, we are going to develop a framework to promote the equivalence above to a part of a closed $(\infty,2)$-operad (or $(2,2)$-multicategory) structure on $\Mod_w(\T,\Cat)$. As an application, we will obtain that the endofunctor $\underline{\IntAlg}$ is in fact a $2$-comonad. This generalizes the first part of Fox's theorem, which states the latter for $\T=\T_{cmon}^\flat$.

\subsection{Bilax homomorphisms}

Let $\mu\colon \T\otimes_{s,p}\T\to \T$ be a pseudocommutativity structure on a Lawvere $(2,2)$-theory $\T$ and $\C$ a $(2,2)$-category with finite products. We are now going to take an advantage of the isomorphism %So far, we have only made use of the right closed structure on $\mF\text{-}\Sketch^{\Psi}$. Combined with the right structure, we get the isomorphism (as explain in ??) 
$$i\colon\Mod_l(\T,\Mod_{l^*}(\T,\C))\xrightarrow{\sim} \Mod_{l^*}(\Mod_l(\T,\C))$$ from Proposition \ref{prop: swapping lax and colax}. Since any pseudocommutativity
is also a (co)lax commutativity, any (co)lax homomorphism $f\colon \X\to \Y$ lifts to a (co)lax homomorphism $\widetilde{f}\colon\widetilde{\X}\to \widetilde{\Y}$.

\begin{lemma} \label{lemma: bilax}
    Let $\X,\Y\colon\T\to \C$ be models, $X=\X(1)$, $Y=\Y(1)$. Suppose we have a lax homomorphism $\fup\colon \X\to \Y$ together with a colax homomorphism $\flow\colon \X\to \Y$ such that they both have the same underlying $1$-cell $f\colon X\to Y$.

    %Let $f\colon X\to Y$ be a $1$-cell in $\C$. Suppose that we have structure of a lax homomorphism on $f$ consisting of the family of maps $\fup_\alpha\colon f^n\circ \X(\alpha)\Rightarrow \Y(\alpha) \circ f^m$ and a colax homomorphism structure given by maps $\flow_\alpha\colon \Y(\alpha) \circ f^m \Rightarrow f^n\circ \X(\alpha)$ for each $\alpha\colon m\to n$ in $\T$. 
    
    Then the following are equivalent: 

    \begin{enumerate}
        \item For each $\alpha\colon m\to n$ in $\T$, the lax structure cell $\fup_\alpha\colon f^n\circ \X(\alpha)\Rightarrow \Y(\alpha) \circ f^m$ lifts to a $2$-cell ${\flow}^n\circ \widetilde{\X}(\alpha)\Rightarrow\widetilde{\Y}(\alpha)\circ {\flow}^m$ in $\Mod_{l^*}(\T,\C)$.
        \item For each $\alpha\colon m\to n$ in $\T$, the colax structure cell $\flow_\alpha\colon \Y(\alpha) \circ f^m \Rightarrow f^n\circ \X(\alpha)$ lifts to a 2-cell $\widetilde{\Y}(\alpha)\circ {\fup}^m \Rightarrow {\fup}^n\circ \widetilde{\X}(\alpha)$ in $\Mod_l(\T,\C)$.
    \end{enumerate}
\end{lemma}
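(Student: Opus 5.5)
The plan is to unwind both conditions into the same family of equations between pasting diagrams in $\C$, with the comparison supplied by the isomorphism $i$ of Proposition \ref{prop: swapping lax and colax}. Since $\C$ is a $(2,2)$-category, Remark \ref{remark: 2-dim faithful I} says the forgetful functors $\Mod_{l^*}(\T,\C)\to\C$ and $\Mod_l(\T,\C)\to\C$ are locally faithful. Hence a 2-cell of $\C$ lifts to a 2-cell in $\Mod_{l^*}(\T,\C)$ (resp. $\Mod_l(\T,\C)$) iff it satisfies the modification axiom, and the lift is then unique. So (1) and (2) are both properties of the data $(\fup,\flow)$. I will show that each is equivalent to the following ``mixed coherence'' condition:
\begin{quote}
for all $1$-cells $\alpha\colon m\to n$, $\beta\colon k\to l$ in $\T$, the pasting of $\fup_\alpha$ (powered $k$ resp. $l$ times) with $\flow_\beta$ (powered $m$ resp. $n$ times), together with the pseudo-Gray cells $\X(\sigma_{\alpha\beta})$ and $\Y(\sigma_{\alpha\beta})$, agrees on both sides of the evident cube.
\end{quote}

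For (1), I write out the modification condition. The 1-cells $\flow^m\colon\widetilde\X^m\to\widetilde\Y^m$ and $\widetilde\Y(\alpha)\circ\flow^m$, $\flow^n\circ\widetilde\X(\alpha)$ are colax homomorphisms. Their structure 2-cells at $\beta\colon k\to l$ are obtained by pasting $\flow_\beta^{m}$ (resp. $\flow_\beta^n$) with the pseudo-cells $\widetilde\Y(\alpha)_\beta=\Y(\sigma_{\alpha\beta})$ (resp. $\widetilde\X(\alpha)_\beta=\X(\sigma_{\alpha\beta})$) coming from the lifting functor of Proposition \ref{prop: lifting the models functor}. Requiring $\fup_\alpha$ (with components $\fup_\alpha^{\,j}$ at level $j$, forced by Remark \ref{remark: 2-dim faithful I}) to be a modification between them is exactly the cube equation above, one instance for each $\beta$. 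By the symmetric computation for (2), with the roles of $\alpha$ and $\beta$ exchanged, the modification condition for $\flow_\alpha$ against the lax structures $\fup_\beta$ is also the same cube equation. The only differences are that the indices of the Gray cell are swapped ($\sigma_{\beta\alpha}$ versus $\sigma_{\alpha\beta}$) and that the $2$-cell being checked is $\flow$ rather than $\fup$.

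The main obstacle is the bookkeeping in this last identification: checking that the two cube equations really coincide. For this I would rely on the isomorphism $i\colon\Mod_l(\T,\Mod_{l^*}(\T,\C))\cong\Mod_{l^*}(\T,\Mod_l(\T,\C))$. Both conditions say that a single piece of data is a $1$-cell $\widetilde\X\to\widetilde\Y$ on the respective side. This data consists of $f$, the lax cells in one $\T$-variable and the colax cells in the other, together with the compatibility cube. Under $i$, which merely swaps variables and is an isomorphism, conditions (1) and (2) are the two transported descriptions of that one object. Concretely, I would verify that $i$ sends a lax $1$-cell whose components are colax homomorphisms $\flow^m$ and whose structure cells are the lifted $\fup_\alpha$ to a colax $1$-cell whose components are $\fup^n$ and whose structure cells are $\flow_\beta$. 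I would then match the Gray cells by using the symmetry of the pseudo-cells $\sigma$, via invertibility and the reindexing $(\alpha,\beta)\leftrightarrow(\beta,\alpha)$ through the symmetry isomorphisms in $\F^{op}$.

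Finally, uniqueness of lifts gives the equivalence: each condition holds iff this cube identity holds for all $\alpha,\beta$. It suffices to check this on a basis of $1$-cells, by Lemma \ref{lemma: syntactic}(2),(3).
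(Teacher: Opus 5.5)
Your reduction of (1) and (2) to modification conditions is fine, and it is a more explicit version of the paper's argument. Using local faithfulness (Remark~\ref{remark: 2-dim faithful I}), both conditions become properties of $(\fup,\flow)$ with unique lifts. The gap is in the final identification of the two cube equations.

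As you note, suppose $\widetilde{\X},\widetilde{\Y}$ denote one and the same lift in (1) and (2). Then for a lax index $\alpha$ and a colax index $\beta$, condition (1) gives an equation built from $\X(\sigma_{\alpha\beta}),\Y(\sigma_{\alpha\beta})$, whereas (2) gives one built from $\sigma_{\beta\alpha}$. You propose to match them via ``the symmetry of the pseudo-cells $\sigma$, via invertibility''. But $\mu$ is only assumed to be a pseudocommutativity. Invertibility does not give $\sigma_{\beta\alpha}=\sigma_{\alpha\beta}^{-1}$ (after conjugating by the symmetries of $\F^{op}$). That identity is exactly the extra axiom of a \emph{symmetric} pseudocommutativity in Lemma~\ref{lemma: syntactic}, and it fails for every non-symmetric braiding.

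In fact, with a single lift the two equations are genuinely different, so no bookkeeping can identify them. Take $B\V$ with $\V$ braided, acting on itself, and set $f=Z\otimes -$, $\fup_X=b_{Z,X}\otimes -$, $\flow_X=b_{X,Z}\otimes -$. Writing $s_1,s_2$ for the standard braid generators, one of your two equations becomes the Yang--Baxter relation $s_1s_2s_1=s_2s_1s_2$. The other becomes $s_1s_2^{-1}s_1=s_2s_1^{-1}s_2$, which is false in $B_3$.

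The missing point is that $i$ does not preserve the lift: it exchanges the two $\T$-variables. So it sends $\widetilde{\X}\in\Mod_l(\T,\Mod_{l^*}(\T,\C))$, whose outer variable is the first variable of $\mu$, to the lift in $\Mod_{l^*}(\T,\Mod_l(\T,\C))$ whose outer variable is the second one. Condition (2) must therefore be read with $\widetilde{\X}(\alpha),\widetilde{\Y}(\alpha)$ taken as the lifts along that other variable of $\mu$. With this reading, the modification conditions in (1) and (2) for the pair $(\fup_\alpha,\flow_\beta)$ involve the same Gray cells $\X(\sigma_{\alpha\beta}),\Y(\sigma_{\alpha\beta})$. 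They are then literally the single equation displayed after the lemma, and no symmetry is needed; this is what the paper's one-line appeal to $i$ implicitly uses. If you insist on one and the same lift in both conditions, your argument goes through only after adding the hypothesis that $\mu$ is symmetric.
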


\begin{proof}
    Condition $(1)$ means that the data $\{\flow^n,\fup_\alpha\}$ gives us a $1$-cell in $\Mod_l(\T,\Mod_{l^*}(\T,\C))$. Condition $(2)$ then tells us that the data $\{\fup^n,\flow_\alpha\}$ assembles into a $1$-cell in $\Mod_{l^*}(\T,\Mod_l(\T,\C)$. Clearly, these two correspond to each other via the isomorphism $i$. 
\end{proof}

In more concrete terms, checking both of the conditions above boils down to checking the same equation for each $\beta\colon k\to l$, $\alpha\colon m\to n$ in $\T$ (the unnamed isomorphisms below come from the pseudocommutativity structure):

\[\begin{tikzcd}
	& {(X^n)^k} &&&& {(X^n)^k} \\
	{(X^m)^k} && {(Y^n)^k} && {(X^m)^k} && {(Y^n)^k} \\
	& {(X^n)^l} &&&& {(Y^m)^k} \\
	{(X^m)^l} && {(Y^n)^l} && {(X^m)^l} && {(Y^n)^l} \\
	& {(Y^m)^l} &&&& {(Y^m)^l}
	\arrow[""{name=0, anchor=center, inner sep=0}, "{(f^n)^k}", curve={height=-12pt}, from=1-2, to=2-3]
	\arrow["{\X^n(\beta)}"', from=1-2, to=3-2]
	\arrow["{(f^n)^k}", curve={height=-12pt}, from=1-6, to=2-7]
	\arrow[""{name=1, anchor=center, inner sep=0}, "{\X(\alpha)^k}", curve={height=-12pt}, from=2-1, to=1-2]
	\arrow["{\X^m(\beta)}"', from=2-1, to=4-1]
	\arrow[""{name=2, anchor=center, inner sep=0}, "{\Y^n(\beta)}", from=2-3, to=4-3]
	\arrow["{\X(\alpha)^k}", curve={height=-12pt}, from=2-5, to=1-6]
	\arrow[""{name=3, anchor=center, inner sep=0}, "{(f^m)^k}", curve={height=12pt}, from=2-5, to=3-6]
	\arrow[""{name=4, anchor=center, inner sep=0}, "{\X^m(\beta)}"', from=2-5, to=4-5]
	\arrow["{\Y^n(\beta)}", from=2-7, to=4-7]
	\arrow[""{name=5, anchor=center, inner sep=0}, "{(f^n)^l}"', curve={height=-12pt}, from=3-2, to=4-3]
	\arrow["{(\fup_\alpha)^k}"', between={0.3}{0.8}, Rightarrow, from=3-6, to=1-6]
	\arrow[""{name=6, anchor=center, inner sep=0}, "{\Y(\alpha)^k}", curve={height=12pt}, from=3-6, to=2-7]
	\arrow["{\Y^m(\beta)}"{description}, from=3-6, to=5-6]
	\arrow[""{name=7, anchor=center, inner sep=0}, "{\X(\alpha)^l}"', curve={height=-12pt}, from=4-1, to=3-2]
	\arrow["{(f^m)^l}"', curve={height=12pt}, from=4-1, to=5-2]
	\arrow[""{name=8, anchor=center, inner sep=0}, "{(f^m)^l}"', curve={height=12pt}, from=4-5, to=5-6]
	\arrow["{(\fup_\alpha)^l}"', between={0.3}{0.8}, Rightarrow, from=5-2, to=3-2]
	\arrow["{\Y(\alpha)^l}"', curve={height=12pt}, from=5-2, to=4-3]
	\arrow[""{name=9, anchor=center, inner sep=0}, curve={height=12pt}, from=5-6, to=4-7]
	\arrow["\cong", draw=none, from=1, to=7]
	\arrow["{=}"{marking, allow upside down}, draw=none, from=2, to=4]
	\arrow["{\flow^n_\beta}", shift left=3, between={0.4}{0.7}, Rightarrow, from=5, to=0]
	\arrow["\cong"', draw=none, from=6, to=9]
	\arrow["{\flow_\beta^m}"', shift right=3, between={0.3}{0.7}, Rightarrow, from=8, to=3]
\end{tikzcd}\]

Whenever the conditions of Lemma \ref{lemma: bilax} are satisfied, we have a homomorphism $\overline{\underline{f}}\colon \widetilde{\X}\to \widetilde{\Y}$, which is a $1$-cell in $\Mod_l(\T,\Mod_{l^*}(\T,\C))\cong \Mod_{l^*}(\T,\Mod_l(\T,\C)$ and gets sent to $\fup$ and $\flow$ by the respective forgetful functors.

\begin{definition} \label{def:bilax}
    Let $\X,\Y\colon\T\to \C$ be models, $X=\X(1)$, $Y=\Y(1)$. We define a \textit{bilax structure} on $f\colon X\to Y$ to be a pair of compatible lax and colax homomorphism structures on $f$ as in the previous lemma -- i.e., a lax homomorphism $\fup$, a colax homomorphism $\flow$ with $\fup(1)=\flow(1)=f$ such that the lift $\overline{\flow}$ as above exists.
\end{definition}

As a corollary, we get that for a pseudocommutative Lawvere $2$-theory $\T$, we have an isomorphism $$\IntCoalg(\widetilde{\X}_l)\cong \IntAlg(\widetilde{\X}_{l^*}),$$ where $\widetilde{\X}_w$ denotes the composition $\T\xrightarrow{\widetilde{\X}}\Mod_p(\T,\C)\subset \Mod_w(\T,\C)$ for $w=l,l^*.$ By the previous lemma, both capture the bilax structures on a $1$-cell $*\to X$ in $\C$.

\begin{definition}
    Let $\T$ be a Lawvere $(2,2)$-theory with a pseudocommutativity $\mu$, $\X\colon \T\to \C$ a model. Define $\IntBialg(\X):=\IntCoalg(\widetilde{\X}_l)$ where $\widetilde{\X}_l\colon \T\to \Mod_p(\T,\C)\subset \Mod_l(\T,\C)$ is the lift of $\X$. %This category is isomorphic to $\IntAlg(\widetilde{\X})$ for the lift $\widetilde{X}\colon \T\to \Mod_p(\T,\C)\subset \Mod_c(\T,\C).$
\end{definition}

\begin{example}
    If $\T=\T_{E_\infty}$ and $\V,\V'$ models in $\Cat$, i.e.\ symmetric monoidal categories, then our definition of a bilax structure on a map $\V\to \V'$ corresponds to the usual definition of a bilax symmetric monoidal functor $\V\to\V'$. Also, $\IntBialg(\V)$ is the category of bicommutative bimonoids in $\V$. %The same holds for $\T_{braid}$, the theory for braided monoids.
\end{example}

\subsection{Convolution algebras} \label{subsec: convolution}
It is well known that if $\V$ is a monoidal 1-category, $C$ a comonoid and $M$ a monoid, then the hom-set $\V(C,M)$ inherits a monoid structure called convolution. We are going to see that this generalizes to any Lawvere $(2,2)$-theory $\T$ equipped with an identity on objects, product preserving functor $\rho \colon \T\to \T_{E_\infty}$ and a basis $B$ of $1$-cells such that $\rho$ sends $B$ into $\langle B_{can}\rangle$ (recall that the latter consists of a unit and iterated multiplication maps). The reason we need this is that under these conditions, $\Set$ with the cartesian monoidal structure becomes a $\T$-model in $\Cat$ via the composition $\T\xrightarrow{\rho}\T_{E_\infty}\to\Cat$ (where the second map specifies the cartesian monoidal structure on $\Set$).

\begin{proposition} \label{prop: conv product}
    Let $\T$ be a Lawvere $(2,2)$-theory and suppose we have a basis $B$ and a morphism $\rho\colon \T\to \T_{E_\infty}$ of Lawvere $(2,2)$-theories satisfying the condition above. Let $\X\colon \T\to \Cat$ be a model, $A\colon *\to \X$ an internal algebra and $C\colon *\to \X$ an internal coalgebra. Put $X:=\X(1)$, $a:=A(*),$ $c:=C(*)$. Then the hom-set $X(c,a)$ has a structure of an internal algebra in the $\T$-model $\Set$ defined above.
\end{proposition}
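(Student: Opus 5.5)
The plan is to realize $X(c,a)$ as an internal algebra by writing down a lax homomorphism $*\to\Set_\rho$, where $\Set_\rho:=\T\xrightarrow{\rho}\T_{E_\infty}\to\Cat$ is the cartesian model. The structure maps come from a convolution formula, and the coherence axioms are checked only on the basis $B$ together with the operadic closure.

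We first unpack the data. The lax homomorphism $A\colon *\to\X$ amounts to, for each $\alpha\colon n\to 1$ in $\T$, a morphism $A_\alpha\colon \X(\alpha)(a,\dots,a)\to a$ in $X$, compatible with composition, products and $2$-cells of $\T$. Dually, the colax $C$ gives $C_\alpha\colon c\to \X(\alpha)(c,\dots,c)$. On the target side, $\Set_\rho(\alpha)$ sends $(S_1,\dots,S_n)$ to $\prod_i S_i$ when $\rho(\alpha)$ is an iterated multiplication (or $1$ for the unit). An internal algebra in $\Set_\rho$ on a set $S$ is then a family of maps $\prod^n S\to S$, one for each $\rho(\alpha)$, compatible with $2$-cells. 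For $\alpha\in B$ we define
\[
\star_\alpha\colon X(c,a)^n\to X(c,a),\qquad (g_1,\dots,g_n)\mapsto A_\alpha\circ \X(\alpha)(g_1,\dots,g_n)\circ C_\alpha .
\]
This uses functoriality of $\X(\alpha)\colon X^n\to X$ on morphisms, which is exactly the classical convolution $f\star g=\mu\circ(f\otimes g)\circ\nabla$. For a general $\gamma=\alpha\beta$ with $\beta$ inert and $\alpha\in\langle B\rangle$, we extend along operadic compositions; the hypothesis that $\rho(B)\subseteq\langle B_{can}\rangle$ guarantees that this arity matches the product arity in $\Set_\rho$.

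Next, we check that these maps assemble into a lax homomorphism $*\to\Set_\rho$. The key observation is that the defining formula is functorial in $\alpha$. If $\alpha=\alpha'(\beta_1,\dots,\beta_k)$, then $A_\alpha$ and $C_\alpha$ decompose as composites of $A_{\alpha'}$, $\X(\alpha')(A_{\beta_i})$ and the corresponding $C$-cells, using the composition axioms of lax and colax transformations. Interchange (functoriality of $\X(\alpha')$ on composites) then gives $\star_\alpha=\star_{\alpha'}\circ(\star_{\beta_1}\times\dots\times\star_{\beta_k})$, which matches the corresponding equation in $\Set_\rho$, where composition is just composition of product maps. The product axioms hold trivially. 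For a $2$-cell $s\colon\alpha\Rightarrow\alpha'$ in $\T$, the modification axioms give $A_{\alpha'}\circ \X(s)_a=A_\alpha$ and $\X(s)_c\circ C_\alpha=C_{\alpha'}$, while naturality of $\X(s)$ yields $\X(s)_a\circ\X(\alpha)(g)=\X(\alpha')(g)\circ\X(s)_c$. Together these show $\star_\alpha=\star_{\alpha'}$. Since $\Set_\rho(s)$ is the image of a $2$-cell of $\T_{E_\infty}$ acting on cartesian products, this is the required compatibility (a lax cell into a discrete target).

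The main obstacle is well-definedness: we must show that $\star_\gamma$ does not depend on the chosen decomposition of $\gamma$ into basis elements and inerts. We also need the resulting structure to respect $\Set_\rho(s)$, which in $\Set$ are the symmetries and associators; these are nontrivial on tuples, so they do not reduce to equalities $\star_\alpha=\star_{\alpha'}$. To handle this, we would avoid choosing decompositions entirely. Instead, we would define the structure for every $\alpha\colon n\to 1$ directly by the displayed formula; by the composition argument above this is automatically multiplicative, so the basis is used only to pin down arities. For $2$-cells, the first thing to try is to use that $\rho$ is a map of theories: then $\Set_\rho(s)$ is the permutation of factors induced by $\rho(s)$, and it matches $\X(s)$ after applying $\X(\alpha)(g_1,\dots,g_n)$, by naturality of $\X(s)$ in all $n$ variables.
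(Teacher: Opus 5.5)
Your convolution maps are the same as the paper's. There, the structure on $X(c,a)$ is the composite $\mathcal{H}om\circ(C^{op}\times A)$, where $\mathcal{H}om\colon\X^{op}\times\X\to\Set$ is a lax homomorphism whose cell at $\alpha\in B$ is the action of $\X(\alpha)$ on hom-sets. The resulting cell at $\alpha$ is exactly $A_\alpha\circ\X(\alpha)(g_1,\dots,g_n)\circ C_\alpha$.

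Your interchange computation is correct for linear operations. So is your $2$-cell computation: for a $2$-cell between linear operations, $\Set_\rho(s)$ permutes factors compatibly with the variable labels, so the required axiom really is $\star_\alpha=\star_{\alpha'}$.

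The gap is the step you flag as the main obstacle, and your proposed fix does not close it. Take $\gamma\colon n\to 1$ that repeats or deletes variables. The required structure cell is a map $\Set_\rho(\gamma)(S,\dots,S)\cong S^k\to S$, with $k$ the arity of the linear part, not a map $S^n\to S$. So the displayed formula has the wrong type. Strictness on inert maps forces this cell to be $\star_\alpha$ for every factorization $\gamma=\alpha\beta$ with $\alpha\in\langle B\rangle$. Independence of the factorization is therefore a genuine condition, and your argument does not give it. An equation $\alpha\beta=\alpha'\beta'$ with $\beta$ non-injective only constrains $\X(\alpha),\X(\alpha')$ on tuples of morphisms with the repetitions prescribed by $\beta$. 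But $\star_\alpha=\star_{\alpha'}$ needs agreement on arbitrary tuples. For the same reason, naturality of $\X(s)$ in the $n$ variables of $\gamma$ cannot account for a permutation of repeated slots performed by $\Set_\rho(s)$.

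This cannot be repaired without an extra hypothesis. Here is a counterexample.
\begin{itemize}
\item Let $\T$ be generated by binary $\alpha,\alpha'$ subject to $\alpha\circ\delta=\alpha'\circ\delta$, where $\delta\colon 1\to 2$ is the diagonal. Take $B=\{\alpha,\alpha'\}$ and $\rho(\alpha)=\rho(\alpha')=m$.
\item Take the model $X=\Set$ with $\X(\alpha)(x,y)=x\times y$ and $\X(\alpha')(x,y)=y\times x$. These agree on the diagonal, also on morphisms.
\item Let $a$ be a set with at least two elements and multiplication $(p,q)\mapsto p$ for both $\alpha$ and $\alpha'$. Let $c$ be a point with its unique comultiplication.
\end{itemize}
Then $\star_\alpha(g,h)=g$ and $\star_{\alpha'}(g,h)=h$. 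But any internal algebra $S$ in $\Set_\rho$ satisfies $S_\alpha=S_{\alpha\circ\delta}=S_{\alpha'\circ\delta}=S_{\alpha'}$, since the cell at $\delta$ is an identity.

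The same problem arises with $2$-cells. Consider an endo-$2$-cell of $\alpha\circ\delta$ that $\rho$ sends to the symmetry of $x\otimes x$. Its internal-algebra axiom demands a commutative product. The convolution of a noncommutative magma $a$ (with $\X(s)=\id$) is not commutative.

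What is needed is a linearity assumption. For example: every equation and every $2$-cell between operations of $\T$ is obtained by whiskering with inert maps from one between elements of $\langle B\rangle$. This holds for theories of operads such as $\T_{mon}$ or $\T_{E_\infty}$. Under it, $S_{\alpha\beta}:=\star_\alpha$ is well defined, and your interchange and naturality computations verify all the axioms. The paper's proof has the same gap: it is exactly the assertion that functoriality of $\mathcal{H}om$ is clear.
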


\begin{proof}
    We start with a general observation: if $F\colon \C\to \D$ is any functor between $1$-categories, the maps $F_{x,y}\colon\C(x,y)\to \D(Fx,Fy)$ assemble into a natural transformation $\phi$ fitting in the triangle below.
    % https://q.uiver.app/#q=WzAsMyxbMCwwLCJcXENee29wfVxcdGltZXMgXFxDIl0sWzAsMSwiXFxEXntvcH1cXHRpbWVzIFxcRCJdLFsxLDEsIlxcYnVsbGV0Il0sWzAsMSwiRl57b3B9XFx0aW1lcyBGIiwyXSxbMCwyLCJcXEMoLSwtKSJdLFsxLDIsIlxcRCgtLC0pIiwyXSxbNCwxLCJcXHBoaSIsMix7InNob3J0ZW4iOnsic291cmNlIjoyMH19XV0=
\[\begin{tikzcd}
	{\C^{op}\times \C} \\
	{\D^{op}\times \D} & \Set
	\arrow["{F^{op}\times F}"', from=1-1, to=2-1]
	\arrow[""{name=0, anchor=center, inner sep=0}, "{\C(-,-)}", from=1-1, to=2-2]
	\arrow["{\D(-,-)}"', from=2-1, to=2-2]
	\arrow["\phi"', between={0.2}{1}, Rightarrow, from=0, to=2-1]
\end{tikzcd}\]

If $\X\colon \T\to \Cat$ is a model, then the hom-functor $X^n(-,-)\colon (X^n)^{op}\times X^n\to \Set$ factors through the product functor $\prod_{i=1}^n\colon \Set^n\to \Set$. We are going to denote the functor $(X^n)^{op}\times X^n\to \Set^n$ by $X^n(-,-)$ as well.

Using the cell $\phi$ above, we can make the hom-functors into a lax homomorphism $\mathcal{H}om\colon\X^{op}\times \X\to \Set$: we define $\mathcal{H}om_n=X^n(-,-)$ and for any $\alpha\colon n\to 1$ in our basis $B$ (therefore, $\alpha$ which gets sent to $n$-fold product by $\rho$), we define $\mathcal{H}om_\alpha$ to be the cell $\phi$ above for choices $\C=X^n$, $\D=X$, $F=\X(\alpha)$, as depicted in the following diagram:
% https://q.uiver.app/#q=WzAsNCxbMCwyLCJYXntvcH1cXHRpbWVzIFgiXSxbMCwwLCIoWF57b3B9KV5uXFx0aW1lcyBYXm4iXSxbMSwwLCJcXFNldF5uIl0sWzEsMiwiXFxTZXQiXSxbMSwwLCJcXFgoXFxhbHBoYSlee29wfVxcdGltZXMgXFxYKFxcYWxwaGEpIiwyXSxbMSwyLCJYXm4oLSwtKSJdLFswLDMsIlgoLSwtKSIsMl0sWzIsMywiXFxwcm9kX3tpPTF9Xm4iXSxbNSw2LCJcXG1hdGhjYWx7SH1vbV9cXGFscGhhIiwwLHsib2Zmc2V0IjozLCJzaG9ydGVuIjp7InNvdXJjZSI6MzAsInRhcmdldCI6NDB9fV1d
\[\begin{tikzcd}
	{(X^{op})^n\times X^n} & {\Set^n} \\
	\\
	{X^{op}\times X} & \Set
	\arrow[""{name=0, anchor=center, inner sep=0}, "{X^n(-,-)}", from=1-1, to=1-2]
	\arrow["{\X(\alpha)^{op}\times \X(\alpha)}"', from=1-1, to=3-1]
	\arrow["{\prod_{i=1}^n}", from=1-2, to=3-2]
	\arrow[""{name=1, anchor=center, inner sep=0}, "{X(-,-)}"', from=3-1, to=3-2]
	\arrow["{\mathcal{H}om_\alpha}", shift right=3, between={0.3}{0.6}, Rightarrow, from=0, to=1]
\end{tikzcd}\]

An $i$-th projection map $p_i\colon n\to 1$ in $\T$ induces an $i$-th projection maps on $X^n$ and $\Set$ and the Hom clearly commutes with these, as in the diagram below (vertical maps are $i$-th projections).

% https://q.uiver.app/#q=WzAsNCxbMCwyLCJYXntvcH1cXHRpbWVzIFgiXSxbMCwwLCIoWF57b3B9KV5uXFx0aW1lcyBYXm4iXSxbMSwwLCJcXFNldF5uIl0sWzEsMiwiXFxTZXQiXSxbMSwwLCJcXFgocF9pKV57b3B9XFx0aW1lcyBcXFgocF9pKSIsMl0sWzEsMiwiWF5uKC0sLSkiXSxbMCwzLCJYKC0sLSkiLDJdLFsyLDMsIlxccGlfaSJdXQ==
\[\begin{tikzcd}
	{(X^{op})^n\times X^n} & {\Set^n} \\
	\\
	{X^{op}\times X} & \Set
	\arrow["{X^n(-,-)}", from=1-1, to=1-2]
	\arrow["{\X(p_i)^{op}\times \X(p_i)}"', from=1-1, to=3-1]
	\arrow["{\pi_i}", from=1-2, to=3-2]
	\arrow["{X(-,-)}"', from=3-1, to=3-2]
\end{tikzcd}\] 

Thus, we have indeed specified a lax homomorphism (functoriality is clear). Then, we define the internal algebra structure $\X(c,a)$ on $X(c,a)$ is a composition $$\X(c,a):=\mathcal{H}om\circ(C^{op}\times A)\colon *\cong *\times *\rightarrow \X^{op}\times \X\rightarrow \Set.$$ As $C\colon *\to \X$ is colax, $C^{op}$ is lax, so the above is well-defined, and clearly $n$ gets mapped to $X(c,a)^n$. \end{proof}

For reader's convenience, we add one detail to the proof above. Namely, consider any map $\alpha\colon n\to 1$ in $B$. Then the components of $C$ and $A$ produce natural transformations $o_a\colon \X(\alpha)\circ A^n\to A$, $o_c\colon C\to \X(\alpha)\circ C^n$ as below.
% https://q.uiver.app/#q=WzAsNixbMCwxLCIqIl0sWzEsMCwiWF5uIl0sWzEsMSwiWCwiXSxbMiwxLCIqIl0sWzMsMSwiWCJdLFszLDAsIlhebiJdLFswLDEsIkFebiJdLFsxLDIsIlxcWChcXGFscGhhKSJdLFswLDIsIkEiLDJdLFszLDQsIkMiLDJdLFs1LDQsIlxcWChcXGFscGhhKSJdLFszLDUsIkNebiJdLFs2LDIsIm9fYSIsMCx7InNob3J0ZW4iOnsic291cmNlIjoyMCwidGFyZ2V0IjoyMH19XSxbNCwxMSwib19jIiwyLHsic2hvcnRlbiI6eyJzb3VyY2UiOjIwLCJ0YXJnZXQiOjIwfX1dXQ==
\[\begin{tikzcd}
	& {X^n} && {X^n} \\
	{*} & {X,} & {*} & X
	\arrow["{\X(\alpha)}", from=1-2, to=2-2]
	\arrow["{\X(\alpha)}", from=1-4, to=2-4]
	\arrow[""{name=0, anchor=center, inner sep=0}, "{A^n}", from=2-1, to=1-2]
	\arrow["A"', from=2-1, to=2-2]
	\arrow[""{name=1, anchor=center, inner sep=0}, "{C^n}", from=2-3, to=1-4]
	\arrow["C"', from=2-3, to=2-4]
	\arrow["{o_a}", between={0.2}{0.8}, Rightarrow, from=0, to=2-2]
	\arrow["{o_c}"', between={0.2}{0.8}, Rightarrow, from=2-4, to=1]
\end{tikzcd}\]

From that, we can construct the following natural transformation: 

% https://q.uiver.app/#q=WzAsNSxbMCwyLCIqIl0sWzEsMiwiWF57b3B9XFx0aW1lcyBYIl0sWzEsMCwiKFhee29wfSleblxcdGltZXMgWF5uIl0sWzIsMCwiXFxTZXRebiJdLFsyLDIsIlxcU2V0Il0sWzAsMSwiKENee29wfSxBKSIsMl0sWzAsMiwiKChDXntvcH0pXm4sQV5uKSJdLFsyLDEsIlxcWChcXGFscGhhKV57b3B9XFx0aW1lcyBcXFgoXFxhbHBoYSkiLDFdLFsyLDNdLFsxLDRdLFszLDQsIlxccHJvZF97aT0xfV5uIl0sWzYsMSwib19jXntvcH1cXHRpbWVzIG9fYSAiLDIseyJzaG9ydGVuIjp7InNvdXJjZSI6MjAsInRhcmdldCI6MjB9fV0sWzgsOSwiXFxwaGkiLDAseyJzaG9ydGVuIjp7InNvdXJjZSI6MzAsInRhcmdldCI6NDB9fV1d
\[\begin{tikzcd}
	& {(X^{op})^n\times X^n} & {\Set^n} \\
	\\
	{*} & {X^{op}\times X} & \Set
	\arrow[""{name=0, anchor=center, inner sep=0}, from=1-2, to=1-3]
	\arrow["{\X(\alpha)^{op}\times \X(\alpha)}"{description}, from=1-2, to=3-2]
	\arrow["{\prod_{i=1}^n}", from=1-3, to=3-3]
	\arrow[""{name=1, anchor=center, inner sep=0}, "{((C^{op})^n,A^n)}", from=3-1, to=1-2]
	\arrow["{(C^{op},A)}"', from=3-1, to=3-2]
	\arrow[""{name=2, anchor=center, inner sep=0}, from=3-2, to=3-3]
	\arrow["\mathcal{H}om_\alpha", between={0.3}{0.6}, Rightarrow, from=0, to=2]
	\arrow["{o_c^{op}\times o_a }"', between={0.2}{0.8}, Rightarrow, from=1, to=3-2]
\end{tikzcd}\]

This gives us $\X(c,a)$ evaluated on $\alpha$.

\section{Higher Day convolution and commutativity} \label{sec: day conv}

In this section, we will examine certain $(\infty,2)$-operad or $(2,2)$-multicategory structures on $2$-categories of models $\Mod_w(\T,\Cat).$ Our main tool is a result of independent interest: we develop certain generalized Day convolution structure. Let us first informally describe how this structure should look like without defining what we precisely mean by an $(\infty,n)$-operad; roughly, this is something like an $\infty$-operad but the hom-objects $\Mul(X_1,\dots,X_n;Y)$ are in fact $(\infty,n)$-categories instead of just $\infty$-groupoids. We will make this precise in the next subsection.

Let $\V$ be any monoidal $(\infty,n)$-category; for our application, we need at least the case $n=3$ so that we can access our main example $\V=(\Cat^{\mathfrak{m}}_{(\infty,2)},\times)$ which is self-enriched and hence also $(\Cat_{(\infty,2)},\times)$-enriched. Consider two monoids $M_0,M_1\colon \Delta^{op}\to \V$. We introduce the following notation for $i=0,1$:\begin{itemize}
    \item $\mM_i:=M_i(1)$,
    \item we denote the image of the unique active morphism $n\to 1$ in $\Delta^{op}$ by $m_i^n$,
    \item we denote by $\gamma^i_{k_1,\dots,k_n}$ the associativity constraints as below.% https://q.uiver.app/#q=WzAsNCxbMCwxLCJcXG1NX2lee1xcb3RpbWVzIFxcc3VtX3tpPTF9Xm4ga19pfSJdLFsxLDEsIlxcbU1faSJdLFsxLDAsIlxcbU1faV57XFxvdGltZXMgbn0iXSxbMCwwLCJcXGJpZ290aW1lc197aT0xfV5uIFxcbU1faV57XFxvdGltZXMga19pfSJdLFswLDMsIlxcY29uZyJdLFszLDIsIlxcb3RpbWVzX3tpPTF9Xm4gbV97a19pfSJdLFsyLDEsIm1fbiJdLFswLDEsIm1fe1xcc3VtX2kga19pfSIsMl0sWzUsNywiXFxnYW1tYV5pX3tuLGtfaX0iLDIseyJzaG9ydGVuIjp7InNvdXJjZSI6NDAsInRhcmdldCI6NDB9fV0sWzcsNSwiXFxzaW1lcSIsMix7InNob3J0ZW4iOnsic291cmNlIjoyMCwidGFyZ2V0IjoyMH0sInN0eWxlIjp7ImJvZHkiOnsibmFtZSI6Im5vbmUifSwiaGVhZCI6eyJuYW1lIjoibm9uZSJ9fX1dXQ==
\[\begin{tikzcd}
	{\bigotimes_{i=1}^n \mM_i^{\otimes k_i}} & {\mM_i^{\otimes n}} \\
	{\mM_i^{\otimes \sum_{i=1}^n k_i}} & {\mM_i}
	\arrow[""{name=0, anchor=center, inner sep=0}, "{\otimes_{i=1}^n m_{k_i}}", from=1-1, to=1-2]
	\arrow["{m_n}", from=1-2, to=2-2]
	\arrow["\cong", from=2-1, to=1-1]
	\arrow[""{name=1, anchor=center, inner sep=0}, "{m_{\sum_i k_i}}"', from=2-1, to=2-2]
	\arrow["{\gamma^i_{n,k_i}}"', between={0.4}{0.6}, Rightarrow, from=0, to=1]
	\arrow["\simeq"', draw=none, from=1, to=0]
\end{tikzcd}\]
\end{itemize}

Then, we will show that the hom $(\infty,n-1)$-category $\Hom_{\V}(\mM_0,\mM_1)$ admits an $(\infty,n-1)$-operad structure. We can describe the low-dimensional data as follows:
\begin{enumerate}
    \item For any tuple of $1$-cells $X_1,\dots, X_n,Y\colon \mM_0\to \mM_1$, an $n$-ary map $f\colon X_1,\dots, X_n\to Y$ is a cell as on the left below for $n\ge1$, and for $n=0$, a nullary map to $Y$ is a $2$-cell $m_1^0\to Ym_0^0$ (as on the right below). 
    % https://q.uiver.app/#q=WzAsOCxbMCwwLCJcXG1NXntcXG90aW1lcyBufV8wIl0sWzAsMSwiXFxtTV8wIl0sWzIsMSwiXFxtTV8xLCJdLFsyLDAsIlxcbU1ee1xcb3RpbWVzIG59XzEiXSxbNCwwLCJJIl0sWzMsMCwiSSJdLFs0LDEsIlxcbU1fMiJdLFszLDEsIlxcbU1fMSJdLFswLDEsIm1fMF57XFxvdGltZXMgbn0iXSxbMywyLCJtXzFee1xcb3RpbWVzIG59Il0sWzAsMywiWF8xXFxvdGltZXNcXGNkb3RzXFxvdGltZXMgWF9uIl0sWzUsNCwiIiwxLHsibGV2ZWwiOjIsInN0eWxlIjp7ImhlYWQiOnsibmFtZSI6Im5vbmUifX19XSxbNCw2LCJtXzFeMCJdLFs1LDcsIm1fMF4wIiwyXSxbNyw2LCJZIiwyXSxbMSwyLCJZIiwyXSxbMTEsMTQsInUiLDAseyJzaG9ydGVuIjp7InNvdXJjZSI6NDAsInRhcmdldCI6NDB9fV0sWzEwLDE1LCJmIiwwLHsic2hvcnRlbiI6eyJzb3VyY2UiOjQwLCJ0YXJnZXQiOjQwfX1dXQ==
\[\begin{tikzcd}
	{\mM^{\otimes n}_0} && {\mM^{\otimes n}_1} & I & I \\
	{\mM_0} && {\mM_1,} & {\mM_0} & {\mM_1}
	\arrow[""{name=0, anchor=center, inner sep=0}, "{X_1\otimes\cdots\otimes X_n}", from=1-1, to=1-3]
	\arrow["{m_0^{\otimes n}}", from=1-1, to=2-1]
	\arrow["{m_1^{\otimes n}}", from=1-3, to=2-3]
	\arrow[""{name=1, anchor=center, inner sep=0}, equals, from=1-4, to=1-5]
	\arrow["{m_0^0}"', from=1-4, to=2-4]
	\arrow["{m_1^0}", from=1-5, to=2-5]
	\arrow[""{name=2, anchor=center, inner sep=0}, "Y"', from=2-1, to=2-3]
	\arrow[""{name=3, anchor=center, inner sep=0}, "Y"', from=2-4, to=2-5]
	\arrow["f", between={0.4}{0.6}, Rightarrow, from=0, to=2]
	\arrow["u", between={0.4}{0.6}, Rightarrow, from=1, to=3]
\end{tikzcd}\]

%\item If $f\colon X_1,\dots,X_n\to Y$ and $g\colon X_1',\dots,X_n'\to Y'$ are $n$-ary maps, a morphism $f\Rightarrow g$ consists of $2$-cells $x_i\colon X_i\Rightarrow X_i'$, $y\colon Y\Rightarrow Y'$ and a $3$-cell $\eta\colon g\circ(\otimes_ix_i)\Rrightarrow y\circ f$.

    \item Consider multimaps $f\colon Y_1,\dots,Y_n\to Z$ and $f_i\colon X_{i1},\dots,X_{ik_i}\to Y_i$, $i=1,\dots,n$. Define the composition $f(f_1,\dots,f_n)\colon X_{11},\dots,X_{nk_n}\to Z$ to be the pasting of the following $2$-cells:

% https://q.uiver.app/#q=WzAsOCxbMSwxLCJcXG1NXzBee1xcb3RpbWVzIG59Il0sWzEsMiwiXFxtTV8wIl0sWzMsMiwiXFxtTV8xIl0sWzMsMSwiXFxtTV8xXntcXG90aW1lcyBufSJdLFsxLDAsIlxcYmlnb3RpbWVzX2kgXFxtTV8wXntcXG90aW1lcyBrX2l9Il0sWzMsMCwiXFxiaWdvdGltZXNfaSBcXG1NXzFee1xcb3RpbWVzIGtfaX0iXSxbMCwwLCJcXG1NXzBee1xcb3RpbWVzXFxzdW1faSBrX2l9Il0sWzQsMCwiXFxtTV8xXntcXG90aW1lc1xcc3VtX2kga19pfSJdLFswLDEsIm1fMV57bn0iXSxbMywyLCJtXzJeeyBufSJdLFsxLDIsIloiLDJdLFswLDMsIllfMVxcb3RpbWVzXFxjZG90c1xcb3RpbWVzIFlfbiJdLFs0LDAsIlxcb3RpbWVzX2kgbV8xXnsga19pfSJdLFs1LDMsIlxcb3RpbWVzX2kgbV8yXnsga19pfSJdLFs0LDUsIlxcb3RpbWVzX3tpfVxcb3RpbWVzX2pYX3tpan0iXSxbNiw0LCJcXGNvbmciXSxbNiwxLCJtXzFee1xcc3VtX2lrX2l9IiwyLHsiY3VydmUiOjR9XSxbNSw3LCJcXGNvbmciXSxbNywyLCJtXzJee1xcb3RpbWVzXFxzdW1faWtfaX0iLDAseyJjdXJ2ZSI6LTR9XSxbMTEsMTAsImYiLDAseyJzaG9ydGVuIjp7InNvdXJjZSI6NDAsInRhcmdldCI6NDB9fV0sWzE0LDExLCJcXG90aW1lc19pIGZfaSIsMCx7InNob3J0ZW4iOnsic291cmNlIjo0MCwidGFyZ2V0Ijo0MH19XSxbMTgsNSwiKFxcZ2FtbWFeMl97a18xLFxcZG90cyxrX259KV57LTF9IiwyLHsic2hvcnRlbiI6eyJzb3VyY2UiOjQwLCJ0YXJnZXQiOjQwfX1dLFsxMiwxNiwiXFxnYW1tYV4xX3trXzEsXFxkb3RzLGtfbn0iLDIseyJzaG9ydGVuIjp7InNvdXJjZSI6NDAsInRhcmdldCI6NDB9fV1d
\[\begin{tikzcd}
	{\mM_0^{\otimes\sum_i k_i}} & {\bigotimes_i \mM_0^{\otimes k_i}} && {\bigotimes_i \mM_1^{\otimes k_i}} & {\mM_1^{\otimes\sum_i k_i}} \\
	& {\mM_0^{\otimes n}} && {\mM_1^{\otimes n}} \\
	& {\mM_0} && {\mM_1}
	\arrow["\cong", from=1-1, to=1-2]
	\arrow[""{name=0, anchor=center, inner sep=0}, "{m_1^{\sum_ik_i}}"', curve={height=24pt}, from=1-1, to=3-2]
	\arrow[""{name=1, anchor=center, inner sep=0}, "{\otimes_{i}\otimes_jX_{ij}}", from=1-2, to=1-4]
	\arrow[""{name=2, anchor=center, inner sep=0}, "{\otimes_i m_1^{ k_i}}", from=1-2, to=2-2]
	\arrow["\cong", from=1-4, to=1-5]
	\arrow["{\otimes_i m_2^{ k_i}}", from=1-4, to=2-4]
	\arrow[""{name=3, anchor=center, inner sep=0}, "{m_2^{\otimes\sum_ik_i}}", curve={height=-24pt}, from=1-5, to=3-4]
	\arrow[""{name=4, anchor=center, inner sep=0}, "{Y_1\otimes\cdots\otimes Y_n}", from=2-2, to=2-4]
	\arrow["{m_1^{n}}", from=2-2, to=3-2]
	\arrow["{m_2^{ n}}", from=2-4, to=3-4]
	\arrow[""{name=5, anchor=center, inner sep=0}, "Z"', from=3-2, to=3-4]
	\arrow["{\otimes_i f_i}", between={0.4}{0.6}, Rightarrow, from=1, to=4]
	\arrow["{\gamma^1_{k_1,\dots,k_n}}"', between={0.4}{0.6}, Rightarrow, from=2, to=0]
	\arrow["{(\gamma^2_{k_1,\dots,k_n})^{-1}}"', between={0.4}{0.6}, Rightarrow, from=3, to=1-4]
	\arrow["f", between={0.4}{0.6}, Rightarrow, from=4, to=5]
\end{tikzcd}\]
\end{enumerate}

\subsection{Notion of an $(\infty,n)$-operad}
In this subsection, we are going to clarify what we mean by an $(\infty,n)$-operad over an $(\infty,1)$-operad $\O$. By default, our $\infty$-operads are \textit{non-symmetric} or \textit{planar}, i.e. they are not defined as certain $\infty$-categories over $\F_*$ but over $\Delta^{op}$ (see \cite[Def. 4.1.3.2]{Lur17}). We keep the following notation: for an $\infty$-operad $p\colon\O\to \Delta^{op}$, an active map $\alpha\colon n\to 1$ in $\Delta^{op}$, and objects $X_1,\dots,X_n$, $Y$ in $p^{-1}(1)$, we denote by $\Mul(X_1,\dots,X_n;Y)$ the space of maps $(X_1,\dots,X_n)\to Y$ in $\mathcal O$, where $(X_1,...,X_n)\in p^{-1}(1)^n\simeq p^{-1}(n)$, which get sent to $\alpha$ by $p$. For a monoidal $\infty$-category $\V$, we write the morphism specifying the monoidal structure as $\V^\otimes\to \Delta^{op}$.

For us, $(\infty,n)$-categories are complete $n$-fold Segal spaces, see for example \cite[Def. 4.1]{Nui24} (this is our default reference for higher categories with $n>2$). If $\C$ is an $(\infty,n)$-category, we can still talk about a marking on $\C$ -- again, we are only marking 1-cells, not higher cells. We adapt \cite[Def. 5.1]{Nui24} to the marked setting. 
\begin{definition}
    Let $p\colon\D\to \C$ be a functor of $(\infty,n)$-categories and $I$ a marking on $\C$. We say that $p$ if a \textit{marked pre-cocartesian fibration} if for any marked $1$-cell $\alpha\colon c\to c'$ in $\C$ and any object $d\in p^{-1}(c)$ there exists its cocartesian lift, i.e. a $1$-cell $\tilde \alpha\colon d\to d'$ with $p(\tilde \alpha)=\alpha$ such that the following square of hom $(\infty,n-1)$-categories is cartesian for any $d''\in \D$. 
    % https://q.uiver.app/#q=WzAsNCxbMCwwLCJcXEhvbShkJyxkJycpIl0sWzEsMCwiXFxIb20oZCxkJycpIl0sWzAsMSwiXFxIb20ocGQnLHBkJycpIl0sWzEsMSwiXFxIb20ocGQscGQnJykiXSxbMCwxLCJcXHRpbGRlXFxhbHBoYV4qIl0sWzEsMywicCJdLFsyLDMsIlxcYWxwaGFeKiIsMl0sWzAsMiwicCIsMl0sWzAsMywiIiwxLHsic3R5bGUiOnsibmFtZSI6ImNvcm5lciJ9fV1d
\[\begin{tikzcd}
	{\Hom(d',d'')} & {\Hom(d,d'')} \\
	{\Hom(pd',pd'')} & {\Hom(pd,pd'')}
	\arrow["{\tilde\alpha^*}", from=1-1, to=1-2]
	\arrow["p"', from=1-1, to=2-1]
	\arrow["\lrcorner"{anchor=center, pos=0.125}, draw=none, from=1-1, to=2-2]
	\arrow["p", from=1-2, to=2-2]
	\arrow["{\alpha^*}"', from=2-1, to=2-2]
\end{tikzcd}\]
\end{definition}

\begin{remark}
    If in the situation above, $\C$ is only an $(\infty,1)$-category, then $p$ is automatically a hom-wise $(n-1)$-fibration in the sense of \cite[Def. 5.10.(1)]{Nui24} as the target of the functor $\Hom(d,d')\xrightarrow{p}\Hom(pd,pd')$ is only an $\infty$-groupoid. 
\end{remark}

Let $\O$ be an $\infty$-operad. Recall that for a cartesian monoidal $\infty$-category $\C$ we can consider a category $\Mon_{\O}(\C)$ of $\O$-monoids in $\C$. Take $\C$ to be the $(\infty,1)$-category $\Cat_{(\infty,n)}$, defined following \cite{Nui24}, this is cartesian monoidal. For example if $\O$ is $\Delta^{op}$ or $\F_*$, $\Mon_{\O}(\Cat_{(\infty,n)})$ consists of monoidal (resp. symmetric monoidal) $(\infty,n)$-categories. By $n$-categorical unstraightening, one can view an $\O$-monoid $\V$ in $\Cat_{(\infty,n)}$ as a marked as $n$-fibration $\V^\otimes\to \O$ satisfying the Segal condition. We call these \textit{$\O$-monoidal $(\infty,n)$-categories}. More generally, let us make the following definition:

\begin{definition} \label{def: oo,n operad over O}
    Let $\O$ be an $(\infty,1)$-operad with its natural marking given by inert $1$-cells and let $p\colon \V^{\otimes}\to \O$ be a functor of $(\infty,n)$-categories. We say that $p$ exhibit $\V^{\otimes}$ as an \textit{$(\infty,n)$-operad over $\O$} if $p$ is a marked pre-cocartesian fibration satisfying the Segal condition, i.e. the following holds:
    \begin{enumerate}
        \item for every object $X$ of $\O$, the map $p^{-1}(X)\to \prod_{i=1}^n p^{-1}(X_i)$ induced by the cocartesian lifts $X\to X_i$ of inert maps $n\to 1$ in $\Delta^{op}$ is an equivalence.
        \item If $X,$ $X_i$ are as above, $\bar X\in p^{-1}$ and $\bar X\to \bar X_i$ are cocartesian lifts of inert maps $X\to X_i$, the the following square of hom $(\infty,n-1)$-categories is cartesian for any $Y\in \O$ and any $\bar Y\in p^{-1}(Y)$.
        % https://q.uiver.app/#q=WzAsNCxbMCwwLCJcXEhvbV97XFxWXntcXG90aW1lc319KFxcYmFyIFksXFxiYXIgWCkiXSxbMSwwLCJcXHByb2Rfe2k9MX1eblxcSG9tX3tcXFZee1xcb3RpbWVzfX0oXFxiYXIgWSxcXGJhciBYX2kpIl0sWzEsMSwiXFxwcm9kX3tpPTF9Xm5cXEhvbV97XFxPfShcXGJhciBZLFxcYmFyIFhfaSkiXSxbMCwxLCJcXEhvbV97XFxPKFksWCkiXSxbMCwxXSxbMSwyXSxbMywyXSxbMCwzXSxbMCw2LCIiLDEseyJsZXZlbCI6MSwic3R5bGUiOnsibmFtZSI6ImNvcm5lciJ9fV1d
\[\begin{tikzcd}
	{\Hom_{\V^{\otimes}}(\bar Y,\bar X)} & {\prod_{i=1}^n\Hom_{\V^{\otimes}}(\bar Y,\bar X_i)} \\
	{\Hom_{\O}(Y,X)} & {\prod_{i=1}^n\Hom_{\O}(\bar Y,\bar X_i)}
	\arrow[from=1-1, to=1-2]
	\arrow[from=1-1, to=2-1]
	\arrow[from=1-2, to=2-2]
	\arrow[""{name=0, anchor=center, inner sep=0}, from=2-1, to=2-2]
	\arrow["\lrcorner"{anchor=center, pos=0.125}, draw=none, from=1-1, to=0]
\end{tikzcd}\]
    \end{enumerate}
    A morphism of $(\infty,n)$-operads $\V^{\otimes}\to \W^{\otimes}$ over $\O$ is a functor over $\O$ which preserves cocartesian lifts of inert $1$-cells.
\end{definition}

\begin{remark}
Generalizing an observation of \cite{CCRY25}, the functor $\Fun_w(\C,-)\colon \Cat_{(\infty,n)}\to \Cat_{(\infty,n)}$ preserves products and so for any $\O$-monoidal $(\infty,n)$-category $\V$, $\Fun_w(\C,\V)$ inherits the $\O$-monoidal structure. 
\end{remark}

\subsection{Higher Day convolution $\infty$-operad} \label{subsec: higher Day conv}

Let $\V$ be an $\O$-monoidal $(\infty,n)$-category and write $\V^\otimes\to \O$ for the associated $(\infty,n)$-operad over $\O$. Let $M_0, M_1\colon \O\to \V^\otimes$ be $\O$-algebras in $\V^\otimes$, i.e. morphisms of operads over $\O$. The source and target functors $d_0, d_1\colon \Fun_w(\Delta^1,\V)\to \V$ are $\O$-monoidal, resulting in a map of $(\infty,n)$-operads $\Fun_w(\Delta^1,\V^\otimes)\to \V^{\otimes}\times_{\O} \V^\otimes$ over $\O$. Define $\Hom_{\V}^w(M_0,M_1)^\otimes$ to be the following pullback:

% https://q.uiver.app/#q=WzAsNCxbMCwxLCJcXEZ1bl97XFxiYXIgd30oXFxEZWx0YV4xLFxcTSlee1xcb3RpbWVzfSJdLFsxLDEsIlxcTV57XFxvdGltZXN9XFx0aW1lc197XFxEZWx0YV57b3B9fVxcTV57XFxvdGltZXN9Il0sWzEsMCwiXFxEZWx0YV^{op}}Il0sWzAsMCwiXFx1bmRlcmxpbmV7XFxIb219X3tcXE19Xnt3fShNXzEsTV8yKSJdLFsyLDEsIihNXzEsTV8yKSJdLFswLDEsIihkXzAsZF8xKSIsMl0sWzMsMF0sWzMsMiwicCJdLFszLDEsIiIsMSx7InN0eWxlIjp7Im5hbWUiOiJjb3JuZXIifX1dXQ==
\begin{equation} \label{dia: operad}\begin{tikzcd}
	{\Hom_{\V}^{w}(M_0,M_1)^\otimes} & {\O} \\
	{\Fun_{w^*}(\Delta^1,\V)^{\otimes}} & {\V^{\otimes}\times_{\O}\V^{\otimes}}
	\arrow["p", from=1-1, to=1-2]
	\arrow["j"', from=1-1, to=2-1]
	\arrow["\lrcorner"{anchor=center, pos=0.125}, draw=none, from=1-1, to=2-2]
	\arrow["{(M_0,M_1)}", from=1-2, to=2-2]
	\arrow["{(d_0,d_1)}"', from=2-1, to=2-2]
\end{tikzcd}\end{equation}

\begin{remark}
    If $\O=\Delta^{op}$ or $\F_*$, write $\mM_i:=M_i(1)$, $i=0,1$. In these cases, we are going to abuse the notation and write $ {{\Hom}_{\V}^{w}(\mM_0,\mM_1)^\otimes}$ instead of ${\Hom_{\V}^{w}(M_0,M_1)^\otimes}$.
\end{remark}

Let us take a closer look at the situation $w=\lax$ and $\O=\Delta^{op}$ -- i.e., $\V$ is a monoidal $(\infty,n)$-category and $\mM_0$, $\mM_1$ are monoids in it. Then by definition, the fiber $p^{-1}(1)$ is the hom $(\infty,n-1)$-category $\Hom_{\V}(\mM_0,\mM_1)$. Note our convention regarding the lax / oplax cells: a $1$-cell in $\Fun_{l^*}(\Delta^1,\V)$ which gets sent to $\id_{\mM_i}$ by $d_i$ is of the form of the square below, producing the $2$-morphism $X\Rightarrow Y$ in the correct direction. 

% https://q.uiver.app/#q=WzAsNixbMCwwLCJcXG1NXzAiXSxbMSwwLCJcXG1NXzEiXSxbMiwwLCJcXG1NXzAiXSxbMiwxLCJcXG1NXzEiXSxbMywxLCJcXG1NXzEiXSxbMywwLCJcXG1NXzAiXSxbMCwxLCJZIiwyLHsiY3VydmUiOjJ9XSxbMCwxLCJYIiwwLHsiY3VydmUiOi0yfV0sWzIsNSwiIiwwLHsibGV2ZWwiOjIsInN0eWxlIjp7ImhlYWQiOnsibmFtZSI6Im5vbmUifX19XSxbMyw0LCIiLDAseyJsZXZlbCI6Miwic3R5bGUiOnsiaGVhZCI6eyJuYW1lIjoibm9uZSJ9fX1dLFsyLDMsIlgiLDJdLFs1LDQsIlkiXSxbMyw1LCJcXGV0YSIsMCx7InNob3J0ZW4iOnsic291cmNlIjozMCwidGFyZ2V0IjozMH0sImxldmVsIjoyfV0sWzEsMiwiIiwxLHsic2hvcnRlbiI6eyJzb3VyY2UiOjIwLCJ0YXJnZXQiOjMwfSwic3R5bGUiOnsidGFpbCI6eyJuYW1lIjoiYXJyb3doZWFkIn0sImJvZHkiOnsibmFtZSI6InNxdWlnZ2x5In19fV0sWzcsNiwiXFxldGEiLDAseyJzaG9ydGVuIjp7InNvdXJjZSI6MjAsInRhcmdldCI6MjB9fV1d
\[\begin{tikzcd}
	{\mM_0} & {\mM_1} & {\mM_0} & {\mM_0} \\
	&& {\mM_1} & {\mM_1}
	\arrow[""{name=0, anchor=center, inner sep=0}, "Y"', curve={height=12pt}, from=1-1, to=1-2]
	\arrow[""{name=1, anchor=center, inner sep=0}, "X", curve={height=-12pt}, from=1-1, to=1-2]
	\arrow[between={0.2}{0.7}, squiggly, tail reversed, from=1-2, to=1-3]
	\arrow[equals, from=1-3, to=1-4]
	\arrow["X"', from=1-3, to=2-3]
	\arrow["Y", from=1-4, to=2-4]
	\arrow["\eta", between={0.3}{0.7}, Rightarrow, from=2-3, to=1-4]
	\arrow[equals, from=2-3, to=2-4]
	\arrow["\eta", between={0.2}{0.8}, Rightarrow, from=1, to=0]
\end{tikzcd}\]

\begin{theorem} \label{lemma: higher day conv} %\begin{enumerate}
    %\item 
    The functor $p$ exhibits ${\Hom}^w_{\V}(\mM_0, \mM_1)^\otimes$ as an $(\infty,n-1)$-operad over $\O$.
\end{theorem}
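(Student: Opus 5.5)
We have to show that $p\colon \Hom^w_{\V}(M_0,M_1)^\otimes\to\O$ is a marked pre-cocartesian fibration satisfying the two Segal conditions of Definition \ref{def: oo,n operad over O}, with fibers $(\infty,n-1)$-categories. The approach is to obtain every required property by pulling back along $(M_0,M_1)$ in diagram (\ref{dia: operad}): all conditions in question are either existence of lifts plus cartesianness of squares of hom-categories, or equivalences of fibers, and both kinds are stable under pullback once we know the corresponding statements for $(d_0,d_1)\colon \Fun_{w^*}(\Delta^1,\V)^\otimes\to \V^\otimes\times_\O\V^\otimes$ relative to the base.

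First, record that $\Fun_{w^*}(\Delta^1,\V)^\otimes\to \O$ and $\V^\otimes\times_\O\V^\otimes\to\O$ are $\O$-monoidal $(n)$-fibrations; the former by the Remark after Definition \ref{def: oo,n operad over O}, since $\Fun_{w^*}(\Delta^1,-)$ preserves products, and the latter since products of $\O$-monoids are computed as fiber products. Moreover $(d_0,d_1)$ is induced by the $\O$-monoidal functors $d_0,d_1$, hence preserves cocartesian lifts of inert maps. Then I would show: (a) the map $(d_0,d_1)$ has the property that a cocartesian lift (over $\O$) of an inert map in the source is sent to a cocartesian lift in the target, so that for an object $x$ of $\Hom^w_\V(M_0,M_1)^\otimes$ over $X$ and inert $\alpha\colon X\to X'$, the cocartesian lift $\tilde\alpha$ of $\alpha$ in $\Fun_{w^*}(\Delta^1,\V)^\otimes$ starting at $j(x)$ lands over $(M_0,M_1)(\alpha)$, because $M_0,M_1$ are operad maps and send inerts to cocartesian lifts, and cocartesian lifts are unique up to equivalence. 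This gives a lift $x\to x'$ in the pullback.

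Second, cocartesianness of that lift and the Segal hom-square (2): the hom $(\infty,n-1)$-categories of the pullback are pullbacks $\Hom_{\O}(X,Y)\times_{\Hom(\V^\otimes\times_\O\V^\otimes)}\Hom_{\Fun}(j x, j y)$. Pasting cartesian squares then reduces the required square for $p$ to the corresponding cartesian square for $\Fun_{w^*}(\Delta^1,\V)^\otimes\to\O$ together with the one for $\V^\otimes\times_\O\V^\otimes\to\O$ (pasting/cancellation lemma for pullbacks in $\Cat_{(\infty,n-1)}$). Condition (1) on fibers follows likewise: $p^{-1}(X)$ is the fiber of $\Fun_{w^*}(\Delta^1,\V)_X\to (\V_X)^2$ over $(M_0(X),M_1(X))$, and all three decompose as products over the $X_i$ compatibly, with $M_i(X)\simeq (M_i(X_i))_i$.

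The main obstacle I expect is the first point: verifying that $\Fun_{w^*}(\Delta^1,\V)^\otimes\to\O$ really is a marked pre-cocartesian fibration whose inert cocartesian lifts are computed pointwise, i.e. identifying the unstraightening of the $\O$-monoid $\Fun_{w^*}(\Delta^1,\V)$ with the lax arrow construction on $\V^\otimes$ in a way compatible with $d_0,d_1$. I would handle it by working entirely on the straightened side: $\Fun_{w^*}(\Delta^1,-)$ applied to the $\O$-monoid $\V$ gives an $\O$-monoid with natural transformations to $\V\times\V$, and unstraightening (a functor on $\O$-monoids landing in fibrations with inert-preserving maps) yields $(d_0,d_1)$ as a map of $(\infty,n)$-operads; then everything above is a formal consequence of pullback stability. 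Finally, for $\O=\Delta^{op}$, the fiber over $1$ is the fiber of $\Fun_{w^*}(\Delta^1,\V)\to\V\times\V$ at $(\mM_0,\mM_1)$, namely $\Hom_\V(\mM_0,\mM_1)$, as noted before the theorem.
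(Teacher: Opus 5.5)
Your proposal is correct and follows essentially the same route as the paper. The paper packages your pullback-stability arguments as Lemma \ref{lemma: cocart lifts} and Lemma \ref{lemma: checking segal}, applied with $q=(d_0,d_1)$ and $M=(M_0,M_1)$. In the first, the cocartesian lift in $\W$ is matched with $M(f)$ by uniqueness of cocartesian lifts, and cocartesianness follows by pasting cartesian squares of homs. In the second, the fibers are handled via $p^{-1}(Y)\simeq q^{-1}(M(Y))$ and the hom-Segal square by the same pasting. The point you flag as the main obstacle, namely that $\Fun_{w^*}(\Delta^1,\V)^\otimes\to\O$ is an $(\infty,n)$-operad and $(d_0,d_1)$ preserves inert lifts, is simply taken as given in the paper. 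It rests on the remark that $\Fun_w(\C,-)$ preserves products, together with $d_0,d_1$ being $\O$-monoidal.
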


\begin{proof}First, we are going to show that $p$ is marked pre-cocartesian fibration (where we mark $\O$ by inert maps). This follows from (more general) Lemma \ref{lemma: cocart lifts} below. Then, we are going to check the Segal condition. We do that slightly more generally in Lemma \ref{lemma: checking segal}.
\end{proof}

\begin{lemma} \label{lemma: cocart lifts}
Consider any diagram of $(\infty,d)$-categories as below such that $vM\simeq1$. Let $f\colon m\to n$ be a $1$-cell in $\O$. Suppose that the following holds:
\begin{itemize}
    \item $\V,\W$ admits all cocartesian lifts of $f$,
    \item $q$ preserves these lifts,
    \item $M$ sends $f$ to a cocartesian lift of $f$.
\end{itemize} Then $p$ admits all cocartesian lifts of $f$ as well.
    % https://q.uiver.app/#q=WzAsNSxbMCwxLCJcXFciXSxbMSwxLCJcXFYiXSxbMSwwLCJcXE8iXSxbMSwyLCJcXE8iXSxbMCwwLCJcXFdcXHRpbWVzX3tcXFZ9XFxPIl0sWzAsMSwicSJdLFsyLDEsIk0iXSxbMSwzLCJ2Il0sWzAsMywidyIsMl0sWzQsMCwiaiIsMl0sWzQsMiwicCJdLFs0LDEsIiIsMCx7InN0eWxlIjp7Im5hbWUiOiJjb3JuZXIifX1dLFsyLDMsIiIsMSx7ImN1cnZlIjotMywibGV2ZWwiOjIsInN0eWxlIjp7ImhlYWQiOnsibmFtZSI6Im5vbmUifX19XV0=
\begin{equation}\label{dia: higher operads} \begin{tikzcd}
	{\W\times_{\V}\O} & \O \\
	\W & \V \\
	& \O
	\arrow["p", from=1-1, to=1-2]
	\arrow["j"', from=1-1, to=2-1]
	\arrow["\lrcorner"{anchor=center, pos=0.125}, draw=none, from=1-1, to=2-2]
	\arrow["M", from=1-2, to=2-2]
	\arrow["q", from=2-1, to=2-2]
	\arrow["w"', from=2-1, to=3-2]
	\arrow["v", from=2-2, to=3-2]
\end{tikzcd}\end{equation}
\end{lemma}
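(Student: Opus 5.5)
We construct the lift explicitly and check its universal property by pasting pullback squares of hom $(\infty,d-1)$-categories. An object of $\W\times_{\V}\O$ over $m\in\O$ is a pair $(x,m)$ with $x\in\W$ and an identification $q(x)\simeq M(m)$. Since $vM\simeq 1$, we get $w(x)=vq(x)\simeq m$.

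First choose a cocartesian lift $\tilde f\colon x\to x'$ of $f$ in $\W$ with respect to $w$; this exists by the first hypothesis. By the second hypothesis, $q(\tilde f)$ is a $v$-cocartesian lift of $f$ at $q(x)\simeq M(m)$. By the third hypothesis, $M(f)\colon M(m)\to M(n)$ is also a $v$-cocartesian lift of $f$ at $M(m)$. Cocartesian lifts with a fixed source are essentially unique, so there is an equivalence $q(x')\simeq M(n)$ under which $q(\tilde f)\simeq M(f)$, compatibly with the given identification at the source. This makes $(\tilde f,f)$ a $1$-cell $(x,m)\to(x',n)$ in the pullback lying over $f$. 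It is our candidate cocartesian lift.

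To verify the universal property, fix $(x'',k)$ in the pullback. Homs in a pullback of $(\infty,d)$-categories are pullbacks of homs, so
\[\Hom((x',n),(x'',k))\simeq \Hom_{\W}(x',x'')\times_{\Hom_{\V}(M n,M k)}\Hom_{\O}(n,k),\]
and similarly with source $(x,m)$. The map $\tilde f^*$ is induced componentwise by $\tilde f^*$, $q(\tilde f)^*=M(f)^*$ and $f^*$. We need the square formed by $\tilde f^*$ together with $p$, which projects onto $\Hom_{\O}$, to be cartesian. The cube of the three hom-pullbacks compares the front face (source $x'$) with the back face (source $x$). The $\W$-face is cartesian over $\Hom_{\O}$ because $\tilde f$ is $w$-cocartesian. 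The $\V$-face is cartesian over $\Hom_{\O}$ because $M(f)$ is $v$-cocartesian. The $\O$-face is trivially cartesian.

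Since $w=vq$ and the $\W$-terms map to the $\O$-homs through the $\V$-terms, we can apply pasting of pullbacks. Namely, $\Hom_{\W}(x',x'')$ is identified with $\Hom_{\W}(x,x'')\times_{\Hom_{\O}(m,k)}\Hom_{\O}(n,k)$, and likewise for $\V$. Substituting these into the pullback formula and cancelling the common factor $\Hom_{\O}(n,k)$ shows that
\[\Hom((x',n),(x'',k))\simeq\Hom((x,m),(x'',k))\times_{\Hom_{\O}(m,k)}\Hom_{\O}(n,k).\]
This is exactly the cocartesianness condition for $p$.

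The main obstacle is this limit manipulation. Commuting the iterated fibre products is formal, since limits commute with limits, but we must make sure that the map to $\Hom_{\O}$ used in the $\W$-cartesian square factors through $q$ compatibly with the identification $q(\tilde f)\simeq M(f)$. I would handle this by choosing the identification $q(x')\simeq M(n)$ as part of a contractible space of choices, namely the space of $v$-cocartesian lifts, so that all the squares commute coherently.
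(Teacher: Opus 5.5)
Your proposal is correct and follows the paper's argument. The paper likewise takes a $w$-cocartesian lift $f_!$ in $\W$, uses that $q(f_!)$ and $M(f)$ are both $v$-cocartesian lifts of $f$ to build a $1$-cell in $\W\times_{\V}\O$, and then proves that the required hom-square is cartesian by pasting pullback squares in a commutative cube of hom $(\infty,d-1)$-categories (writing $h:=qj\simeq Mp$). Your explicit fibre-product cancellation is an unpacked version of that cube chase.
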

\begin{proof}
    Let $X$ be an object in the fiber $p^{-1}(m)$ and consider the cocartesian lift $f_!\colon jX\to Y'$ in $\W$. From our assumptions on $q$ and $M$ we deduce that both $f_!$ and $f$ get mapped to the same morphism in $\V$ (up to homotopy), giving us a $1$-cell $\tilde f_!\colon X\to Y$ in $\W\times_{\V}\O$ such that $p\tilde f_!\simeq f,$ $j\tilde f_!\simeq f_!$. We claim that $\tilde f_!$ is the desired cocartesian lift of $f$. For that, we need to show that the square $(\heartsuit)$ in the diagram below is cartesian for any object $A$ of $\W\times_{\V}\O$. This follows by certain diagram chase: more concretely, in the commutative cube below, enough of the squares are cartesian to deduce that all of them are. (We denote $h:=qj\simeq Mp$.)

% https://q.uiver.app/#q=WzAsNyxbMCwwLCJcXEhvbShZLEEpIl0sWzEsMSwiXFxIb20ocFkscEEpIl0sWzEsMCwiXFxIb20oWCxBKSJdLFsyLDEsIlxcSG9tKHBYLHBBKSJdLFswLDEsIlxcSG9tKGpZLCBqQSkiXSxbMSwyLCJcXEhvbShoWSwgaEEpIl0sWzIsMiwiXFxIb20oaFgsIGhBKSJdLFswLDFdLFswLDIsIihcXHRpbGRlIGZfISleKiJdLFsyLDNdLFswLDRdLFs0LDVdLFsxLDVdLFs1LDYsIihoXFx0aWxkZSBmXyEpXioiLDJdLFszLDZdLFsxLDMsImZeKiIsMl0sWzEsNiwiIiwxLHsic3R5bGUiOnsibmFtZSI6ImNvcm5lciJ9fV0sWzIsMSwiKFxcaGVhcnRzdWl0KSIsMSx7InN0eWxlIjp7ImJvZHkiOnsibmFtZSI6Im5vbmUifSwiaGVhZCI6eyJuYW1lIjoibm9uZSJ9fX1dLFswLDExLCIiLDEseyJsZXZlbCI6MSwic3R5bGUiOnsibmFtZSI6ImNvcm5lciJ9fV1d
\[\begin{tikzcd}
	{\Hom(Y,A)} & {\Hom(X,A)} \\
	{\Hom(jY, jA)} & {\Hom(pY,pA)} & {\Hom(pX,pA)} \\
	& {\Hom(hY, hA)} & {\Hom(hX, hA)}
	\arrow["{(\tilde f_!)^*}", from=1-1, to=1-2]
	\arrow[from=1-1, to=2-1]
	\arrow[from=1-1, to=2-2]
	\arrow["{(\heartsuit)}"{description}, draw=none, from=1-2, to=2-2]
	\arrow[from=1-2, to=2-3]
	\arrow[""{name=0, anchor=center, inner sep=0}, from=2-1, to=3-2]
	\arrow["{f^*}"', from=2-2, to=2-3]
	\arrow[from=2-2, to=3-2]
	\arrow["\lrcorner"{anchor=center, pos=0.125}, draw=none, from=2-2, to=3-3]
	\arrow[from=2-3, to=3-3]
	\arrow["{(h\tilde f_!)^*}"', from=3-2, to=3-3]
	\arrow["\lrcorner"{anchor=center, pos=0.125}, draw=none, from=1-1, to=0]
\end{tikzcd}\]
% https://q.uiver.app/#q=WzAsOCxbMSwwLCJcXEhvbShZLEEpIl0sWzEsMSwiXFxIb20oalksIGpBKSJdLFsyLDEsIlxcSG9tKGpYLCBqQSkiXSxbMiwwLCJcXEhvbShYLEEpIl0sWzIsMiwiXFxIb20oaFksIGhBKSJdLFszLDIsIlxcSG9tKGhYLCBoQSkiXSxbMywxLCJcXEhvbShwWCxwQSkiXSxbMCwxLCJcXHNpbWVxIl0sWzAsMV0sWzEsMiwiKGZfISleKiJdLFswLDMsIihcXHRpbGRlIGZfISleKiJdLFszLDJdLFsxLDRdLFsyLDVdLFszLDZdLFs2LDVdLFs0LDUsIihoXFx0aWxkZSBmXyEpXioiLDJdLFsxLDUsIiIsMSx7InN0eWxlIjp7Im5hbWUiOiJjb3JuZXIifX1dLFszLDEzLCIiLDEseyJsZXZlbCI6MSwic3R5bGUiOnsibmFtZSI6ImNvcm5lciJ9fV1d
\[\begin{tikzcd}
	& {\Hom(Y,A)} & {\Hom(X,A)} \\
	\simeq & {\Hom(jY, jA)} & {\Hom(jX, jA)} & {\Hom(pX,pA)} \\
	&& {\Hom(hY, hA)} & {\Hom(hX, hA)}
	\arrow["{(\tilde f_!)^*}", from=1-2, to=1-3]
	\arrow[from=1-2, to=2-2]
	\arrow[from=1-3, to=2-3]
	\arrow[from=1-3, to=2-4]
	\arrow["{(f_!)^*}", from=2-2, to=2-3]
	\arrow[from=2-2, to=3-3]
	\arrow["\lrcorner"{anchor=center, pos=0.125, rotate=45}, draw=none, from=2-2, to=3-4]
	\arrow[""{name=0, anchor=center, inner sep=0}, from=2-3, to=3-4]
	\arrow[from=2-4, to=3-4]
	\arrow["{(h\tilde f_!)^*}"', from=3-3, to=3-4]
	\arrow["\lrcorner"{anchor=center, pos=0.125}, draw=none, from=1-3, to=0]
\end{tikzcd}\]
\end{proof}

\begin{lemma} \label{lemma: checking segal}
    Suppose that in the diagram (\ref{dia: higher operads}), the following holds:
    \begin{enumerate}
        \item $v,w$ exhibit $\V,\W$ as $(\infty,n)$-operads over $\O$,
        \item $q$ is a morphism of operads,
        \item $M$ is an $\O$-algebra in $\V$.
    \end{enumerate} 
    Then $p$ exhibit $\W\times_{\V}\O$ as an $(\infty,n)$-operad over $\O$. Moreover, $j$ is a morphism of operads over $\O$.
\end{lemma}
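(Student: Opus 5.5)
We check the two parts of Definition \ref{def: oo,n operad over O} for $p\colon \W\times_{\V}\O\to \O$: existence of cocartesian lifts of inert maps, and the Segal conditions (1) and (2). The guiding principle is that everything about $p$ can be computed from the pullback, because fibers and hom $(\infty,n)$-categories in a pullback of $(\infty,n)$-categories are pullbacks of the corresponding fibers and homs.

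\emph{Cocartesian lifts.} For an inert $f\colon X\to X_i$ in $\O$, we apply Lemma \ref{lemma: cocart lifts}. Its hypotheses hold: $\V,\W$ admit cocartesian lifts of inert maps by (1); $q$ preserves them by (2); and $M$ sends $f$ to a cocartesian lift by (3), since $\O$-algebras are operad maps. So $p$ is a marked pre-cocartesian fibration. The proof of that lemma also shows that the lift $\tilde f_!$ is sent by $j$ to a cocartesian lift $f_!$ in $\W$. Hence $j$ preserves cocartesian lifts of inert maps, which gives the ``moreover'' clause once the operad structure is established.

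\emph{Segal condition (1).} Since $p^{-1}(X)\simeq w^{-1}(X)\times_{v^{-1}(X)}\{M(X)\}$, the comparison map $p^{-1}(X)\to\prod_i p^{-1}(X_i)$ is a map of pullbacks. It is induced by the equivalences $w^{-1}(X)\simeq\prod_i w^{-1}(X_i)$ and $v^{-1}(X)\simeq\prod_i v^{-1}(X_i)$, and by $M(X)\mapsto (M(X_i))_i$. This last assignment is compatible with those equivalences because $M$ sends the inert maps $X\to X_i$ to cocartesian lifts. Since these are cocartesian transports, the relevant square commutes by (2), and pullbacks commute with products, so the comparison map is an equivalence.

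\emph{Segal condition (2).} Fix $\bar Y,\bar X$ and lifts $\bar X\to\bar X_i$; these lifts map under $j$ to cocartesian lifts in $\W$. We have
\[\Hom_{\W\times_\V\O}(\bar Y,\bar X)\simeq \Hom_\W(j\bar Y,j\bar X)\times_{\Hom_\V(h\bar Y,h\bar X)}\Hom_\O(Y,X).\]
The Segal squares for $\W$ and for $\V$ express $\Hom_\W(j\bar Y,j\bar X)$ and $\Hom_\V(h\bar Y,h\bar X)$ as pullbacks over $\Hom_\O(Y,X)$ of the corresponding products over $i$. Substituting these and commuting limits, the hom above becomes $\Hom_\O(Y,X)\times_{\prod_i\Hom_\O(Y,X_i)}\prod_i\big(\Hom_\W\times_{\Hom_\V}\Hom_\O\big)(\ldots,\bar X_i)$, and this is exactly the required cartesian square.

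The main obstacle is the limit bookkeeping in (2). One must check that the maps $\Hom_\O(Y,X)\to\Hom_\V(h\bar Y,h\bar X)$ induced by $M$ are compatible with the Segal decompositions, again using that $M$ preserves the inert lifts. I would organize this as a cube of cartesian squares, as in the proof of Lemma \ref{lemma: cocart lifts}, and use pasting of pullbacks.
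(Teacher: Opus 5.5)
Your proof is correct and follows the paper's argument. The cocartesian lifts come from Lemma \ref{lemma: cocart lifts}, Segal condition (1) from the identification $p^{-1}(X)\simeq q^{-1}(M(X))$, and Segal condition (2) from pasting the Segal squares of $\W$ and $\V$ with the pullback defining $\W\times_{\V}\O$; the paper phrases this last step as a cube of cartesian squares, and your explicit limit computation (including the compatibility of $M$ with the inert decompositions that you flag) is a spelled-out version of it, while the ``moreover'' clause comes, as in the paper, from the construction $j\tilde f_!\simeq f_!$.
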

\begin{proof}
We already know $p$ is a marked pre-cocartesian fibration from the previous lemma. It remains to check the conditions (1) and (2) of Definition \ref{def: oo,n operad over O}. The first condition follows from the fact that $p^{-1}(Y)\simeq q^{-1}(M(Y))$ for any $Y$ of $\O$. To see the second one, we perform a diagram chasing along a commutative cube again: in particular, in the cube below, we can see that enough squares are cartesian to deduce that the square ($\spadesuit$) is cartesian as well. Denote $\P:=\W\times_{\V}\O$.
    % https://q.uiver.app/#q=WzAsNyxbMCwwLCJcXEhvbV97XFxQfShcXGJhciBZLFxcYmFyIFgpIl0sWzEsMSwiXFxwcm9kX3tpPTF9Xm5cXEhvbV97XFxQfShcXGJhciBZLFxcYmFyIFhfaSkiXSxbMSwwLCJcXEhvbV97XFxPfShZLFgpIl0sWzIsMSwiXFxwcm9kX3tpPTF9Xm5cXEhvbV97XFxPfShZLFhfaSkiXSxbMCwxLCJcXEhvbV97XFxXfShqXFxiYXIgWSwgalxcYmFyIFgpIl0sWzEsMiwiXFxwcm9kX3tpPTF9Xm5cXEhvbV97XFxXfShqXFxiYXIgWSwgalxcYmFyIFhfaSkiXSxbMiwyLCJcXHByb2Rfe2k9MX1eblxcSG9tX3tcXFZ9KGhcXGJhciBZLGhcXGJhciBYX2kpIl0sWzAsMV0sWzAsMl0sWzIsM10sWzAsNF0sWzQsNV0sWzEsNV0sWzUsNl0sWzMsNl0sWzEsM10sWzIsMSwiKFxcc3BhZGVzdWl0KSIsMSx7InN0eWxlIjp7ImJvZHkiOnsibmFtZSI6Im5vbmUifSwiaGVhZCI6eyJuYW1lIjoibm9uZSJ9fX1dLFsxLDEzLCIiLDEseyJsZXZlbCI6MSwic3R5bGUiOnsibmFtZSI6ImNvcm5lciJ9fV1d
\[\begin{tikzcd}
	{\Hom_{\P}(\bar Y,\bar X)} & {\Hom_{\O}(Y,X)} & \\
	{\Hom_{\W}(j\bar Y, j\bar X)} & {\prod_{i=1}^n\Hom_{\P}(\bar Y,\bar X_i)} & {\prod_{i=1}^n\Hom_{\O}(Y,X_i)} \\
	& {\prod_{i=1}^n\Hom_{\W}(j\bar Y, j\bar X_i)} & {\prod_{i=1}^n\Hom_{\V}(h\bar Y,h\bar X_i)}
	\arrow[from=1-1, to=1-2]
	\arrow[from=1-1, to=2-1]
	\arrow[from=1-1, to=2-2]
	\arrow["{(\spadesuit)}"{description}, draw=none, from=1-2, to=2-2]
	\arrow[from=1-2, to=2-3]
	\arrow[from=2-1, to=3-2]
	\arrow[from=2-2, to=2-3]
	\arrow[from=2-2, to=3-2]
	\arrow[from=2-3, to=3-3]
	\arrow[""{name=0, anchor=center, inner sep=0}, from=3-2, to=3-3]
	\arrow["\lrcorner"{anchor=center, pos=0.125}, draw=none, from=2-2, to=0]
\end{tikzcd}\]
% https://q.uiver.app/#q=WzAsOCxbMSwwLCJcXEhvbV97XFxQfShcXGJhciBZLFxcYmFyIFgpIl0sWzIsMCwiXFxIb21fe1xcT30oWSxYKSJdLFszLDEsIlxccHJvZF97aT0xfV5uXFxIb21fe1xcT30oWSxYX2kpIl0sWzEsMSwiXFxIb21fe1xcV30oalxcYmFyIFksIGpcXGJhciBYKSJdLFsyLDIsIlxccHJvZF97aT0xfV5uXFxIb21fe1xcV30oalxcYmFyIFksIGpcXGJhciBYX2kpIl0sWzMsMiwiXFxwcm9kX3tpPTF9Xm5cXEhvbV97XFxWfShoXFxiYXIgWSxoXFxiYXIgWF9pKSJdLFswLDEsIlxcc2ltZXEiXSxbMiwxLCJcXEhvbV97XFxWfShoXFxiYXIgWSxoXFxiYXIgWCkiXSxbMCwxXSxbMSwyXSxbMCwzXSxbMyw0XSxbNCw1XSxbMiw1XSxbMSw3XSxbMyw3XSxbNyw1XSxbMyw1LCIiLDAseyJzdHlsZSI6eyJuYW1lIjoiY29ybmVyIn19XSxbMCwxNSwiIiwyLHsibGV2ZWwiOjEsInN0eWxlIjp7Im5hbWUiOiJjb3JuZXIifX1dXQ==
\[\begin{tikzcd}
	& {\Hom_{\P}(\bar Y,\bar X)} & {\Hom_{\O}(Y,X)} & \\
	\simeq & {\Hom_{\W}(j\bar Y, j\bar X)} & {\Hom_{\V}(h\bar Y,h\bar X)} & {\prod_{i=1}^n\Hom_{\O}(Y,X_i)} \\
	&& {\prod_{i=1}^n\Hom_{\W}(j\bar Y, j\bar X_i)} & {\prod_{i=1}^n\Hom_{\V}(h\bar Y,h\bar X_i)}
	\arrow[from=1-2, to=1-3]
	\arrow[from=1-2, to=2-2]
	\arrow[from=1-3, to=2-3]
	\arrow[from=1-3, to=2-4]
	\arrow[""{name=0, anchor=center, inner sep=0}, from=2-2, to=2-3]
	\arrow[from=2-2, to=3-3]
	\arrow["\lrcorner"{anchor=center, pos=0.125, rotate=45}, draw=none, from=2-2, to=3-4]
	\arrow[from=2-3, to=3-4]
	\arrow[from=2-4, to=3-4]
	\arrow[from=3-3, to=3-4]
	\arrow["\lrcorner"{anchor=center, pos=0.125}, draw=none, from=1-2, to=0]
\end{tikzcd}\]
That $j$ is a morphism of operads over $\O$ follow from how we constructed the cocartesian lifts in the previous lemma.
\end{proof}

\begin{remark}
    Take $\O=\Delta^{op}$ and let $m^n\colon n\to 1$ be the unique active morphism $n\to1$ in $\Delta^{op}$. Take any $X_1,\dots,X_n,Y\in p^{-1}$ and denote $\bar X\in p^{-1}(n)\simeq p^{-1}(1)^n$ the object corresponding to $X_1,\dots,X_n$. By definition, $\Mul(X_1,\dots,X_n;Y)$ sits in the following cartesian diagram:
    % https://q.uiver.app/#q=WzAsNixbMCwxLCJcXEhvbShcXGJhciBYLFkpIl0sWzAsMiwiXFxIb20oalxcYmFyIFgsalkpIl0sWzEsMiwiXFxIb20oaFxcYmFyIFgsaFkpIl0sWzEsMSwiXFxIb20obiwxKSJdLFsxLDAsIlxcRGVsdGFeMSJdLFswLDAsIlxcTXVsKFhfMSxcXGRvdHMsWF9uO1kpIl0sWzUsMF0sWzAsMV0sWzEsMl0sWzAsM10sWzMsMl0sWzUsNF0sWzQsMywiXFxtdV5uIl0sWzAsOCwiIiwxLHsibGV2ZWwiOjEsInN0eWxlIjp7Im5hbWUiOiJjb3JuZXIifX1dLFs1LDksIiIsMix7ImxldmVsIjoxLCJzdHlsZSI6eyJuYW1lIjoiY29ybmVyIn19XV0=
\[\begin{tikzcd}
	{\Mul(X_1,\dots,X_n;Y)} & {\Delta^1} \\
	{\Hom(\bar X,Y)} & {\Hom(n,1)} \\
	{\Hom(j\bar X,jY)} & {\Hom(h\bar X,hY)}
	\arrow[from=1-1, to=1-2]
	\arrow[from=1-1, to=2-1]
	\arrow["{\mu^n}", from=1-2, to=2-2]
	\arrow[""{name=0, anchor=center, inner sep=0}, from=2-1, to=2-2]
	\arrow[from=2-1, to=3-1]
	\arrow[from=2-2, to=3-2]
	\arrow[""{name=1, anchor=center, inner sep=0}, from=3-1, to=3-2]
	\arrow["\lrcorner"{anchor=center, pos=0.125}, draw=none, from=1-1, to=0]
	\arrow["\lrcorner"{anchor=center, pos=0.125}, draw=none, from=2-1, to=1]
\end{tikzcd}\]
From that, we can deduce that the objects and $1$-cells of $\Mul(X_1,\dots,X_n;Y)$ looks as described in the introduction to this section.
\end{remark}

\begin{remark}
    Consider an $\O$-algebra in $\Hom_{\V}^l(M_0,M_1)^\otimes.$ By definition, this is a map $\O\to \Fun_{s,l^*}(\Delta^1,\V^\otimes)$ preserving cocartesian lifts of inerts such that its restriction to source and target is $M_0$ and $M_1$ (up to homotopy). Currying $\O\to \Fun_{s,l^*}(\Delta^1,\V^\otimes)$ to $\Delta^1\otimes_{s,l}\O\to \V^\otimes$, we get that this data correspond to a lax natural transformation $M_0\to M_1$ which is strict on the inert cells. If $\O=\Delta^{op}$, one can observe that this is a lax homorphism of monoids $M_0\to M_1$, i.e. a $1$-cell $f\colon \mM_0\to \mM_1$ together with a map $f,f\to f$ etc. This generalizes the classical observation that if $\V,\W$ are monoidal $\infty$-categories, a monoid in $\Fun(\V,\W)$ with respect to the Day convolution structure is precisely a lax monoidal functor $\V\to \W$. As we will not need this result, we do not elaborate on that further, but we would like to prove a more rigorous statement in the follow-up work.
\end{remark}

\begin{example}
    Let $\V=(\Cat_{(\infty,n)},\times,1)$, $\mM_0=1$, $\mM_1$ any monoidal $(\infty,n)$-category. Then ${\Hom}^l_{\V}(1,\mM_1)^\otimes$ is the operad $\V^{\otimes}$ associated to $\V$.
\end{example}

\begin{example}
    Let $\V=(\Cat_\infty,\times,1)$, $(\mM_0,\otimes)$ a small monoidal $\infty$-category and $(\mM_1,\boxtimes)$ a cocomplete monoidal $\infty$-category such that the tensor product $\boxtimes$ preserves colimits in each variable. Then ${\Hom}^l_{\V}(\mM_0,\mM_1)^\otimes$ is the usual Day convolution monoidal structure on $\Fun(\mM_0,\mM_1)$ constructed in \cite{Gla16}, and originally by \cite{Day70} for ordinary monoidal categories. By requirements on $\mM_0$, $\mM_1$, we can consider the left Kan extension $\lan_{\otimes}(\boxtimes\circ (X\times Y))$ as below serving as a tensor product $X\odot Y$ of two functors $X,Y\colon \mM_0\to\mM_1$.  % https://q.uiver.app/#q=WzAsNCxbMCwxLCJcXG1NXzAiXSxbMSwwLCJcXG1NXzFcXHRpbWVzIFxcbU1fMSJdLFswLDAsIlxcbU1fMFxcdGltZXMgXFxtTV8wIl0sWzIsMCwiXFxtTV8xIl0sWzIsMCwiXFxvdGltZXMiLDJdLFsyLDEsIlhcXHRpbWVzIFkiXSxbMSwzLCJcXGJveHRpbWVzIl0sWzAsMywiWFxcb2RvdCBZIiwyLHsic3R5bGUiOnsiYm9keSI6eyJuYW1lIjoiZGFzaGVkIn19fV0sWzIsNywiXFxhbHBoYV97WFl9IiwyLHsic2hvcnRlbiI6eyJzb3VyY2UiOjMwLCJ0YXJnZXQiOjMwfX1dXQ==
\[\begin{tikzcd}
	{\mM_0\times \mM_0} & {\mM_1\times \mM_1} & {\mM_1} \\
	{\mM_0}
	\arrow["{X\times Y}", from=1-1, to=1-2]
	\arrow["\otimes"', from=1-1, to=2-1]
	\arrow["\boxtimes", from=1-2, to=1-3]
	\arrow[""{name=0, anchor=center, inner sep=0}, "{X\odot Y}"', dashed, from=2-1, to=1-3]
	\arrow["{\alpha_{XY}}"', between={0.3}{0.7}, Rightarrow, from=1-1, to=0]
\end{tikzcd}\]
Our theorem above says that if $\mM_0,$ $\mM_1$ are any monoidal $\infty$-categories, then even if we cannot form the left Kan extension $X\odot Y$, there still exists an $\infty$-operad structure on $\Fun(\mM_0,\mM_1)$.
\end{example}

\subsection{The $(\infty,2)$-operad structure on $\Mod_{s}(\T,\Cat_{\infty})$} \label{subsec: 4.2}

Let $\T$ be a Lawvere $2$-theory with a (pseudo)commutativity structure $\mu$. This implies that $\T$ is a $E_1$-monoid in the monoidal $3$-category $\Cat^{\mathfrak{m}}_2$. But so is $\Cat$ with the maximal marking and product given by the actual product $\times$. Therefore, the hom $2$-category $\Cat^{\mathfrak{m}}_2(\T,\Cat)$, which we would write as $\Fun_{s,p}(\T,\Cat)$ in the $(2,2)$-case and $\Fun_{s,s}(\T,\Cat_{\infty})$ in the $(\infty,2)$-case, has a structure of a $2$-operad / $2$-multicategory by Theorem \ref{lemma: higher day conv}. In fact, the same holds more generally for any cartesian monoidal $2$-category $\C$ in place of $\Cat$.
\begin{definition}
    Let $\T$, $\mu$, $\C$ be as above. Define a $(\infty,2)$-operad $\Mod_{s}(\T,\C)^\otimes$ as a full sub-$(\infty,2)$-operad of $\Hom^l_{\Cat^{\mathfrak{m}}}(\T,\C)^\otimes$ spanned by $\T$-models. 
\end{definition}
Writing $q\colon \Mod_s(\T,\C)^\otimes \to \Delta^{op}$, we get that $q^{-1}(1)=\Mod_s(\T,\C)$. If $\mu$ is symmetric, we get a symmetric operad $\Mod_s(\T,\C)^\otimes\to \F_*$.

\begin{example}
    As always, we are going to analyze the structure above for $\T=\T_{E_\infty}$. Let $X,$ $Y$, $Z$ be $\Cat$-valued models, i.e. symmetric monoidal ($\infty$-)categories, and denote the corresponding functors $\T_{E_\infty}\to \Cat$ by $\X$, $\Y$, $\mZ$, respectively. Consider an element $f\in \Mul(\X,\Y;\mZ)$. We are going to check how the components of $f$ look for $m\colon 2\to 1$, $u\colon 0\to 1$: 
    % https://q.uiver.app/#q=WzAsMTYsWzAsMCwiWF4yXFx0aW1lcyBZIl0sWzAsMSwiWFxcdGltZXMgWSJdLFsxLDAsIlpeMiJdLFsxLDEsIloiXSxbMywwLCJYXFx0aW1lcyBZXjIiXSxbMywxLCJYXFx0aW1lcyBZIl0sWzQsMCwiWl4yIl0sWzQsMSwiWiJdLFswLDIsIjFcXHRpbWVzIFkiXSxbMCwzLCJYXFx0aW1lcyBZIl0sWzEsMiwiMSJdLFsxLDMsIloiXSxbMywyLCJYXFx0aW1lcyAxIl0sWzMsMywiWFxcdGltZXMgWSJdLFs0LDIsIjEiXSxbNCwzLCJaIl0sWzAsMSwiXFxYKG0pXFx0aW1lczEiLDJdLFsyLDMsIlxcbVoobSkiXSxbMSwzLCJmX3sxLDF9IiwyXSxbMCwyLCJmX3syLDF9Il0sWzQsNSwiMVxcdGltZXMgXFxZKG0pIiwyXSxbNiw3LCJcXG1aKG0pIl0sWzQsNiwiZl97MSwyfSJdLFs1LDcsImZfezEsMX0iLDJdLFs4LDksIlxcWCh1KVxcdGltZXMgMSIsMl0sWzksMTEsImZfezEsMX0iLDJdLFs4LDEwXSxbMTAsMTEsIlxcbVoodSkiXSxbMTIsMTMsIjFcXHRpbWVzIFxcWShtKSIsMl0sWzEyLDE0XSxbMTMsMTUsImZfezEsMX0iLDJdLFsxNCwxNSwiXFxtWih1KSJdLFsxNiwxNywiXFxzaW1lcSIsMSx7InNob3J0ZW4iOnsic291cmNlIjoyMCwidGFyZ2V0IjoyMH0sInN0eWxlIjp7ImJvZHkiOnsibmFtZSI6Im5vbmUifSwiaGVhZCI6eyJuYW1lIjoibm9uZSJ9fX1dLFsyMCwyMSwiXFxzaW1lcSIsMSx7InNob3J0ZW4iOnsic291cmNlIjoyMCwidGFyZ2V0IjoyMH0sInN0eWxlIjp7ImJvZHkiOnsibmFtZSI6Im5vbmUifSwiaGVhZCI6eyJuYW1lIjoibm9uZSJ9fX1dLFsyOCwzMSwiXFxzaW1lcSIsMSx7InNob3J0ZW4iOnsic291cmNlIjoyMCwidGFyZ2V0IjoyMH0sInN0eWxlIjp7ImJvZHkiOnsibmFtZSI6Im5vbmUifSwiaGVhZCI6eyJuYW1lIjoibm9uZSJ9fX1dLFsyNCwyNywiXFxzaW1lcSIsMSx7InNob3J0ZW4iOnsic291cmNlIjoyMCwidGFyZ2V0IjoyMH0sInN0eWxlIjp7ImJvZHkiOnsibmFtZSI6Im5vbmUifSwiaGVhZCI6eyJuYW1lIjoibm9uZSJ9fX1dXQ==
\[\begin{tikzcd}
	{X^2\times Y} & {Z^2} && {X\times Y^2} & {Z^2} \\
	{X\times Y} & Z && {X\times Y} & Z \\
	{1\times Y} & 1 && {X\times 1} & 1 \\
	{X\times Y} & Z && {X\times Y} & Z
	\arrow["{f_{2,1}}", from=1-1, to=1-2]
	\arrow[""{name=0, anchor=center, inner sep=0}, "{\X(m)\times1}"', from=1-1, to=2-1]
	\arrow[""{name=1, anchor=center, inner sep=0}, "{\mZ(m)}", from=1-2, to=2-2]
	\arrow["{f_{1,2}}", from=1-4, to=1-5]
	\arrow[""{name=2, anchor=center, inner sep=0}, "{1\times \Y(m)}"', from=1-4, to=2-4]
	\arrow[""{name=3, anchor=center, inner sep=0}, "{\mZ(m)}", from=1-5, to=2-5]
	\arrow["{f_{1,1}}"', from=2-1, to=2-2]
	\arrow["{f_{1,1}}"', from=2-4, to=2-5]
	\arrow[from=3-1, to=3-2]
	\arrow[""{name=4, anchor=center, inner sep=0}, "{\X(u)\times 1}"', from=3-1, to=4-1]
	\arrow[""{name=5, anchor=center, inner sep=0}, "{\mZ(u)}", from=3-2, to=4-2]
	\arrow[from=3-4, to=3-5]
	\arrow[""{name=6, anchor=center, inner sep=0}, "{1\times \Y(m)}"', from=3-4, to=4-4]
	\arrow[""{name=7, anchor=center, inner sep=0}, "{\mZ(u)}", from=3-5, to=4-5]
	\arrow["{f_{1,1}}"', from=4-1, to=4-2]
	\arrow["{f_{1,1}}"', from=4-4, to=4-5]
	\arrow["\simeq"{description}, draw=none, from=0, to=1]
	\arrow["\simeq"{description}, draw=none, from=2, to=3]
	\arrow["\simeq"{description}, draw=none, from=4, to=5]
	\arrow["\simeq"{description}, draw=none, from=6, to=7]
\end{tikzcd}\]

Rewriting $\otimes_X:=\X(m)$, $u_X:=\X(u)(1)$ and similarly for $Y$ and $Z$ and writing simply $f:=f_{1,1}$, we get that for any $x,x'\in X$, $y,y'\in Y$, we get that the $2$-cells above evaluated on $x,x'$, $y,y'$ have the following form:
\begin{itemize}
    \item $f(x,y)\otimes_Zf(x',y)\simeq f(x\otimes_X x',y)$, $f(x,y)\otimes_Zf(x,y')\simeq f(x,y\otimes_Y y')$,
    \item $u_Z\simeq f(u_X,y)$, $u_Z\simeq f(x,u_Y)$.
\end{itemize}
Therefore, we arrived at a notion of \textit{bilinear functors} between symmetric monoidal categories. 
\end{example}

\begin{corr} \label{corr: closed operad}
   The $(\infty,2)$-operad above is closed.
\end{corr}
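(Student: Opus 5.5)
To prove Corollary \ref{corr: closed operad}, I will show that for models $\Y,\mZ$ the model $\underline{\Hom}_s(\Y,\mZ)$ of \ref{subsec: closed structure I} represents binary multimorphisms out of $\Y$ into $\mZ$. Concretely, for every finite list $\X_1,\dots,\X_n$ I want a natural equivalence
\[\Mul(\X_1,\dots,\X_n,\Y;\mZ)\simeq \Mul(\X_1,\dots,\X_n;\underline{\Hom}_s(\Y,\mZ)),\]
induced by composition with an evaluation binary map $\mathrm{ev}\in\Mul(\underline{\Hom}_s(\Y,\mZ),\Y;\mZ)$. This is the standard definition of closedness of an operad, and by symmetry of the argument it also gives the other-sided version.

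The first step is to unwind the multimorphism categories. By the Remark following Lemma \ref{lemma: checking segal}, $\Mul(\X_1,\dots,\X_n;\mZ)$ in $\Hom^l_{\Cat^{\mathfrak m}}(\T,\C)^\otimes$ is the category of $2$-cells filling the square whose top is $\X_1\otimes_{s,s}\cdots\otimes_{s,s}\X_n$ followed by $\times$, whose left side is the iterated multiplication $\mu^n\colon \T^{\otimes n}\to\T$, and whose bottom is $\mZ$. In other words it is $\Nat(\times\circ(\X_1\times\cdots\times\X_n),\mZ\circ\mu^n)$. Because the operad is full on models, no further conditions arise.

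The second step is the end computation from the proof of Proposition \ref{prop: closed structure without operad}, now with $n+1$ variables. Writing $\mu^{n+1}=\mu\circ(\mu^n\times 1)$, which is legitimate up to coherent equivalence by the associativity from Theorem \ref{prop: miracle associativity} / the $E_1$-structure, the Fubini rule and the internal-hom adjunction in $\Cat$ give
\[\int_{a_1,\dots,a_n,b}\Fun\Big(\textstyle\prod_i X_i^{a_i}\times Y^b,\,Z^{(\prod a_i) b}\Big)\simeq \int_{a_1,\dots,a_n}\Fun\Big(\textstyle\prod_i X_i^{a_i},\,\Nat(\Y,\mZ^{\prod a_i})\Big).\]
Here $\Nat(\Y,\mZ^{\prod a_i})\simeq \Hom_s(\Y,\mZ)^{\prod a_i}=\underline{\Hom}_s(\Y,\mZ)(\mu^n(a))$, and this uses $\mZ^k$ viewed through the lift $\widetilde{\mZ}$. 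The right-hand side is therefore $\Mul(\X_1,\dots,\X_n;\underline{\Hom}_s(\Y,\mZ))$. I will then check that this equivalence is the one given by composing with the evaluation map, which is the case $n=1$ applied to the identity. That check reduces to naturality of the end equivalences in the $\X_i$.

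The main obstacle is coherence. I must verify that the identification $\mZ\circ\mu^{n+1}\simeq (\mZ\circ\mu)\circ(\mu^n\times 1)$ is compatible with the operadic composition in $\Hom^l(\T,\C)^\otimes$, so that the equivalences assemble into a map of operads and are not just a family of objectwise equivalences. My plan is to avoid doing this by hand. Instead I will express everything in the pullback \eqref{dia: operad}. There the multimorphisms are mapping categories in $\Fun_{l^*}(\Delta^1,\V)^\otimes$ for $\V=\Cat^{\mathfrak m}_2$, and closedness can be deduced from closedness of $(\Cat^{\mathfrak m}_2,\times)$, i.e. Proposition \ref{prop: marked tensor-hom} with $w=s$. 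The end computation then only serves to identify the resulting internal hom with $\underline{\Hom}_s(\Y,\mZ)$, and in particular to show that it is again a model, so that it lies in the full suboperad.
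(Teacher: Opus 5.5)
Your first two steps are the paper's argument. The paper's proof only observes that the binary multimorphism category coincides with $\Mul_s(\X,\Y;\mZ)=\Nat(\times\circ(\X\times\Y),\mZ\circ\mu)$ from \ref{subsec: closed structure I}. It then invokes the end computation of Proposition \ref{prop: closed structure without operad}, which is exactly your case $n=1$. There, ``closed'' means just the binary equivalence $\Mul_2(\X,\Y;\mZ)\simeq\Hom_s(\X,\underline{\Hom}_s(\Y,\mZ))$ (compare how Theorem \ref{thm: closed structure} is phrased); no evaluation map or higher arity is discussed. Your extension to $n+1$ inputs is sound and proves more than the paper does. You write $\mu^{n+1}\simeq\mu\circ(\mu^n\times 1)$ via the $E_1$-structure and note that $c\mapsto\int_b\Fun(Y^b,Z^{\mu(c,b)})\simeq\Hom_s(\Y,\widetilde{\mZ}(c))$ is precisely $\underline{\Hom}_s(\Y,\mZ)$.

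The step that does not work is your plan for the coherence.

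\emph{Closedness of $\V$ is the wrong input.} Closedness of the Day-convolution operad cannot be deduced from closedness of $(\Cat^{\mathfrak m}_2,\times)$ through the pullback \eqref{dia: operad}. For $\mM_0=1$, the operad $\Hom^l_{\V}(1,\mM_1)^\otimes$ is just $\mM_1^\otimes$, which is closed only if $\mM_1$ is, whatever $\V$ does. Closedness of $\V$ only supplies the currying $\Fun(\T\times\T,\Cat)\simeq\Fun(\T,\Fun(\T,\Cat))$. The decisive inputs are cartesian closedness of $\Cat$ and the existence of the relevant ends, which is exactly what your second step uses. So the end computation \emph{is} the proof, not a mere identification of an internal hom produced elsewhere.

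\emph{What the evaluation-map check actually needs.} You do not need the equivalences to assemble into a map of operads. It suffices that they are given by composition with $\mathrm{ev}$, the image of $\id$ under the $n=1$ equivalence, since operadic composition is already coherent. For that, naturality in the models $\X_i$ is not enough, because an $n$-ary $f$ is not a tuple of maps between the $\X_i$. You need the end equivalence to be natural in an arbitrary source functor $F\colon\T^{\times n}\to\Cat$. It must also be compatible with restriction along $\mu^n$ and with the associativity equivalence used in the composition pasting at the start of Section \ref{sec: day conv}. A Yoneda argument with $F=\underline{\Hom}_s(\Y,\mZ)\circ\mu^n$ and $f=\id$ then identifies your equivalence with composition by $\mathrm{ev}$.
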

\begin{proof}
    We can immediately see that $\Mul_2(\X,\Y;\mZ)$ coincides with the one defined in \ref{subsec: closed structure I}. The statement then follows from Proposition \ref{prop: closed structure without operad}.
\end{proof}

\subsection{$(2,2)$-multicategory structure on $\Mod_l(\T,\Cat)$} \label{subsec: 4.3}
Finally, we are going to construct a $2$-multicategory structure on the $(2,2)$-category $\Mod_l(\T,\Cat)$ for $\T$ equipped with a lax commutativity. In order to do so, we cannot use the previous results: $\V:=(\Cat^{\mathfrak{m}}_{(2,2)},\otimes_{s,l}, \Fun_{s,l}(-,-))$ is not a monoidal $3$-category nor even $2$-category. It is not even monoidal in the enriched sense, i.e. as enriched over itself, simply because $\otimes_{s,l}$ is not a symmetric monoidal structure. There is certain way around it via considering $\V$ is bienriched over itself, but then it is not at all clear how to extract a $(2,2)$-multicategory out of this. 

Instead the enriched considerations, we are going to utilize the following simple observation.

\begin{lemma}
    Let $\C, \D$ be marked $(2,2)$-categories, $x,x'\in \C$, $y,y'$ in $\D$. Then there is a natural diagram 
    % https://q.uiver.app/#q=WzAsMixbMCwwLCJcXEMoeCx4JylcXG90aW1lc197cyxsfVxcRCh5LHknKSJdLFsxLDAsIlxcQ1xcb3RpbWVzX3tzLGx9XFxEKCh4LHkpLCh4Jyx5JykpIl0sWzAsMSwiXFxjb21wXntvcH0iLDIseyJjdXJ2ZSI6M31dLFswLDEsIlxcY29tcCIsMCx7ImN1cnZlIjotM31dLFszLDIsIiIsMCx7InNob3J0ZW4iOnsic291cmNlIjoyMCwidGFyZ2V0IjoyMH19XV0=
\[\begin{tikzcd}
	{\C(x,x')\times\D(y,y')} & {\C\otimes_{s,l}\D\,((x,y),(x',y'))}
	\arrow[""{name=0, anchor=center, inner sep=0}, "{\comp^{op}}"', curve={height=18pt}, from=1-1, to=1-2]
	\arrow[""{name=1, anchor=center, inner sep=0}, "\comp", curve={height=-18pt}, from=1-1, to=1-2]
	\arrow[between={0.2}{0.8}, Rightarrow, from=1, to=0]
\end{tikzcd}\]
where the $1$-cells are induced by composition (resp. composition in the reverse direction) and the $2$-cell is induced by the Gray cells.
\end{lemma}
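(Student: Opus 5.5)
The plan is to build the two functors and the $2$-cell directly from the explicit presentation of the Gray tensor product recalled in \ref{sub: recollections}, adapted to the marked setting. Write $\comp\colon \C(x,x')\times\D(y,y')\to \C\otimes_{s,l}\D((x,y),(x',y'))$ for the functor sending a pair of $1$-cells $(f,g)$ to the composite $(1,g)\circ(f,1)\colon (x,y)\to(x',y)\to(x',y')$, and a pair of $2$-cells $(s,t)$ to the horizontal composite $(1,t)\ast(s,1)$. Symmetrically, $\comp^{op}$ sends $(f,g)$ to $(f,1)\circ(1,g)$ and $(s,t)$ to $(s,1)\ast(1,t)$. Functoriality in each variable follows because $f\mapsto(f,1)$ and $g\mapsto(1,g)$ are the $2$-functors $\C\cong\C\otimes_{s,l}1\to\C\otimes_{s,l}\D$ and $\D\to\C\otimes_{s,l}\D$ at the relevant objects. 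Joint functoriality on $\C(x,x')\times\D(y,y')$ then follows from the interchange law in the $2$-category $\C\otimes_{s,l}\D$.

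The $2$-cell $\comp\Rightarrow\comp^{op}$ is defined componentwise by the Gray cell $\Sigma_{fg}\colon (1,g)(f,1)\Rightarrow(f,1)(1,g)$ of diagram (\ref{dia: gray square}), with the lax orientation fixed there. When $f$ or $g$ is marked, the marked tensor product makes $\Sigma_{fg}$ an identity, which is harmless. Naturality of $\Sigma$ in $(f,g)$ with respect to $2$-cells $(s,t)$ is exactly the Gray coherence for $2$-cells. These are the analogues of (\ref{dia: gray2}) and (\ref{dia: gray3}), which hold in $\C\otimes_{s,l}\D$ by construction (see \cite{Gur13}, \cite{ABK24}). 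Checking them one variable at a time suffices, since a general $(s,t)$ factors as $(s,1)\circ(1,t)$ in the product category.

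Finally, I would prove naturality of the whole diagram in $x,x',y,y'$ and in marking-preserving functors. For pre- and post-composition with $1$-cells, the coherence axiom (\ref{dia: gray1}) gives that $\Sigma_{(f'f),g}$ is the pasting of $\Sigma_{f'g}$ and $\Sigma_{fg}$, and similarly in the second variable. This says precisely that the transformations $\comp$, $\comp^{op}$ and $\Sigma$ commute with whiskering. For marked functors $F\colon\C\to\C'$ and $G\colon\D\to\D'$, the induced $F\otimes_{s,l}G$ sends $(f,1)$ to $(Ff,1)$, $(1,g)$ to $(1,Gg)$ and $\Sigma_{fg}$ to $\Sigma_{Ff,Gg}$, so naturality holds on the nose.

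The only genuinely delicate point is the naturality of $\Sigma$ against $2$-cells, that is, that the Gray cells are natural in both arguments. This step is a matter of quoting the defining relations of the Gray tensor product correctly rather than of any new argument.
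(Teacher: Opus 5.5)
Your core construction matches the paper's explicit proof. That is, $\comp$ and $\comp^{op}$ defined on $1$- and $2$-cells, the $2$-cell given componentwise by Gray cells, and its naturality in $2$-cells from the Gray coherences (the analogues of (\ref{dia: gray2}), (\ref{dia: gray3})). The paper frames these as the effect on hom-categories of two \emph{lax} functors $\C\times\D\to\C\otimes_{s,l}\D$, and it proves nothing about naturality in the objects. One minor point: by (\ref{dia: gray square}) the Gray cell runs $(f,1)\circ(1,g)\Rightarrow(1,g)\circ(f,1)$, which is why the paper sets $\comp(f,g)=(f,1)\circ(1,g)$. You reversed both the naming and the direction, so the two swaps cancel.

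Your final paragraph goes beyond the paper, and its key sentence is false as stated: $\comp$ and $\comp^{op}$ do not ``commute with whiskering''. Take your convention $\comp(f,g)=(1,g)\circ(f,1)$ and $h\colon x'\to x''$. Then $\comp(hf,g)=(1,g)\circ(h,1)\circ(f,1)$, whereas $(h,1)\circ\comp(f,g)=(h,1)\circ(1,g)\circ(f,1)$. These are in general different $1$-cells of $\C\otimes_{s,l}\D$; they agree when $h$ or $g$ is marked. Otherwise they are related only by the whiskered Gray cell $\Sigma_{hg}$.

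The same failure occurs for precomposition in $y$, while $\comp^{op}$ fails in $x$ and $y'$ instead. This is exactly the laxity of the two lax functors above, whose composition comparison cells are Gray cells. So in the object variables you get at best naturality up to Gray cells. The composition coherence (\ref{dia: gray1}) says $\Sigma_{hf,g}$ is the pasting of $\Sigma_{hg}$ and $\Sigma_{fg}$, and similarly in the second variable. What it gives you is that $\Sigma$ is compatible with these comparison cells, not strict commutation.

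Naturality in marking-preserving functors $F,G$ is fine as you state it. Either restate the object-naturality claim in this lax form or drop it, since the lemma as proved in the paper does not need it.
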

\begin{proof}
    The lemma follows from the fact that we have two lax functors $\C\times\D \to \C\otimes_{s,l}\D$ an this is their effect on hom-categories, but let us write it more explicitly. Define $\comp$, $\comp^{op}$ on objects by $\comp(f,g):=(f,1)\circ (1,g)$, $\comp^{op}(f,g):=(1,g)\circ (f,1)$ as in (\ref{dia: gray square}). Similarly on morphisms: for $\alpha\colon f\Rightarrow f'$ in $\C(x,x')$ and $\beta\colon g\Rightarrow g'$ in $\D(y,y')$, define $\comp(\alpha,\beta):=(\alpha,1)\circ (1,\beta)$, $\comp^{op}(\alpha,\beta):=(1,\beta)\circ (\alpha,1)$. The Gray cells $\Sigma_{fg}$ are then by definition maps $\comp(f,g)\to \comp(g,f)$. The equality $\comp^{op}(\alpha,\beta)\circ \Sigma_{fg}=\Sigma_{f'g'}\circ \comp(\alpha,\beta)$ follows from the coherences for the Gray cells. 
\end{proof}

\begin{remark}
    The map $\comp$ comes from a marked version of the universal cubical lax functor $\C\times \D\to \C\otimes_{s,l}\D$.
\end{remark}

In the following, let us write $[\A,\B]:=\Fun_{s,l}(\A,\B)$ for any pair of marked $(2,2)$-categories. This notation is natural as this is the internal hom in $\V=(\Cat_{(2,2)}^{\mathfrak{m}},\otimes_{s,l})$. Let us also write simply $\otimes$ instead of $\otimes_{s,l}$.

\begin{construction}
    Consider any functors of marked $(2,2)$-categories $F,F'\colon \A\to \B$, $G,G'\colon \A'\to \B'$. As $\V$ is enriched over itself, we obtain a map \begin{align*}\mathfrak{comp}\colon [\A,\B](F,F')\times [\A',\B'](G,G')\xrightarrow{\comp}[\A,\B]\otimes[\A',\B']((F,G),(F',G'))\\ \to [\A\otimes \A', \B\otimes \B'](F\otimes G, F'\otimes G').\end{align*}
\end{construction}

\begin{definition}
    Define a $(2,2)$-category $\Fun_{\V}^{\mathfrak{res}}(\Delta^1,\V)$ as follows: \begin{itemize}
        \item Objects are morphisms $F\colon \A_0\to \A_1$ in $\V$, i.e. functors of marked $(2,2)$-categories, i.e. morphisms $\Delta^1\to \V$.
        \item For $F\colon \A_0\to \A_1$, $G\colon\B_0\to \B_1$, the hom-category $\langle F,G\rangle$ is defined to be $$\coprod_{J_0\colon \A_{0}\to \B_0, J_1\colon \A_1\to \B_1} [\A_0,\B_1](G J_0,J_1F).$$ 
        \item For a third functor $H\colon \C_0\to \C_1$, the composition $\langle G,H\rangle\times \langle F,G\rangle \to \langle F,H\rangle$ is induced by the maps \begin{align*}
            [\A_0,\B_1](GJ_0,J_1F)\times [\B_0,\C_1](HK_0, K_1G)\xrightarrow{K_{1,*}\times J_0^*}[\A_0,\C_1](K_1GJ_0,K_1J_1F)\times [\A_0,\C_1](HK_0J_0, K_1GJ_0)\\
            \xrightarrow{\circ} [\A_0,\C_1](HK_0J_0,K_1J_1F).
        \end{align*} 
    \end{itemize}
    Here, $J_0^*$ is the composition $[\B_0,\C_1]\otimes 1\xrightarrow{\id\otimes J_0}[\B_0,\C_1]\otimes [\A_0,\B_0]\to [\A_0,\C_1]$ and similarly with $K_{1,*}$.
\end{definition}

\begin{remark} \label{remark: Gray structure}
    Let us explain the notation $\Fun_{\V}^{\mathfrak{res}}(\Delta^1,\V)$. There should exist a $\V$-enriched functor category $\Fun_{\V}(\Delta^1,\V)$ where objects are morphisms in $\V$, i.e. functors of marked $(2,2)$-categories $F\colon \A_0\to \A_1$, and the hom-object $[[F,G]]$ is defined as the following comma object: % https://q.uiver.app/#q=WzAsNCxbMCwxLCJbXFxCLFxcRF0iXSxbMSwwLCJbXFxBLFxcQ10iXSxbMSwxLCJbXFxBLFxcRF0iXSxbMCwwLCJbW0YsR11dIl0sWzEsMiwiR18qIl0sWzAsMiwiRl4qIiwyXSxbMywwXSxbMywxXSxbMSwwLCIiLDEseyJzaG9ydGVuIjp7InNvdXJjZSI6MzAsInRhcmdldCI6MzB9LCJsZXZlbCI6Mn1dXQ==
\[\begin{tikzcd}
	{[[F,G]]} & {[\A_0,\B_0]} \\
	{[\A_1,\B_1]} & {[\A_0,\B_1]}
	\arrow[from=1-1, to=1-2]
	\arrow[from=1-1, to=2-1]
	\arrow[between={0.3}{0.7}, Rightarrow, from=1-2, to=2-1]
	\arrow["{G_*}", from=1-2, to=2-2]
	\arrow["{F^*}"', from=2-1, to=2-2]
\end{tikzcd}\]
Explicitly, $[[F,G]]$ looks as follows:
\begin{itemize}
    \item Objects are triples $(J_0\colon \A_0\to \B_0, J_1\colon \A_1\to \B_1, \eta\colon GJ_0\Rightarrow J_1F)$ where $\eta$ is an $(s,l)$-natural transformation.
    \item A $1$-cell $(J_0,J_1,\eta)\to (J'_0,J'_1,\eta)$ is a triple $(\alpha_0\colon J_0\Rightarrow J'_0, \alpha_1\Rightarrow J_1\to J_1',t\colon G_*\alpha_0\Rrightarrow F^*\alpha_1)$ where $\alpha_0$, $\alpha_1$ are $(s,l)$-natural transformations and $t$ is a modification,
    \item A $2$-cell $(\alpha_0,\alpha_1, t)\Rightarrow(\alpha'_0,\alpha'_1,t')$ is a pair of modifications $s_0\colon \alpha_0\Rrightarrow\alpha_0'$, $s_1\colon \alpha_1\Rrightarrow\alpha_1'$ such that $F^*s_1\circ t=t'\circ G_* s_0.$
\end{itemize}
Then, we see that $\langle F, G\rangle$ arises as a wide $(2,2)$-subcategory of $[[F,G]]$ where the only $1$-cells we keep are ``icons'', i.e. triples $(\alpha_0,\alpha_1,t)$ with $\alpha_0$, $\alpha_1$ being identities. This forces all $2$-cells to be identities.
\end{remark}

\begin{proposition} \label{prop: it exists, after all}
    There is a monoidal structure $\circledast$ on $\Fun_{\V}^{\mathfrak{res}}(\Delta^1,\V)$ such that on objects, $F\circledast G=F\otimes_{s,l} G$ is a tensor product of morphisms in $\V$. This structure makes $\Fun_{\V}^{\mathfrak{res}}(\Delta^1,\V)$ into a monoidal $(2,2)$-category.
\end{proposition}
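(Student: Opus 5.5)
We will define $\circledast$ directly on objects and hom-categories, then verify the monoidal axioms strictly on the level of hom-categories, using that $(\Cat^{\mathfrak{m}}_{(2,2)},\otimes_{s,l},1)$ is an honest (non-symmetric) monoidal $1$-category, closed on both sides by Proposition \ref{prop: Gray universal}.

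On objects we set $F\circledast G:=F\otimes G\colon \A_0\otimes\B_0\to\A_1\otimes\B_1$, with unit $\id_1\colon 1\to 1$. On hom-categories, for $F\colon\A_0\to\A_1$, $F'\colon\A'_0\to\A'_1$, $G\colon\B_0\to\B_1$, $G'\colon\B'_0\to\B'_1$, we need a functor
\[\langle F,F'\rangle\times\langle G,G'\rangle\to\langle F\otimes G,F'\otimes G'\rangle .\]
On the summand indexed by $(J_0,J_1)$ and $(K_0,K_1)$ we send it to the summand indexed by $(J_0\otimes K_0,J_1\otimes K_1)$ via the map $\mathfrak{comp}$ of the preceding Construction,
\[[\A_0,\A'_1](F'J_0,J_1F)\times[\B_0,\B'_1](G'K_0,K_1G)\to[\A_0\otimes\B_0,\A'_1\otimes\B'_1]\bigl((F'\otimes G')(J_0\otimes K_0),(J_1\otimes K_1)(F\otimes G)\bigr),\]
using strict functoriality of $\otimes$ on $1$-cells of $\Cat^{\mathfrak{m}}_{(2,2)}$ to identify source and target. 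Because every $1$-cell of $\langle F,G\rangle$ is an icon with identity components, we only need a functor on these categories, and $\mathfrak{comp}$ is one, being built from $\comp$ (a functor by the preceding lemma) and the enriched tensor.

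The key step is functoriality of $\circledast$ as a $2$-functor of two variables, i.e. the interchange law
\[(\eta'\circ\eta)\circledast(\theta'\circ\theta)=(\eta'\circledast\theta')\circ(\eta\circledast\theta),\]
where $\circ$ is the composition from the Definition (whiskering by $K_{1,*},J_0^*$ followed by vertical composition). Both sides are built from whiskerings and vertical composites of the lax transformations $\eta\otimes\theta$. This identity is where the non-symmetry of $\otimes_{s,l}$ matters: $\comp$ chooses the order ``first variable, then second'', and the two sides differ a priori by a Gray cell $\Sigma$. The resolution comes from restricting to icons, which is exactly why $\Fun^{\mathfrak{res}}$ was introduced. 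Whiskering a transformation in the first variable by a \emph{functor} in the second variable involves no Gray cell, since Gray cells only appear between two nonidentity $1$-cells of $\A$ and $\B$, and $J_i,K_i$ are functors, not transformations. So we plan to check componentwise at objects $(a,b)$ that both sides equal $(\eta'_{J_0a}\cdot\eta_a)\otimes(\theta'_{K_0b}\cdot\theta_b)$ composed as in $\comp$, and to handle the $1$-cell components $\eta_f$ in the same way. We expect this componentwise bookkeeping, in particular for the lax components at $1$-cells $(f,1),(1,g)$ together with the Gray cells of $\A_0\otimes\B_0$, to be the main obstacle. It should follow from the naturality equation $\comp^{op}(\alpha,\beta)\Sigma_{fg}=\Sigma_{f'g'}\comp(\alpha,\beta)$ of the preceding lemma.

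Finally we would take associator and unitors to be identities, induced from the strict associativity and unitality of $\otimes_{s,l}$ on marked $(2,2)$-categories, their functors, and (by the analogous strict associativity of $\mathfrak{comp}$, inherited from associativity of the enriched tensor and of $\comp$) on the hom-categories. The pentagon and triangle axioms then hold trivially, giving a strict monoidal $(2,2)$-category.
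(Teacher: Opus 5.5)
Your construction of $\circledast$ (on objects $F\otimes_{s,l}G$, on hom-categories via $\mathfrak{comp}$) is exactly the paper's. The paper's proof stops there, asserting only that the coherences follow from those of $\otimes$. You are right that the interchange law is the crux, but your argument for it is wrong.

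Write the first-factor $1$-cells as $\eta=(J_0,J_1,\eta)$ and $\eta'=(J'_0,J'_1,\eta')$, and the second-factor ones as $\theta=(K_0,K_1,\theta)$ and $\theta'=(K'_0,K'_1,\theta')$. Let the targets be $H\colon\C_0\to\C_1$ and $H'\colon\C'_0\to\C'_1$. The composite $\eta'\circ\eta$ carries the transformation $(J'_1\eta)\cdot(\eta'J_0)$, and $\mathfrak{comp}(\alpha,\beta)=(\alpha\otimes 1)\cdot(1\otimes\beta)$. Hence $(\eta'\circ\eta)\circledast(\theta'\circ\theta)$ applies (whiskerings of) $\theta',\theta,\eta',\eta$ in this order, while $(\eta'\circledast\theta')\circ(\eta\circledast\theta)$ applies $\theta',\eta',\theta,\eta$. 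The two sides differ by replacing $\comp(\eta'J_0,K'_1\theta)$ with $\comp^{op}(\eta'J_0,K'_1\theta)$.

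Already at an object $(a,b)$ the components differ. Where one side has $(u,1)(1,v)$ in $\C_1\otimes_{s,l}\C'_1$, with $u=\eta'_{J_0a}$ and $v=K'_1(\theta_b)$, the other has $(1,v)(u,1)$. These are related only by the Gray cell $\Sigma_{uv}$, which is non-invertible in general. This swap is between two genuine transformations, not a transformation and a functor.

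Restricting to icons does not touch this. It only discards $2$-cells of $\Fun_{\V}^{\mathfrak{res}}(\Delta^1,\V)$ that move $J_0,J_1$, while every $1$-cell still carries an arbitrary lax transformation. The equation $\comp^{op}(\alpha,\beta)\Sigma_{fg}=\Sigma_{f'g'}\comp(\alpha,\beta)$ only makes these comparison cells natural; it does not make them identities. So what you get is a lax (cubical) functor in two variables, matching the remark that $\comp$ comes from the universal cubical lax functor, not a $2$-functor. The (strict) monoidal conclusion therefore does not follow.

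Your slogan already fails for $\mathfrak{comp}$ itself. Lax naturality of $\alpha\otimes K$ at a $1$-cell $(1,g)$ needs a $2$-cell between $(1,Kg)(\alpha_a,1)$ and $(\alpha_a,1)(1,Kg)$, i.e. a Gray cell. In one of the two variables the required direction is opposite to that of $\Sigma$, which is exactly why $\V$ is not monoidal in the enriched sense.

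Since the paper is silent on interchange, you cannot defer to it here. What does make your argument work is a hypothesis on markings. If the codomains of all objects are maximally marked, then every Gray cell in $\B_1\otimes_{s,l}\B'_1$ has a marked leg and is therefore an identity, so $\B_1\otimes_{s,l}\B'_1=\B_1\times\B'_1$. Restrict to the full sub-$2$-category of such objects, which is closed under $\otimes_{s,l}$ and contains the unit. There, $\mathfrak{comp}$ is well defined, $\comp$ and $\comp^{op}$ agree after tensoring, and your componentwise check goes through. The associators are the canonical associativity isomorphisms of $\otimes_{s,l}$, not identities.

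This restricted case covers the intended use for $\Mod_l(\T,\Cat)^\otimes$, where $\mM_1=\Cat$ carries the maximal marking. In full generality you should only expect a lax, Gray-type monoidal structure.
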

\begin{proof}
    We have specified how $\circledast$ looks on objects. To specify it on hom-categories, let $F\colon \A_0\to \A_1$, $F'\colon \A'_0\to \A'_1$, $G\colon \B_0\to \B_1$, $G'\colon \B'_0\to \B'_1$ be a quadruple of functors. We need to specify a map $\langle F,G\rangle\times\langle F',G'\rangle\to \langle F\otimes F',G\otimes G'\rangle$. To do so, pick any $J_i\colon \A_i\to \B_i$, $J_i'\colon \A'_i\to \B_i'$, $i=0,1$ and do the following composition:
    \begin{align*}
        [\A_0,\B_1](G J_0,J_1F)\times [\A'_0,\B'_1](G' J'_0,J'_1F')\xrightarrow{\mathfrak{comp}}[\A_0\otimes \A_0',\B_1\otimes \B_1'](GJ_0\otimes G'J'_0, J_1F\otimes J'_1F')\\ \cong [\A_0\otimes \A_0',\B_1\otimes \B_1']((G\otimes G')\circ (J_0\otimes J_0'), (J_1\otimes J_1')\circ (F\otimes F')).
    \end{align*}
    Here, we are using the interchange law in $\V$ to see that $(G\otimes G')\circ (J_0\otimes J_0')\cong GJ_0\otimes G'J'_0$ etc. The coherences for associativity and unitality of $\circledast$ follows from those of $\otimes$.
\end{proof}

In the definition below, we use the same conventions as in the introduction to Section \ref{sec: day conv}.
\begin{definition} \label{def: lax Day conv} Let $(\mM_0,m_0^\bullet, \gamma_\bullet^0)$, $(\mM_1,m_1^\bullet, \gamma_\bullet^1)$ be $E_1$-monoids in $\V=(\Cat^{\mathfrak{m}}_{(2,2)},\otimes_{s,l}$). Define a $(2,2)$-multicategory structure $\Hom^l_{\V}(\mM_0,\mM_1)^\otimes$ on $\Hom_{\V}(\mM_0,\mM_1)$ as follows:  \begin{enumerate}
    \item For any tuple of functors $X_1,\dots, X_n,Y\colon \mM_0\to \mM_1$, define the category $\Mul(X_1,\dots,X_n;Y)$ of \textit{$n$-ary multimaps} to be the category $$[\mM_0^{\otimes n},\mM_1](m_1^{ n}\circ \otimes_i X_i, Y\circ m_0^{ n}).$$ In other words, for $n\ge1$, a multimap $f\colon X_1,\dots, X_n\to Y$ is a cell as on the left below, and for $n=0$, a nullary map to $Y$ is a $2$-cell $m_1^0\to Ym_0^0$ (as on the right below). 
    % https://q.uiver.app/#q=WzAsOCxbMCwwLCJNXntcXG90aW1lcyBufV8xIl0sWzAsMSwiTV8xIl0sWzIsMSwiTV8yLCJdLFsyLDAsIk1ee1xcb3RpbWVzIG59XzIiXSxbNCwwLCJJIl0sWzMsMCwiSSJdLFs0LDEsIk1fMiJdLFszLDEsIk1fMSJdLFswLDEsIm1fMV57XFxvdGltZXMgbn0iXSxbMywyLCJtXzJee1xcb3RpbWVzIG59Il0sWzEsMiwiWSIsMl0sWzAsMywiWF8xXFxvdGltZXNcXGNkb3RzXFxvdGltZXMgWF9uIl0sWzUsNCwiIiwxLHsibGV2ZWwiOjIsInN0eWxlIjp7ImhlYWQiOnsibmFtZSI6Im5vbmUifX19XSxbNCw2LCJtXzJeMCJdLFs1LDcsIm1fMV4wIiwyXSxbNyw2LCJYIiwyXSxbMTEsMTAsImYiLDAseyJzaG9ydGVuIjp7InNvdXJjZSI6NDAsInRhcmdldCI6NDB9fV0sWzEyLDE1LCJ1IiwwLHsic2hvcnRlbiI6eyJzb3VyY2UiOjQwLCJ0YXJnZXQiOjQwfX1dXQ==
\[\begin{tikzcd}
	{\mM^{\otimes n}_0} && {\mM^{\otimes n}_1} & I & I \\
	{\mM_0} && {\mM_1,} & {\mM_0} & {\mM_1}
	\arrow[""{name=0, anchor=center, inner sep=0}, "{X_1\otimes\cdots\otimes X_n}", from=1-1, to=1-3]
	\arrow["{m_0^{\otimes n}}", from=1-1, to=2-1]
	\arrow["{m_1^{\otimes n}}", from=1-3, to=2-3]
	\arrow[""{name=1, anchor=center, inner sep=0}, equals, from=1-4, to=1-5]
	\arrow["{m_0^0}"', from=1-4, to=2-4]
	\arrow["{m_1^0}", from=1-5, to=2-5]
	\arrow[""{name=2, anchor=center, inner sep=0}, "Y"', from=2-1, to=2-3]
	\arrow[""{name=3, anchor=center, inner sep=0}, "Y"', from=2-4, to=2-5]
	\arrow["f", between={0.4}{0.6}, Rightarrow, from=0, to=2]
	\arrow["u", between={0.4}{0.6}, Rightarrow, from=1, to=3]
\end{tikzcd}\]

    \item Consider multimaps $f\colon Y_1,\dots,Y_n\to Z$ and $f_i\colon X_{i1},\dots,X_{ik_i}\to Y_i$, $i=1,\dots,n$. Define the composition $f(f_1,\dots,f_n)\colon X_{11},\dots,X_{nk_n}\to Z$ to be the pasting of $f$, $\circledast_i f_i$, and the coherences, as in the following diagram:

% https://q.uiver.app/#q=WzAsOCxbMSwxLCJNXntcXG90aW1lcyBufV8xIl0sWzEsMiwiTV8xIl0sWzMsMiwiTV8yIl0sWzMsMSwiTV57XFxvdGltZXMgbn1fMiJdLFsxLDAsIlxcYmlnb3RpbWVzX2kgTV8xXntcXG90aW1lcyBrX2l9Il0sWzMsMCwiXFxiaWdvdGltZXNfaSBNXzJee1xcb3RpbWVzIGtfaX0iXSxbMCwwLCJNXzFee1xcb3RpbWVzXFxzdW1faSBrX2l9Il0sWzQsMCwiTV8yXntcXG90aW1lc1xcc3VtX2kga19pfSJdLFswLDEsIm1fMV57bn0iXSxbMywyLCJtXzJeeyBufSJdLFsxLDIsIloiLDJdLFswLDMsIllfMVxcb3RpbWVzXFxjZG90c1xcb3RpbWVzIFlfbiJdLFs0LDAsIlxcb3RpbWVzX2kgbV8xXnsga19pfSJdLFs1LDMsIlxcb3RpbWVzX2kgbV8yXnsga19pfSJdLFs0LDUsIlxcb3RpbWVzX3tpfVxcb3RpbWVzX2pYX3tpan0iXSxbNiw0LCJcXGNvbmciXSxbNiwxLCJtXzFee1xcc3VtX2lrX2l9IiwyLHsiY3VydmUiOjN9XSxbNSw3LCJcXGNvbmciXSxbNywyLCJtXzJee1xcb3RpbWVzXFxzdW1faWtfaX0iLDAseyJjdXJ2ZSI6LTN9XSxbMTEsMTAsImYiLDAseyJzaG9ydGVuIjp7InNvdXJjZSI6NDAsInRhcmdldCI6NDB9fV0sWzE0LDExLCJcXG90aW1lc19pIGZfaSIsMCx7InNob3J0ZW4iOnsic291cmNlIjo0MCwidGFyZ2V0Ijo0MH19XSxbMTgsNSwiKFxcZ2FtbWFeMl97a18xLFxcZG90cyxrX259KV57LTF9IiwyLHsic2hvcnRlbiI6eyJzb3VyY2UiOjQwLCJ0YXJnZXQiOjQwfX1dLFsxMiwxNiwiXFxnYW1tYV4xX3trXzEsXFxkb3RzLGtfbn0iLDIseyJzaG9ydGVuIjp7InNvdXJjZSI6NDAsInRhcmdldCI6NDB9fV1d
\begin{equation} \label{dia: pasting multimaps} \begin{tikzcd}
	{M_1^{\otimes\sum_i k_i}} & {\bigotimes_i M_1^{\otimes k_i}} && {\bigotimes_i M_2^{\otimes k_i}} & {M_2^{\otimes\sum_i k_i}} \\
	& {M^{\otimes n}_1} && {M^{\otimes n}_2} \\
	& {M_1} && {M_2}
	\arrow["\cong", from=1-1, to=1-2]
	\arrow[""{name=0, anchor=center, inner sep=0}, "{m_1^{\sum_ik_i}}"', curve={height=18pt}, from=1-1, to=3-2]
	\arrow[""{name=1, anchor=center, inner sep=0}, "{\otimes_{i}\otimes_jX_{ij}}", from=1-2, to=1-4]
	\arrow[""{name=2, anchor=center, inner sep=0}, "{\otimes_i m_1^{ k_i}}", from=1-2, to=2-2]
	\arrow["\cong", from=1-4, to=1-5]
	\arrow["{\otimes_i m_2^{ k_i}}", from=1-4, to=2-4]
	\arrow[""{name=3, anchor=center, inner sep=0}, "{m_2^{\otimes\sum_ik_i}}", curve={height=-18pt}, from=1-5, to=3-4]
	\arrow[""{name=4, anchor=center, inner sep=0}, "{Y_1\otimes\cdots\otimes Y_n}", from=2-2, to=2-4]
	\arrow["{m_1^{n}}", from=2-2, to=3-2]
	\arrow["{m_2^{ n}}", from=2-4, to=3-4]
	\arrow[""{name=5, anchor=center, inner sep=0}, "Z"', from=3-2, to=3-4]
	\arrow["{\circledast_i f_i}", between={0.4}{0.6}, Rightarrow, from=1, to=4]
	\arrow["{\gamma^1_{k_1,\dots,k_n}}"', between={0.4}{0.6}, Rightarrow, from=2, to=0]
	\arrow["{(\gamma^2_{k_1,\dots,k_n})^{-1}}"', between={0.4}{0.6}, Rightarrow, from=3, to=1-4]
	\arrow["f", between={0.4}{0.6}, Rightarrow, from=4, to=5]
\end{tikzcd}\end{equation}
\end{enumerate}
\end{definition}

This is well-defined and associative up to isomorphism by Proposition \ref{prop: it exists, after all}.

Choosing $\mM_0$ to be a Lawvere $(2,2)$-theory $\T$ with a lax commutativity $\mu$ and $\mM_1$ to be $\Cat$ with its cartesian product, we denote $\Mod_l(\T,\Cat)^\otimes:=\Hom_{\V}^l(\T,\Cat)^\otimes$.

\begin{example}
    As always, we are going to analyze the structure above for $\T=\T_{cmon}^\flat=\T_{E_\infty}$. Let $X,$ $Y$, $Z$ be $\Cat$-valued models, i.e. symmetric monoidal categories, and denote the corresponding functors $\T_{E_\infty}\to \Cat$ by $\X$, $\Y$, $\mZ$, respectively. As $m\colon 2\to 1$, $u\colon 0\to 1$ form a basis of $\T_{E_\infty}$, we get from Lemma \ref{lemma: binary} that a lax binary map $f\colon \X,\Y\to \mZ$ is determined by the following data: 
    % https://q.uiver.app/#q=WzAsMTYsWzAsMCwiWF4yXFx0aW1lcyBZIl0sWzAsMSwiWFxcdGltZXMgWSJdLFsxLDAsIlpeMiJdLFsxLDEsIloiXSxbMywwLCJYXFx0aW1lcyBZXjIiXSxbMywxLCJYXFx0aW1lcyBZIl0sWzQsMCwiWl4yIl0sWzQsMSwiWiJdLFswLDIsIjFcXHRpbWVzIFkiXSxbMCwzLCJYXFx0aW1lcyBZIl0sWzEsMiwiMSJdLFsxLDMsIloiXSxbMywyLCJYXFx0aW1lcyAxIl0sWzMsMywiWFxcdGltZXMgWSJdLFs0LDIsIjEiXSxbNCwzLCJaIl0sWzAsMSwiXFxYKG0pXFx0aW1lczEiLDJdLFsyLDMsIlxcbVoobSkiXSxbMSwzLCJmX3sxLDF9IiwyXSxbMCwyLCJmX3syLDF9Il0sWzQsNSwiMVxcdGltZXMgXFxZKG0pIiwyXSxbNiw3LCJcXG1aKG0pIl0sWzQsNiwiZl97MSwyfSJdLFs1LDcsImZfezEsMX0iLDJdLFs4LDksIlxcWCh1KVxcdGltZXMgMSIsMl0sWzksMTEsImZfezEsMX0iLDJdLFs4LDEwXSxbMTAsMTEsIlxcbVoodSkiXSxbMTIsMTMsIjFcXHRpbWVzIFxcWShtKSIsMl0sWzEyLDE0XSxbMTMsMTUsImZfezEsMX0iLDJdLFsxNCwxNSwiXFxtWih1KSJdLFsxOSwxOCwiZl97bSwxfSIsMCx7InNob3J0ZW4iOnsic291cmNlIjoyMCwidGFyZ2V0IjoyMH19XSxbMjIsMjMsImZfezEsbX0iLDAseyJzaG9ydGVuIjp7InNvdXJjZSI6MjAsInRhcmdldCI6MjB9fV0sWzI2LDI1LCJmX3t1LDF9IiwwLHsic2hvcnRlbiI6eyJzb3VyY2UiOjIwLCJ0YXJnZXQiOjIwfX1dLFsyOSwzMCwiZl97MSx1fSIsMCx7InNob3J0ZW4iOnsic291cmNlIjoyMCwidGFyZ2V0IjoyMH19XV0=
\[\begin{tikzcd}
	{X^2\times Y} & {Z^2} && {X\times Y^2} & {Z^2} \\
	{X\times Y} & Z && {X\times Y} & Z \\
	{1\times Y} & 1 && {X\times 1} & 1 \\
	{X\times Y} & Z && {X\times Y} & Z
	\arrow[""{name=0, anchor=center, inner sep=0}, "{f_{2,1}}", from=1-1, to=1-2]
	\arrow["{\X(m)\times1}"', from=1-1, to=2-1]
	\arrow["{\mZ(m)}", from=1-2, to=2-2]
	\arrow[""{name=1, anchor=center, inner sep=0}, "{f_{1,2}}", from=1-4, to=1-5]
	\arrow["{1\times \Y(m)}"', from=1-4, to=2-4]
	\arrow["{\mZ(m)}", from=1-5, to=2-5]
	\arrow[""{name=2, anchor=center, inner sep=0}, "{f_{1,1}}"', from=2-1, to=2-2]
	\arrow[""{name=3, anchor=center, inner sep=0}, "{f_{1,1}}"', from=2-4, to=2-5]
	\arrow[""{name=4, anchor=center, inner sep=0}, from=3-1, to=3-2]
	\arrow["{\X(u)\times 1}"', from=3-1, to=4-1]
	\arrow["{\mZ(u)}", from=3-2, to=4-2]
	\arrow[""{name=5, anchor=center, inner sep=0}, from=3-4, to=3-5]
	\arrow["{1\times \Y(m)}"', from=3-4, to=4-4]
	\arrow["{\mZ(u)}", from=3-5, to=4-5]
	\arrow[""{name=6, anchor=center, inner sep=0}, "{f_{1,1}}"', from=4-1, to=4-2]
	\arrow[""{name=7, anchor=center, inner sep=0}, "{f_{1,1}}"', from=4-4, to=4-5]
	\arrow["{f_{m,1}}", between={0.2}{0.8}, Rightarrow, from=0, to=2]
	\arrow["{f_{1,m}}", between={0.2}{0.8}, Rightarrow, from=1, to=3]
	\arrow["{f_{u,1}}", between={0.2}{0.8}, Rightarrow, from=4, to=6]
	\arrow["{f_{1,u}}", between={0.2}{0.8}, Rightarrow, from=5, to=7]
\end{tikzcd}\]

Rewriting $\otimes_X:=\X(m)$, $u_X:=\X(u)(1)$ and similarly for $Y$ and $Z$ and writing simply $f:=f_{1,1}$, we get that for any $x,x'\in X$, $y,y'\in Y$, we get that the $2$-cells above evaluated on $x,x'$, $y,y'$ have the following form:
\begin{itemize}
    \item $f(x,y)\otimes_Zf(x',y)\to f(x\otimes_X x',y)$, $f(x,y)\otimes_Zf(x,y')\to f(x,y\otimes_Y y')$,
    \item $u_Z\to f(u_X,y)$, $u_Z\to f(x,u_Y)$.
\end{itemize}
Therefore, we get lax binary functors give us a notion of \textit{lax bilinear functors} between symmetric monoidal categories. 
\end{example}

Finally, we are going to show that the $2$-multicategory $\underline{\Mod}_l(\T,\Cat)$ is closed.

\begin{theorem} \label{thm: closed structure}
    Let $\T$ be Lawvere $(2,2)$-theory with a lax commutativity $\mu$. Then the $(2,2)$-multicategory ${\Mod}_l(\T,\Cat)^\otimes$ is (right) closed, with the internal Hom being $\underline{\Hom}_l(\X,\Y)$. In other words, we have the isomorphism (functorial in all variables) $$\Mul_2(\X,\Y;\mZ)\cong \Hom_l(\X,\underline{\Hom}_l(\Y,\mZ)).$$
\end{theorem}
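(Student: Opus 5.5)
We prove the isomorphism $\Mul_2(\X,\Y;\mZ)\cong \Hom_l(\X,\underline{\Hom}_l(\Y,\mZ))$ by unwinding both sides into explicit data in $\Cat$ and matching them. By Definition \ref{def: lax Day conv}, an object of $\Mul_2(\X,\Y;\mZ)$ is an $(s,l)$-natural transformation $f\colon \times\circ(\X\otimes_{s,l}\Y)\Rightarrow \mZ\circ\mu$ of functors $\T\otimes_{s,l}\T\to\Cat$, and morphisms are modifications. Using Proposition \ref{prop: Gray universal}, we curry $f$ along the first variable. This turns it into an $(s,l)$-natural family indexed by $a\in\T$: the functors $\X(a)\to \Fun_{s,l}(\T,\Cat)(\Y,\mZ(a\cdot -))$. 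Here we use that $\mu(a,b)=a\cdot b$, and that the Gray cells $\Sigma_{\alpha\beta}$ are sent by $\mu$ to the cells $\sigma_{\alpha\beta}$ of Lemma \ref{lemma: syntactic}. This is the lax analogue of the end computation in Proposition \ref{prop: closed structure without operad}. Since strict naturality of ordinary ends is not available in the lax setting, we instead use the lax ends of Appendix A, which gives a Fubini-type rule of the form
\[\int^{\mathrm{lax}}_{a,b}\Fun(X^a\times Y^b,Z^{ab})\cong \int^{\mathrm{lax}}_a\Fun\Big(X^a,\int^{\mathrm{lax}}_b\Fun(Y^b,Z^{ab})\Big).\]

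Next, we identify the inner lax end $\int_b^{\mathrm{lax}}\Fun(Y^b,Z^{ab})$ with $\Hom_l(\Y,\mZ^a)\cong\Hom_l(\Y,\mZ)^a$. Two facts enter here. The marking condition (strictness on inerts) together with product preservation forces the components at $b$ to be $b$-th powers of the component at $1$, as in Remark \ref{remark: 2-dim faithful I}. Moreover, the functor $a\mapsto \Hom_l(\Y,\mZ^a)$, with its action on $1$-cells given by postcomposition with $\widetilde{\mZ}(\alpha)$, is by definition $\underline{\Hom}_l(\Y,\mZ)$. The lax structure cells of $\widetilde{\mZ}(\alpha)$ are exactly the $\mZ(\sigma_{\alpha\beta})$.

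We then check that the outer lax end reproduces lax homomorphisms $\X\to\underline{\Hom}_l(\Y,\mZ)$ that are strict on inerts. The key point is that the $2$-cells $f_{\alpha,1}$ (lax in the $\X$-variable) correspond to the structure cells of the curried lax homomorphism. The cells $f_{1,\beta}$ give the lax structure of each component. The Gray coherence between them, namely that $f$ pastes correctly with $\Sigma_{\alpha\beta}$, is exactly the condition that $f_{\alpha,1}$ is a modification in $\Hom_l(\Y,\mZ^{\bullet})$, i.e.\ a $2$-cell in the target. Functoriality in $\X,\Y,\mZ$ follows from naturality of the currying isomorphism of Proposition \ref{prop: Gray universal}.

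The main obstacle is the middle step: making the lax-end Fubini argument rigorous and verifying that the Gray cell coherence translates on the nose into the modification condition. I would first attempt this directly at the level of generators. Following Lemma \ref{lemma: binary}, a binary lax multimap is determined by its values on a basis, so it suffices to compare the two sides on active $1$-cells and on the cells $\Sigma_{\alpha\beta}$ with $\alpha,\beta$ active. I would then invoke the locally fully faithful forgetful functor of Remark \ref{remark: 2-dim faithful I} to conclude that the comparison is an isomorphism on hom-categories, not merely on objects.
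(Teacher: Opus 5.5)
This is essentially the paper's proof. It uses the same chain of lax-end isomorphisms: Hirata's description of $\Nat_l$ as a lax end, the Gray-sensitive Fubini rule of Appendix~A, and pulling $\Fun(X^a,-)$ out of the inner lax end. It then finishes by restricting to transformations strict on inerts, and that restriction is where your passage from $\Nat_l(\Y,\mZ^a)$ to $\Hom_l(\Y,\mZ^a)$ actually happens, since the inner lax end by itself does not see the marking.

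Two small remarks. First, the currying via Proposition~\ref{prop: Gray universal} in your first step, combined with the pointwise tensoring $\Fun_{s,l}(\T,\Cat)(C\times\Y,\mZ')\cong\Fun(C,\Hom_l(\Y,\mZ'))$, already settles the Fubini point you flag as the main obstacle. Second, your fallback would not work as stated: Remark~\ref{remark: 2-dim faithful I} gives only local faithfulness, which cannot by itself upgrade a bijection on objects to an isomorphism of hom-categories.
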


The proof can be done by hand using Lemma \ref{lemma: binary} below and some other observations. We present a more formal proof using lax ends; we refer the reader to \cite{Hir22} or Appendix A for more details on these. For convenience, let us just recall the following fact: for any two $2$-functors $F,G\colon \C\to \D$ of $2$-categories, the category of lax natural transformations $F\Rightarrow G$ and modifications between them, denoted by $\Nat_l(F,G)$, can be computed as a lax end, i.e. $\Nat_l(F,G)\cong \int_c^{lax}\D(Fc, Gc)$.

 In particular, for any $1$-cell $\alpha$ in $\C$, we have the following lax square: 
% https://q.uiver.app/#q=WzAsNCxbMCwwLCJcXE5hdF9sKEYsRykiXSxbMSwwLCJcXEQoRmMsR2MpIl0sWzEsMSwiXFxEKEZjLEdjJykiXSxbMCwxLCJcXEQoRmMnLEdjJykiXSxbMCwxLCJcXGV2X2MiXSxbMSwyLCJHXFxhbHBoYV8qIl0sWzAsMywiXFxldl97Yyd9IiwyXSxbMywyLCJGXFxhbHBoYV4qIiwyXSxbNCw3LCJcXHBoaV9cXGFscGhhIiwwLHsic2hvcnRlbiI6eyJzb3VyY2UiOjIwLCJ0YXJnZXQiOjIwfX1dXQ==
\[\begin{tikzcd}
	{\Nat_l(F,G)} & {\D(Fc,Gc)} \\
	{\D(Fc',Gc')} & {\D(Fc,Gc')}
	\arrow[""{name=0, anchor=center, inner sep=0}, "{\ev_c}", from=1-1, to=1-2]
	\arrow["{\ev_{c'}}"', from=1-1, to=2-1]
	\arrow["{G\alpha_*}", from=1-2, to=2-2]
	\arrow[""{name=1, anchor=center, inner sep=0}, "{F\alpha^*}"', from=2-1, to=2-2]
	\arrow["{\phi_\alpha}", between={0.2}{0.8}, Rightarrow, from=0, to=1]
\end{tikzcd}\]
We are going to make us of (a version of) the $2$-cell $\phi_\alpha$ later.

\begin{proof}
 Using the fact that the category of lax natural transformations can be computed using lax ends \cite[Prop. 3.2.]{Hir22}, we can do the following computation (where $\int_a^{lax}$ stands for a lax end): \begin{align}\Nat_l(\times \circ (\X\otimes \Y),\mZ\mu)&\cong
    \int_{a,b}^{lax} \Fun(X^a\times Y^b,Z^{ab})\nonumber\\
    &\cong \int_{a}^{lax} \int_b^{lax}\Fun(X^a\times Y^b,Z^{ab})\label{eq: Fubini}\\
    &\cong \int_{a}^{lax} \int_b^{lax} \Fun(X^a,\Fun(Y^b,Z^{ab}))\nonumber\\
    &\cong \int_a^{lax}\Fun(X^a,\int^{lax}_b\Fun(Y^b,Z^{ab})) \label{eq: ends}\\
    &\cong \int_a^{lax} \Fun(X^a,\Nat_l(\Y,\mZ^a))\nonumber\\
    &\cong \Nat_l(\X,\Nat_l(\Y,\mZ^\bullet))\nonumber
\end{align}

The line (\ref{eq: ends}) is \cite[Prop. 3.3]{Hir22}. The line (\ref{eq: Fubini}) is the Fubini rule for lax ends; however, the usual Fubini rule cannot be applied: for that, we would need to have a lax wedge $\T^{op}\times \T^{op}\times \T\times \T \to \Cat$, but what we really have is a lax wedge $\T^{op}\times \T^{op}\times \T\otimes_{s,l} \T \to \Cat$. We treat this subtle point in the Appendix A where we prove a generalized version of the Fubini rule, see Proposition \ref{prop: Fubini 2}.

In the equivalence we have just proved, those natural trasformations that are strict on the product projections clearly correspond to each other on both sides. Thus, we are done.
\end{proof}

In the rest of this subsection, we explain more concretely how the data of a lax homomorphism $g\colon \X\to \underline{\Hom}_{l}(\Y,\mZ)$ correspond to a data of lax binary map $f\colon \X,\Y\to \mZ$.

\begin{lemma} \label{lemma: binary}
    A lax binary map map $f\colon \X,\Y\to \mZ$ is uniquely determined by the $1$-cell $f_{1,1}\colon X\times Y\to Z$ and the families of $2$-cells $f_{1,\alpha}$, $f_{\alpha,1}$ as below.
    % https://q.uiver.app/#q=WzAsOCxbMCwwLCJYXm5cXHRpbWVzIFkiXSxbMCwxLCJYXFx0aW1lcyBZIl0sWzIsMCwiWl57bn0iXSxbMiwxLCJaLCJdLFs0LDAsIlhcXHRpbWVzIFlebiJdLFs2LDAsIlpee259Il0sWzQsMSwiWFxcdGltZXMgWSJdLFs2LDEsIloiXSxbMCwxLCJcXFgoXFxhbHBoYSlcXHRpbWVzIDEiLDJdLFswLDIsImZfe24sMX0iXSxbMSwzLCJmX3sxLDF9IiwyXSxbMiwzLCJcXG1aKFxcYWxwaGEpIl0sWzQsNSwiZl97MSxufSJdLFs0LDYsIjFcXHRpbWVzIFxcWShcXGFscGhhKSIsMl0sWzYsNywiZl97MSwxfSIsMl0sWzUsNywiXFxtWihcXGFscGhhKSJdLFs5LDEwLCJmX3tcXGFscGhhLDF9IiwwLHsic2hvcnRlbiI6eyJzb3VyY2UiOjIwLCJ0YXJnZXQiOjIwfX1dLFsxMiwxNCwiZl97MSxcXGFscGhhfSIsMCx7InNob3J0ZW4iOnsic291cmNlIjoyMCwidGFyZ2V0IjoyMH19XV0=
\[\begin{tikzcd}
	{X^n\times Y} && {Z^{n}} && {X\times Y^n} && {Z^{n}} \\
	{X\times Y} && {Z,} && {X\times Y} && Z
	\arrow[""{name=0, anchor=center, inner sep=0}, "{f_{n,1}}", from=1-1, to=1-3]
	\arrow["{\X(\alpha)\times 1}"', from=1-1, to=2-1]
	\arrow["{\mZ(\alpha)}", from=1-3, to=2-3]
	\arrow[""{name=1, anchor=center, inner sep=0}, "{f_{1,n}}", from=1-5, to=1-7]
	\arrow["{1\times \Y(\alpha)}"', from=1-5, to=2-5]
	\arrow["{\mZ(\alpha)}", from=1-7, to=2-7]
	\arrow[""{name=2, anchor=center, inner sep=0}, "{f_{1,1}}"', from=2-1, to=2-3]
	\arrow[""{name=3, anchor=center, inner sep=0}, "{f_{1,1}}"', from=2-5, to=2-7]
	\arrow["{f_{\alpha,1}}", between={0.2}{0.8}, Rightarrow, from=0, to=2]
	\arrow["{f_{1,\alpha}}", between={0.2}{0.8}, Rightarrow, from=1, to=3]
\end{tikzcd}\]
\end{lemma}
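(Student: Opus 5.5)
A lax binary map $f\colon \X,\Y\to \mZ$ is, by Definition \ref{def: lax Day conv}, an $(s,l)$-natural transformation filling the square $\times\circ(\X\otimes_{s,l}\Y)\Rightarrow \mZ\circ\mu$ on $\T\otimes_{s,l}\T$. The plan is to unpack this transformation using the presentation of $\T\otimes_{s,l}\T$ by generators. Its components are $1$-cells $f_{a,b}\colon X^a\times Y^b\to Z^{ab}$, one for each object $(a,b)$. It also carries $2$-cells $f_\gamma$ indexed by the $1$-cells $\gamma$ of $\T\otimes_{s,l}\T$. These are generated by $(\alpha,1_b)$ and $(1_a,\beta)$, so by the composition axiom for lax transformations every $f_\gamma$ is a pasting of the cells $f_{(\alpha,1_b)}$ and $f_{(1_a,\beta)}$. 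The proof therefore reduces to two steps: (i) the components $f_{a,b}$ are determined by $f_{1,1}$, and (ii) the cells $f_{(\alpha,1_b)}$, $f_{(1_a,\beta)}$ are determined by the cells $f_{\alpha,1}$, $f_{1,\alpha}$ displayed in the statement.

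For (i), use that $f$ must be strict on marked $1$-cells, which are exactly those of the form $(p,1)$, $(1,p)$ with $p$ inert. Strict naturality with respect to the product projections $(p_i,1)\colon(a,b)\to(1,b)$ and $(1,p_j)\colon(1,b)\to(1,1)$ forces $f_{a,b}$, composed with the projections of $Z^{ab}$, to equal $f_{1,1}\circ(\pi_i\times\pi_j)$. This is the argument of Remark \ref{remark: 2-dim faithful I} applied in each variable, and it gives $f_{a,b}\cong (f_{1,1})^{ab}$ up to reindexing. Similarly, strictness on $(1_a,p)$ combined with the coherence axiom for composites shows that $f_{(\alpha,1_b)}$ equals the $b$-fold product of $f_{(\alpha,1_1)}$. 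Since $\alpha\colon m\to n$ can be written as $(p_i\alpha)_i$, the axiom for composites with inert maps reduces $f_{(\alpha,1_1)}$ further to the cases with target $1$. These are exactly the cells $f_{\alpha,1}$ of the statement, and symmetrically in the other variable.

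For (ii), the remaining data are the values on $2$-cells and the compatibility with the Gray cells $\Sigma_{\alpha\beta}$. I expect the Gray cells to be the main obstacle. In $\T\otimes_{s,l}\T$ the square $(\alpha,1)(1,\beta)\Rightarrow(1,\beta)(\alpha,1)$ is a $2$-cell, not an identity, so naturality of $f$ imposes an equation relating the pasting of $f_{(\alpha,1)}$ and $f_{(1,\beta)}$ with $\mZ(\mu(\Sigma_{\alpha\beta}))$. The approach is to show that this constraint is only a condition and contributes no additional data. A lax natural transformation out of a $2$-category presented by generators and relations is determined by its components and its cells on generating $1$-cells, subject to equations, one for each generating $2$-cell. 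The same holds for the relations imposed by the $2$-cells $s$ of $\T$, which give conditions of the shape (\ref{dia: gray2}) and (\ref{dia: gray3}). Hence uniqueness follows once $f$ is known on objects and on generating $1$-cells, which was established in (i).
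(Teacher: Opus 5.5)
Your proposal is correct and follows essentially the paper's argument. The components $f_{m,n}$ are pinned down by strict naturality on the inert cells $(p_i,1)$ and $(1,q_j)$, and the cells $f_{\alpha,k}$ are reduced to $f_{\alpha,1}$ by whiskering with the projections $\mZ(p_i)$ (this is the paper's cube), with composite cells being pastings of the generating ones. You additionally make explicit two points the paper leaves implicit: reducing $\alpha\colon m\to n$ to the cells for $p_i\alpha$, and noting that the $2$-cells of $\T$ and the Gray cells impose only equations, not extra data.
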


\begin{proof}
    A lax binary map $f\colon \X,\Y\to \mZ$ is a particular kind of a natural transformation $\X\otimes_{s,l}\Y\to \mZ\mu$ where $\mu$ is the lax commutativity structure on $\T$. Therefore, $f$ a priori comprises data of $1$-cells $f_{m,k}$ and $2$-cells $f_{\alpha,k}$, $f_{m,\beta}$, satisfying some equations. We will show that the data $f_{1,1}$, $f_{\alpha,1}$ and $f_{1,\alpha}$ are enough to determine the rest.
    
    As the diagram below is required to commute (where $p_i$, $q_j$ are projections to $i$-th or $j$-the factor, respectively), we see that any $1$-cell $f_{m,n}$ is determined by $f_{1,1}$. 
    % https://q.uiver.app/#q=WzAsNCxbMCwwLCJYXm1cXHRpbWVzIFlebiJdLFswLDEsIlhcXHRpbWVzIFkiXSxbMiwwLCJaXnttbn0iXSxbMiwxLCJaIl0sWzAsMSwiXFxYKHBfaSlcXHRpbWVzIFxcWShxX2opIiwyXSxbMCwyLCJmX3ttLG59Il0sWzEsMywiZl97MSwxfSIsMl0sWzIsMywiXFxtWihwX2lcXG90aW1lcyBxX2opIl1d
\[\begin{tikzcd}
	{X^m\times Y^n} && {Z^{mn}} \\
	{X\times Y} && Z
	\arrow["{f_{m,n}}", from=1-1, to=1-3]
	\arrow["{\X(p_i)\times \Y(q_j)}"', from=1-1, to=2-1]
	\arrow["{\mZ(p_i\otimes q_j)}", from=1-3, to=2-3]
	\arrow["{f_{1,1}}"', from=2-1, to=2-3]
\end{tikzcd}\]
     The $2$-cells $f_{\alpha,n}$ for any $n$ are determined by $f_{\alpha,1}$ in a similar fashion. To see that, again choose any projection $p_i\colon k\to 1$; then we obtain the cube as below with all the side faces commutative. The case of the cells $f_{n,\alpha}$ is the same.
     % https://q.uiver.app/#q=WzAsMTQsWzAsMSwiWF5uXFx0aW1lcyBZXmsiXSxbMSwwLCJaXntua30iXSxbMiwxLCJaXmsiXSxbMSwyLCJYXFx0aW1lcyBZXmsiXSxbMSwzLCJYXFx0aW1lcyBZIl0sWzAsMiwiWF5uXFx0aW1lcyBZIl0sWzIsMiwiWiJdLFszLDEsIlheblxcdGltZXMgWV5rIl0sWzQsMCwiWl57bmt9Il0sWzUsMSwiWl5rIl0sWzMsMiwiWF5uXFx0aW1lcyBZIl0sWzQsMywiWFxcdGltZXMgWSJdLFs1LDIsIloiXSxbNCwxLCJaXm4iXSxbMCwxLCJmX3tuLGt9Il0sWzEsMiwiXFxtWl5rKFxcYWxwaGEpIl0sWzAsMywiXFxYKFxcYWxwaGEpXFx0aW1lcyAxIl0sWzMsMiwiZl97MSxrfSIsMl0sWzMsNCwiMVxcdGltZXMgXFxZKHBfaSkiLDFdLFswLDUsIjFcXHRpbWVzIFxcWShwX2kpIiwyXSxbNSw0LCJcXFgoXFxhbHBoYSlcXHRpbWVzIDEiLDJdLFsyLDYsIlxcbVoocF9pKSIsMV0sWzQsNiwiZl97MSwxfSIsMl0sWzcsOCwiZl97bixrfSJdLFs4LDksIlxcbVpeayhcXGFscGhhKSJdLFs3LDEwLCIxXFx0aW1lcyBcXFkocF9pKSIsMV0sWzEwLDExLCJcXFgoXFxhbHBoYSlcXHRpbWVzIDEiLDJdLFs5LDEyLCJcXG1aKHBfaSkiXSxbMTEsMTIsImZfezEsMX0iLDJdLFsxMCwxMywiZl97biwxfSJdLFsxMywxMiwiXFxtWihcXGFscGhhKSJdLFs4LDEzLCJcXG1aXm4ocF9pKSIsMV0sWzI5LDI4LCJmX3tcXGFscGhhLDF9IiwwLHsic2hvcnRlbiI6eyJzb3VyY2UiOjQwLCJ0YXJnZXQiOjQwfX1dLFsxNCwxNywiZl97XFxhbHBoYSxufSIsMCx7InNob3J0ZW4iOnsic291cmNlIjo0MCwidGFyZ2V0Ijo0MH19XSxbMjEsMjUsIj0iLDEseyJzaG9ydGVuIjp7InNvdXJjZSI6MjAsInRhcmdldCI6MjB9LCJzdHlsZSI6eyJib2R5Ijp7Im5hbWUiOiJub25lIn0sImhlYWQiOnsibmFtZSI6Im5vbmUifX19XV0=
\[\begin{tikzcd}
	& {Z^{nk}} &&& {Z^{nk}} \\
	{X^n\times Y^k} && {Z^k} & {X^n\times Y^k} & {Z^n} & {Z^k} \\
	{X^n\times Y} & {X\times Y^k} & Z & {X^n\times Y} && Z \\
	& {X\times Y} &&& {X\times Y}
	\arrow["{\mZ^k(\alpha)}", from=1-2, to=2-3]
	\arrow["{\mZ^n(p_i)}"{description}, from=1-5, to=2-5]
	\arrow["{\mZ^k(\alpha)}", from=1-5, to=2-6]
	\arrow[""{name=0, anchor=center, inner sep=0}, "{f_{n,k}}", from=2-1, to=1-2]
	\arrow["{1\times \Y(p_i)}"', from=2-1, to=3-1]
	\arrow["{\X(\alpha)\times 1}", from=2-1, to=3-2]
	\arrow[""{name=1, anchor=center, inner sep=0}, "{\mZ(p_i)}"{description}, from=2-3, to=3-3]
	\arrow["{f_{n,k}}", from=2-4, to=1-5]
	\arrow[""{name=2, anchor=center, inner sep=0}, "{1\times \Y(p_i)}"{description}, from=2-4, to=3-4]
	\arrow["{\mZ(\alpha)}", from=2-5, to=3-6]
	\arrow["{\mZ(p_i)}", from=2-6, to=3-6]
	\arrow["{\X(\alpha)\times 1}"', from=3-1, to=4-2]
	\arrow[""{name=3, anchor=center, inner sep=0}, "{f_{1,k}}"', from=3-2, to=2-3]
	\arrow["{1\times \Y(p_i)}"{description}, from=3-2, to=4-2]
	\arrow[""{name=4, anchor=center, inner sep=0}, "{f_{n,1}}", from=3-4, to=2-5]
	\arrow["{\X(\alpha)\times 1}"', from=3-4, to=4-5]
	\arrow["{f_{1,1}}"', from=4-2, to=3-3]
	\arrow[""{name=5, anchor=center, inner sep=0}, "{f_{1,1}}"', from=4-5, to=3-6]
	\arrow["{f_{\alpha,n}}", between={0.4}{0.6}, Rightarrow, from=0, to=3]
	\arrow["{=}"{description}, draw=none, from=1, to=2]
	\arrow["{f_{\alpha,1}}", between={0.4}{0.6}, Rightarrow, from=4, to=5]
\end{tikzcd}\]
\end{proof}

Now, we know that $g$ is determined by the data $g_1\colon X\to \Hom_l(\Y,\mZ)$ together with $2$-cells $g_\alpha$ as in the leftmost square below; we then define $f_{1,1},$ $f_{\alpha,1}$ using currying from the rectangle on the left.  

% https://q.uiver.app/#q=WzAsMTAsWzAsMCwiWF5uIl0sWzAsMSwiWCJdLFsxLDAsIlxcSG9tX2woXFxZLFxcbVpebikiXSxbMiwwLCJcXEZ1bihZLFpebikiXSxbMSwxLCJcXEhvbV9sKFxcWSxcXG1aKSJdLFsyLDEsIlxcRnVuKFksWiksIl0sWzQsMCwiWF5uXFx0aW1lcyBZIl0sWzUsMCwiWl5uIl0sWzUsMSwiWiJdLFs0LDEsIlhcXHRpbWVzIFkiXSxbMCwxLCJcXFgoXFxhbHBoYSkiLDJdLFswLDIsImdfMV5uIl0sWzIsMywiXFxldl8xIl0sWzEsNCwiZ18xIiwyXSxbMiw0LCJcXHdpZGV0aWxkZXtcXG1afShcXGFscGhhKV8qIl0sWzQsNSwiXFxldl8xIiwyXSxbMyw1LCJcXG1aKFxcYWxwaGEpXyoiXSxbNiw3LCJmX3tuLDF9Il0sWzcsOCwiXFxtWihcXGFscGhhKSJdLFs2LDksIlxcWChcXGFscGhhKVxcdGltZXMgMSIsMl0sWzksOCwiZl97MSwxfSIsMl0sWzExLDEzLCJnX3tcXGFscGhhfSIsMCx7InNob3J0ZW4iOnsic291cmNlIjoyMCwidGFyZ2V0IjoyMH19XSxbMTcsMjAsImZfe1xcYWxwaGEsMX0iLDAseyJzaG9ydGVuIjp7InNvdXJjZSI6MjAsInRhcmdldCI6MjB9fV0sWzE2LDE5LCIiLDAseyJzaG9ydGVuIjp7InNvdXJjZSI6NDAsInRhcmdldCI6NDB9LCJsZXZlbCI6MSwic3R5bGUiOnsidGFpbCI6eyJuYW1lIjoiYXJyb3doZWFkIn0sImJvZHkiOnsibmFtZSI6InNxdWlnZ2x5In19fV1d
\[\begin{tikzcd}
	{X^n} & {\Hom_l(\Y,\mZ^n)} & {\Fun(Y,Z^n)} && {X^n\times Y} & {Z^n} \\
	X & {\Hom_l(\Y,\mZ)} & {\Fun(Y,Z),} && {X\times Y} & Z
	\arrow[""{name=0, anchor=center, inner sep=0}, "{g_1^n}", from=1-1, to=1-2]
	\arrow["{\X(\alpha)}"', from=1-1, to=2-1]
	\arrow["{\ev_1}", from=1-2, to=1-3]
	\arrow["{\widetilde{\mZ}(\alpha)_*}", from=1-2, to=2-2]
	\arrow[""{name=1, anchor=center, inner sep=0}, "{\mZ(\alpha)_*}", from=1-3, to=2-3]
	\arrow[""{name=2, anchor=center, inner sep=0}, "{f_{n,1}}", from=1-5, to=1-6]
	\arrow[""{name=3, anchor=center, inner sep=0}, "{\X(\alpha)\times 1}"', from=1-5, to=2-5]
	\arrow["{\mZ(\alpha)}", from=1-6, to=2-6]
	\arrow[""{name=4, anchor=center, inner sep=0}, "{g_1}"', from=2-1, to=2-2]
	\arrow["{\ev_1}"', from=2-2, to=2-3]
	\arrow[""{name=5, anchor=center, inner sep=0}, "{f_{1,1}}"', from=2-5, to=2-6]
	\arrow["{g_{\alpha}}", between={0.2}{0.8}, Rightarrow, from=0, to=4]
	\arrow[between={0.4}{0.6}, squiggly, tail reversed, from=1, to=3]
	\arrow["{f_{\alpha,1}}", between={0.2}{0.8}, Rightarrow, from=2, to=5]
\end{tikzcd}\]

Then, observe that the map $g_1\colon X\to \Hom_l(\Y,\mZ)$ is uniquely determined by the composite with the evaluation at 1 $\ev_1 g_1\colon X\to \Fun(Y,Z)$ together with the composite $2$-cells $\phi_\alpha g_1$ as in the left diagram below.
% https://q.uiver.app/#q=WzAsOSxbMCwxLCJcXEhvbV9sKFxcWSxcXG1aKSJdLFsxLDEsIlxcSG9tX2woWV5uLFpebikiXSxbMSwyLCJcXEhvbV9sKFlebixaKSJdLFswLDIsIlxcSG9tX2woWSxaKSJdLFswLDAsIlgiXSxbMywxLCJYXFx0aW1lcyBZXm4iXSxbNCwxLCJaXm4iXSxbNCwyLCJaIl0sWzMsMiwiWFxcdGltZXMgWSJdLFswLDEsIlxcZXZfbiJdLFsxLDIsIlxcbVooXFxhbHBoYSlfKiJdLFswLDMsIlxcZXZfMSIsMl0sWzMsMiwiXFxZKFxcYWxwaGEpXioiLDJdLFs0LDAsImdfMSJdLFs4LDcsImZfezEsMX0iLDJdLFs1LDYsImZfezEsbn0iXSxbNiw3LCJcXG1aKFxcYWxwaGEpIl0sWzUsOCwiMVxcdGltZXMgXFxZKFxcYWxwaGEpIiwyXSxbOSwxMiwiXFxwaGlfXFxhbHBoYSIsMCx7InNob3J0ZW4iOnsic291cmNlIjoyMCwidGFyZ2V0IjoyMH19XSxbMTAsMTcsIiIsMCx7InNob3J0ZW4iOnsic291cmNlIjo0MCwidGFyZ2V0Ijo0MH0sImxldmVsIjoxLCJzdHlsZSI6eyJ0YWlsIjp7Im5hbWUiOiJhcnJvd2hlYWQifSwiYm9keSI6eyJuYW1lIjoic3F1aWdnbHkifX19XSxbMTUsMTQsImZfezEsXFxhbHBoYX0iLDIseyJzaG9ydGVuIjp7InNvdXJjZSI6MjAsInRhcmdldCI6MjB9fV1d
\[\begin{tikzcd}
	X \\
	{\Hom_l(\Y,\mZ)} & {\Hom_l(Y^n,Z^n)} && {X\times Y^n} & {Z^n} \\
	{\Hom_l(Y,Z)} & {\Hom_l(Y^n,Z)} && {X\times Y} & Z
	\arrow["{g_1}", from=1-1, to=2-1]
	\arrow[""{name=0, anchor=center, inner sep=0}, "{\ev_n}", from=2-1, to=2-2]
	\arrow["{\ev_1}"', from=2-1, to=3-1]
	\arrow[""{name=1, anchor=center, inner sep=0}, "{\mZ(\alpha)_*}", from=2-2, to=3-2]
	\arrow[""{name=2, anchor=center, inner sep=0}, "{f_{1,n}}", from=2-4, to=2-5]
	\arrow[""{name=3, anchor=center, inner sep=0}, "{1\times \Y(\alpha)}"', from=2-4, to=3-4]
	\arrow["{\mZ(\alpha)}", from=2-5, to=3-5]
	\arrow[""{name=4, anchor=center, inner sep=0}, "{\Y(\alpha)^*}"', from=3-1, to=3-2]
	\arrow[""{name=5, anchor=center, inner sep=0}, "{f_{1,1}}"', from=3-4, to=3-5]
	\arrow["{\phi_\alpha}", between={0.2}{0.8}, Rightarrow, from=0, to=4]
	\arrow[between={0.4}{0.6}, squiggly, tail reversed, from=1, to=3]
	\arrow["{f_{1,\alpha}}"', between={0.2}{0.8}, Rightarrow, from=2, to=5]
\end{tikzcd}\]

By currying, we obtain the lax square on the right and we define $f_{1,\alpha}$ to be the $2$-cell filling that square. Since lax homomorphisms $\X\to \underline{\Hom}_l(\Y,\mZ)$ are precisely those natural transformations $\X\to \Nat_l(\Y,\mZ^{\bullet})$ uniquely determined by $\ev_1 g_1$, $\ev_1 g_\alpha$ and $\phi_\alpha g_1$, the above is a bijective correspondence.

\subsection{Fox's theorem}

In this subsection, we are finally going to show that if a Lawvere $2$-theory $\T$ is equipped with with a $w$-commutativity, the induced endofunctor $\underline{\IntAlg}$ is equipped with a comonad structure.

First, let $\T$ be a Lawvere $(2,2)$-theory equipped with a lax or colax commutativity $\mu$ -- without loss of generality, let us assume lax. Consider the endofunctor $\underline{\IntAlg}\colon \Mod_{\lax}(\T,\Cat)\to \Mod_{\lax}(\T,\Cat)$. We are now going to prove certain generalization of Fox's theorem.

\begin{theorem} \label{thm: fox}
    Suppose $\T$ is equipped with a lax commutativity $\mu$. Then the endofunctor $\underline{\IntAlg}$ has a structure of a 2-comonad. Moreover, if $\T$ is lax idempotent, this 2-comonad is idempotent.
\end{theorem}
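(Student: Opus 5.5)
We plan to obtain the comonad structure formally from the closed $(2,2)$-multicategory structure of Theorem \ref{thm: closed structure}, mimicking the way the endofunctor $\underline{\Hom}(I,-)$ of a closed multicategory becomes a comonad when evaluated at a suitable ``unit-like'' commutative monoid. Write $*$ for the terminal model. The key observation is that $*$ carries a canonical structure of a monoid in the multicategory $\Mod_l(\T,\Cat)^\otimes$. Nullary and binary multimaps into $*$ are unique, since $\Cat(-,1)$ is trivial, and the associativity and unit constraints then hold automatically. We then set $\underline{\IntAlg}=\underline{\Hom}_l(*,-)$ and define the comultiplication and counit using the closedness isomorphism $\Mul_2(\X,\Y;\mZ)\cong \Hom_l(\X,\underline{\Hom}_l(\Y,\mZ))$ together with its $n$-ary analogues, which follow from the same lax-end computation iterated.

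For the counit $\varepsilon_\X\colon \underline{\Hom}_l(*,\X)\to \X$, we take the image under closedness of the binary multimap $\underline{\Hom}_l(*,\X),\;*\to\X$ given by evaluation. We then compose it with the unique nullary map $()\to *$, using the multicategory composition of Definition \ref{def: lax Day conv}. This yields a unary map $\underline{\Hom}_l(*,\X)\to\X$. Concretely, on underlying categories it is $\ev_1$, sending an internal algebra to its carrier, and its lax structure cells come from the algebra structure maps.

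For the comultiplication $\delta_\X\colon \underline{\Hom}_l(*,\X)\to\underline{\Hom}_l(*,\underline{\Hom}_l(*,\X))$, we use closedness twice. Such a map corresponds to a ternary multimap $\underline{\Hom}_l(*,\X),*,*\to\X$. We take this to be evaluation precomposed with the binary multiplication $*,*\to *$. Coassociativity and counitality then reduce to the associativity and unitality of the monoid $*$, together with the naturality of the closedness bijection in all variables. Once the adjunction data are set up functorially, this part should be formal. The 2-functoriality of $\delta$ and $\varepsilon$, including modifications, follows from functoriality of $\underline{\Hom}_l$ and of the isomorphism in Theorem \ref{thm: closed structure}.

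For idempotence, it suffices to show that $\delta$ is invertible, or equivalently that $\varepsilon$ applied to $\underline{\IntAlg}(\X)$ is invertible. By Lemma \ref{lemma: idempotence}, lax idempotence makes the lifting functor $\widetilde{(-)}\colon\Mod_l(\T,\C)\to\Mod_l(\T,\Mod_l(\T,\C))$ an equivalence. An internal algebra in $\underline{\IntAlg}(\X)$ amounts to a lax map $*\to\underline{\Hom}_l(*,\X)$, hence a lax binary map $*,*\to\X$. Using Lemma \ref{lemma: binary}, this is an object with two internal-algebra structures that are compatible through the Gray cells. The equivalence $\widetilde{(-)}$, an Eckmann--Hilton-type rigidity, should force the two structures to agree, so that $\delta$ is an isomorphism.

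We expect the main obstacle to be this last identification: showing that the canonical comparison of lax binary maps $*,*\to \X$ with lax maps $*\to\X$, induced by $\mu$ via the diagonal $\T\to\T\otimes_{\F^{op}}^{s,l}\T$, is inverse to restriction. Our first attempt would be to express $\Mul_2(*,*;\X)$ as $\Hom_l$ out of $*$ in $\Mod_l(\T\otimes_{\F^{op}}^{s,l}\T,\Cat)$, which is possible because the Day-convolution multimaps are natural transformations $\X\otimes_{s,l}\Y\to\mZ\mu$. We would then transport along the equivalence $\mu'$ from Definition \ref{def: Eckmann-Hilton}.
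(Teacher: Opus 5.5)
Your proposal follows the paper's proof. It uses the unique monoid $m\colon *,*\to *$, $u\colon ()\to *$ in $\Mod_l(\T,\Cat)^\otimes$, and reads off the comonad structure from $m^*$ and $u^*$ under the closedness isomorphisms. For idempotence it identifies $\Mul_2(*,*;\X)$ with $\Hom_l(\widetilde{*},\widetilde{\X})$, using $\times\circ(*\otimes_{s,l}*)=*\circ\mu$, and then applies lax idempotence. That is exactly your plan of rewriting $\Mul_2(*,*;\X)$ over $\T\otimes^{s,l}_{\F^{op}}\T$ and transporting along $\mu'$.

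One step is missing. Your idempotence argument only controls the underlying functor $\delta_{\X,1}\colon \Hom_l(*,\X)\to \Hom_l(*,\underline{\Hom}_l(*,\X))\cong\Mul_2(*,*;\X)$. If $\delta_\X$ were a genuinely lax homomorphism, invertibility of this functor would not make $\delta_\X$ invertible in $\Mod_l(\T,\Cat)$. Transporting its structure cells to the inverse functor reverses their direction, so the inverse is only colax.

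The paper supplies the needed observation: $\delta_\X$ and $\varepsilon_\X$ are \emph{strict} homomorphisms. The evaluation multimap $\underline{\Hom}_l(*,\X),*\to\X$ corresponds to an identity under closedness, so its first-variable cells $f_{\alpha,1}$ are identities. The maps $m$ and $u$ only act in the remaining variables, so this triviality persists after composing with them.

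This also shows your concrete description of $\varepsilon_\X$ is wrong. The algebra structure maps sit in the second-variable cells $f_{1,\alpha}$ of evaluation, and these disappear once $u$ is plugged into the second slot. So $\varepsilon_\X$ is $\ev_1$ with identity structure cells.

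Once $\delta_\X$ is known to be strict, invertibility of $\delta_{\X,1}$ suffices. A strict homomorphism has component $\delta_{\X,1}^n$ at $n$, and its pointwise inverse is again strict.
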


\begin{proof}
    %Denote again by $*$ the constant functor $\T\to\Cat$ with value the terminal category. 
    As the diagram below commutes, $*$ has a unique monoid structure $m\colon *,*\to *$, $u\colon ()\to *$ in the $(2,2)$-multicategory $\underline{\Mod}_{\lax}(\T,\Cat)$.
    % https://q.uiver.app/#q=WzAsNCxbMCwwLCJcXFRcXG90aW1lc197dyxzfVxcVCJdLFsyLDAsIlxcQ2F0XFxvdGltZXNfe3csc31cXENhdCJdLFswLDEsIlxcVCJdLFsyLDEsIlxcQ2F0Il0sWzAsMSwiKlxcb3RpbWVzKiJdLFsxLDMsIlxcdGltZXMiXSxbMiwzLCIqIiwyXSxbMCwyLCJcXG11IiwyXV0=
\begin{equation} \label{dia: point} \begin{tikzcd}
	{\T\otimes_{w,s}\T} && {\Cat\otimes_{w,s}\Cat} \\
	\T && \Cat
	\arrow["{*\otimes*}", from=1-1, to=1-3]
	\arrow["\mu"', from=1-1, to=2-1]
	\arrow["\times", from=1-3, to=2-3]
	\arrow["{*}"', from=2-1, to=2-3]
\end{tikzcd}\end{equation}
    
    From that, it formally follows\footnote{We have learned this fact, including the proof, from John Bourke.} that $\underline{\Hom}_w(*,-)$ is a $2$-comonad in the underlying $2$-category of unary maps, i.e. $\Mod_w(\T,\Cat)$. In particular, for any object $\Y$, the comultiplication component $\Delta_{\Y}$ and the counit component $\epsilon_{\Y}$ are determined by the following commutative squares:

   % https://q.uiver.app/#q=WzAsOSxbMCwwLCJcXE11bF8xKFxcWCxcXHVuZGVybGluZXtcXEhvbX1fdygqLFxcWSkpIl0sWzEsMiwiXFxNdWxfMyhYLCosKjtcXFkpLCJdLFswLDIsIlxcTXVsXzIoXFxYLCo7XFxZKSJdLFsxLDEsIlxcTXVsXzIoXFxYLCo7XFx1bmRlcmxpbmV7XFxIb219X3coKixcXFkpKSJdLFsxLDAsIlxcTXVsXzEoXFxYLFxcdW5kZXJsaW5le1xcSG9tfV93KCosXFx1bmRlcmxpbmV7XFxIb219X3coKixcXFkpKSkiXSxbMCwzLCJcXE11bF8xKFxcWCxcXHVuZGVybGluZXtcXEhvbX1fdygqLFxcWSkpIl0sWzEsMywiXFxNdWxfMShcXFgsXFxZKSJdLFswLDQsIlxcTXVsXzIoXFxYLCo7XFxZKSJdLFsxLDQsIlxcTXVsXzEoXFxYLFxcWSkuIl0sWzIsMSwibV4qIl0sWzAsMiwiXFxjb25nIiwyXSxbMywxLCJcXGNvbmciXSxbNCwzLCJcXGNvbmciXSxbMCw0LCJcXERlbHRhX3tcXFl9XioiXSxbNSw3LCJcXGNvbmciLDJdLFs1LDYsIlxcZXBzaWxvbl97XFxZfV4qIl0sWzYsOCwiIiwyLHsibGV2ZWwiOjIsInN0eWxlIjp7ImhlYWQiOnsibmFtZSI6Im5vbmUifX19XSxbNyw4LCJ1XioiLDJdXQ==
\[\begin{tikzcd}
	{\Mul_1(\X,\underline{\Hom}_w(*,\Y))} & {\Mul_1(\X,\underline{\Hom}_w(*,\underline{\Hom}_w(*,\Y)))} \\
	& {\Mul_2(\X,*;\underline{\Hom}_w(*,\Y))} \\
	{\Mul_2(\X,*;\Y)} & {\Mul_3(X,*,*;\Y),} \\
	{\Mul_1(\X,\underline{\Hom}_w(*,\Y))} & {\Mul_1(\X,\Y)} \\
	{\Mul_2(\X,*;\Y)} & {\Mul_1(\X,\Y).}
	\arrow["{\Delta_{\Y}^*}", from=1-1, to=1-2]
	\arrow["\cong"', from=1-1, to=3-1]
	\arrow["\cong", from=1-2, to=2-2]
	\arrow["\cong", from=2-2, to=3-2]
	\arrow["{m^*}", from=3-1, to=3-2]
	\arrow["{\epsilon_{\Y}^*}", from=4-1, to=4-2]
	\arrow["\cong"', from=4-1, to=5-1]
	\arrow[equals, from=4-2, to=5-2]
	\arrow["{u^*}"', from=5-1, to=5-2]
\end{tikzcd}\]
    
    As both $m,$ $u$ are strict, so are $\Delta_{\Y}$ and $\epsilon_{\Y}$. Therefore, to see that $\Delta$ is an isomorphism, it is enough to see that for any $\X$, the evaluation at $1$ $$\Delta_{\X,1}\colon \Hom_w(*,\X)\to \Hom_w(*,\underline{\Hom}_w(*,\X))\cong \Mul_2(*,*;\X)$$ is invertible. We claim that there is an isomorphism $\Mul_2(*,*;\X)\cong \Hom_w(\widetilde{*},\widetilde{X)}$. Indeed, using the commutative diagram (\ref{dia: point}), we have \begin{align*}
        \Mul_2(*,*;\X)&\cong \Mod_w(\T\otimes_{s,w}\T,\Cat)(\times\circ *\otimes_{s,l}*, \X\circ \mu)\\
        &\cong \Mod_w(\T\otimes_{s,w}\T,\Cat)(*\circ \mu, \X\circ \mu)\\
        &\cong \Mod_w(\T,\Mod_w(\T,\Cat))(\widetilde{*},\widetilde{\X}).
    \end{align*} 

    If $\T$ is lax idempotent, we obtain by Lemma \ref{lemma: idempotence} that $\Hom_w(*,\X)\to \Hom_w(\widetilde{*},\widetilde{\X})$ is an isomorphism. As one can easily identify this map with $\Delta_{\X,1}$, this finishes the proof.
\end{proof}

\begin{example}
    Consider the example $\T=\T_{E_\infty}$. Recall that for any $w\in W$, the $2$-category $\Mod_w(\T_{E_\infty},\Cat)$ is the $2$-category of symmetric monoidal categories, symmetric $w$-monoidal functors, and monoidal natural transformations. Since $\T$ is pseudocommutative and pseudoidempotent, we get that $\underline{\IntAlg}$ is an idempotent $2$-comonad on $\Mod_p(\T,\Cat)$. The (slight strenghtening of) result of Fox identifies the $2$-category of $\underline{\IntAlg}$-coalgebras and their pseudohomomorphism with $\Cat^\coprod$, the 2-category of categories with finite coproducts and coproduct-preserving functors.

    As pseudocommutativity implies lax commutativity, we obtain a $2$-comonad $\underline{\IntAlg}'$ on $\Mod_l(\T,\Cat)$ as well. We can again identify the coalgebras with categories with finite products, but now the homomorphisms of coalgebras correspond to \textit{all} functors (as any functor between cocartesian monoidal categories is automatically lax monoidal).
\end{example}

\begin{example}
    Consider the Lawvere $1$-theory $\T_*$ for pointed objects, i.e. it is given by freely adjoining a map $u\colon 0\to 1$ to $\F^{op}$. We have a basis $B=\{u\}$ and we see that $\T_*$ is commutative and idempotent. Models of $\T$ in $\Cat$ are pointed categories, and we can identify the $2$-category $\Mod_w(\T,\Cat)$ with the $w$-slice under a point which we denote by $\Cat_{*/,w}$: objects are functors $*\to \C$ and morphims a triangles filled with a $w$-cell. For a pointed category $*\xrightarrow{x}\C$, we can see that the category of internal algebras is precisely the slice category $\C_{x/}$, and one can identify $\underline{\IntAlg}$-coalgebras and their homomorphisms with the category $\Cat^\emptyset$ of categories with an initial object and functors preserving initial objects (up to isomorphism). %Moreover, considering homomorphisms of $\underline{\IntAlg}_l$-coalgebras are all the functors of the underlying categories, same as in the previous example.
\end{example}

We can generalize the Fox's theorem to the setting of $\infty$-categories with some extra assumptions -- probably, neither of those are necessary and we would like to get rid of them in future work.

Let $\T$ be a Lawvere $(\infty,2)$-theory with a strict commutativity $\mu$. By Definition \ref{def: variations of intalg}, we can consider endofunctors $$\underline{\IntAlg},\underline{\IntCoalg}\colon \Mod_{s}(\T,\Cat)\to \Mod_{s}(\T,\Cat)$$ sending a model $\X\colon \T\to\Cat$ to $\underline{\Hom}_l(*,\X)$, $\underline{\Hom}_{l^*}(*,\X)$, respectively. 

\begin{proposition} \label{prop: restricting fox}
    Suppose that the following holds:
    \begin{itemize}
        \item the $(\infty,2)$-operad $\Mod_{s}(\T,\Cat_{\infty})^\otimes\to \Delta^{op}$ is in fact a monoidal $(\infty,2)$-category,a cocartesian $2$-fibration, 
        \item there is an object $\P\in \Alg(\Mod_{s}(\T,\Cat_{\infty})^\otimes)$ and a lax homomorphism of models $*\to \P$ inducing an isomorphism $\underline{\Hom}_s(\P,\X)\simeq \underline{\Hom}_l(*,\X)$.
    \end{itemize}
    Then $\underline{\IntAlg}$ is a comonad. If $\T$ is a pseudoidempotent $(2,2)$-theory, this comonad is idempotent.
\end{proposition}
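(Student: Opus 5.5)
The plan is to run the argument of Theorem \ref{thm: fox} inside the monoidal $(\infty,2)$-category furnished by the first hypothesis. Assuming $\Mod_{s}(\T,\Cat_{\infty})^\otimes\to\Delta^{op}$ is a cocartesian $2$-fibration, the underlying $(\infty,2)$-category $\Mod_s(\T,\Cat_\infty)$ carries a monoidal structure $\circledast$. By Corollary \ref{corr: closed operad} this structure is closed, with internal hom $\underline{\Hom}_s(-,-)$, so
\[
\Hom_s(\X\circledast\Y,\mZ)\simeq\Mul_2(\X,\Y;\mZ)\simeq\Hom_s(\X,\underline{\Hom}_s(\Y,\mZ)).
\]
For the algebra $\P\in\Alg(\Mod_s(\T,\Cat_\infty)^\otimes)$, the functor $-\circledast\P$ is then a monad on $\Mod_s(\T,\Cat_\infty)$; this is the standard $\infty$-categorical fact that an algebra in a monoidal $(\infty,2)$-category acts by a monad, obtained from the action of $\End$ on the right module $\P$. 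Its right adjoint $\underline{\Hom}_s(\P,-)$ therefore inherits a comonad structure, since adjunctions transport monads to comonads (mates). The second hypothesis identifies $\underline{\Hom}_s(\P,-)$ with $\underline{\Hom}_l(*,-)=\underline{\IntAlg}$, and I will check that this identification is natural in $\X$ and respects the $\T$-model structures. Naturality should hold because the comparison is precomposition with the fixed lax homomorphism $*\to\P$, and it lifts to $\underline{\Hom}$ because both sides are obtained from $\widetilde{\X}$ by the recipe of Definition \ref{def: variations of intalg}. Transporting the comonad structure along this equivalence then gives the first claim.

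For idempotence, it suffices to show that the comultiplication $\underline{\Hom}_s(\P,\X)\to\underline{\Hom}_s(\P,\underline{\Hom}_s(\P,\X))\simeq\underline{\Hom}_s(\P\circledast\P,\X)$ is an equivalence. Equivalently, the multiplication $\P\circledast\P\to\P$ should induce an equivalence on $\underline{\Hom}_s(-,\X)$ for every $\X$, and since equivalences in $\Mod_s$ are detected on underlying categories, it is enough to check this after evaluation at $1$. Following the proof of Theorem \ref{thm: fox}, I rewrite the underlying category via the second hypothesis and closedness:
\[
\Hom_s(\P\circledast\P,\X)\simeq\Mul_2(\P,\P;\X)\simeq\Hom_l(*,\underline{\Hom}_l(*,\X)),
\]
which in turn is the category of lax maps $\widetilde{*}\to\widetilde{\X}$ in $\Mod_l(\T,\Mod_l(\T,\Cat))$, obtained through the commutative square (\ref{dia: point}) and Lemma \ref{lemma: sketchy equivalence}. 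When $\T$ is a pseudoidempotent $(2,2)$-theory, Lemma \ref{lemma: idempotence} makes the lifting functor an equivalence, so $\Hom_l(\widetilde{*},\widetilde{\X})\simeq\Hom_l(*,\X)$. It then remains to identify this equivalence with the one induced by the multiplication.

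I expect that last identification to be the main obstacle, together with the middle equivalence $\Mul_2(\P,\P;\X)\simeq\Hom_l(*,\underline{\Hom}_l(*,\X))$. The middle step uses the second hypothesis twice, once in each variable, and this needs the equivalence $\underline{\Hom}_s(\P,-)\simeq\underline{\Hom}_l(*,-)$ to be an equivalence of $\T$-models, not merely of underlying categories. My first approach is to use the naturality in $\X$ of the second hypothesis, and to argue that $\mu$-lifting commutes with precomposition by $*\to\P$ because $\widetilde{(-)}$ is given by precomposition with $\mu$. A further subtlety is that Lemma \ref{lemma: idempotence} is stated for $(2,2)$-theories. I would apply it via the homotopy $2$-category as in Remark \ref{rmk: syntax of oo,2 theories}, checking that idempotence of $\mu'$ still gives an equivalence $\Mod_l(\T\otimes_{\F^{op}}\T,\Cat_\infty)\simeq\Mod_l(\T,\Cat_\infty)$ when $\Cat_\infty$ is the target.
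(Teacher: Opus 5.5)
This is essentially the paper's argument. The comonad structure on $\underline{\Hom}_s(\P,-)$ comes from the monad $-\circledast\P$ by passing to right adjoints; the paper packages this as a general lemma, using the monoidal equivalence between left and right adjoint endofunctors. Idempotence is then obtained by rerunning the proof of Theorem \ref{thm: fox}.

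One point needs tightening, and the paper glosses over it too. Lemma \ref{lemma: idempotence} applied with pseudoidempotence only compares pseudo hom-categories, so it does not by itself give $\Hom_l(\widetilde{*},\widetilde{\X})\simeq\Hom_l(*,\X)$. You should add that your $\mu$ is strict, hence factors through $\T\otimes_{s,p}\T$. Restricting lax transformations along $\T\otimes_{s,l}\T\to\T\otimes_{s,p}\T$ does not change lax hom-categories, because this map is the identity on objects and $1$-cells and merely inverts Gray cells. After that, $\mu'$ being an equivalence concludes.
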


\begin{proof}
    We adapt the proof of the previous version. The fact that $\P$ is a monoid implies that $\underline{\Hom}_s(\P,-)$ is a comonad; we prove that more generally in a Lemma below. To show the second part, we can proceed the same as in  the previous proof.
\end{proof}

\begin{lemma}
    Let $(\V,\otimes, I,[-,-])$ be a closed monoidal category and $X$ a monoid in $\V$. Then the endofunctor $[X,-]\colon \V\to \V$ inherits a comonad structure.
\end{lemma}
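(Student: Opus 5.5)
The plan is to transport the monoid structure on $X$ along the closed structure, using the Yoneda lemma in $\V$. Write $m\colon X\otimes X\to X$ and $u\colon I\to X$ for the multiplication and unit. The comonad structure on $[X,-]$ will be the image of the monoid $(X,m,u)$ under the functor $X\mapsto [X,-]$. This functor is contravariant, and it sends $\otimes$ to composition, since $[X\otimes X,-]\cong [X,[X,-]]$ and $[I,-]\cong \id$.

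First I fix the convention for the internal hom. I take $[Y,-]$ right adjoint to $-\otimes Y$, so that $\V(A\otimes Y,Z)\cong\V(A,[Y,Z])$. The currying isomorphism $c\colon [X\otimes X,Z]\cong[X,[X,Z]]$ is natural in $Z$. It comes from
\[
\V(A,[X\otimes X,Z])\cong\V(A\otimes(X\otimes X),Z)\cong\V((A\otimes X)\otimes X,Z)\cong\V(A\otimes X,[X,Z])\cong\V(A,[X,[X,Z]])
\]
together with Yoneda. The unitor gives a natural isomorphism $r\colon [I,Z]\cong Z$. With these in hand I define
\[
\Delta_Z\colon [X,Z]\xrightarrow{[m,Z]}[X\otimes X,Z]\xrightarrow{c}[X,[X,Z]],\qquad \epsilon_Z\colon [X,Z]\xrightarrow{[u,Z]}[I,Z]\xrightarrow{r}Z .
\]
Both are natural in $Z$, because $[-,Z]$ is a bifunctor and $c$ and $r$ are natural.

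Next I check the comonad axioms. Rather than manipulating internal homs directly, I apply $\V(A,-)$ and use Yoneda. Under the adjunction, $\V(A,\Delta_Z)$ becomes precomposition with $A\otimes m$ on $\V(A\otimes X,Z)$, after reassociating $A\otimes X\otimes X$. Likewise $\V(A,\epsilon_Z)$ becomes precomposition with $A\otimes u$ followed by the unitor.

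Coassociativity, $[X,\Delta_Z]\circ\Delta_Z=\Delta_{[X,Z]}\circ\Delta_Z$, then reduces to precomposition on $\V(A\otimes X^{\otimes 3},Z)$ by $A\otimes(m\circ(m\otimes 1))$ versus $A\otimes(m\circ(1\otimes m))$, up to associators. These agree by associativity of $m$. Counitality reduces in the same way to the two unit laws $m\circ(u\otimes 1)\cong\lambda$ and $m\circ(1\otimes u)\cong\rho$.

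The main obstacle is bookkeeping. One must check that $[X,\Delta_Z]$ and $\Delta_{[X,Z]}$ transform under the adjunction into precomposition by $m$ acting on the correct tensor factor. The first acts on the "inner" $X$ and the second on the "outer" one, and this has to be matched with the Mac Lane coherence of the associators. I will handle this by writing $[X,[X,[X,Z]]]\cong[X^{\otimes3},Z]$ via iterated currying and verifying that the two currying routes agree, using the pentagon axiom. In a possibly non-strict or $\infty$-setting, the cleaner route is to note that $Y\mapsto[Y,-]$ is a monoidal functor $\V^{op}\to\End(\V)^{\otimes\text{-rev}}$, where $\End(\V)$ carries composition. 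The currying and unitor isomorphisms supply its monoidal structure, and they are coherent because they come from the associativity and unit constraints through Yoneda. A monoidal functor sends monoids to monoids. A monoid in $\V^{op}$ is the same as a monoid in $\V$ with arrows reversed, and a monoid in $\End(\V)$ taken in $\V^{op}$ orientation is exactly a comonad. This yields the statement with all higher coherences packaged at once.
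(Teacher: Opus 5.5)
Your main argument is correct and complete for an ordinary closed monoidal category, and it takes a genuinely different, more elementary route than the paper. You define $\Delta_Z=c\circ[m,Z]$ and $\epsilon_Z=r\circ[u,Z]$, then verify coassociativity and counitality by transporting along $\V(A,-)$. There they become precomposition with $A\otimes(m\circ(m\otimes 1))$ versus $A\otimes(m\circ(1\otimes m))$, and with $A\otimes(m\circ(u\otimes 1))$ and its mirror.

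The paper never writes down $\Delta$ or $\epsilon$. It argues as follows:
\begin{enumerate}
\item Since $\V$ is a module over itself and $\Fun(\V,\V)$ with composition is Lurie's endomorphism object \cite[4.7.1]{Lur17}, the functor $X\mapsto X\otimes-$ is automatically monoidal.
\item Closedness makes it land in $\Fun^{\ladj}(\V,\V)$.
\item The monoidal mate equivalence $\Fun^{\ladj}(\V,\V)^{op}\simeq\Fun^{\radj}(\V,\V)^{\text{rev}}$ of \cite[Thm.~B]{HHLN23} then gives a monoidal functor $\V^{op}\to\Fun^{\radj}(\V,\V)^{\text{rev}}$, $X\mapsto[X,-]$.
\item This functor sends the comonoid $X\in\V^{op}$ to a comonad.
\end{enumerate}
What the paper's route buys is validity in the $\infty$-categorical setting, which is where the lemma is actually used. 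In Prop.~\ref{prop: restricting fox} it is applied to the monoidal $(\infty,2)$-category $\Mod_s(\T,\Cat_\infty)^\otimes$. There, checking the two comonad axioms by hand does not produce a comonad, because all higher coherences are required.

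Your final paragraph sketches essentially the paper's strategy, but two points need fixing.
\begin{itemize}
\item You claim the currying and unitor isomorphisms make $Y\mapsto[Y,-]$ monoidal coherently ``because they come from the constraints through Yoneda''. In the $\infty$-setting this is exactly the nontrivial step, not something Yoneda delivers. The paper avoids building these coherences by hand. It obtains monoidality for $X\mapsto X\otimes-$ for free and transports it along the mate correspondence, which is a theorem.
\item Your orientation is garbled. A monoid in $\V^{op}$ is a comonoid in $\V$, and transporting it would give a monad, not a comonad. What you need is that your monoid $X$ is a \emph{comonoid} in $\V^{op}$. Monoidal functors preserve comonoids, so $[X,-]$ becomes a comonoid in the endofunctor category, i.e.\ a comonad.
\end{itemize}
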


\begin{proof}
    The functor $\V\times \V\to \V$ sending $(X,Y)$ to $\X\otimes X$ gives $\V$ a structure of a module over itself. As $\Fun(\V,\V)$ with the monoidal structure given by composition is the endomorphsim object in sense of Lurie \cite[4.7.1]{Lur17}, the corresponding functor $\V\to \Fun(\V,\V)$ sending $X$ to $X\otimes -$ is monoidal. As $\V$ is closed, this functor factors through the full (monoidal) subcategory $\Fun^{\ladj}(\V,\V)$ spanned by left adjoints. Applying $(-)^{op}$, we obtain a functor $\V^{op}\to \Fun^{\ladj}(\V,\V)^{op}\simeq \Fun^{\radj}(\V,\V)^{\text{rev}}$; the latter is monoidal equivalence by \cite[Thm. B]{HHLN23} (by taking $B=*$). This functor sends $X$ to $[X,-]$ and it is monoidal, so it sends comonoids in $\V^{op}$ -- i.e., monoids in $\V$ -- to comonoids in $\Fun^{\radj}(\V,\V)^{\text{rev}}$ -- i.e., comonads.
\end{proof}

\section{Open questions} \label{sec: future}

There is a number of loose ends in this paper, some of them already mentioned. Aside from those, there is a number of other interesting questions. Let us mention some of them.

%It would be interesting to explore some other examples of $\mF$-sketches, for example the sketch for fibrations \cite[Example 5.15]{ABK24}.

\subsection{Lurie's tensor product and $w$-commutativity}

    In case that $\T$ is a Lawvere $(\infty,1)$-theory and $w=s$, we can formulate the results on commutativity differently using Lurie's tensor product $\otimes_{\Lur}$ on the $\infty$-category $\Pr^L$ of presentable $\infty$-categories and left adjoint functors. Denoting by $\mathcal{S}$ the $\infty$-category of $\infty$-groupoids, we get that $\Mod_{\T} :=\Mod_s(\T,\mathcal{S})$ is presentable and more generally, for any presentable $\infty$-category $\C$, we get that $\Mod_s(\T,\C)$ is again presentable and equivalent to the Lurie's tensor product $\Mod_{\T}\otimes_{\Lur} \C$; these results are due to \cite[Appendix B]{GGN16}. Therefore, we can write $\Mod_s(\T,\Mod_{\T})\simeq \Mod_{\T}\otimes_{\Lur}\Mod_{\T}$. Under these identifications, commutativity on $\T$ corresponds to a coalgebra structure in $\Pr^R$ on $\Mod_{\T}$ with counit being the forgetful functor $\Mod_{\T}\to \mathcal{S}$ and comultiplication $\Mod_{\T}\to \Mod_{\T}\otimes_{\Lur}\Mod_{\T}$ being $\widetilde{(-)}$. This observation, formulated in a slightly different language, was made by Berman \cite{Ber20}. We expect similar results to hold for $(\infty,2)$-theories.

\subsection{2-dimensional commutativity for monads}
A notion of pseudocommutativity for $(2,2)$-monads was developed by Hyland and Power \cite[Def. 5]{HP02}, and it consists of the invertible $2$-cells $\gamma_{A,B}$ below (where $t,$ $t^*$ are the left and right strength), natural in $A, B$ and satisfying plethora of axioms. Dropping the invertibility, one would obtain the notion of (co)lax commutativity.

    % https://q.uiver.app/#q=WzAsNixbMCwwLCJUQVxcdGltZXMgVEIiXSxbMSwwLCJUKEFcXHRpbWVzIFRCKSJdLFsyLDAsIlReMihBXFx0aW1lcyBCKSJdLFsyLDEsIlQoQVxcdGltZXMgQikiXSxbMCwxLCJUKFRBXFx0aW1lcyBCKSJdLFsxLDEsIlReMihBXFx0aW1lcyBCKSJdLFswLDEsInReKiJdLFsxLDIsIlQodCkiXSxbMCw0LCJ0IiwyXSxbNCw1LCJUKHReKikiLDJdLFsyLDMsIlxcbXVfe0FcXHRpbWVzIEJ9Il0sWzUsMywiXFxtdV97QVxcdGltZXMgQn0iLDJdLFsxLDUsIlxcZ2FtbWFfe0EsQn0iLDAseyJzaG9ydGVuIjp7InNvdXJjZSI6MzAsInRhcmdldCI6MzB9LCJsZXZlbCI6Mn1dXQ==
\begin{equation}\label{dia: monads}
\begin{tikzcd}
	{TA\times TB} & {T(A\times TB)} & {T^2(A\times B)} \\
	{T(TA\times B)} & {T^2(A\times B)} & {T(A\times B)}
	\arrow["{t^*}", from=1-1, to=1-2]
	\arrow["t"', from=1-1, to=2-1]
	\arrow["{T(t)}", from=1-2, to=1-3]
	\arrow["{\gamma_{A,B}}", between={0.3}{0.7}, Rightarrow, from=1-2, to=2-2]
	\arrow["{\mu_{A\times B}}", from=1-3, to=2-3]
	\arrow["{T(t^*)}"', from=2-1, to=2-2]
	\arrow["{\mu_{A\times B}}"', from=2-2, to=2-3]
\end{tikzcd}\end{equation}

  As Lawvere $(2,2)$-theories correspond to finitary $(2,2)$-monads on $\Cat$ by \cite{Pow99}, the natural expectation is that the notions of $w$-commutativity for theories and monads coincide. Indeed, for pseudocommutativity, one could show it by checking that the $(2,2)$-multicategory structure on the $(2,2)$-category of models we developed in this paper coincides with the one defined by Hyland and Power in \textit{loc. cit.}
  
  Unlike our definition of $w$-commutativity, which works for arbitrary $(\infty,2)$-theories, the definition of $w$-commutativity for monads is not suitable to the generalization to $(\infty,2)$-monads. The question is whether we can arrive at more conceptual definition of $w$-commutativity for monads that would transfer to the higher setting. Instead of answering that, we outline one possible approach to make the definition of $w$-commutativity for monads looks more similar to the one for theories.

    Having a Lawvere $(2,2)$-theory $\T$ with a pseudocommutativity given by $2$-cells $\sigma_{\alpha\beta}$, consider the corresponding $(2,2)$-monad $TX:=\int^nX^n\times \T(n,1)$; in particular, $Tn=\T(n,1)$. Consider also the $2$-functor $SX:=\int^{m,n}X^{mn}\times Tm \times Tn$. We have two natural transformations $\otimes_l, \otimes_r\colon S\Rightarrow T$ with components $SX\to TX$ defined by maps $X^{mk}\times Tm\times Tk\to X^{mk}\times T{mk}$ which are identity in the first component and send $(\alpha, \beta)\in Tm\times Tk$ to $\alpha\otimes\beta$, $\beta\otimes \alpha$, respectively. The $2$-cells $\sigma_{\beta\alpha}$ produce a modification $m\colon \otimes_l\to \otimes_r.$ Then, define a natural transformation $d_{X,Y}\colon TX\times TY \to S(X\times Y)$ induced by $X^m\times Tm \times Y^k\times Tk\to (X\times Y)^{mk} \times Tm\times Tk$, i.e. sending the quadruple $[(x_i),\alpha,(y_j),\beta]$ to the triple $[(x_i,y_j),\alpha,\beta]$. % https://q.uiver.app/#q=WzAsNCxbMCwwLCJcXENhdFxcdGltZXNcXENhdCJdLFsxLDAsIlxcQ2F0XFx0aW1lc1xcQ2F0Il0sWzAsMSwiXFxDYXQiXSxbMSwxLCJcXENhdCJdLFswLDEsIihULFQpIl0sWzAsMiwiXFx0aW1lcyIsMl0sWzEsMywiXFx0aW1lcyJdLFsyLDMsIlMiLDJdLFs0LDcsImQiLDAseyJzaG9ydGVuIjp7InNvdXJjZSI6MjAsInRhcmdldCI6MjB9fV1d
\[\begin{tikzcd}
	\Cat\times\Cat & \Cat\times\Cat \\
	\Cat & \Cat
	\arrow[""{name=0, anchor=center, inner sep=0}, "{(T,T)}", from=1-1, to=1-2]
	\arrow["\times"', from=1-1, to=2-1]
	\arrow["\times", from=1-2, to=2-2]
	\arrow[""{name=1, anchor=center, inner sep=0}, "S"', from=2-1, to=2-2]
	\arrow["d", shorten <=4pt, shorten >=4pt, Rightarrow, from=0, to=1]
\end{tikzcd}\]

We would now hope that the cells $\gamma_{X,Y}$ given by the composition% https://q.uiver.app/#q=WzAsMyxbMCwwLCJUWFxcdGltZXMgVFkiXSxbMSwwLCJTKFhcXHRpbWVzIFkpIl0sWzIsMCwiVChYXFx0aW1lcyBZKSJdLFswLDEsImRfe1gsWX0iXSxbMSwyLCJcXG90aW1lc19sIiwwLHsiY3VydmUiOi0zfV0sWzEsMiwiXFxvdGltZXNfciIsMix7ImN1cnZlIjozfV0sWzQsNSwibSIsMCx7InNob3J0ZW4iOnsic291cmNlIjoyMCwidGFyZ2V0IjoyMH19XV0=
\[\begin{tikzcd}
	{TX\times TY} & {S(X\times Y)} & {T(X\times Y)}
	\arrow["{d_{X,Y}}", from=1-1, to=1-2]
	\arrow[""{name=0, anchor=center, inner sep=0}, "{\otimes_l}", curve={height=-18pt}, from=1-2, to=1-3]
	\arrow[""{name=1, anchor=center, inner sep=0}, "{\otimes_r}"', curve={height=18pt}, from=1-2, to=1-3]
	\arrow["m", between={0.2}{0.8}, Rightarrow, from=0, to=1]
\end{tikzcd}\]
define a pseudocommutativity on $T$ in the sense of Hyland and Power. It seems that something similar has been done in a recent paper by Manco \cite{Man25}. 

\subsection{More on Fox's theorem}

 In this paper, we have not touched the second part of Fox's theorem, which says that algebras for the comonad $\underline{\CMon}$ are cartesian monoidal categories. Note that it is a \textit{property} of a category to be cartesian monoidal, i.e., the $\underline{\CMon}$-coalgebras are algebras for a lax idempotent $2$-monad on $\Cat$ (the finite product completion $2$-monad). Recently, a similar result was proved by Kudzman-Blais for linearly distributive categories \cite{Kud25}. \vspace{0.5em}   
 
    In our general setting, denote by $Q$ the comonad $$\underline{\IntAlg}\colon \Mod_w(\T,\Cat)\to \Mod_w(\T,\Cat),$$ where $\T$ is a theory with a $w$-commutativity. We can consider the forgetful functor $U\colon Q\text{-}\mathsf{Coalg}\to \Mod_w(\T,\Cat)\to \Cat$; under some mild assumptions, this is monadic and it holds for the corresponding monad $P$ on $\Cat$ that $P\text{-}\mathsf{Alg}\cong Q\text{-}\mathsf{Coalg}$.\footnote{We have learned this from John Bourke.} For $\T_{E_\infty}$, $P$ is the monad for cartesian categories. The question then becomes: when is $P$ lax idempotent? (That generalizes the observation that being coalgebra for $Q$ is a property, as lax idempotent monads on $\Cat$ capture properties.)

\appendix
\section{Fubini's rule for lax ends}

The aim of this appendix is to fill in the gap in the proof of Theorem \ref{thm: closed structure} by inspecting lax ends more carefully. As the original reference for lax ends is in French \cite{Boz76}, we follow the note of Hirata \cite{Hir22}.

\begin{remark} \label{remark: higher lax ends}
    In the theory of ``strict'' ends, the important result is that we can write an end as a limit indexed over the twisted arrow ($2$-)category; for any $T\colon \A^{op}\times \A\to \C$, we get $\int_a T(a,a)=\lim Fp$ where $p\colon \Tw(\A)\to \A$ is a Grothendieck construction of the two-sided Hom-functor $\A^{op}\times \A\to \Cat$. This makes the theory of ends easier and suitable for generalization to $\infty$-categorical setting. Unfortunately, lax ends cannot be written as a lax limit over the twisted arrow category (at least not naively) and hence their theory is more delicate. In particular, the text below contains a lot of explicit diagram chasing and is not suitable for $\infty$-categorical generalization.

    Nevertheless, our hope is that this defect can be resolved using $\mF$-categories (or marked $(\infty,2)$-categories). In particular, after introducing an $\mF$-category structure on $\A^{op}\times \A$ such that tight $1$-cells are of the form $(f,1)$ and lifting that to an $\mF$-category structure on the twisted arrow $2$-category $\Tw(\A)$, it seems that a lax end $\int_a^{lax}T(a,a)$ can be computed as a ``partially lax limit'' of $Fp$, meaning a right adjoint to the constant functor $\C\to \Fun_{s,l}(\Tw(\A),\C).$ We hope to investigate this in future work.
\end{remark}

\begin{definition} \cite[Def. 2.1]{Hir22}
    For a 2-functor $T\colon \A^{op}\times \A\to \C$, a \textit{lax wedge} $x\to T$ consists of an object $c$ of $\C$, $1$-cells $\sigma_a\colon c\to T(a,a)$ for each object $a$ of $\A$, and for each $1$-cell $f\colon a\to b$ in $\A$, the following $2$-cell: 
% https://q.uiver.app/#q=WzAsNCxbMCwwLCJjIl0sWzEsMCwiVChhLGEpIl0sWzAsMSwiVChiLGIpIl0sWzEsMSwiVChhLGIpIl0sWzAsMSwiXFxzaWdtYV9hIl0sWzAsMiwiXFxzaWdtYV9iIiwyXSxbMSwzLCJUKDEsZikiXSxbMiwzLCJUKGYsMSkiLDJdLFs0LDcsIlxcc2lnbWFfZiIsMCx7InNob3J0ZW4iOnsic291cmNlIjo0MCwidGFyZ2V0Ijo0MH19XV0=
\[\begin{tikzcd}
	c & {T(a,a)} \\
	{T(b,b)} & {T(a,b)}
	\arrow[""{name=0, anchor=center, inner sep=0}, "{\sigma_a}", from=1-1, to=1-2]
	\arrow["{\sigma_b}"', from=1-1, to=2-1]
	\arrow["{T(1,f)}", from=1-2, to=2-2]
	\arrow[""{name=1, anchor=center, inner sep=0}, "{T(f,1)}"', from=2-1, to=2-2]
	\arrow["{\sigma_f}", between={0.4}{0.6}, Rightarrow, from=0, to=1]
\end{tikzcd}\]
Moreover, we require $\sigma_{\id}=\id$ and the following two kinds of equations: 
% https://q.uiver.app/#q=WzAsOCxbMSwwLCJjIl0sWzIsMSwiVChhLGEpIl0sWzAsMSwiVChhJyxhJykiXSxbMSwyLCJUKGEsYScpIl0sWzQsMCwiYyJdLFs1LDEsIlQoYSxhKSJdLFs0LDIsIlQoYSxhJykiXSxbMywxLCJUKGEnLGEnKSJdLFswLDIsIlxcc2lnbWFfe2EnfSIsMl0sWzAsMSwiXFxzaWdtYV97YX0iXSxbMSwzLCJUKDEsZikiLDAseyJjdXJ2ZSI6LTN9XSxbMiwzLCJUKGYsMSkiLDEseyJjdXJ2ZSI6LTN9XSxbMiwzLCJUKGcsMSkiLDIseyJjdXJ2ZSI6M31dLFsxLDcsIj0iLDEseyJzdHlsZSI6eyJib2R5Ijp7Im5hbWUiOiJub25lIn0sImhlYWQiOnsibmFtZSI6Im5vbmUifX19XSxbNCw1LCJcXHNpZ21hX2EiXSxbNCw3LCJcXHNpZ21hX3thJ30iLDJdLFs3LDYsIlQoZywxKSIsMix7ImN1cnZlIjozfV0sWzUsNiwiVCgxLGYpIiwwLHsiY3VydmUiOi0zfV0sWzUsNiwiVCgxLGcpIiwxLHsiY3VydmUiOjN9XSxbMTEsMTIsIlQoXFxhbHBoYSwxKSIsMix7InNob3J0ZW4iOnsic291cmNlIjoyMCwidGFyZ2V0IjoyMH19XSxbOSwxMSwiXFxzaWdtYV9mIiwyLHsic2hvcnRlbiI6eyJzb3VyY2UiOjMwLCJ0YXJnZXQiOjMwfX1dLFsxNywxOCwiVCgxLFxcYWxwaGEpIiwyLHsic2hvcnRlbiI6eyJzb3VyY2UiOjIwLCJ0YXJnZXQiOjIwfX1dLFsxNCwxNiwiXFxzaWdtYV9nIiwyLHsic2hvcnRlbiI6eyJzb3VyY2UiOjMwLCJ0YXJnZXQiOjQwfX1dXQ==
\begin{equation} \label{dia: lax wedge 1}\begin{tikzcd}
	& c &&& c \\
	{T(a',a')} && {T(a,a)} & {T(a',a')} && {T(a,a)} \\
	& {T(a,a')} &&& {T(a,a')}
	\arrow["{\sigma_{a'}}"', from=1-2, to=2-1]
	\arrow[""{name=0, anchor=center, inner sep=0}, "{\sigma_{a}}", from=1-2, to=2-3]
	\arrow["{\sigma_{a'}}"', from=1-5, to=2-4]
	\arrow[""{name=1, anchor=center, inner sep=0}, "{\sigma_a}", from=1-5, to=2-6]
	\arrow[""{name=2, anchor=center, inner sep=0}, "{T(f,1)}"{description}, curve={height=-18pt}, from=2-1, to=3-2]
	\arrow[""{name=3, anchor=center, inner sep=0}, "{T(g,1)}"', curve={height=18pt}, from=2-1, to=3-2]
	\arrow["{=}"{description}, draw=none, from=2-3, to=2-4]
	\arrow["{T(1,f)}", curve={height=-18pt}, from=2-3, to=3-2]
	\arrow[""{name=4, anchor=center, inner sep=0}, "{T(g,1)}"', curve={height=18pt}, from=2-4, to=3-5]
	\arrow[""{name=5, anchor=center, inner sep=0}, "{T(1,f)}", curve={height=-18pt}, from=2-6, to=3-5]
	\arrow[""{name=6, anchor=center, inner sep=0}, "{T(1,g)}"{description}, curve={height=18pt}, from=2-6, to=3-5]
	\arrow["{\sigma_f}"', between={0.3}{0.7}, Rightarrow, from=0, to=2]
	\arrow["{\sigma_g}"', between={0.3}{0.6}, Rightarrow, from=1, to=4]
	\arrow["{T(\alpha,1)}"', between={0.2}{0.8}, Rightarrow, from=2, to=3]
	\arrow["{T(1,\alpha)}"', between={0.2}{0.8}, Rightarrow, from=5, to=6]
\end{tikzcd}\end{equation}

% https://q.uiver.app/#q=WzAsMTQsWzAsMSwiYyJdLFsxLDAsIlQoYSxhKSJdLFsxLDEsIlQoYScsYScpIl0sWzEsMiwiVChhJycsYScnKSJdLFsyLDAsIlQoYSxhJykiXSxbMiwyLCJUKGEnLGEnJykiXSxbNCwxLCJjIl0sWzUsMCwiVChhLGEpIl0sWzUsMiwiVChhJycsYScnKSJdLFs2LDAsIlQoYSxhJykiXSxbNiwyLCJUKGEnLGEnJykiXSxbNiwxLCJUKGEsYScnKSJdLFsyLDEsIlQoYSxhJycpIl0sWzMsMSwiPSJdLFswLDEsIlxcc2lnbWFfe2F9Il0sWzAsMiwiXFxzaWdtYV97YSd9IiwxXSxbMCwzLCJcXHNpZ21hX3thJyd9IiwyXSxbMSw0LCJUKDEsZikiXSxbMiw0LCJUKGYsMSkiLDJdLFszLDUsIlQoZywxKSIsMl0sWzIsNSwiVCgxLGcpIl0sWzYsNywiXFxzaWdtYV97YX0iXSxbNiw4LCJcXHNpZ21hX3thJyd9IiwyXSxbNyw5LCJUKDEsZikiXSxbOCwxMCwiVChnLDEpIiwyXSxbNyw4LCJcXHNpZ21hX3tnZn0iLDIseyJzaG9ydGVuIjp7InNvdXJjZSI6NDAsInRhcmdldCI6NDB9LCJsZXZlbCI6Mn1dLFs3LDExLCJUKDEsZ2YpIiwyXSxbOSwxMSwiVCgxLGcpIl0sWzgsMTEsIlQoZ2YsMSkiXSxbMTAsMTEsIlQoZiwxKSIsMl0sWzQsMTIsIlQoMSxnKSJdLFs1LDEyLCJUKGYsMSkiLDJdLFsxNCwxOCwiXFxzaWdtYV9mIiwwLHsic2hvcnRlbiI6eyJzb3VyY2UiOjQwLCJ0YXJnZXQiOjQwfX1dLFsxNiwyMCwiXFxzaWdtYV97Z30iLDIseyJzaG9ydGVuIjp7InNvdXJjZSI6NDAsInRhcmdldCI6NDB9fV1d
\begin{equation} \label{dia: lax wedge 2}\begin{tikzcd}
	& {T(a,a)} & {T(a,a')} &&& {T(a,a)} & {T(a,a')} \\
	c & {T(a',a')} & {T(a,a'')} & {=} & c && {T(a,a'')} \\
	& {T(a'',a'')} & {T(a',a'')} &&& {T(a'',a'')} & {T(a',a'')}
	\arrow["{T(1,f)}", from=1-2, to=1-3]
	\arrow["{T(1,g)}", from=1-3, to=2-3]
	\arrow["{T(1,f)}", from=1-6, to=1-7]
	\arrow["{T(1,gf)}"', from=1-6, to=2-7]
	\arrow["{\sigma_{gf}}"', between={0.4}{0.6}, Rightarrow, from=1-6, to=3-6]
	\arrow["{T(1,g)}", from=1-7, to=2-7]
	\arrow[""{name=0, anchor=center, inner sep=0}, "{\sigma_{a}}", from=2-1, to=1-2]
	\arrow["{\sigma_{a'}}"{description}, from=2-1, to=2-2]
	\arrow[""{name=1, anchor=center, inner sep=0}, "{\sigma_{a''}}"', from=2-1, to=3-2]
	\arrow[""{name=2, anchor=center, inner sep=0}, "{T(f,1)}"', from=2-2, to=1-3]
	\arrow[""{name=3, anchor=center, inner sep=0}, "{T(1,g)}", from=2-2, to=3-3]
	\arrow["{\sigma_{a}}", from=2-5, to=1-6]
	\arrow["{\sigma_{a''}}"', from=2-5, to=3-6]
	\arrow["{T(g,1)}"', from=3-2, to=3-3]
	\arrow["{T(f,1)}"', from=3-3, to=2-3]
	\arrow["{T(gf,1)}", from=3-6, to=2-7]
	\arrow["{T(g,1)}"', from=3-6, to=3-7]
	\arrow["{T(f,1)}"', from=3-7, to=2-7]
	\arrow["{\sigma_f}", between={0.4}{0.6}, Rightarrow, from=0, to=2]
	\arrow["{\sigma_{g}}"', between={0.4}{0.6}, Rightarrow, from=1, to=3]
\end{tikzcd}\end{equation}
\end{definition} 
%These pieces of data then need to satisfy certain equations. We define the lax end $\int^{lax}_a T(a,a)$ to be the universal lax wedge $\int^{lax}_a T(a,a)\to T$.

\begin{lemma} \cite[Prop. 3.2]{Hir22}
    For $2$-functors $F,G\colon \A\to \C$, we get $$\Nat_l(F,G)\cong \int^{lax}_a\C(Fa,Ga).$$
\end{lemma}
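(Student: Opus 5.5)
The plan is to verify directly that the lax end $\int^{lax}_a\C(Fa,Ga)$, taken in $\Cat$, satisfies the universal property characterizing $\Nat_l(F,G)$. I will compare the two categories by writing out what a lax wedge into the hom-functor $T(a,b):=\C(Fa,Gb)$, $T\colon\A^{op}\times\A\to\Cat$, is.

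First I will show that a lax wedge with vertex the terminal category $1$ is the same as a lax natural transformation $F\Rightarrow G$.
\begin{itemize}
\item The objects $\sigma_a\colon 1\to\C(Fa,Ga)$ are exactly the components $\phi_a\colon Fa\to Ga$.
\item For $f\colon a\to b$, the $2$-cell $\sigma_f$ compares $T(1,f)\sigma_a=Gf\circ\phi_a$ with $T(f,1)\sigma_b=\phi_b\circ Ff$. It is therefore exactly a lax naturality cell $\phi_f$, with the direction dictated by the paper's convention.
\item The normalization $\sigma_{\id}=\id$ is the unit axiom.
\item Equation (\ref{dia: lax wedge 2}) is the composition axiom $\phi_{gf}=(\phi_g Ff)\circ(Gg\,\phi_f)$.
\item Equation (\ref{dia: lax wedge 1}), applied to a $2$-cell $\alpha\colon f\Rightarrow g$, is the compatibility of $\phi$ with $2$-cells, i.e. $(\phi_b\alpha)\circ\phi_f=\phi_g\circ(G\alpha\,\phi_a)$.
\end{itemize}
So lax wedges $1\to T$ biject with lax natural transformations.

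Next I will show that a morphism between two such wedges, as described in \cite{Hir22}, is exactly a modification. Such a morphism is a family of $2$-cells $\sigma_a\Rightarrow\sigma'_a$ in $\C(Fa,Ga)$ compatible with the $\sigma_f$.

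I will then upgrade this to general vertices $c\in\Cat$. By cartesian closedness, a lax wedge $c\to T$ amounts to a functor $c\to\Nat_l(F,G)$. This rests on the observation that the wedge data and axioms are checked pointwise on objects and morphisms of $c$, that is, componentwise after currying $c\times\A$. The compatibility of the cells with morphisms in $c$ is just naturality of the $2$-cells $\sigma_f$ in $c$, which matches functoriality into $\Nat_l$ via modifications. This natural bijection exhibits $\Nat_l(F,G)$, with its evaluation wedge and the cells $\phi_\alpha$ displayed before the proof of Theorem \ref{thm: closed structure}, as the universal lax wedge. That identifies it with $\int^{lax}_a\C(Fa,Ga)$.

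I expect the main obstacle to be bookkeeping rather than anything conceptual. The orientation of $\sigma_f$ has to be matched against the lax, as opposed to colax, convention used in the paper, and equation (\ref{dia: lax wedge 1}) has to be checked to reproduce precisely the $2$-cell axiom of a lax transformation, including its direction. Since the result is stated as \cite[Prop. 3.2]{Hir22}, I would cite it for these verifications and only spell out the dictionary above.
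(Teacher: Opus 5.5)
Your proposal is correct. Unpacking a lax wedge into $\C(F-,G-)$ pointwise in the vertex gives exactly a functor into $\Nat_l(F,G)$: the wedge cells, the normalization and the two wedge equations match the lax naturality cells, the unit axiom, the composition axiom and the $2$-cell axiom, and naturality in the vertex matches the modification axiom; note that the $2$-cell axiom should read $(\phi_b F\alpha)\circ\phi_f=\phi_g\circ(G\alpha\,\phi_a)$. The paper gives no argument of its own beyond citing \cite[Prop. 3.2]{Hir22}, and your direct verification is the standard one underlying that reference.
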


There is also the following version of the Fubini rule, although we will need something more general.

\begin{lemma}
    (Fubini's rule I) \cite[Prop. 3.10]{Hir22} Let $T\colon \A^{op}\times \B^{op}\times \A\times \B\to \Cat$ be $2$-functor. The we have the following isomorphisms: $$\int^{lax}_a\int^{lax}_bT(a,b,a,b)\cong \int^{lax}_{a,b} T(a,b,a,b)\cong \int^{lax}_b\int^{lax}_aT(a,b,a,b).$$
\end{lemma}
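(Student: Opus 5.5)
The plan is to identify both sides with one and the same category of ``doubly lax wedges'' $\Cat\to T$, that is, of data $\sigma_{a,b}\colon c\to T(a,b,a,b)$ together with $2$-cells $\sigma_{(f,g)}$ indexed by $1$-cells $(f,g)\colon (a,b)\to(a',b')$ of $\A\times\B$. The middle term $\int^{lax}_{a,b}$ is by definition the universal such object. I will construct mutually inverse assignments between lax wedges into the iterated end and lax wedges into the double end, naturally in $c$. By the Yoneda lemma this yields the isomorphisms; the second isomorphism then follows by symmetry, swapping the roles of $\A$ and $\B$.

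First I fix $a$ and spell out what the inner lax end $E(a',a):=\int^{lax}_b T(a',b,a,b)$ is. It is a $2$-functor $\A^{op}\times\A\to\Cat$, whose functoriality comes from the universal property of the inner lax end. Next I write down a lax wedge $c\to E$. It consists of $\tau_a\colon c\to E(a,a)$ together with cells $\tau_f$. Unpacking $\tau_a$ via the universal property gives components $\sigma_{a,b}$ and cells $\sigma_{(1_a,g)}$ satisfying (\ref{dia: lax wedge 1}) and (\ref{dia: lax wedge 2}) in the $b$-variable. A $2$-cell $\tau_f$ between maps into $E(a,a')$ amounts to a modification between the two induced lax wedges, which by the $2$-dimensional part of the universal property is a family of cells $\sigma_{(f,1_b)}$ compatible with the $\sigma_{(1,g)}$.

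Conversely, given a lax wedge into the double end, I define $\sigma_{(f,g)}$ by pasting along the factorization $(f,g)=(f,1)\circ(1,g)$. Equation (\ref{dia: lax wedge 2}) for the composite then shows that all the data are determined by the $(f,1)$- and $(1,g)$-components, and that these must satisfy exactly the modification condition above. The remaining axioms, (\ref{dia: lax wedge 1}) for $2$-cells $(\alpha,\beta)$ and (\ref{dia: lax wedge 2}) for composites, I reduce to the separate-variable axioms. For this I use the interchange $(\alpha,\beta)=(\alpha,1)(1,\beta)$ together with $2$-functoriality of $T$.

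I expect the main obstacle to be the compatibility condition. It arises because $(f,1)(1,g)=(1,g)(f,1)$ in $\A\times\B$, so the two pastings must agree. This equality is precisely the modification axiom on the $\tau_f$, and it must be checked diagrammatically in both directions. The check is a careful diagram chase but contains no conceptual difficulty. The same chase is what will later fail, and need modification, when $\A\times\B$ is replaced by a Gray tensor product.
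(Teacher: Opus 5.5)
Your proposal is correct and matches the paper's approach. The paper cites \cite[Prop.~3.10]{Hir22} for this statement, and its own proof of the Gray variant (Proposition~\ref{prop: Fubini 2}) proceeds as you do: it unpacks a lax wedge $\tau$ into $\int_b^{lax}T(-,b,-,b)$ into $\sigma_{a,b}=\rho^{a,a}_b\tau_a$, $\sigma_{(1_a,g)}$, and $\sigma_{(f,1_b)}$, the last obtained from $\tau_f$ via the two-dimensional universal property. Your identification of the modification axiom on $\tau_f$ with the interchange condition $\sigma_{(1,g)}\circ\sigma_{(f,1)}=\sigma_{(f,1)}\circ\sigma_{(1,g)}$ is exactly the extra check the paper says is needed in the cartesian case, which in the Gray case is instead mediated by $\Sigma_{fg}$ (small slip: ``$\Cat\to T$'' in your first sentence should read $c\to T$).
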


The proof of this lemma is straightforward and proof of the version we need will be analogous. However, we will see that it is subtle to even formulate the generalized Fubini's rule correctly. To that end, we introduce the following $\mF$-category structure on $\A^{op}\times \A$: the tight $1$-cells are cells of the form $(f,1)$ for any $1$-cell $f$ in $\A$.

\begin{lemma}
    Taking a lax end gives us a $2$-functor $$\int_a^{lax}\colon \Fun_{s,l}(\A^{op}\times \A, \Cat)\to \Cat.$$
\end{lemma}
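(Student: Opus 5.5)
The plan is to build the $2$-functor explicitly, on objects, $1$-cells and $2$-cells, using the universal property of lax ends; then the $2$-functoriality axioms follow from uniqueness. For $\Cat$-valued $T$, lax ends exist and can be described concretely, following \cite{Hir22}. An object of $\int^{lax}_a T(a,a)$ is a lax wedge $1\to T$: a family $x_a\in T(a,a)$ together with morphisms
\[
x_f\colon T(1,f)x_a\to T(f,1)x_b ,
\]
subject to (\ref{dia: lax wedge 1}), (\ref{dia: lax wedge 2}) and $x_{\id}=\id$. Morphisms are families $x_a\to y_a$ compatible with the $x_f$ and $y_f$. This assigns $\int^{lax}_a T(a,a)$ to each object $T$ of $\Fun_{s,l}(\A^{op}\times\A,\Cat)$.

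The $1$-cells are where the marking matters. A $1$-cell $\phi\colon T\Rightarrow T'$ in $\Fun_{s,l}(\A^{op}\times\A,\Cat)$ consists of components $\phi_{(a,b)}$ and lax cells $\phi_{(g,h)}$. These are identities whenever $h=1$, because exactly the cells $(f,1)$ are tight, so $\phi$ is strictly natural in the first variable. Given a wedge $(x_a,x_f)$, I set
\[
x'_a:=\phi_{(a,a)}x_a ,
\]
and define $x'_f$ as the composite
\[
T'(1,f)\phi_{(a,a)}x_a\xrightarrow{\ \phi_{(1,f)}\ }\phi_{(a,b)}T(1,f)x_a\xrightarrow{\ \phi_{(a,b)}x_f\ }\phi_{(a,b)}T(f,1)x_b=T'(f,1)\phi_{(b,b)}x_b ,
\]
where the last equality uses strictness on the tight cell $(f,1)$. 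The direction of the lax cell $\phi_{(1,f)}$ has to be fixed so that this composite points the right way, and this is what the choice of lax versus colax convention in $\Fun_{s,l}$ dictates. If instead $(f,1)$ were allowed a non-identity lax cell, the two lax directions would be incompatible and no such composite would exist. That is why the marking on $\A^{op}\times\A$ is needed.

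On $2$-cells, a modification $\Gamma\colon\phi\Rrightarrow\psi$ acts componentwise by $\Gamma_{(a,a)}x_a$. Compatibility with $x'_f$ is the modification axiom applied at $(1,f)$ and $(f,1)$.

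The main work is checking that $x'$ is again a lax wedge, that is, that (\ref{dia: lax wedge 1}) and (\ref{dia: lax wedge 2}) hold for $x'$. I expect (\ref{dia: lax wedge 2}) to be the real obstacle. One must paste $\phi_{(1,g)}$ and $\phi_{(1,f)}$, use the composition axiom of the lax transformation, $\phi_{(1,gf)}=\phi_{(1,g)}\ast\phi_{(1,f)}$, and then slide $\phi_{(a,b)}$ past $T(f,1)$. This sliding is possible because the mixed cells $\phi_{(f,g)}$ decompose into $\phi_{(f,1)}$ and $\phi_{(1,g)}$, and $\phi_{(f,1)}$ is an identity. I would first draw the pasting diagram for $x'_{gf}$ and rewrite it step by step into the pasting of $x'_f$ and $x'_g$.

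The rest should be routine. Equation (\ref{dia: lax wedge 1}) follows from naturality of $\phi_{(1,-)}$ in $2$-cells, and $x'_{\id}=\id$ from unitality of $\phi$. Functoriality of the assignment $\phi\mapsto(x\mapsto x')$ follows from the composition rule for lax transformations; it is strict because the formula is given by whiskering. Compatibility with vertical and horizontal composition of modifications holds componentwise.
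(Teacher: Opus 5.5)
Your proposal is correct and is essentially the paper's argument. You whisker a lax wedge by the semi-strict transformation, with components $\phi_{(a,a)}x_a$ and cell $\phi_{(a,b)}x_f\circ\phi_{(1,f)}x_a$. You check the two wedge axioms via naturality in $2$-cells and the composition axiom, and you note that strictness on the tight cells $(f,1)$ is exactly what makes this composite exist. The differences are cosmetic: you use wedges out of the terminal category (the concrete description of lax ends in $\Cat$) rather than an arbitrary $c$ with the universal property, and you also sketch the action on modifications and strict functoriality, which the paper omits.
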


\begin{proof}
    The proof is straightforward; we will only show the functoriality in the semi-strict natural transformations and leave out the verification of functoriality in modifications (as we also do not need it in our application). If $T,S\colon \A^{op}\times\A\to \Cat$ are $2$-functors and $c\to T$ is a lax wedge, a semi-strict natural transformation $\eta\colon T\to S$ gives us the following diagram:
    % https://q.uiver.app/#q=WzAsNyxbMCwxLCJjIl0sWzEsMCwiVChhLGEpIl0sWzEsMiwiVChhJyxhJykiXSxbMiwxLCJUKGEsYScpIl0sWzIsMCwiUyhhLGEpIl0sWzMsMSwiUyhhLGEnKSJdLFsyLDIsIlMoYScsYScpIl0sWzAsMSwiXFxzaWdtYV9hIl0sWzEsNCwiXFxldGFfe2EsYX0iXSxbNCw1LCJTKDEsZikiXSxbMSwzLCJUKDEsZikiLDJdLFswLDIsIlxcc2lnbWFfYiIsMl0sWzIsMywiVChmLDEpIl0sWzYsNSwiUyhmLDEpIiwyXSxbMyw1LCJcXGV0YV97YSxhJ30iXSxbMiw2LCJcXGV0YV97YScsYSd9IiwyXSxbMSwyLCJcXHNpZ21hX2YiLDIseyJzaG9ydGVuIjp7InNvdXJjZSI6NDAsInRhcmdldCI6NDB9LCJsZXZlbCI6Mn1dLFsxMiwxMywiPSIsMSx7InNob3J0ZW4iOnsic291cmNlIjoyMCwidGFyZ2V0IjoyMH0sInN0eWxlIjp7ImJvZHkiOnsibmFtZSI6Im5vbmUifSwiaGVhZCI6eyJuYW1lIjoibm9uZSJ9fX1dLFs4LDE0LCJcXGV0YV97MSxmfSIsMCx7InNob3J0ZW4iOnsic291cmNlIjo0MCwidGFyZ2V0Ijo0MH19XV0=
\[\begin{tikzcd}
	& {T(a,a)} & {S(a,a)} \\
	c && {T(a,a')} & {S(a,a')} \\
	& {T(a',a')} & {S(a',a')}
	\arrow[""{name=0, anchor=center, inner sep=0}, "{\eta_{a,a}}", from=1-2, to=1-3]
	\arrow["{T(1,f)}"', from=1-2, to=2-3]
	\arrow["{\sigma_f}"', between={0.4}{0.6}, Rightarrow, from=1-2, to=3-2]
	\arrow["{S(1,f)}", from=1-3, to=2-4]
	\arrow["{\sigma_a}", from=2-1, to=1-2]
	\arrow["{\sigma_{a'}}"', from=2-1, to=3-2]
	\arrow[""{name=1, anchor=center, inner sep=0}, "{\eta_{a,a'}}", from=2-3, to=2-4]
	\arrow[""{name=2, anchor=center, inner sep=0}, "{T(f,1)}", from=3-2, to=2-3]
	\arrow["{\eta_{a',a'}}"', from=3-2, to=3-3]
	\arrow[""{name=3, anchor=center, inner sep=0}, "{S(f,1)}"', from=3-3, to=2-4]
	\arrow["{\eta_{1,f}}", between={0.4}{0.6}, Rightarrow, from=0, to=1]
	\arrow["{=}"{description}, draw=none, from=2, to=3]
\end{tikzcd}\]
We define the lax wedge $\eta_*\sigma\colon c\to S$ by putting $(\eta_*\sigma)_a:=\eta_{a,a}\sigma_a,$ and defining $(\eta_*\sigma)_f$ to be the composite $2$-cell from the diagram above. (Here we can see why the $\mF$-category structure on $\A^{op}\times \A$ was introduced: a fully lax transformation with some non-invertible $2$-cell $\eta_{f,1}$ would not produce a $2$-cell $S(1,f)\circ \eta_{a,a}\circ \sigma_a\to S(f,1)\circ \eta_{a',a'}\circ \sigma_b$.) This is indeed a lax wedge: equations of the form (\ref{dia: lax wedge 1}), (\ref{dia: lax wedge 2}) follow from the naturality of $\eta$ with respect to $2$-cells and compositions, respectively. By the universal property of lax ends, we get a map $\int_a^{lax}\eta_{a,a}\colon \int_a^{lax} T(a,a)\to \int_a^{lax} S(a,a)$.
\end{proof}

To formulate the Fubini rule, consider two $2$-categories $\A,\B$. We have a lax end $$\int_{a,b}^{lax}\colon \Fun_{s,l}((\A\otimes_l \B)^{op}\times (\A\otimes_l \B), \Cat)\to \Cat.$$ As the canonical map $\A\otimes \B\to \A\times \B$ is an epimorphism, we can view $$\Fun_{s,l}(\A^{op}\times \B^{op}\times(\A\otimes_l\B),\Cat)$$ as the full subcategory of the former. 

\begin{lemma} \label{lemma: iso of tensor products}
    Using the $\mF$-category structure on $\A^{op}\times \A$, $\B^{op}\times \B$ discussed above, the permutation of the middle two coordinates extends to an equivalence  $$j\colon (\A^{op}\times \A)\otimes_{s,l}(\B^{op}\times\B)\xrightarrow{\sim} \A^{op}\times\B^{op}\times (\A\otimes_l\B).$$
\end{lemma}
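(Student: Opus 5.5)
I will construct $j$ and its inverse explicitly on generators of the $(2,2)$-categories involved, in the style of the proof of Lemma \ref{lemma: syntactic}. The marked Gray tensor product $(\A^{op}\times \A)\otimes_{s,l}(\B^{op}\times\B)$ has objects $(a',a,b',b)$. Its $1$-cells are generated by $((f',f),1)$ and $(1,(g',g))$, and its $2$-cells by those of the factors together with the Gray cells $\Sigma_{(f',f),(g',g)}$. By the definition of $\otimes_{s,l}$, such a Gray cell is required to be invertible (indeed an identity, since $w'=s$) whenever $(f',f)$ or $(g',g)$ is tight, i.e.\ of the form $(f',1)$ or $(g',1)$.

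The first step is to define $j$ on generators: $(a',a,b',b)\mapsto(a',b',a,b)$, $((f',f),1)\mapsto (f',1,(f,1))$, $(1,(g',g))\mapsto(1,g',(1,g))$. A $2$-cell $(\alpha',\alpha)$ in $\A^{op}\times\A$ goes to $(\alpha',1,(\alpha,1))$, and similarly for $\B$. For the Gray cells I would use the factorisation $(f',f)=(f',1)\circ(1,f)$ in $\A^{op}\times\A$ together with the coherence equations (\ref{dia: gray1}) of the Gray tensor product. This reduces every Gray cell to a pasting of cells $\Sigma_{(1,f),(1,g)}$, which are sent to the Gray cells $\Sigma_{fg}$ of $\A\otimes_l\B$, and cells involving a tight factor, which are identities and are sent to identities. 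These identities are legitimate because on the $\A^{op}\times\B^{op}$ coordinates the target is a cartesian product, so all squares there commute strictly. I then check that $j$ respects the defining relations: functoriality in each variable, and the coherence equations (\ref{dia: gray1})--(\ref{dia: gray3}). Each relation maps either to a relation of $\A\otimes_l\B$ or to a strict commutation in $\A^{op}\times\B^{op}$.

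The second step is the inverse $k$. I would describe $\A^{op}\times\B^{op}\times(\A\otimes_l\B)$ by generators as well: $1$-cells $(f',1,1)$, $(1,g',1)$, $(1,1,(f,1))$, $(1,1,(1,g))$; relations saying that the first two coordinates commute strictly with everything; and Gray cells $\Sigma_{fg}$ only in the last factor. The map $k$ sends these to $((f',1),1)$, $(1,(g',1))$, $((1,f),1)$, $(1,(1,g))$ and $\Sigma_{fg}\mapsto\Sigma_{(1,f),(1,g)}$. The strict commutations hold in the source because the relevant Gray cells involve tight cells, hence are identities, or lie inside a single cartesian factor $\A^{op}\times\A$ or $\B^{op}\times\B$. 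On generators $jk$ and $kj$ are identities, so $j$ is an isomorphism. Finally, $j$ preserves markings, since tight cells $(f',1)$ and $(g',1)$ go to cells in the $\A^{op}\times\B^{op}$ part, provided the target is marked by these.

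The main obstacle is the presentation step. I need a rigorous description of the two sides by generators and relations, which is awkward for a colimit of $(2,2)$-categories. The cleanest alternative is to use the universal property instead: by Proposition \ref{prop: Gray universal}, maps out of the left side into any marked $\C$ correspond to $\Fun_{s,l}(\A^{op}\times\A,\Fun_{s,l}(\B^{op}\times\B,\C))$. I would then verify, via currying of cartesian products and $\otimes_l$, that this is naturally isomorphic to the corresponding hom out of the right side. The essential point in that verification is that strictness on tight cells makes the $\A^{op}$ and $\B^{op}$ directions behave cartesianly, and the conclusion follows by Yoneda.
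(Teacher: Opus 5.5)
Your proposal is correct and takes essentially the same route as the paper. The paper's proof is your first step: factor $(f',f)=(f',1)\circ(1,f)$, observe that every Gray cell with a tight component is trivial, and note that the remaining cells $\Sigma_{(1,f),(1,g)}$ match the Gray cells $\Sigma_{fg}$ of $\A\otimes_l\B$ after permuting coordinates. Your explicit inverse, your check of the relations, and your check of the marking fill in details the paper leaves implicit; the paper likewise takes the generators-and-relations description of both sides for granted.
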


\begin{proof}
    We only need to check that the non-trivial Gray cells match. Let $f\colon a_2\to a_2'$, $g\colon b_2\to b_2'$ be any $1$-cells in $\A, \B$, respectively. Then for any other map $f'\colon a_1\to a_1'$, we have $(f',f)=(f',1)\circ (1,f)$ where the first map is tight, and similarly in $\B$, so all the non-trivial Gray cells on the left hand side are generated by the ones below:
    %https://q.uiver.app/#q=WzAsOCxbMCwwLCIoYV8xLGJfMSxhXzIsYl8yKSJdLFsxLDAsIihhXzEsYl8xLGFfMicsYl8yKSJdLFsxLDEsIihhXzEsYl8xLGFfMicsYl8yJykiXSxbMCwxLCIoYV8xLGJfMSxhXzIsYl8yJykiXSxbMywwLCIoYV8xLGFfMixiXzEsYl8yKSJdLFszLDEsIihhXzEsYV8yLGJfMSxiXzInKSJdLFs0LDAsIihhXzEsYV8yJyxiXzEsYl8yKSJdLFs0LDEsIihhXzEsYV8yJyxiXzEsYl8yJykiXSxbMCwxLCIoMSwxLGYsMSkiXSxbMSwyLCIoMSwxLDEsZykiXSxbMywyLCIoMSwxLGYsMSkiLDJdLFswLDMsIigxLDEsMSxnKSIsMl0sWzQsNSwiKDEsMSwxLGcpIiwyXSxbNiw3LCIoMSwxLDEsZykiXSxbNSw3LCIoMSxmLDEsMSkiLDJdLFs0LDYsIigxLGYsMSwxKSJdLFs4LDEwLCJcXFNpZ21hX3tmZ30iLDAseyJzaG9ydGVuIjp7InNvdXJjZSI6NDAsInRhcmdldCI6NDB9fV0sWzE1LDE0LCJcXFNpZ21hJ197Zmd9IiwwLHsic2hvcnRlbiI6eyJzb3VyY2UiOjQwLCJ0YXJnZXQiOjQwfX1dXQ==
% https://q.uiver.app/#q=WzAsNCxbMCwwLCIoYV8xLGFfMixiXzEsYl8yKSJdLFswLDEsIihhXzEsYV8yLGJfMSxiXzInKSJdLFsxLDAsIihhXzEsYV8yJyxiXzEsYl8yKSJdLFsxLDEsIihhXzEsYV8yJyxiXzEsYl8yJykiXSxbMCwxLCIoMSwxLDEsZykiLDJdLFsyLDMsIigxLDEsMSxnKSJdLFsxLDMsIigxLGYsMSwxKSIsMl0sWzAsMiwiKDEsZiwxLDEpIl0sWzcsNiwiXFxTaWdtYSdfe2ZnfSIsMCx7InNob3J0ZW4iOnsic291cmNlIjo0MCwidGFyZ2V0Ijo0MH19XV0=
\[\begin{tikzcd}
	{(a_1,a_2,b_1,b_2)} & {(a_1,a_2',b_1,b_2)} \\
	{(a_1,a_2,b_1,b_2')} & {(a_1,a_2',b_1,b_2')}
	\arrow[""{name=0, anchor=center, inner sep=0}, "{(1,f,1,1)}", from=1-1, to=1-2]
	\arrow["{(1,1,1,g)}"', from=1-1, to=2-1]
	\arrow["{(1,1,1,g)}", from=1-2, to=2-2]
	\arrow[""{name=1, anchor=center, inner sep=0}, "{(1,f,1,1)}"', from=2-1, to=2-2]
	\arrow["{\Sigma_{fg}}", between={0.4}{0.6}, Rightarrow, from=0, to=1]
\end{tikzcd}\]
After permuting coordinates, these precisely match the Gray cells on the right hand side.
\end{proof}

Consider $T\colon \A^{op}\times \B^{op}\times (\A\otimes_l\B)\to \Cat$. We define $\int_a^{lax}\int_b^{lax}T(a,b,a,b)$ to be the value of $T$ precomposed with $j$ in the iterated lax end functor below. $$\Fun_{s,l}(\A^{op}\times \A,\Fun_{s,l}(\B^{op}\times \B,\Cat))\xrightarrow{(\int_b^{lax})_*} \Fun_{s,l}(\A^{op}\times \A,\Cat)\xrightarrow{\int_a^{lax}}\Cat.$$

We define $\int_{a,b}^{lax}T(a,b,a,b)$ to be the lax end of the precomposition of $T$ with the canonical quotient functor $(\A\otimes_l\B)^{op}\times (\A\otimes_l\B)\to\A^{op}\times \B^{op}\times (\A\otimes_l\B)$.

\begin{proposition} \label{prop: Fubini 2}
    (Fubini's rule II) For a $2$-functor $T\colon \A^{op}\times \B^{op}\times (\A\otimes_l\B)\to \Cat$, we have an isomorphism $$\int_{a,b}^{lax}T(a,b,a,b)\cong\int_a^{lax}\int_b^{lax}T(a,b,a,b).$$ 
\end{proposition}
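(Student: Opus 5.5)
The plan is to show that both sides corepresent the same functor $\Cat\to\Cat$. Concretely, for any category $c$, lax wedges $c\to T$ for the double lax end should correspond bijectively, and naturally in $c$, to lax wedges of the iterated kind. I will then extend this to morphisms of lax wedges, namely modifications of the wedge data, and conclude by Yoneda. This mirrors the proof of Fubini's rule I in \cite[Prop. 3.10]{Hir22}. The only new feature is that the indexing $2$-category is $\A\otimes_l\B$ rather than $\A\times\B$, and the source of $T$ is made compatible with the iterated end via the equivalence $j$ of Lemma \ref{lemma: iso of tensor products}.

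First I unpack a lax wedge $\sigma\colon c\to T$ for $\int_{a,b}^{lax}$. It consists of $1$-cells $\sigma_{a,b}\colon c\to T(a,b,a,b)$ and $2$-cells $\sigma_h$ for every $1$-cell $h$ of $\A\otimes_l\B$, subject to (\ref{dia: lax wedge 1}) and (\ref{dia: lax wedge 2}). Since $1$-cells of $\A\otimes_l\B$ are generated by $(f,1)$ and $(1,g)$, equation (\ref{dia: lax wedge 2}) says that $\sigma$ is determined by $\sigma_{a,b}$, $\sigma_{(f,1)}$ and $\sigma_{(1,g)}$. The constraints on these generators are the compatibility with composition in each variable separately, the $2$-cell equations (\ref{dia: lax wedge 1}) for $2$-cells of the form $(\alpha,1)$ and $(1,\beta)$, and one further equation for each Gray cell $\Sigma_{fg}$. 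That last equation comes from applying (\ref{dia: lax wedge 1}) to $\Sigma_{fg}\colon (f,1)(1,g)\Rightarrow(1,g)(f,1)$ together with (\ref{dia: lax wedge 2}) on both composites. It equates the pasting of $\sigma_{(1,g)}$, $\sigma_{(f,1)}$ and $T(1,1,\Sigma_{fg})$ with the pasting in the other order.

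Second I unpack the iterated side. Fix $a$. A lax wedge into $\int_b^{lax}T(a,-,a,-)$ is a family of $\sigma_{a,b}$ and $\sigma_{(1,g)}$ satisfying the $\B$-wedge axioms, and $1$-cells out of $c$ into the lax end are exactly these by its universal property. The outer wedge for $\int_a^{lax}$ then supplies, for each $f\colon a\to a'$, a $2$-cell between $1$-cells $c\to\int_b^{lax}T(a,b,a',b)$. The functor $a\mapsto\int_b^{lax}T(a,-,a,-)$ was built in the lemma on functoriality of $\int_a^{lax}$ in semi-strict transformations; it is legitimate here because via $j$ the $\A$-direction tight maps $(f,1)$ act strictly. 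By the universal property this $2$-cell is the same as a modification of $\B$-wedges, i.e. a family $\sigma_{(f,1)}$ indexed by $b$. The modification axiom with respect to $g$ is then precisely the Gray-cell equation from the first step, where the lax structure $\eta_{1,g}$ of the transformation in the $b$-variable is induced by $T(1,1,\Sigma_{fg})$. The remaining outer wedge axioms match the $\A$-direction equations. This yields the bijection on objects, and the same bookkeeping handles morphisms of wedges.

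The main obstacle is this identification of the modification axiom with the Gray-cell equation. It requires keeping track of which cells are tight, so that $j$ and the semi-strict functoriality of $\int_b^{lax}$ apply, and of the direction of $\Sigma_{fg}$ relative to the lax wedge cells. I expect to check it by one explicit pasting diagram and to treat everything else as routine.
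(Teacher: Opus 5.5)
Your proposal is correct and follows essentially the paper's route. You compare the universal properties, build the two-variable wedge from $\tau$ using the inner universal wedges $\rho^{a,a'}$ and the $2$-dimensional universal property of $\int_b^{lax}$, and isolate the Gray cells $\Sigma_{fg}$ as the only new check (with $T(\Sigma_{fg},1,1)=1$, since the contravariant part factors through $\A^{op}\times\B^{op}$). You identify that check with the modification axiom coming from the lax cells $T(1,1,\Sigma_{fg})$ of the semi-strict transformation $T(1,-,f,-)$. This is exactly what the paper verifies diagrammatically: the step (\ref{dia: Fubini 3})$=$(\ref{dia: Fubini 4}) uses the defining property of $\int_b^{lax}T(1,b,f,b)$, and the step (\ref{dia: Fubini 1})$=$(\ref{dia: Fubini 2}) uses the strict naturality behind $\int_b^{lax}T(f,b,1,b)$.
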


\begin{proof}
    We compare the universal property of both lax ends. A map $x\to \int_{a}^{lax}\int_b^{lax}T(a,b,a,b)$ corresponds to a lax wedge $\tau\colon x\to \int_b^{lax}T(-,b,-,b)$, i.e. 1-cells $\tau_a$ and $2$-cells $\tau_f$ filling the leftmost square in the diagram below. 
   % https://q.uiver.app/#q=WzAsNyxbMCwxLCJ4Il0sWzEsMCwiXFxpbnRfYl57bGF4fVQoYSxiLGEsYikiXSxbMSwyLCJcXGludF9iXntsYXh9VChhJyxiLGEnLGIpIl0sWzIsMSwiXFxpbnRfYl57bGF4fVQoYSxiLGEnLGIpIl0sWzIsMiwiVChhJyxiLGEnLGIpIl0sWzIsMCwiVChhLGIsYSxiKSJdLFszLDEsIlQoYSxiLGEnLGIpIl0sWzAsMSwiXFx0YXVfYSJdLFswLDIsIlxcdGF1X3thJ30iLDJdLFsxLDMsIlxcaW50X2Jee2xheH1UKDEsYixmLGIpIiwxXSxbMiwzLCJcXGludF9iXntsYXh9VChmLGIsMSxiKSIsMV0sWzEsNSwiXFxyaG9ee2EsYX1fYiJdLFszLDYsIlxccmhvX2Jee2EsYSd9Il0sWzIsNCwiXFxyaG9fYl57YScsYSd9IiwyXSxbNSw2LCJUKDEsMSxmLDEpIl0sWzQsNiwiVChmLDEsMSwxKSIsMl0sWzcsMTAsIlxcdGF1X2YiLDAseyJzaG9ydGVuIjp7InNvdXJjZSI6NDAsInRhcmdldCI6NDB9fV1d
\[\begin{tikzcd}
	& {\int_b^{lax}T(a,b,a,b)} & {T(a,b,a,b)} \\
	x && {\int_b^{lax}T(a,b,a',b)} & {T(a,b,a',b)} \\
	& {\int_b^{lax}T(a',b,a',b)} & {T(a',b,a',b)}
	\arrow["{\rho^{a,a}_b}", from=1-2, to=1-3]
	\arrow["{\int_b^{lax}T(1,b,f,b)}"{description}, from=1-2, to=2-3]
	\arrow["{T(1,1,f,1)}", from=1-3, to=2-4]
	\arrow[""{name=0, anchor=center, inner sep=0}, "{\tau_a}", from=2-1, to=1-2]
	\arrow["{\tau_{a'}}"', from=2-1, to=3-2]
	\arrow["{\rho_b^{a,a'}}", from=2-3, to=2-4]
	\arrow[""{name=1, anchor=center, inner sep=0}, "{\int_b^{lax}T(f,b,1,b)}"{description}, from=3-2, to=2-3]
	\arrow["{\rho_b^{a',a'}}"', from=3-2, to=3-3]
	\arrow["{T(f,1,1,1)}"', from=3-3, to=2-4]
	\arrow["{\tau_f}", between={0.4}{0.6}, Rightarrow, from=0, to=1]
\end{tikzcd}\]
For a fixed pair $a,a'$, consider the universal lax wedge (in variable $b$) $\rho^{a,a'}\colon \int_b^{lax}T(a,b,a',b)\to T(a,-,a',-)$. We now claim that the following data give us a two-variable lax wedge $\sigma \colon x\to T$:
\begin{itemize}
    \item $\sigma_{a,b}:=\rho^{a,a}_b\circ\tau_a,$
    \item $\sigma_{\id_a, g}:=\rho^a_f\circ \tau_a$,
    \item $\sigma_{f,\id_b}$ to be the composite $2$-cell above.
\end{itemize}
This would finish the proof as we obtain that the maps $x\to \int_{a}^{lax}\int_b^{lax}T(a,b,a,b)$ correspond to map $x\to \int_{a,b}^{lax} T(a,b,a,b)$ and this correspondence is clearly a bijection. 

To prove the claim, we need to show that the equations of the form (\ref{dia: lax wedge 1}), (\ref{dia: lax wedge 2}) holds. Notice that if we were dealing with a wedge $(\A\times \B)^{op}\times (\A\times B)\to \Cat$, we would have to additionally prove that $\sigma_{\id_{a'}, g}\circ \sigma_{f,\id_b}=\sigma_{f,\id_{b'}}\circ \sigma_{\id_a,g}$ to see that the data above indeed define a lax wedge. In our case, it does not make sense to ask these $2$-cells to be equal; they are related by a $2$-cell coming form a Gray cell in $\A\otimes_l \B$.

Most of these verifications proceed the same way as in case of the classical Fubini rule; we refer to \cite{Hir22} for details. It essentially follows from observing that by the universal property of the lax end, $\tau_f$ is a modification of wedges. The only extra verification for us is given by the Gray cells $\Sigma_{fg}$. For reader's convenience, and to make up for the lack of details in \textit{loc. cit.}, we write down this case explicitly with all the details.

Consider $1$-cells $f\colon a\to a'$ in $\A$, $g\colon b\to b'$ in $\B$. We want to show the equation (\ref{dia: lax wedge 1}) holds where the $2$-cell $\alpha$ in question is the Gray cell $\Sigma_{fg}\colon (\id_{a'},g)\circ (f,\id_b)\Rightarrow (f,\id_{b'})\circ (\id_a,g)$. As our functor $T$ factors through $\A^{op}\times \B^{op}\times (\A\otimes_l \B)$, we get that $T(\Sigma_{fg},1,1)=1$, so what we need to really show is that the pasting of $T(1,1,\Sigma_{fg})$, $\sigma_{f,\id_{b'}}$ and $\sigma_{\id_a,g}$ is the same as pasting $\sigma_{\id_{a'},g}$ with $\sigma_{f,\id_b}$. Below, we are going to write down all the diagrams getting us from the latter to the former expression. To make the diagrams more eligible, all the cells coming from the map $f$ are depicted in the blue colour and all the maps coming from $g$ are depicted in yellow. 

Let us explain the equalities between the diagrams below:
\begin{enumerate}
    \item The equality (\ref{dia: Fubini 1})=(\ref{dia: Fubini 2}) follows from the universal property of lax ends, in particular from the properties of the map $\int_b^{lax} T(f,b,1,b)$.
    \item The diagrams (\ref{dia: Fubini 2}), (\ref{dia: Fubini 3}) are the same, the latter just has some extra identity $2$-cells in the middle.
    \item The equality (\ref{dia: Fubini 3})=(\ref{dia: Fubini 4}) is a bit difficult to read off of the diagrams given their monstrous shape, so we depicted the irrelevant parts of both diagrams in a lighter colors.  Then, one can observe the top right parts of the respective diagrams are two halves of a (deformed) commutative cube. We are going to add the equality of the subdiagrams to the end.
\end{enumerate} 

% https://q.uiver.app/#q=WzAsMTAsWzAsMSwieCJdLFsxLDAsIlxcaW50X2Jee2xheH1UKGEsYixhLGIpIl0sWzEsMiwiXFxpbnRfYl57bGF4fVQoYScsYixhJyxiKSJdLFsyLDEsIlxcaW50X2Jee2xheH1UKGEsYixhJyxiKSJdLFsyLDIsIlQoYScsYixhJyxiKSJdLFsyLDAsIlQoYSxiLGEsYikiXSxbMywxLCJUKGEsYixhJyxiKSJdLFszLDMsIlQoYScsYixhJyxiJykiXSxbMiwzLCJUKGEnLGInLGEnLGInKSJdLFs0LDIsIlQoYSxiLGEnLGInKSJdLFswLDEsIlxcdGF1X2EiXSxbMCwyLCJcXHRhdV97YSd9IiwyXSxbMSwzLCJcXGludF9iXntsYXh9VCgxLGIsZixiKSIsMSx7ImNvbG91ciI6WzIzMCw5OSw1MF19LFsyMzAsOTksNTAsMV1dLFsyLDMsIlxcaW50X2Jee2xheH1UKGYsYiwxLGIpIiwxLHsiY29sb3VyIjpbMjMwLDk5LDUwXX0sWzIzMCw5OSw1MCwxXV0sWzEsNSwiXFxyaG9ee2EsYX1fYiJdLFszLDYsIlxccmhvX2Jee2EsYSd9Il0sWzIsNCwiXFxyaG9fYl57YScsYSd9IiwyXSxbNSw2LCJUKDEsMSxmLDEpIiwwLHsiY29sb3VyIjpbMjMwLDk5LDUwXX0sWzIzMCw5OSw1MCwxXV0sWzQsNiwiVChmLDEsMSwxKSIsMix7ImNvbG91ciI6WzIzMCw5OSw1MF19LFsyMzAsOTksNTAsMV1dLFs0LDcsIlQoMSwxLDEsZykiLDAseyJjb2xvdXIiOlsyOCwxMDAsNTBdfSxbMjgsMTAwLDUwLDFdXSxbMiw4LCJcXHJob157YScsYSd9X3tiJ30iLDJdLFs4LDcsIlQoMSxnLDEsMSkiLDIseyJjb2xvdXIiOlsyOCwxMDAsNTBdfSxbMjgsMTAwLDUwLDFdXSxbNCw4LCJcXHJob19nXnthJyxhJ30iLDIseyJzaG9ydGVuIjp7InNvdXJjZSI6MzAsInRhcmdldCI6MzB9LCJsZXZlbCI6MiwiY29sb3VyIjpbMjgsMTAwLDUwXX0sWzI4LDEwMCw1MCwxXV0sWzYsOSwiVCgxLDEsMSxnKSIsMCx7ImNvbG91ciI6WzI4LDEwMCw1MF19LFsyOCwxMDAsNTAsMV1dLFs3LDksIlQoZiwxLDEsMSkiLDIseyJjb2xvdXIiOlsyMzAsOTksNTBdfSxbMjMwLDk5LDUwLDFdXSxbMTAsMTMsIlxcdGF1X2YiLDAseyJzaG9ydGVuIjp7InNvdXJjZSI6NDAsInRhcmdldCI6NDB9fV1d
\begin{equation}\label{dia: Fubini 1} \begin{tikzcd}
	& {\int_b^{lax}T(a,b,a,b)} & {T(a,b,a,b)} \\
	x && {\int_b^{lax}T(a,b,a',b)} & {T(a,b,a',b)} \\
	& {\int_b^{lax}T(a',b,a',b)} & {T(a',b,a',b)} && {T(a,b,a',b')} \\
	&& {T(a',b',a',b')} & {T(a',b,a',b')}
	\arrow["{\rho^{a,a}_b}", from=1-2, to=1-3]
	\arrow["{\int_b^{lax}T(1,b,f,b)}"{description}, color={rgb,255:red,1;green,43;blue,254}, from=1-2, to=2-3]
	\arrow["{T(1,1,f,1)}", color={rgb,255:red,1;green,43;blue,254}, from=1-3, to=2-4]
	\arrow[""{name=0, anchor=center, inner sep=0}, "{\tau_a}", from=2-1, to=1-2]
	\arrow["{\tau_{a'}}"', from=2-1, to=3-2]
	\arrow["{\rho_b^{a,a'}}", from=2-3, to=2-4]
	\arrow["{T(1,1,1,g)}", color={rgb,255:red,255;green,119;blue,0}, from=2-4, to=3-5]
	\arrow[""{name=1, anchor=center, inner sep=0}, "{\int_b^{lax}T(f,b,1,b)}"{description}, color={rgb,255:red,1;green,43;blue,254}, from=3-2, to=2-3]
	\arrow["{\rho_b^{a',a'}}"', from=3-2, to=3-3]
	\arrow["{\rho^{a',a'}_{b'}}"', from=3-2, to=4-3]
	\arrow["{T(f,1,1,1)}"', color={rgb,255:red,1;green,43;blue,254}, from=3-3, to=2-4]
	\arrow["{\rho_g^{a',a'}}"', color={rgb,255:red,255;green,119;blue,0}, between={0.3}{0.7}, Rightarrow, from=3-3, to=4-3]
	\arrow["{T(1,1,1,g)}", color={rgb,255:red,255;green,119;blue,0}, from=3-3, to=4-4]
	\arrow["{T(1,g,1,1)}"', color={rgb,255:red,255;green,119;blue,0}, from=4-3, to=4-4]
	\arrow["{T(f,1,1,1)}"', color={rgb,255:red,1;green,43;blue,254}, from=4-4, to=3-5]
	\arrow["{\tau_f}", color={rgb,255:red,1;green,43;blue,254}, between={0.4}{0.6}, Rightarrow, from=0, to=1]
\end{tikzcd}\end{equation}

% https://q.uiver.app/#q=WzAsMTAsWzAsMSwieCJdLFsxLDAsIlxcaW50X2Jee2xheH1UKGEsYixhLGIpIl0sWzEsMiwiXFxpbnRfYl57bGF4fVQoYScsYixhJyxiKSJdLFsyLDEsIlxcaW50X2Jee2xheH1UKGEsYixhJyxiKSJdLFsyLDAsIlQoYSxiLGEsYikiXSxbMywxLCJUKGEsYixhJyxiKSJdLFszLDMsIlQoYScsYixhJyxiJykiXSxbMiwzLCJUKGEnLGInLGEnLGInKSJdLFs0LDIsIlQoYSxiLGEnLGInKSJdLFszLDIsIlQoYSxiJyxhJyxiJykiXSxbMCwxLCJcXHRhdV9hIl0sWzAsMiwiXFx0YXVfe2EnfSIsMl0sWzEsMywiXFxpbnRfYl57bGF4fVQoMSxiLGYsYikiLDEseyJjb2xvdXIiOlsyMzAsOTksNTBdfSxbMjMwLDk5LDUwLDFdXSxbMiwzLCJcXGludF9iXntsYXh9VChmLGIsMSxiKSIsMSx7ImNvbG91ciI6WzIzMCw5OSw1MF19LFsyMzAsOTksNTAsMV1dLFsxLDQsIlxccmhvXnthLGF9X2IiXSxbMyw1LCJcXHJob19iXnthLGEnfSJdLFs0LDUsIlQoMSwxLGYsMSkiLDAseyJjb2xvdXIiOlsyMzAsOTksNTBdfSxbMjMwLDk5LDUwLDFdXSxbMiw3LCJcXHJob157YScsYSd9X3tiJ30iLDJdLFs3LDYsIlQoMSxnLDEsMSkiLDIseyJjb2xvdXIiOlsyOCwxMDAsNTBdfSxbMjgsMTAwLDUwLDFdXSxbNSw4LCJUKDEsMSwxLGcpIiwwLHsiY29sb3VyIjpbMjgsMTAwLDUwXX0sWzI4LDEwMCw1MCwxXV0sWzYsOCwiVChmLDEsMSwxKSIsMix7ImNvbG91ciI6WzIzMCw5OSw1MF19LFsyMzAsOTksNTAsMV1dLFszLDksIlxccmhvX3tiJ31ee2EsYSd9IiwyXSxbOSw4LCJUKDEsZywxLDEpIiwyLHsiY29sb3VyIjpbMjgsMTAwLDUwXX0sWzI4LDEwMCw1MCwxXV0sWzcsOSwiVChmLDEsMSwxKSIsMSx7ImNvbG91ciI6WzIzMCw5OSw1MF19LFsyMzAsOTksNTAsMV1dLFs1LDksIlxccmhvX2dee2EsYSd9IiwyLHsic2hvcnRlbiI6eyJzb3VyY2UiOjMwLCJ0YXJnZXQiOjMwfSwibGV2ZWwiOjIsImNvbG91ciI6WzI4LDEwMCw1MF19LFsyOCwxMDAsNTAsMV1dLFsxMCwxMywiXFx0YXVfZiIsMCx7InNob3J0ZW4iOnsic291cmNlIjo0MCwidGFyZ2V0Ijo0MH0sImNvbG91ciI6WzIzMCw5OSw1MF19LFsyMzAsOTksNTAsMV1dXQ==
\begin{equation}\label{dia: Fubini 2}\begin{tikzcd}
	& {\int_b^{lax}T(a,b,a,b)} & {T(a,b,a,b)} \\
	x && {\int_b^{lax}T(a,b,a',b)} & {T(a,b,a',b)} \\
	& {\int_b^{lax}T(a',b,a',b)} && {T(a,b',a',b')} & {T(a,b,a',b')} \\
	&& {T(a',b',a',b')} & {T(a',b,a',b')}
	\arrow["{\rho^{a,a}_b}", from=1-2, to=1-3]
	\arrow["{\int_b^{lax}T(1,b,f,b)}"{description}, color={rgb,255:red,1;green,43;blue,254}, from=1-2, to=2-3]
	\arrow["{T(1,1,f,1)}", color={rgb,255:red,1;green,43;blue,254}, from=1-3, to=2-4]
	\arrow[""{name=0, anchor=center, inner sep=0}, "{\tau_a}", from=2-1, to=1-2]
	\arrow["{\tau_{a'}}"', from=2-1, to=3-2]
	\arrow["{\rho_b^{a,a'}}", from=2-3, to=2-4]
	\arrow["{\rho_{b'}^{a,a'}}"', from=2-3, to=3-4]
	\arrow["{\rho_g^{a,a'}}"', color={rgb,255:red,255;green,119;blue,0}, between={0.3}{0.7}, Rightarrow, from=2-4, to=3-4]
	\arrow["{T(1,1,1,g)}", color={rgb,255:red,255;green,119;blue,0}, from=2-4, to=3-5]
	\arrow[""{name=1, anchor=center, inner sep=0}, "{\int_b^{lax}T(f,b,1,b)}"{description}, color={rgb,255:red,1;green,43;blue,254}, from=3-2, to=2-3]
	\arrow["{\rho^{a',a'}_{b'}}"', from=3-2, to=4-3]
	\arrow["{T(1,g,1,1)}"', color={rgb,255:red,255;green,119;blue,0}, from=3-4, to=3-5]
	\arrow["{T(f,1,1,1)}"{description}, color={rgb,255:red,1;green,43;blue,254}, from=4-3, to=3-4]
	\arrow["{T(1,g,1,1)}"', color={rgb,255:red,255;green,119;blue,0}, from=4-3, to=4-4]
	\arrow["{T(f,1,1,1)}"', color={rgb,255:red,1;green,43;blue,254}, from=4-4, to=3-5]
	\arrow["{\tau_f}", color={rgb,255:red,1;green,43;blue,254}, between={0.4}{0.6}, Rightarrow, from=0, to=1]
\end{tikzcd}\end{equation}

\begin{equation} \label{dia: Fubini 3}\begin{tikzcd}
	&& {T(a,b,a,b)} & {T(a,b,a',b)} \\
	& {\int_b^{lax}T(a,b,a,b)} & {\int_b^{lax}T(a,b,a',b)} \\
	\textcolor{rgb,255:red,99;green,99;blue,99}{x} & \textcolor{rgb,255:red,99;green,99;blue,99}{{\int_b^{lax}T(a,b,a',b)}} & {T(a,b',a,b')} \\
	\textcolor{rgb,255:red,102;green,102;blue,102}{{\int_b^{lax}T(a',b,a',b)}} && {T(a,b',a',b')} & {T(a,b,a',b')} \\
	& \textcolor{rgb,255:red,99;green,99;blue,99}{{T(a',b',a',b')}} & \textcolor{rgb,255:red,99;green,99;blue,99}{{T(a',b,a',b')}}
	\arrow["{T(1,1,f,1)}", color={rgb,255:red,1;green,43;blue,254}, from=1-3, to=1-4]
	\arrow["{T(1,1,1,g)}", color={rgb,255:red,255;green,119;blue,0}, from=1-4, to=4-4]
	\arrow["{\rho^{a,a}_b}", from=2-2, to=1-3]
	\arrow[draw={rgb,255:red,1;green,43;blue,254}, from=2-2, to=2-3]
	\arrow["{\int_b^{lax}T(1,b,f,b)}"{description}, color={rgb,255:red,98;green,124;blue,254}, from=2-2, to=3-2]
	\arrow["{\rho^{a,a}_{b'}}", from=2-2, to=3-3]
	\arrow[""{name=0, anchor=center, inner sep=0}, "{\rho_b^{a,a'}}", from=2-3, to=1-4]
	\arrow["{\rho_{b'}^{a,a'}}", shift left=5, curve={height=-30pt}, from=2-3, to=4-3]
	\arrow[""{name=1, anchor=center, inner sep=0}, "{\tau_a}", color={rgb,255:red,99;green,99;blue,99}, from=3-1, to=2-2]
	\arrow["{\tau_{a'}}"', color={rgb,255:red,99;green,99;blue,99}, from=3-1, to=4-1]
	\arrow["{\rho^{a,a'}_{b'}}"{description}, color={rgb,255:red,99;green,99;blue,99}, from=3-2, to=4-3]
	\arrow["{T(1,1,f,1)}"{description}, color={rgb,255:red,1;green,43;blue,254}, from=3-3, to=4-3]
	\arrow[""{name=2, anchor=center, inner sep=0}, "{\int_b^{lax}T(f,b,1,b)}"{description}, color={rgb,255:red,98;green,124;blue,254}, from=4-1, to=3-2]
	\arrow["{\rho^{a',a'}_{b'}}"', color={rgb,255:red,102;green,102;blue,102}, from=4-1, to=5-2]
	\arrow["{T(1,g,1,1)}"', color={rgb,255:red,255;green,119;blue,0}, from=4-3, to=4-4]
	\arrow["{T(f,1,1,1)}"{description}, color={rgb,255:red,98;green,124;blue,254}, from=5-2, to=4-3]
	\arrow["{T(1,g,1,1)}"', color={rgb,255:red,255;green,171;blue,97}, from=5-2, to=5-3]
	\arrow["{T(f,1,1,1)}"', color={rgb,255:red,98;green,124;blue,254}, from=5-3, to=4-4]
	\arrow["{\rho_g^{a,a'}}", color={rgb,255:red,255;green,119;blue,0}, between={0.3}{0.7}, Rightarrow, from=0, to=4-4]
	\arrow["{\tau_f}", color={rgb,255:red,98;green,124;blue,254}, between={0.4}{0.6}, Rightarrow, from=1, to=2]
\end{tikzcd}\end{equation}

\begin{equation} \label{dia: Fubini 4}\begin{tikzcd}
	&& {T(a,b,a,b)} & {T(a,b,a',b)} \\
	& {\int_b^{lax}T(a,b,a,b)} && {T(a,b,a,b')} \\
	\textcolor{rgb,255:red,92;green,92;blue,92}{x} & \textcolor{rgb,255:red,99;green,99;blue,99}{{\int_b^{lax}T(a,b,a',b)}} & {T(a,b',a,b')} \\
	\textcolor{rgb,255:red,102;green,102;blue,102}{{\int_b^{lax}T(a',b,a',b)}} && {T(a,b',a',b')} & {T(a,b,a',b')} \\
	& \textcolor{rgb,255:red,99;green,99;blue,99}{{T(a',b',a',b')}} & \textcolor{rgb,255:red,102;green,102;blue,102}{{T(a',b,a',b')}}
	\arrow["{T(1,1,f,1)}", color={rgb,255:red,1;green,43;blue,254}, from=1-3, to=1-4]
	\arrow["{T(1,1,1,g)}"{description}, color={rgb,255:red,255;green,119;blue,0}, from=1-3, to=2-4]
	\arrow["{\rho_g^{a,a}}"', color={rgb,255:red,255;green,119;blue,0}, between={0.4}{0.6}, Rightarrow, from=1-3, to=3-3]
	\arrow["{T(1,1,\Sigma_{fg})}"{description}, Rightarrow, from=1-4, to=2-4]
	\arrow["{T(1,1,1,g)}", shift left=5, color={rgb,255:red,255;green,119;blue,0}, curve={height=-30pt}, from=1-4, to=4-4]
	\arrow["{\rho^{a,a}_b}", from=2-2, to=1-3]
	\arrow["{\int_b^{lax}T(1,b,f,b)}"{description}, color={rgb,255:red,98;green,124;blue,254}, from=2-2, to=3-2]
	\arrow["{\rho^{a,a}_{b'}}", from=2-2, to=3-3]
	\arrow["{T(1,1,f,1)}"', color={rgb,255:red,1;green,43;blue,254}, from=2-4, to=4-4]
	\arrow[""{name=0, anchor=center, inner sep=0}, "{\tau_a}", color={rgb,255:red,133;green,133;blue,133}, from=3-1, to=2-2]
	\arrow["{\tau_{a'}}"', color={rgb,255:red,120;green,120;blue,120}, from=3-1, to=4-1]
	\arrow["{\rho^{a,a'}_{b'}}"{description}, color={rgb,255:red,102;green,102;blue,102}, from=3-2, to=4-3]
	\arrow["{T(1,g,1,1)}", color={rgb,255:red,255;green,119;blue,0}, from=3-3, to=2-4]
	\arrow["{T(1,1,f,1)}"{description}, color={rgb,255:red,1;green,43;blue,254}, from=3-3, to=4-3]
	\arrow[""{name=1, anchor=center, inner sep=0}, "{\int_b^{lax}T(f,b,1,b)}"{description}, color={rgb,255:red,113;green,136;blue,254}, from=4-1, to=3-2]
	\arrow["{\rho^{a',a'}_{b'}}"', color={rgb,255:red,110;green,110;blue,110}, from=4-1, to=5-2]
	\arrow["{T(1,g,1,1)}"', color={rgb,255:red,255;green,119;blue,0}, from=4-3, to=4-4]
	\arrow["{T(f,1,1,1)}"{description}, color={rgb,255:red,98;green,124;blue,254}, from=5-2, to=4-3]
	\arrow["{T(1,g,1,1)}"', color={rgb,255:red,255;green,171;blue,97}, from=5-2, to=5-3]
	\arrow["{T(f,1,1,1)}"', color={rgb,255:red,98;green,124;blue,254}, from=5-3, to=4-4]
	\arrow["{\tau_f}", color={rgb,255:red,128;green,149;blue,254}, between={0.4}{0.6}, Rightarrow, from=0, to=1]
\end{tikzcd}\end{equation}

Note that diagrams (\ref{dia: Fubini 3}) and (\ref{dia: Fubini 4}) are partially the same and these parts are depicted in the lighter colors above. The equality of the other parts of the diagrams follows from universal properties of lax ends, in particular of the map $\int_b^{lax} T(1,b,f,b)$. We rewrite this equality once again in a nicer shape below.

% https://q.uiver.app/#q=WzAsOCxbMSwwLCJcXGludF9iXntsYXh9VChhLGIsYSxiKSJdLFswLDEsIlQoYSxiLGEsYikiXSxbMiwxLCJUKGEsYicsYSxiJykiXSxbMSwyLCJUKGEsYixhLGInKSJdLFsyLDMsIlQoYSxiJyxhJyxiJykiXSxbMSw0LCJUKGEsYixhJyxiJykiXSxbMCwzLCJUKGEsYixhJyxiKSJdLFszLDIsIj0iXSxbMCwxLCJcXHJob19iXnthLGF9IiwyXSxbMCwyLCJcXHJob197Yid9XnthLGF9Il0sWzEsMywiVCgxLDEsMSxnKSIsMSx7ImNvbG91ciI6WzI4LDEwMCw1MF19LFsyOCwxMDAsNTAsMV1dLFsyLDMsIlQoMSwxLDEsZykiLDEseyJjb2xvdXIiOlsyOCwxMDAsNTBdfSxbMjgsMTAwLDUwLDFdXSxbMiw0LCJUKDEsMSxmLDEpIiwwLHsiY29sb3VyIjpbMjM3LDk4LDUwXX0sWzIzNyw5OCw1MCwxXV0sWzQsNSwiVCgxLDEsMSxnKSIsMCx7ImNvbG91ciI6WzI4LDEwMCw1MF19LFsyOCwxMDAsNTAsMV1dLFszLDUsIlQoMSwxLGYsMSkiLDEseyJjb2xvdXIiOlsyMzcsOTgsNTBdfSxbMjM3LDk4LDUwLDFdXSxbNiw1LCJUKDEsMSwxLGcpIiwyLHsiY29sb3VyIjpbMjgsMTAwLDUwXX0sWzI4LDEwMCw1MCwxXV0sWzEsNiwiVCgxLDEsZiwxKSIsMix7ImNvbG91ciI6WzIzNyw5OCw1MF19LFsyMzcsOTgsNTAsMV1dLFs5LDEwLCJcXHJob19nXnthLGF9IiwyLHsic2hvcnRlbiI6eyJzb3VyY2UiOjMwLCJ0YXJnZXQiOjMwfSwiY29sb3VyIjpbMjgsMTAwLDUwXX0sWzI4LDEwMCw1MCwxXV0sWzEyLDE0LCJUKDEsMSxcXFNpZ21hX3tmZ30pIiwyLHsic2hvcnRlbiI6eyJzb3VyY2UiOjQwLCJ0YXJnZXQiOjQwfX1dXQ==
\[\begin{tikzcd}
	& {\int_b^{lax}T(a,b,a,b)} \\
	{T(a,b,a,b)} && {T(a,b',a,b')} \\
	& {T(a,b,a,b')} && {=} \\
	{T(a,b,a',b)} && {T(a,b',a',b')} \\
	& {T(a,b,a',b')}
	\arrow["{\rho_b^{a,a}}"', from=1-2, to=2-1]
	\arrow[""{name=0, anchor=center, inner sep=0}, "{\rho_{b'}^{a,a}}", from=1-2, to=2-3]
	\arrow[""{name=1, anchor=center, inner sep=0}, "{T(1,1,1,g)}"{description}, color={rgb,255:red,255;green,119;blue,0}, from=2-1, to=3-2]
	\arrow["{T(1,1,f,1)}"', color={rgb,255:red,3;green,15;blue,252}, from=2-1, to=4-1]
	\arrow["{T(1,1,1,g)}"{description}, color={rgb,255:red,255;green,119;blue,0}, from=2-3, to=3-2]
	\arrow[""{name=2, anchor=center, inner sep=0}, "{T(1,1,f,1)}", color={rgb,255:red,3;green,15;blue,252}, from=2-3, to=4-3]
	\arrow[""{name=3, anchor=center, inner sep=0}, "{T(1,1,f,1)}"{description}, color={rgb,255:red,3;green,15;blue,252}, from=3-2, to=5-2]
	\arrow["{T(1,1,1,g)}"', color={rgb,255:red,255;green,119;blue,0}, from=4-1, to=5-2]
	\arrow["{T(1,1,1,g)}", color={rgb,255:red,255;green,119;blue,0}, from=4-3, to=5-2]
	\arrow["{\rho_g^{a,a}}"', color={rgb,255:red,255;green,119;blue,0}, between={0.3}{0.7}, Rightarrow, from=0, to=1]
	\arrow["{T(1,1,\Sigma_{fg})}"', between={0.4}{0.6}, Rightarrow, from=2, to=3]
\end{tikzcd}\]

% https://q.uiver.app/#q=WzAsOCxbMiwwLCJcXGludF9iXntsYXh9VChhLGIsYSxiKSJdLFsxLDEsIlQoYSxiLGEsYikiXSxbMywxLCJUKGEsYicsYSxiJykiXSxbMywzLCJUKGEsYicsYScsYicpIl0sWzIsNCwiVChhLGIsYScsYicpIl0sWzEsMywiVChhLGIsYScsYikiXSxbMiwyLCJcXGludF9iXntsYXh9VChhLGIsYScsYikiXSxbMCwyLCI9Il0sWzAsMSwiXFxyaG9fYl57YSxhfSIsMl0sWzAsMiwiXFxyaG9fe2InfV57YSxhfSJdLFsyLDMsIlQoMSwxLGYsMSkiLDAseyJjb2xvdXIiOlsyMzcsOTgsNTBdfSxbMjM3LDk4LDUwLDFdXSxbMyw0LCJUKDEsMSwxLGcpIiwwLHsiY29sb3VyIjpbMjgsMTAwLDUwXX0sWzI4LDEwMCw1MCwxXV0sWzUsNCwiVCgxLDEsMSxnKSIsMix7ImNvbG91ciI6WzI4LDEwMCw1MF19LFsyOCwxMDAsNTAsMV1dLFsxLDUsIlQoMSwxLGYsMSkiLDIseyJjb2xvdXIiOlsyMzcsOTgsNTBdfSxbMjM3LDk4LDUwLDFdXSxbNiw1LCJcXHJob19iXnthLGEnfSIsMl0sWzYsMywiXFxyaG9fe2InfV57YSxhJ30iXSxbMCw2LCJcXGludF9iXntsYXh9VCgxLGIsZixiKSIsMSx7ImNvbG91ciI6WzIzNyw5OCw1MF19LFsyMzcsOTgsNTAsMV1dLFsxNSwxMiwiXFxyaG9fZ157YSxhJ30iLDAseyJzaG9ydGVuIjp7InNvdXJjZSI6MzAsInRhcmdldCI6MzB9LCJjb2xvdXIiOlsyOCwxMDAsNTBdfSxbMjgsMTAwLDUwLDFdXV0=
\[\begin{tikzcd}
	&& {\int_b^{lax}T(a,b,a,b)} \\
	& {T(a,b,a,b)} && {T(a,b',a,b')} \\
	{=} && {\int_b^{lax}T(a,b,a',b)} \\
	& {T(a,b,a',b)} && {T(a,b',a',b')} \\
	&& {T(a,b,a',b')}
	\arrow["{\rho_b^{a,a}}"', from=1-3, to=2-2]
	\arrow["{\rho_{b'}^{a,a}}", from=1-3, to=2-4]
	\arrow["{\int_b^{lax}T(1,b,f,b)}"{description}, color={rgb,255:red,3;green,15;blue,252}, from=1-3, to=3-3]
	\arrow["{T(1,1,f,1)}"', color={rgb,255:red,3;green,15;blue,252}, from=2-2, to=4-2]
	\arrow["{T(1,1,f,1)}", color={rgb,255:red,3;green,15;blue,252}, from=2-4, to=4-4]
	\arrow["{\rho_b^{a,a'}}"', from=3-3, to=4-2]
	\arrow[""{name=0, anchor=center, inner sep=0}, "{\rho_{b'}^{a,a'}}", from=3-3, to=4-4]
	\arrow[""{name=1, anchor=center, inner sep=0}, "{T(1,1,1,g)}"', color={rgb,255:red,255;green,119;blue,0}, from=4-2, to=5-3]
	\arrow["{T(1,1,1,g)}", color={rgb,255:red,255;green,119;blue,0}, from=4-4, to=5-3]
	\arrow["{\rho_g^{a,a'}}", color={rgb,255:red,255;green,119;blue,0}, between={0.3}{0.7}, Rightarrow, from=0, to=1]
\end{tikzcd}\]

\end{proof}

\bibliographystyle{alpha}  % Generates citations like [CI19]
\bibliography{bibliography}  % File name of your .bib file

 \end{document}